\numberwithin{equation}{section}
\numberwithin{figure}{section}
\theoremstyle{plain}
\newtheorem{thm}{\protect\theoremname}[section]
\theoremstyle{plain}
\newtheorem{conjecture}[thm]{\protect\conjecturename}
\theoremstyle{definition}
\newtheorem{defn}[thm]{\protect\definitionname}
\theoremstyle{remark}
\newtheorem{rem}[thm]{\protect\remarkname}
\theoremstyle{definition}
\newtheorem{example}[thm]{\protect\examplename}
\theoremstyle{plain}
\newtheorem{lem}[thm]{\protect\lemmaname}
\theoremstyle{plain}
\newtheorem{prop}[thm]{\protect\propositionname}
\theoremstyle{plain}
\newtheorem{cor}[thm]{\protect\corollaryname}
\theoremstyle{definition}
\newtheorem{condition}[thm]{\protect\conditionname}
\subjclass[2020]{11G50, 11S15, 14G05, 14E18, 14D23}
\providecommand{\conditionname}{Condition}
\providecommand{\conjecturename}{Conjecture}
\providecommand{\corollaryname}{Corollary}
\providecommand{\definitionname}{Definition}
\providecommand{\examplename}{Example}
\providecommand{\lemmaname}{Lemma}
\providecommand{\propositionname}{Proposition}
\providecommand{\remarkname}{Remark}
\providecommand{\theoremname}{Theorem}
\begin{document}
\title{The Batyrev--Manin conjecture for DM stacks II}
\author{Ratko Darda}
\address{Faculty of Engineering and Natural Sciences, Sabancı University, Tuzla,
Istanbul, Turkey}
\email{ratko.darda@gmail.com, ratko.darda@sabanciuniv.edu}
\author{Takehiko Yasuda}
\address{Department of Mathematics, Graduate School of Science, the University
of Osaka, Toyonaka, Osaka 560-0043, JAPAN}
\address{Kavli Institute for the Physics and Mathematics of the Universe, the
University of Tokyo, Kashiwa, Chiba, 277-8583, Japan}
\email{yasuda.takehiko.sci@osaka-u.ac.jp}
\begin{abstract}
In this paper, we propose a new framework for studying the distribution
of rational points on DM stacks of positive characteristic. Our primary
focus is on wild stacks, which existing frameworks do not address.
There was not even a satisfactory notion of heights for such stacks.
First, we introduce a new kind of height function that extends the
authors' idea from their preceding paper on characteristic-zero stacks.
This new height function is more general and flexible than the previous
one. Examples of the new height function include discriminants of
torsors, minimal discriminants, and conductors of elliptic curves
in characteristic three. Next, we formulate a generalization of the
Batyrev--Manin conjecture for rational points of DM stacks in positive
characteristic relative to this new type of height function. We provide
several pieces of evidence for this generalization. 
\end{abstract}

\maketitle
\selectlanguage{english}%
\global\long\def\bigmid{\mathrel{}\middle|\mathrel{}}%

\global\long\def\AA{\mathbb{A}}%

\global\long\def\CC{\mathbb{C}}%

\global\long\def\EE{\mathbb{E}}%

\global\long\def\FF{\mathbb{F}}%

\global\long\def\GG{\mathbb{G}}%

\global\long\def\LL{\mathbb{L}}%

\global\long\def\MM{\mathbb{M}}%

\global\long\def\NN{\mathbb{N}}%

\global\long\def\PP{\mathbb{P}}%

\global\long\def\QQ{\mathbb{Q}}%

\global\long\def\RR{\mathbb{R}}%

\global\long\def\SS{\mathbb{S}}%

\global\long\def\ZZ{\mathbb{Z}}%

\global\long\def\bA{\mathbf{A}}%

\global\long\def\ba{\mathbf{a}}%

\global\long\def\bb{\mathbf{b}}%

\global\long\def\bc{\mathbf{c}}%

\global\long\def\bd{\mathbf{d}}%

\global\long\def\bf{\mathbf{f}}%

\global\long\def\bg{\mathbf{g}}%

\global\long\def\bh{\mathbf{h}}%

\global\long\def\bj{\mathbf{j}}%

\global\long\def\bk{\mathbf{k}}%

\global\long\def\bm{\mathbf{m}}%

\global\long\def\bp{\mathbf{p}}%

\global\long\def\bq{\mathbf{q}}%

\global\long\def\br{\mathbf{r}}%

\global\long\def\bs{\mathbf{s}}%

\global\long\def\bt{\mathbf{t}}%

\global\long\def\bv{\mathbf{v}}%

\global\long\def\bw{\mathbf{w}}%

\global\long\def\bx{\boldsymbol{x}}%

\global\long\def\by{\boldsymbol{y}}%

\global\long\def\bz{\mathbf{z}}%

\global\long\def\cA{\mathcal{A}}%

\global\long\def\cB{\mathcal{B}}%

\global\long\def\cC{\mathcal{C}}%

\global\long\def\cD{\mathcal{D}}%

\global\long\def\cE{\mathcal{E}}%

\global\long\def\cF{\mathcal{F}}%

\global\long\def\cG{\mathcal{G}}%

\global\long\def\cH{\mathcal{H}}%

\global\long\def\cI{\mathcal{I}}%

\global\long\def\cJ{\mathcal{J}}%

\global\long\def\cK{\mathcal{K}}%

\global\long\def\cL{\mathcal{L}}%

\global\long\def\cM{\mathcal{M}}%

\global\long\def\cN{\mathcal{N}}%

\global\long\def\cO{\mathcal{O}}%

\global\long\def\cP{\mathcal{P}}%

\global\long\def\cQ{\mathcal{Q}}%

\global\long\def\cR{\mathcal{R}}%

\global\long\def\cS{\mathcal{S}}%

\global\long\def\cT{\mathcal{T}}%

\global\long\def\cU{\mathcal{U}}%

\global\long\def\cV{\mathcal{V}}%

\global\long\def\cW{\mathcal{W}}%

\global\long\def\cX{\mathcal{X}}%

\global\long\def\cY{\mathcal{Y}}%

\global\long\def\cZ{\mathcal{Z}}%

\global\long\def\fa{\mathfrak{a}}%

\global\long\def\fb{\mathfrak{b}}%

\global\long\def\fc{\mathfrak{c}}%

\global\long\def\fd{\mathfrak{d}}%

\global\long\def\fe{\mathfrak{e}}%

\global\long\def\ff{\mathfrak{f}}%

\global\long\def\fj{\mathfrak{j}}%

\global\long\def\fk{\mathfrak{k}}%

\global\long\def\fn{\mathfrak{n}}%

\global\long\def\fm{\mathfrak{m}}%

\global\long\def\fp{\mathfrak{p}}%

\global\long\def\fs{\mathfrak{s}}%

\global\long\def\ft{\mathfrak{t}}%

\global\long\def\fx{\mathfrak{x}}%

\global\long\def\fv{\mathfrak{v}}%

\global\long\def\fC{\mathfrak{C}}%

\global\long\def\fD{\mathfrak{D}}%

\global\long\def\fF{\mathfrak{F}}%

\global\long\def\fJ{\mathfrak{J}}%

\global\long\def\fG{\mathfrak{G}}%

\global\long\def\fK{\mathfrak{K}}%

\global\long\def\fM{\mathfrak{M}}%

\global\long\def\fN{\mathfrak{N}}%

\global\long\def\fO{\mathfrak{O}}%

\global\long\def\fS{\mathfrak{S}}%

\global\long\def\fV{\mathfrak{V}}%

\global\long\def\fX{\mathfrak{X}}%

\global\long\def\fY{\mathfrak{Y}}%

\global\long\def\ru{\mathrm{u}}%

\global\long\def\rv{\mathbf{\mathrm{v}}}%

\global\long\def\rw{\mathrm{w}}%

\global\long\def\rx{\mathrm{x}}%

\global\long\def\ry{\mathrm{y}}%

\global\long\def\rz{\mathrm{z}}%

\global\long\def\AdGp{\mathrm{AdGp}}%

\global\long\def\Aff{\mathbf{Aff}}%

\global\long\def\Alg{\mathbf{Alg}}%

\global\long\def\age{\operatorname{age}}%

\global\long\def\Ann{\mathrm{Ann}}%

\global\long\def\Aut{\operatorname{Aut}}%

\global\long\def\B{\operatorname{\mathrm{B}}}%

\global\long\def\Bl{\mathrm{Bl}}%

\global\long\def\C{\operatorname{\mathrm{C}}}%

\global\long\def\calm{\mathrm{calm}}%

\global\long\def\can{\mathrm{can}}%

\global\long\def\center{\mathrm{center}}%

\global\long\def\characteristic{\operatorname{char}}%

\global\long\def\cjun{c\textrm{-jun}}%

\global\long\def\codim{\operatorname{codim}}%

\global\long\def\Coker{\mathrm{Coker}}%

\global\long\def\Conj{\operatorname{Conj}}%

\global\long\def\cw{\mathbf{cw}}%

\global\long\def\D{\mathrm{D}}%

\global\long\def\Df{\mathrm{Df}}%

\global\long\def\dec{\mathrm{dec}}%

\global\long\def\det{\operatorname{det}}%

\global\long\def\diag{\mathrm{diag}}%

\global\long\def\discrep#1{\mathrm{discrep}\left(#1\right)}%

\global\long\def\dom{\mathrm{dom}}%

\global\long\def\doubleslash{\sslash}%

\global\long\def\E{\operatorname{E}}%

\global\long\def\Emb{\operatorname{Emb}}%

\global\long\def\et{\textrm{ét}}%

\global\long\def\etop{\mathrm{e}_{\mathrm{top}}}%

\global\long\def\el{\mathrm{e}_{l}}%

\global\long\def\Exc{\mathrm{Exc}}%

\global\long\def\Ext{\operatorname{Ext}}%

\global\long\def\Fal{\mathrm{Fal}}%

\global\long\def\FConj{F\textrm{-}\Conj}%

\global\long\def\Fitt{\operatorname{Fitt}}%

\global\long\def\fMov{\overline{\mathfrak{Mov}}}%

\global\long\def\fPEff{\overline{\mathfrak{Eff}}}%

\global\long\def\fr{\mathrm{fr}}%

\global\long\def\Fr{\mathrm{Fr}}%

\global\long\def\Gal{\operatorname{Gal}}%

\global\long\def\GalGps{\mathrm{GalGps}}%

\global\long\def\GL{\mathrm{GL}}%

\global\long\def\Gor{\mathrm{Gor}}%

\global\long\def\Grass{\mathrm{Grass}}%

\global\long\def\gw{\mathbf{gw}}%

\global\long\def\H{\operatorname{\mathrm{H}}}%

\global\long\def\hattimes{\hat{\times}}%

\global\long\def\hatotimes{\hat{\otimes}}%

\global\long\def\Hilb{\mathrm{Hilb}}%

\global\long\def\Hodge{\mathrm{Hodge}}%

\global\long\def\Hom{\operatorname{Hom}}%

\global\long\def\hyphen{\textrm{-}}%

\global\long\def\I{\operatorname{\mathrm{I}}}%

\global\long\def\id{\mathrm{id}}%

\global\long\def\Image{\operatorname{\mathrm{Im}}}%

\global\long\def\ind{\mathrm{ind}}%

\global\long\def\injlim{\varinjlim}%

\global\long\def\Inn{\mathrm{Inn}}%

\global\long\def\iper{\mathrm{iper}}%

\global\long\def\Iso{\operatorname{Iso}}%

\global\long\def\isoto{\xrightarrow{\sim}}%

\global\long\def\J{\operatorname{\mathrm{J}}}%

\global\long\def\Jac{\mathrm{Jac}}%

\global\long\def\kConj{k\textrm{-}\Conj}%

\global\long\def\KConj{K\textrm{-}\Conj}%

\global\long\def\Ker{\operatorname{Ker}}%

\global\long\def\Kzero{\operatorname{K_{0}}}%

\global\long\def\lc{\mathrm{lc}}%

\global\long\def\lcr{\mathrm{lcr}}%

\global\long\def\lcm{\operatorname{\mathrm{lcm}}}%

\global\long\def\length{\operatorname{\mathrm{length}}}%

\global\long\def\M{\operatorname{\mathrm{M}}}%

\global\long\def\MC{\mathrm{MC}}%

\global\long\def\MHS{\mathbf{MHS}}%

\global\long\def\mld{\mathrm{mld}}%

\global\long\def\mod#1{\pmod{#1}}%

\global\long\def\Mov{\overline{\mathrm{Mov}}}%

\global\long\def\mRep{\mathbf{mRep}}%

\global\long\def\mult{\mathrm{mult}}%

\global\long\def\N{\operatorname{\mathrm{N}}}%

\global\long\def\Nef{\mathrm{Nef}}%

\global\long\def\nor{\mathrm{nor}}%

\global\long\def\nr{\mathrm{nr}}%

\global\long\def\NS{\mathrm{NS}}%

\global\long\def\op{\mathrm{op}}%

\global\long\def\orb{\mathrm{orb}}%

\global\long\def\ord{\operatorname{ord}}%

\global\long\def\P{\operatorname{P}}%

\global\long\def\PEff{\overline{\mathrm{Eff}}}%

\global\long\def\PGL{\mathrm{PGL}}%

\global\long\def\pt{\mathbf{pt}}%

\global\long\def\pur{\mathrm{pur}}%

\global\long\def\perf{\mathrm{perf}}%

\global\long\def\perm{\mathrm{perm}}%

\global\long\def\Pic{\mathrm{Pic}}%

\global\long\def\pr{\mathrm{pr}}%

\global\long\def\Proj{\operatorname{Proj}}%

\global\long\def\projlim{\varprojlim}%

\global\long\def\Qbar{\overline{\QQ}}%

\global\long\def\QConj{\mathbb{Q}\textrm{-}\Conj}%

\global\long\def\R{\operatorname{\mathrm{R}}}%

\global\long\def\Ram{\operatorname{\mathrm{Ram}}}%

\global\long\def\rank{\operatorname{\mathrm{rank}}}%

\global\long\def\rat{\mathrm{rat}}%

\global\long\def\Ref{\mathrm{Ref}}%

\global\long\def\rig{\mathrm{rig}}%

\global\long\def\rj{\mathrm{rj}}%

\global\long\def\red{\mathrm{red}}%

\global\long\def\reg{\mathrm{reg}}%

\global\long\def\rep{\mathrm{rep}}%

\global\long\def\Rep{\mathbf{Rep}}%

\global\long\def\sbrats{\llbracket s\rrbracket}%

\global\long\def\Sch{\mathbf{Sch}}%

\global\long\def\sep{\mathrm{sep}}%

\global\long\def\Set{\mathbf{Set}}%

\global\long\def\sing{\mathrm{sing}}%

\global\long\def\SL{\mathrm{SL}}%

\global\long\def\sm{\mathrm{sm}}%

\global\long\def\small{\mathrm{small}}%

\global\long\def\Sp{\operatorname{Sp}}%

\global\long\def\Spec{\operatorname{Spec}}%

\global\long\def\Spf{\operatorname{Spf}}%

\global\long\def\ss{\mathrm{ss}}%

\global\long\def\st{\mathrm{st}}%

\global\long\def\Stab{\operatorname{Stab}}%

\global\long\def\Supp{\operatorname{Supp}}%

\global\long\def\spars{\llparenthesis s\rrparenthesis}%

\global\long\def\Sym{\mathrm{Sym}}%

\global\long\def\T{\operatorname{T}}%

\global\long\def\tame{\mathrm{tame}}%

\global\long\def\tbrats{\llbracket t\rrbracket}%

\global\long\def\top{\mathrm{top}}%

\global\long\def\tors{\mathrm{tors}}%

\global\long\def\tpars{\llparenthesis t\rrparenthesis}%

\global\long\def\Tr{\mathrm{Tr}}%

\global\long\def\ulAut{\operatorname{\underline{Aut}}}%

\global\long\def\ulHom{\operatorname{\underline{Hom}}}%

\global\long\def\ulInn{\operatorname{\underline{Inn}}}%

\global\long\def\ulIso{\operatorname{\underline{{Iso}}}}%

\global\long\def\ulSpec{\operatorname{\underline{{Spec}}}}%

\global\long\def\Utg{\operatorname{Utg}}%

\global\long\def\utg{\mathrm{utg}}%

\global\long\def\Unt{\operatorname{Unt}}%

\global\long\def\Var{\mathbf{Var}}%

\global\long\def\Vol{\mathrm{Vol}}%

\global\long\def\wt{\mathrm{wt}}%

\global\long\def\Y{\operatorname{\mathrm{Y}}}%
\selectlanguage{american}%

\tableofcontents{}

\section{Introduction}

This is a sequel to the paper \cite{darda2024thebatyrevtextendashmanin}.
In that paper, we generalized the Batyrev--Manin conjecture to DM
stacks over a number field by introducing height functions associated
to raised line bundles, which refined a conjecture proposed by Ellenberg--Satriano--Zureick-Brown
\cite{ellenberg2023heights} before. Like the original Batyrev--Manin
conjecture, it predicts that under a suitable assumption, the number
of rational points with height at most $B$ grows like $CB^{a}(\log B)^{b-1}$
for some positive constants $C,a,b$, provided that we exclude points
in an ``accumulation subset.'' To determine the constants $a$ and
$b$ in a similar way as in the original conjecture, we defined orbifold
Néron--Severi spaces and orbifold pseudo-effective cones, incorporating
contributions of sectors, that is, connected components of the cyclotomic
inertia stack (called the stack of twisted 0-jets in \cite{darda2024thebatyrevtextendashmanin}).

The main subject of this paper is DM stacks over a finite field $k$
say of characteristic $p>0$ and the distribution of $F$-points on
such a stack for a global field $F$ over $k$. A DM stack over $k$
is said to be \emph{wild }if the automorphism group of some point
has order divisible by $p$. Otherwise, it is called \emph{tame}.
While tame stacks resemble stacks in characteristic zero in many ways,
wild stacks are often much more difficult to study. Many results and
theories apply only to tame stacks, or are much more complicated in
the case of wild stacks. In the context of the distribution of rational
points, the difficulty of the wild stack is illustrated by Landesman's
work \cite{landesman2024stackyheights}. He demonstrated that the
Faltings height of elliptic curves of characteristic three is not
a heigh function on the stack $\overline{\cM_{1,1}}$ in the sense
of \cite{ellenberg2023heights}. The framework of \cite{darda2024thebatyrevtextendashmanin},
as well as any other previously existing framework, appears to be
inapplicable in this context, too. 

Our first goal is to introduce a new type of height function. The
new height function is flexible enough to include many height-like
invariants as special cases. Notably, it includes the aforementioned
Faltings height of elliptic curves in characteristic three. Furthermore,
it inherits nice properties from standard height functions on varieties,
that is, a functorial property and the Northcott property. The key
idea is very simple and natural. In the preceding paper \cite{darda2024thebatyrevtextendashmanin},
we associated a height function to a line bundle equipped with a raising
function, which is a function on the set of twisted sectors, denoted
by $\pi_{0}(\cJ_{0}\cX)$. We upgrade it into a function on the space
of twisted arcs, denoted by $|\cJ_{\infty}\cX|$, and again call it
a \emph{raising function}. The necessity of this upgrade is already
evident in the case of the classifying stack $\B G$ associated with
a finite group $G$ of order divisible by $p$. The discriminant exponent
of a $G$-torsor over the local field $k\tpars$ can take infinitely
many distinct values, and thus cannot be expressed as a function on
the finite set of sectors. As a substitute to the notion of sectors,
we introduce the notion of \emph{sectoroids}. A sectoroid is a subset
of the space of twisted arcs which satisfies several properties. We
fix a subdivision of the space $|\cJ_{\infty}\cX|$ into at most countably
many sectoroids and assume that a raising function is constant on
each of those sectoroids. If $\cX$ is smooth and tame, then all the
fibers of the natural map 
\[
|\cJ_{\infty}\cX|\to\pi_{0}(\cJ_{0}\cX),
\]
which associates a sector to each twisted arc, are sectoroids. If
we use these sectoroids and if we assume that our raising function
$c$ is induced from a function on $\pi_{0}(\cJ_{0}\cX)$ via the
above map, then the height function that we introduce in this paper
is reduces to the one defined in \cite{darda2024thebatyrevtextendashmanin}. 

Our next goal is to formulate the Batyrev--Manin type conjecture
for DM stacks in positive characteristic, using our new kind of height
functions. We closely follow the formulation in \cite{darda2024thebatyrevtextendashmanin}
for DM stacks over a number field. We define the \emph{augmented Néron--Severi
space} of a DM stack $\cX$ as the direct sum
\[
\fN^{1}(\cX):=\N^{1}(\cX)_{\RR}\oplus\prod_{i}\RR[A_{i}]^{*},
\]
where $\N^{1}(\cX)_{\RR}$ is the usual Néron--Severi space and $A_{i}$'s
are sectoroids appearing in the fixed subdivision of the space of
twisted arcs. Since there can be infinitely many of $A_{i}$'s, the
augmented Néron--Severi space is of infinite dimension in general.
Nevertheless, we can define the counterpart of the pseudo-effective
cone in $\fN^{1}(\cX)$ called the \emph{augmented pseudo-effective
cone} and denoted by $\fPEff(\cX)$. We can also define a special
element $\fK_{\cX}\in\fN^{1}(\cX)$ called the \emph{augmented canonical
class,} which is a counterpart of the numerical class $[K_{X}]$ of
the canonical divisor for a variety $X$. Recall that we used the
\emph{age} invariants of sectors in the definition of the counterpart
of this element in the framework of \cite{darda2024thebatyrevtextendashmanin}.
In the definition of the augmented canonical class, we use the \emph{Gorenstein
weights} of sectoroids in place of the age invariants. The raised
line bundle $(\cL,c)$, that is, the pair of a line bundle $\cL$
and a raising function $c$, induces an element $\llbracket\cL,c\rrbracket\in\fN^{1}(\cX)$
in a natural way. Using the cone $\fPEff(\cX)$ and the two elements
$\fK_{\cX}$ and $\llbracket\cL,c\rrbracket$, we define the $a$-invariant
$a(\cL,c)$ and the $b$-invariant $b(\cL,c)$ in the same way as
in the case of varieties. Notice that since $\fN^{1}(\cX)$ is of
infinite dimension, it is a subtle problem when these invariants are
finite. The following is the main conjecture that we would like to
propose in this paper as a prototype of Batyrev--Manin type conjectures
for DM stacks in positive characteristic:
\begin{conjecture}[Conjecture \ref{conj:main}]
\label{conj:main-intro}Let $\cX$ be a geometrically irreducible
DM stack of finite type over $k$ and let $F$ be a global field over
$k$. Assume that $\cX$ has a geometrically rationally connected
coarse moduli space and that $\cX$ has only normal $\QQ$-Gorenstein
singularities. Assume that $F$-points of $\cX$ are Zariski dense.
Assume that the raised line bundle $(\cL,c)$ satisfies the conditions
corresponding to the bigness and the nefness in our framework. Assume
that $b(\cL,c)<\infty$. Finally, assume that $c$ belongs to a certain
large enough class of functions which include most important functions
appearing in applications. Then, there exists a thin subset $T\subset\cX\langle F\rangle$
such that the following asymptotic formula holds:
\[
\#\{x\in\cX\langle F\rangle\setminus T\mid H_{\cL,c}(x)\le B\}\asymp B^{a(\cL,c)}(\log B)^{b(\cL,c)-1}\quad(B\to\infty).
\]
\end{conjecture}

To the best of the authors' knowledge, even including the simplest
case of classifying stacks, the above conjecture is the first attempt
to formulate such an asymptotic formula for the number of rational
points with bounded height in a wild situation. Notice that in the
conjecture, the stack $\cX$ is not only allowed to be wild, but also
allowed to have mild singularities. Thus, our new formalism provides
a unifying framework for a wide range of objects. Assumptions in the
conjecture are stated rather loosely. This is because, at present,
the wild examples for which asymptotic formulas have been obtained
are quite limited, making it difficult to formulate precise conditions.
In particular, the conditions that the raising function $c$ should
satisfy are not very clear, though candidates of such properties are
discussed in \cite{darda2025counting}. To identify the necessary
assumptions and arrive at the final form of the conjecture, we would
need to examine more concrete examples.

Thus, while there remains the issue of making the scope of the conjecture
more precise, we present the following evidences for the conjecture
in Sections \ref{sec:Tame-smooth-stacks} to \ref{sec:Elliptic-curves}:
\begin{enumerate}
\item If $\cX$ is smooth and tame and if the raising function $c$ factors
through $\pi_{0}(\cJ_{0}\cX)$, then Conjecture \ref{conj:main-intro}
reduces to the stacky Batyrev--Manin conjecture in \cite{darda2024thebatyrevtextendashmanin}
up to minor differences in settings. 
\item When applied to classifying stacks associated to finite groups, the
conjecture matches the existing results on counting Galois extensions
for abelian $p$-groups \cite{lagemann2015distribution,lagemann2012distribution,potthast2025onthe,gundlach2024counting}. 
\item The conjecture explains Manin's conjecture for Fano varieties with
canonical quotient singularities.
\item It is compatible with taking products of stacks.
\item The conjecture holds for the moduli stack $\overline{\cM_{1,1}}$
of elliptic curves in characteristic three and for raised line bundles
of product type. 
\end{enumerate}
In a separate paper \cite{darda2025counting}, the authors prove the
conjecture for the classifying stack $\B G$ of an abelian $p$-group
under a certain condition on $c$.

The paper contains four appendices to show supplementary results,
which might be of independent interest. Firstly, we generalize the
main result of \cite{tonini2020moduliof} by allowing the group scheme
in question to be defined over the punctured formal disk rather than
over its coefficient field. Secondly, we construct the morphism $f_{\infty}\colon\cJ_{\infty}\cY\to\cJ_{\infty}\cX$
associated to a morphism $f\colon\cY\to\cX$ of DM stacks (or more
generally a morphism of formal DM stacks over a formal disk). Previously,
the morphism was constructed only when $\cX$ is an algebraic space
\cite{yasuda2024motivic2}. Moreover, if $\rho\colon\cX\to\cX^{\rig}$
is the rigidification of a DM stack, then the induced morphism $f_{\infty}\colon\cJ_{\infty}\cX\to\cJ_{\infty}\cX^{\rig}$
is countably component-wise of finite type. Thirdly, we show an asymptotic
formula for a product height on a product set. In the final appendix,
we give an erratum to the previous paper. 

To conclude the introduction, we briefly mention a few issues that
are not addressed in this paper. While considering stacks over global
fields allows us to handle more general situations, this paper does
not address them. Compared to this general situation, the situation
we deal with can be called the isotrivial case. Since even the isotrivial
case requires a substantial theoretical framework, we have chosen
to restrict ourselves to the isotrivial case to avoid introducing
further complexity. We do not attempt to specify the removed subset
$T$ in the conjecture. To do so, we would need to work with the notion
of breaking thin subsets introduced by Lehmann--Sengputa--Tanimoto
\cite{lehmann2022geometric} for DM stacks over global fields. 

\subsection*{Notation}

We work over a finite base field $k$ of characteristic $p>0$. By
$F$, we mean a global field containing $k$ as its constant field,
that is, the function field of a smooth projective and geometrically
irreducible curve over $k$, unless otherwise noted. We denote the
set of places of $F$ by $M_{F}$. For a DM stack $\cX$ over $k$,
we denote by $\cX\langle F\rangle$ the set of isomorphism classes
of $F$-points of $\cX$. When $\cX$ is a DM stack, we denote its
coarse moduli space by $X$. We denote the point set of a stack $\cX$
by $|\cX|$. 

\subsection*{Acknowledgements}

We thank Boaz Moerman and Sho Tanimoto for helpful comments. Ratko
Darda has received a funding from the European Union’s Horizon 2023
research and innovation programme under the Maria Skłodowska-Curie
grant agreement 101149785. Takehiko Yasuda was supported by JSPS KAKENHI
Grant Numbers JP21H04994, JP23H01070 and JP24K00519.

\section{Twisted formal disks and twisted arcs\label{sec:Twisted-formal-disks}}

\subsection{Twisted formal disks and Galois extensions of the field of Laurent
power series}

Let $k'/k$ be an algebraically closed field. We define the \emph{formal
disk} over $k'$ to be $\D_{k'}:=\Spec k'\tbrats$ and the \emph{punctured
formal disk} over $k'$ to be $\D_{k'}^{*}:=\Spec k'\tpars$, which
is an open subscheme of $\D_{k'}$. 
\begin{defn}
By a \emph{twisted formal disk} \emph{over }$k'$, we mean a DM stack
of the form $[E/G]$, where $E$ is a normal connected Galois cover
of $\D_{k'}$ with a Galois group $G$. 
\end{defn}

A twisted formal disk $\cE$ over $k'$ is equipped with a coarse
moduli morphism $\cE\to\D_{k'}$, which is an isomorphism over $\D_{k'}^{*}$.
If $K/k'\tpars$ is a finite Galois extension and if $\cO_{K}$ is
the integral closure of $k'\tbrats$ in $K$, then $[\Spec\cO_{K}/\Gal(K/k'\tpars)]$
is a twisted formal disk. This construction gives the following one-to-one
correspondence:
\begin{align*}
\{\text{finite Galois extension of \ensuremath{k'\tpars}}\}/{\cong} & \leftrightarrow\{\text{twisted formal disk over \ensuremath{k'}}\}/{\cong}\\
K/k'\tpars & \mapsto[\Spec\cO_{K}/\Gal(K/k'\tpars)]
\end{align*}
 
\begin{defn}
We say that a finite group $G$ is \emph{Galoisian} if it is isomorphic
to the Galois group of some Galois extension $L/k'\tpars$ with $k'$
an algebraically closed field over $k$. By $\GalGps$, we denote
a representative set of the isomorphism classes of Galoisian groups. 
\end{defn}

Let $\Lambda$ be the stack of fiberwise connected finite étale torsors
over $\D_{k}^{*}$ with locally constant ramification \cite[Definition 5.12]{yasuda2024motivic2}.
This stack is written as 
\[
\Lambda=\coprod_{G\in\GalGps}\coprod_{[\br]\in\Ram(G)/\Aut(G)}\Lambda_{[G]}^{[\br]},
\]
the disjoint union of countably many stacks $\Lambda_{[G]}^{[\br]}$.
Here $[\br]$ runs over $\Aut(G)$-orbits of ramification data for
$G$. Let $|\Lambda|$ denote the point set of $|\Lambda|$ defined
as in \cite[tag 04XE]{stacksprojectauthors2022stacksproject}. We
have the identifications 
\begin{equation}
\begin{aligned}|\Lambda| & =\varinjlim_{k'/k}\{\text{finite Galois extension over \ensuremath{\D_{k'}^{*}}}\}/{\cong}\\
 & =\varinjlim_{k'/k}\{\text{twisted formal disk over \ensuremath{k'}}\}/{\cong}.
\end{aligned}
\label{eq:|Lambda|}
\end{equation}

There exist reduced DM stacks $\Theta_{[G]}^{[\br]}$ of finite type
over $k$ such that $\Lambda_{[G]}^{[\br]}$ are the ind-perfections
of $\Theta_{[G]}^{[\br]}$, respectively \cite[Proposition 5.19 and Definition 5.20]{yasuda2024motivic2}.
In particular, the induced maps $|\Theta_{[G]}^{[\br]}|\to|\Lambda_{[G]}^{[\br]}|$
are bijective. We set
\[
\Theta:=\coprod_{G\in\GalGps}\coprod_{[\br]\in\Ram(G)/\Aut(G)}\Theta_{[G]}^{[\br]}.
\]
We have the ind-perfection morphism $\Theta\to\Lambda$, which induces
a bijection $|\Theta|\to|\Lambda|$. 
\begin{defn}
We say that a morphism $f\colon\cY\to\cX$ of DM stacks is \emph{ccft
(countably component-wise finite-type) }or $\cY$ is \emph{ccft over
$\cX$ }if there exists a countable family $\{\cY_{i}\}_{i\in I}$
of open and closed substacks $\cY_{i}\subset\cY$ such that the restrictions
$f|_{\cY_{i}}\colon\cY_{i}\to\cX$ are of finite type for all $i\in I$. 
\end{defn}

We see that $\Theta$ is ccft over $\Spec k$.

\subsection{Twisted arcs}
\begin{defn}
A \emph{quasi-nice stack over} $k$ means a normal DM stack $\cX$
over $k$ satisfying the following conditions:
\begin{enumerate}
\item $\cX$ is separated, geometrically irreducible and $\QQ$-Gorenstein
over $k$, 
\item a coarse moduli space of $\cX$ is projective over $k$,
\item $\cX$ is not $k$-isomorphic to $\Spec k$.
\end{enumerate}
\end{defn}

From now on, $\cX$ denotes a quasi-nice DM stack over $k$ and $X$
denotes its coarse moduli space. The associated morphism $\cX\to X$
is denoted by $\pi$. 
\begin{defn}
Let $k'$ be an algebraically closed field over $k$. A \emph{twisted
arc} of $\cX$ over $k'$ is a representable morphism $\cE\to\cX$
with $\cE$ a twisted formal disk over $k'$. Two twisted arcs over
$k'$, $\cE\to\cX$ and $\cE'\to\cX$, are \emph{isomorphic }if there
exists an isomorphism making the following diagram 2-commutative:
\[
\xymatrix{\cE\ar[r]\ar[d]\ar[dr] & \cX\\
\D_{k'} & \cE'\ar[u]\ar[l]
}
\]
\end{defn}

For a twisted arc $\cE\to\cX$ over $k'$, the composition $\D_{k'}^{*}\hookrightarrow\cE\to\cX$
is an $k'\tpars$-point of $\cX$. Conversely, for a $k'\tpars$-point
$\D_{k'}^{*}\to\cX$, there exists a unique twisted arc $\cE\to\cX$
extending this morphism. We can prove this in the same way as the
proof of \cite[Lemma 2.16]{darda2024thebatyrevtextendashmanin}, except
that the induced twisted formal disk is not generally of the form
$[\Spec k'\llbracket t^{1/l}\rrbracket/\mu_{l}]$. Thus, we have a
one-to-one correspondence
\[
\{\text{twisted arc of \ensuremath{\cX} over \ensuremath{k'}}\}/{\cong}\longleftrightarrow\cX\langle k'\tpars\rangle.
\]

We can construct a stack $\cJ_{\infty}\cX$ of twisted arcs \cite[Sections 7 and 16]{yasuda2024motivic2}.
This is a DM stack over $k$ and defined as the projective limit $\projlim\cJ_{n}\cX$
of stacks of twisted jets; each of $\cJ_{n}\cX$ is ccft over $k$.
For each algebraically closed field $k'/k$, a $k'$-point of $\cJ_{\infty}\cX$
corresponds to a twisted arc of $\cX$ over $k'$. Via the above one-to-one
correspondence, the point set $|\cJ_{\infty}\cX|$ is identified with
$\varinjlim_{k'/k}\cX\langle k'\tpars\rangle$.
\begin{rem}
\label{rem:not unique}In the construction of stacks $\cJ_{\infty}\cX$
and $\cJ_{n}\cX$, we need to choose some stack denoted by $\Gamma_{\cX}$,
however the point sets $|\cJ_{\infty}\cX|$, $|\cJ_{n}\cX|$ and $|\Gamma_{\cX}|$
are uniquely determined. See Appendix \ref{sec:Morphisms} for details. 
\end{rem}

\begin{example}
If $\cX$ is a scheme, then $\cJ_{\infty}\cX$ is the usual arc scheme
$\J_{\infty}\cX$ that parametrizes usual arcs $\D_{k'}\to\cX$. If
$\cX$ is an affine space $\AA_{k}^{n}$, then $\J_{\infty}\cX$ is
the projective limit of finite dimensional affine spaces $\varprojlim_{m}\AA_{k}^{m}$.
\end{example}

\begin{example}
If $\cX$ is a classifying stack $\B G$ associated to a finite étale
group scheme $G$ over $k$, then $\cJ_{\infty}\cX$ is ccft over
$k$ and its coarse moduli space is a scheme model of the P-moduli
space $\Delta_{G}$ of $G$-torsors over $\D_{k}^{*}$, which follows
from computation in the proof of \cite[Corollary 14.4]{yasuda2024motivic2}.
\end{example}

We have a natural map
\begin{equation}
\tau\colon|\cJ_{\infty}\cX|\to|\Lambda|,\quad(\cE\to\cX)\mapsto[\cE],\label{eq:tau}
\end{equation}
where $[\cE]$ is the point of $|\Lambda|$ corresponding to the twisted
formal disk corresponding to $\cE$ via (\ref{eq:|Lambda|}). Moreover,
this map is induced from a certain morphism $\cJ_{\infty}\cX\to\Gamma_{\cX}$
of DM stacks (see Section \ref{subsec:Constr-tw} for details). The
following lemma will be used in Section \ref{sec:Elliptic-curves}. 
\begin{lem}
\label{lem:twisted-arc-minimality}Let $k'$ be an algebraically closed
field. Let $x\colon\D_{k'}^{*}\to\cX$ be a morphism. Then, there
exists a unique minimal finite separable cover $E^{*}\to\D_{k'}^{*}$
such that the induced morphism $E^{*}\to\cX$ extends to a morphism
$E\to\cX$ from the integral closure $E$ of $\D_{k'}$ in $E^{*}$.
Moreover, $E\to\D_{k'}$ is a Galois cover say with Galois group $G$,
and if $\cD\to\cX$ is the twisted arc corresponding to $x$, then
$\cD$ is isomorphic to $[E/G]$ over $\D_{k'}$.
\end{lem}

\begin{proof}
Aside from its uniqueness, obviously there exists a minimal finite
separable cover $E^{*}\to\D_{k'}^{*}$ as in the lemma. In the following
commutative diagram, the solid arrows form a commutative diagram:
\[
\xymatrix{E^{*}\ar[r]\ar[d] & \D_{k'}^{*}\ar[d]\\
E\ar@{-->}[r]\ar[d] & \cD\ar[d]\\
\cX\ar[r]^{\id_{\cX}} & \cX
}
\]
Here, $\cD\to\cX$ and $E\to\cX$ are the normalizations in $\D_{k'}\to\cX$
and $E^{*}\to\cX$, respectively. From the universality of normalization
(see \cite[(6.3.5), Chapter II]{grothendieck1961elements}, \cite[Tag 035J]{stacksprojectauthors2022stacksproject}),
we have the unique dashed arrow making the whole diagram commutative.
Note that the cited references only treat the case of schemes. However,
since the normalization is compatible with étale base changes \cite[Tag 0ABP]{stacksprojectauthors2022stacksproject},
these results generalize to DM stacks. 

Suppose now that $\cD$ is written as $[E'/G]$, where $E'$ is a
regular Galois cover of $\D_{k'}$ with Galois group $G$ and consider
the following Cartesian diagram induced by the obtained morphism $E\to\cD$
and the canonical morphism $E'\to\cD$:
\[
\xymatrix{E\times_{\cD}E'\ar[r]\ar[d]^{\text{ét.}} & E'\ar[d]^{\text{ét.}}\\
E\ar[r]\ar@{-->}@/^{1.5pc}/[u]\ar@{-->}[ur] & \cD
}
\]
Since the vertical arrows are étale and finite and since the residue
field of the closed point of $E$ is algebraically closed, we see
that the projection $E\times_{\cD}E'\to E$ admits a section and composing
it with the projection $E\times_{\cD}E'\to E'$ gives a morphism $E\to E'$
factoring the morphism $E\to\cD$. But, the minimality of $E$ implies
that that the induced morphism $E\to E'$ is an isomorphism. Thus,
we have $\cD\cong[E/G]$ and $E\to\D_{k'}$ is a Galois extension
with Galois group $G$. Since the twisted formal disk $\cD$ is uniquely
determined from the given $\D_{k'}^{*}$-point $x$, we get the desired
uniqueness of minimal covers $E\to\D_{k'}$. 
\end{proof}
Let $f\colon\cY\to\cX$ be a (not necessarily representable) morphism
of quasi-nice stacks over $k$. For a twisted arc $\gamma\colon\cE\to\cY$
over an algebraically closed field $k'$, the composition $f\circ\gamma\colon\cE\to\cX$
uniquely factors as $\cE\to\cE'\to\cX$ in such a way that $\cE'\to\cX$
is a twisted arc over $k'$ and $\cE\to\cE'$ is compatible with the
morphisms $\cE\to\D_{k'}$ and $\cE'\to\D_{k'}$. This is a special
case of the canonical factorization of a morphism of DM stacks obtained
in \cite[Lemma 25]{yasuda2006motivic} (this is called the relative
coarse moduli space in \cite[Theorem 3.1]{abramovich2011twisted}).
This construction defines a map
\begin{equation}
f_{\infty}\colon|\cJ_{\infty}\cY|\to|\cJ_{\infty}\cX|,\label{eq:f_infty}
\end{equation}
which is compatible with maps $\cY(k'\tpars)\to\cX(k'\tpars)$. See
Section \ref{subsec:mor-tw} for how to realize this map as a stack
morphism $\cJ_{\infty}\cY\to\cJ_{\infty}\cX$. 

\subsection{Rigidification}

Let $\I\cX$ be the inertia stack and let $\I^{\dom}\cX$ be the union
of those connected components of the inertia stack $\I\cX$ that map
onto $\cX$. We see that $\I^{\dom}\cX$ is a subgroup stack of $\I\cX$. 
\begin{lem}
The morphism $h\colon\I^{\dom}\cX\to\cX$ is étale, in particular,
flat. 
\end{lem}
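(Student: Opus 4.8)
The plan is to reduce to an explicit computation on an étale chart of $\cX$ of the form $[U/\Gamma]$ and there identify $\I^{\dom}\cX$ with a finite étale group scheme. First I would recall that $\I\cX\to\cX$ is representable, finite and unramified (because $\cX$ is a separated DM stack), and that $\I^{\dom}\cX$, being open and closed in $\I\cX$, inherits all three properties; so $h$ is representable, finite and unramified, and it suffices to prove it flat (flat and unramified then gives étale). Since this may be checked étale-locally on the target, I would invoke the local structure theorem for separated DM stacks in its stabilizer-preserving form: $\cX$ is covered by étale representable morphisms $\pi\colon[U/\Gamma]\to\cX$ with $U$ affine, $\Gamma$ finite, and $\Stab_{\Gamma}(u)\xrightarrow{\sim}\Aut_{\cX}(\pi(u))$ for every $u\in U$. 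Because $\cX$ is connected, $\Gamma$ acts transitively on the connected components of $U$, so after replacing $\Gamma$ by the stabilizer of one component we may take $U$ connected; and because $U\to\cX$ is étale and $\cX$ is normal, $U$ is normal, hence irreducible.

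With such a chart fixed, the key steps are as follows. Using $\I[U/\Gamma]=[\,\{(u,g):gu=u\}/\Gamma\,]$ together with the stabilizer-preserving property (which guarantees that the inertia of $\cX$ does not exceed that of the chart), one gets $\I\cX\times_{\cX}U=\coprod_{g\in\Gamma}U^{g}$, where $U^{g}$ is the fixed-point subscheme of $g$ acting on $U$, i.e. the equalizer of $\id_{U}$ and the action of $g$. Now $g\colon U\to U$ is a morphism over $\cX$ and $U\to\cX$ is étale and separated, so $\Delta_{U/\cX}$ is an open and closed immersion; pulling it back along $(\id_{U},g)$ shows $U^{g}$ is open and closed in $U$, hence $U^{g}=U$ when $g$ acts trivially on $U$ and $U^{g}$ is a proper closed subscheme otherwise (here the irreducibility of $U$ enters). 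A connected component of $\I\cX\times_{\cX}U$ lying in $U^{g}$ therefore has dense image in $\cX$ exactly when $U^{g}=U$, i.e. exactly when $g\in K:=\ker(\Gamma\to\Aut U)$. Hence
\[
\I^{\dom}\cX\times_{\cX}U=\coprod_{g\in K}U^{g}=U\times K,
\]
which is finite étale over $U$; descending along $U\to\cX$ shows $h$ is étale, in particular flat. One further checks that an element of $\Gamma$ fixing the generic point of $U$ must act trivially (again because $\Delta_{U/\cX}(U)$ is open and closed), so $K=\Stab_{\Gamma}(\eta_{U})\cong\Aut_{\cX}(\eta)$ and $h$ has constant degree equal to the order of the generic automorphism group.

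The step I expect to be the real obstacle is the reduction, not the geometry. The inertia stack does not commute with arbitrary base change — not even along representable étale morphisms, as $\B H\to\B G$ for $H\le G$ shows — so one cannot simply replace $\cX$ by a quotient presentation and compute $\I^{\dom}$ there; the stabilizer-preserving property of the chart is precisely what controls $\I\cX\times_{\cX}U$. Relatedly, the restriction to components that \emph{surject} onto $\cX$ is essential: a closed subgroup scheme of a constant finite group scheme whose order jumps over a proper closed subset is finite and unramified over an irreducible base yet not flat, so a naive ``finite unramified group scheme over a normal base is étale'' argument would fail, and it is the dominating-components condition that excludes such behaviour. Once a suitable chart is in hand, the manipulations with the fixed-point subschemes $U^{g}$ are routine.
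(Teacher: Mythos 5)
The decisive step in your argument is the passage from the local dichotomy to the equality $\I^{\dom}\cX\times_{\cX}U=\coprod_{g\in K}U^{g}$, and as written it has a genuine gap. $\I^{\dom}\cX$ is defined by the connected components of the \emph{global} stack $\I\cX$ that map onto $\cX$, whereas your dichotomy classifies the connected components of the \emph{local pullback} $\I\cX\times_{\cX}U$ according to whether they themselves dominate. These criteria coincide only if no connected component of $\I\cX$ mixes the two types: a priori a dominating component $Z$ of $\I\cX$ could pull back to something like $(U\times\{k\})\sqcup C$ with $k\in K$ and $C$ a component of some proper fixed locus $U^{g}$, $g\notin K$, since $Z\times_{\cX}U$ only has to be open and closed in $\coprod_{g}U^{g}$, not connected. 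That mixing scenario is exactly the situation in which $h$ fails to be flat, so excluding it is the real content of the lemma and cannot be filed under routine manipulations. It can be excluded, e.g.\ by showing that for a point $(x,\alpha)$ of $\I\cX$ the condition ``$\alpha$ is represented by a group element acting trivially on the whole connected chart'' is independent of the chart (if it acts trivially on $U_{i}$ it acts trivially on a common étale refinement, so its fixed locus in the integral $U_{j}$ contains a nonempty open and, being closed, is all of $U_{j}$); then that locus and its complement are both open, hence both closed, hence unions of connected components of $\I\cX$, and your identification follows. Alternatively one can identify $\I^{\dom}\cX$ with the closure of the generic fiber of $\I\cX\to\cX$. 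But some such globalization argument must be supplied.

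Separately, your intermediate claim that $U^{g}$ is open and closed in $U$ is false, and the mechanism offered for it is incorrect: the $2$-isomorphism making $g$ a morphism over $\cX$ is $g$ itself, so the pullback of $\Delta_{U/\cX}$ along $(\id_{U},g)$ is the locus where $g$ induces the \emph{identity} automorphism in $\cX$, which on a stabilizer-preserving chart is empty for $g\ne e$; it is not the fixed-point scheme $U^{g}$. Concretely, for $\cX=[\AA^{1}/\mu_{2}]$ in characteristic $\ne2$ and $g$ the nontrivial element, $U^{g}=\{0\}$ is closed but not open. Your subsequent dichotomy ($U^{g}=U$ for $g\in K$, a proper closed subscheme otherwise) survives because fixed loci are closed and $U$ is integral, but the same conflation reappears in your closing remark about the generic stabilizer. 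Note finally that the paper proves the lemma by a different, chart-free route: $h$ is unramified as a restriction of the base change of the unramified diagonal, its fibers are subgroup schemes of $\ulAut(x)$ and hence étale, and flatness is extracted from EGA IV using that $\cX$ is normal, hence geometrically unibranch; so even after repairing the globalization step your argument would constitute a genuinely different proof rather than a reconstruction of the paper's.
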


\begin{proof}
By the definition of DM stacks, the diagonal morphism $\Delta\colon\cX\to\cX\times_{k}\cX$
is unramified. It follows that the projection $\I\cX\to\cX$ is also
unramified, since it is the base change of $\Delta$ by $\Delta$.
Since $h$ is obtained by restricting $\I\cX\to\cX$ to an open and
closed substack $\I^{\dom}\cX$, we see that $h$ is also unramified.
The fiber of $h$ over every point $x\colon\Spec k'\to\cX$ with $k'$
a field is a subgroup scheme of $\ulAut(x)$ and hence an étale finite
group scheme. Thus, $h$ is a representable, unramified and equidimensional
morphism of finite type over the normal (hence reduced and geometrically
unibranch) DM stack $\cX$ and has smooth fibers. From \cite[Corollary 17.5.6 and Theorem 17.6.1]{grothendieck1967elements},
$h$ is étale. 
\end{proof}
From the lemma and \cite[Theorem A.1]{abramovich2008tamestacks},
we can associate the rigidification of $\cX$ with respect to $\I^{\dom}\cX$. 
\begin{defn}
By the \emph{rigidification} of $\cX$, we mean the rigidification
of $\cX$ with respect to $\I^{\dom}\cX$; we denote it by $\cX^{\rig}$
and the natural morphism $\cX\to\cX^{\rig}$ is denoted by $\rho$. 
\end{defn}

The stack $\cX^{\rig}$ is a DM stack having the trivial generic stabilizer.
The coarse moduli morphism $\pi\colon\cX\to X$ factors as 
\[
\cX\xrightarrow{\rho}\cX^{\rig}\xrightarrow{\pi^{\rig}}X,
\]
where $\pi^{\rig}$ is the coarse moduli morphism for $\cX^{\rig}$.
In particular, $\cX$ and $\cX^{\rig}$ share the coarse moduli space
$X$. The morphism $\rho$ is an étale gerbe and the morphism $\pi^{\rig}$
is birational (see also \cite[Proposition 2.1]{olsson2007aboundedness}).
\begin{prop}
\label{prop:neutral}Let $k'/k$ be an algebraically closed field
and let $\Spec k'\tpars\to\cX^{\rig}$ be a $k'\tpars$-point. Then,
the fiber product $\Spec k'\tpars\times_{\cX^{\rig}}\cX$ is isomorphic
to the classifying stack $\B\cH$ over $k'\tpars$, where $\cH$ is
a finite étale group scheme over $k'\tpars$.
\end{prop}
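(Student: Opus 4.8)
The plan is to show that the base change is a \emph{neutral} gerbe over $\Spec k'\tpars$ and then to read off its band. Write $K:=k'\tpars$ and $\cG:=\Spec K\times_{\cX^{\rig}}\cX$. Since $\rho$ is an étale gerbe, $\cG\to\Spec K$ is again an étale gerbe, and its band is the pullback of the band of $\rho$. Because $\I^{\dom}\cX$ is a union of connected components of $\I\cX\to\cX$ — a finite morphism, as $\cX$ is a separated DM stack — and is étale over $\cX$ by the lemma above, it is a finite étale group scheme over $\cX$; hence the band of $\cG$ is, étale-locally on $\Spec K$, a finite étale group scheme. Consequently, as soon as we know that $\cG$ admits a section $s\colon\Spec K\to\cG$, its automorphism group scheme $\cH:=\ulAut_{\cG}(s)$ is the pullback of $\I^{\dom}\cX\to\cX$ along the induced morphism $\Spec K\to\cX$, hence a finite étale group scheme over $K$, and $\cG\cong\B\cH$. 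So the whole statement reduces to lifting the given $K$-point of $\cX^{\rig}$ to $\cX$, i.e.\ to the neutrality of $\cG$.

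First I would dispose of the easy part of the band: over a finite separable extension $K'/K$ that splits it, $\cG_{K'}$ is banded by a constant finite group $G$. The isomorphism classes of gerbes over $K'$ with this band form a set which — being nonempty, since the neutral gerbe $\B_{K'}G$ exists — is a torsor under $\H^2(K',Z(G))$. Descending back to $K$, the datum of $\cG$ consists of an outer action $\Gal(K)\to\mathrm{Out}(G)$ (automatically factoring through a finite quotient) together with a class whose obstruction to agreeing with the neutral one is governed by $\H^2$ and $\H^3$ of $\Gal(K)$ with coefficients in the finite module $Z(G)$ carrying the induced action; this is Giraud's theory of non-abelian $\H^2$, equivalently the Eilenberg--MacLane theory of group extensions with non-abelian kernel.

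Then I would invoke $\mathrm{cd}(K)\le1$. Since $k'$ is algebraically closed, $\mathrm{cd}_{\ell}(K)\le1$ for every prime $\ell\ne p$ because $K$ is complete for a discrete valuation with residue field of cohomological dimension $0$, and $\mathrm{cd}_{p}(K)\le1$ because $\characteristic K=p$. Hence $\H^i(\Gal(K),A)=0$ for all $i\ge2$ and all torsion $\Gal(K)$-modules $A$; in particular the obstruction groups above vanish, so $\cG$ is isomorphic to the neutral gerbe, which supplies the section $s$ and finishes the proof. (One could instead simply cite the fact that any gerbe banded by a finite étale group scheme over a field of cohomological dimension $\le1$ is neutral.)

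The hard part will be the bookkeeping of the second paragraph: making rigorous, in the non-abelian setting, that gerbes with a fixed band form a torsor under $\H^2$ of the centre and that the descent obstruction really lands in $\H^2$ and $\H^3$ of the absolute Galois group with \emph{finite} coefficients — so that the cohomological dimension bound applies — rather than in some group insensitive to it. This is classical but delicate; the safest route is to reduce, via the lemma, to the assertion that the band is of finite-étale-group type and then to use the neutrality statement for gerbes banded by finite groups over fields of cohomological dimension $\le1$ as a cited black box.
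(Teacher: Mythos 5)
Your argument is essentially the paper's proof: both reduce the statement to the neutrality of the finite étale gerbe $\Spec k'\tpars\times_{\cX^{\rig}}\cX$ and then cite, as a black box, the fact that such a gerbe over a field of cohomological dimension $\le 1$ is neutral (the paper invokes \cite[Lemma 4.4]{bresciani2024fieldsof}). The only cosmetic difference is how $\mathrm{cd}(k'\tpars)\le1$ is obtained — you use the standard bounds for a complete discretely valued field with algebraically closed residue field together with $\mathrm{cd}_p\le1$ in characteristic $p$, while the paper deduces it from Greenberg's theorem that $k'\tpars$ is $C_1$ — and both routes are fine.
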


\begin{proof}
Take an arbitrary $k'\tpars$-point $u\colon\Spec k'\tpars\to\cX^{\rig}$
and put $\cY:=\cX\times_{\cX^{\rig},u}\Spec k'\tpars$. Then, $\cY$
is a finite étale gerbe over $\Spec k'\tpars$. It suffices to show
that $\cY$ is neutral. From \cite[Theorem 2]{greenberg1966rational},
the field $k'\tpars$ is $C_{1}$ and hence has cohomological dimension
$\le1$. From \cite[Lemma 4.4]{bresciani2024fieldsof}, a gerbe over
$k'\tpars$ is neutral, which implies the proposition.
\end{proof}
As a special case of map (\ref{eq:f_infty}), we get a map 
\[
\rho_{\infty}\colon|\cJ_{\infty}\cX|\to|\cJ_{\infty}\cX^{\rig}|.
\]
In fact, (for suitable choices of $\Gamma_{\cX}$; see Remark \ref{rem:not unique})
there exists a ccft morphism 
\[
\rho_{\infty}\colon\cJ_{\infty}\cX\to\cJ_{\infty}\cX^{\rig}
\]
again denoted by $\rho_{\infty}$ which induces the above map of point
sets. 
\begin{rem}
From Proposition \ref{prop:neutral}, for a point $x\colon\Spec k'\tpars\to\cX^{\rig}$,
the fiber of $\rho_{\infty}$ over $x$ is identified with $|\Delta_{\cH}|$.
The moduli stack $\Delta_{\cH}$ admits a P-moduli space represented
by the disjoint union of countably many $k$-varieties in the following
cases:
\end{rem}

\begin{enumerate}
\item $\cH$ is constant \cite{tonini2023moduliof}.
\item The base change $\cH_{k'\tpars^{\sep}}$ is of the form
\[
\prod_{i=1}^{n}(H_{i}\rtimes C_{i}),
\]
where $H_{i}$ are $p$-groups and $C_{i}$ are tame cyclic groups
(\cite{tonini2020moduliof} and Appendix \ref{sec:Moduli-of-formal}
of the present paper). 
\end{enumerate}

\section{Gorenstein weights and sectoroids\label{sec:Gorenstein-weights}}

\subsection{Gorenstein weights}

We keep the notation from the last section. In particular, $\cX$
is a quasi-nice stack, $\cX^{\rig}$ is its rigidification and $X$
is their common coarse moduli space. Thus, $X$ is a normal projective
variety. There exists a unique $\QQ$-divisor $B$ on $X$ such that
$\cX\to(X,B)$ is crepant. We now recall the Gorenstein motivic measure
on the arc space $\J_{\infty}X$ associated to the pair $(X,B)$,
which was introduced by Denef--Loeser \cite{denef2002motivic} in
the case of varieties with Gorenstein singularities. Let $d:=\dim\cX=\dim X$.
For a stable subset $C\subset|\J_{\infty}X|$ of level $n$, its motivic
measure is defined as 
\[
\mu_{X}(C):=\{\pi_{n}(C)\}\LL^{-nd}.
\]

\begin{example}
If $X$ is smooth, then $\mu_{X}(|\J_{\infty}X|)=\{X\}$. Some literature
uses a normalization of the motivic measure different by the factor
of $\LL^{d}$ so that $\mu_{X}(|\J_{\infty}X|)=\{X\}\LL^{-d}$.
\end{example}

For a positive integer $r$ as above, we define a fractional coherent
ideal $\cI_{(X,B),r}\subset K(X)$ by requiring the following equality
of subsheaves of the constant sheaf $\omega_{X}^{\otimes r}\otimes K(X)$:
\[
\cI_{(X,B),r}\cdot\cO_{X}(r(K_{X}+B))=\Image\left(\left(\bigwedge^{d}\Omega_{X}\right)^{\otimes r}\to\omega_{X}^{\otimes r}\otimes K(X)\right).
\]

\begin{defn}
A subset $C\subset|\J_{\infty}X|$ is said to be \emph{Gorenstein
measurable} (with respect to the log pair $(X,B)$) if the motivic
function $\LL^{\frac{1}{r}\ord(\cI_{(X,B),r})}$ is integrable on
$C$. For a Gorenstein measurable subset $C$, we define its \emph{Gorenstein
motivic measure} by
\[
\mu_{(X,B)}^{\Gor}(C):=\int_{C}\LL^{\frac{1}{r}\ord(\cI_{(X,B),r})}\,d\mu_{X}.
\]
\end{defn}

We can also define a motivic measure on $|\cJ_{\infty}\cX^{\rig}|$
as in \cite{yasuda2024motivic2}. The map $\pi_{\infty}^{\rig}\colon|\cJ_{\infty}\cX^{\rig}|\to|\J_{\infty}X|$
is bijective outside measure zero subsets. 
\begin{defn}
We say that a subset $C\subset|\cJ_{\infty}\cX^{\rig}|$ is \emph{Gorenstein
measurable }if $\pi_{\infty}^{\rig}(C)$ is Gorenstein measurable
(with respect to $(X,B)$). For a Gorenstein measurable subset $C\subset|\cJ_{\infty}\cX^{\rig}|$,
we define its \emph{Gorenstein measure }to be
\[
\mu_{\cX^{\rig}}^{\Gor}(C):=\mu_{(X,B)}^{\Gor}(\pi_{\infty}^{\rig}(C)).
\]
For a Gorenstein measurable subset $C$, we define its \emph{Gorenstein
weight }to be
\[
\gw(C):=\dim\mu_{\cX^{\rig}}^{\Gor}(C)-d.
\]
\end{defn}

\begin{example}
If $\cX$ is smooth, then the space $|\J_{\infty}\cX|$ of untwisted
arcs is Gorenstein measurable and has Gorenstein weight $0$. 
\end{example}

\begin{defn}
We call a subset $C\subset|\cJ_{\infty}\cX|$ \emph{Gorenstein pseudo-measurable
}if 
\begin{enumerate}
\item $\rho_{\infty}(C)$ is Gorenstein measurable, and 
\item there exists an integer $m_{C}\in\ZZ_{\ge0}$ such that for every
point $\gamma\in\rho_{\infty}(C)$, $C\cap(\rho_{\infty})^{-1}(\gamma)$
is a constructible subset of $(\rho_{\infty})^{-1}(\gamma)$ of dimension
$m_{C}$. 
\end{enumerate}
For a Gorenstein pseudo-measurable subset $C\subset|\cJ_{\infty}\cX|$,
we define its \emph{Gorenstein weight }to be 
\[
\gw(C):=\gw(\rho_{\infty}(C))+m_{C}.
\]
\end{defn}

\begin{example}
If $\cX=\B G$ for an étale finite group scheme $G$ over $k$ and
if we write $\cJ_{\infty}\cX$ as the coproduct $\coprod_{j=0}^{\infty}\cW_{j}$
of countably many DM stacks $\cW_{j}$ of finite type over $k$, then
a Gorenstein pseudo-measurable subset of $|\cJ_{\infty}\cX|$ is a
constructible subset of $\coprod_{j=0}^{m}|\cW_{j}|$ for $m\gg0$
and its Gorenstein weight is nothing but its dimension. 
\end{example}

\subsection{Sectoroids}
\begin{defn}
\label{def:equiv}Two points $|\cJ_{\infty}\cX|$ are\emph{ }said
to be\emph{ equivalent} if they are represented by twisted arcs $\gamma\colon\cE\to\cX$
and $\gamma'\colon\cE'\to\cX$ defined over the same algebraically
closed field $k'$ such that there exists an isomorphism $\alpha\colon\cE\xrightarrow{\sim}\cE'$
over $k'$ with $\gamma\cong\gamma'\circ\alpha$. When two points
$\beta$ and $\beta'$ are equivalent, we write $\beta\sim\beta'$. 
\end{defn}

\begin{defn}
\label{def:sectoroid}We call a subset $C\subset|\cJ_{\infty}\cX^{\rig}|$
a \emph{sectoroid }if the following conditions are satisfied:
\begin{enumerate}
\item $C$ is closed under the equivalence relation of Definition \ref{def:equiv},
\item $C$ is the union of two subsets $C_{0}$ and $C_{1}$ such that
\begin{enumerate}
\item $C_{0}$ is an irreducible stable subset and the function $\ord\cI_{(X,B),r}$
is constant on $C_{0}$,
\item $C_{1}$ is Gorenstein measurable with $\gw(C_{1})<\gw(C_{0})$.
\end{enumerate}
\end{enumerate}
\end{defn}

If $C\subset|\cJ_{\infty}\cX^{\rig}|$ is a sectoroid, then it is
Gorenstein measurable and $\gw(C)=\gw(C_{0})$ for a stable subset
$C_{0}$ as above. Moreover, the Gorenstein measure of $C$ is of
the form
\[
\{V\}\LL^{m}+\sum_{i=0}^{\infty}\{V_{i}\}\LL^{m_{i}}\quad(\dim V_{i}+m_{i}<\dim V+m),
\]
where $V$ is an irreducible $k$-variety and $V_{i}$ are $k$-varieties,
and $m,m_{i}\in\frac{1}{r}\ZZ$. 

\begin{defn}
\label{def:sectoroid-1}We call a subset $C\subset|\cJ_{\infty}\cX|$
a \emph{sectoroid }if
\begin{enumerate}
\item $C$ is Gorenstein pseudo-measurable,
\item $\rho_{\infty}(C)\subset|\cJ_{\infty}\cX^{\rig}|$ is a sectoroid,
\item there exists an integer $m_{C}\in\ZZ_{\ge0}$ such that for every
point $\gamma\in\rho_{\infty}(C)$, $C\cap(\rho_{\infty})^{-1}(\gamma)$
is an \emph{irreducible} constructible subset of $(\rho_{\infty})^{-1}(\gamma)$
of dimension $m_{C}$. 
\end{enumerate}
\end{defn}

\begin{defn}
\label{def:sectoroid-subdivision}Let $\cX^{\circ}\subset\cX$ be
the open substack obtained by removing the image of $(\I\cX)\setminus(\I^{\dom}\cX)$
from the smooth locus $\cX_{\sm}$ of $\cX$. A \emph{sectoroid subdivision
}of $|\cJ_{\infty}\cX|$ is a collection $\{A_{i}\}_{i\in I}$ of
countably many subsets $A_{i}\subset|\cJ_{\infty}\cX|$ satisfying
the following conditions:
\begin{enumerate}
\item the subsets $A_{i}$, $i\in I$ are mutually disjoint and $|\cJ_{\infty}\cY|=\bigsqcup_{i\in I}A_{i}$,
\item the index set $I$ contains two distinguished elements $0$ and $\infty$, 
\item the subsets $A_{i}$, $i\in I\setminus\{\infty\}$ are sectoroids, 
\item there exists an open dense substack $\cU\subset\cX$ such that $|\J_{\infty}\cU|\subset A_{0}$,
\item $\gw(A_{0})=0$ and $\gw(|\J_{\infty}\cX^{\circ}|\setminus A_{0})<-1$
,
\item $A_{\infty}$ is contained in $|\cJ_{\infty}\cY|$ for some proper
closed substack $\cY\subsetneq\cX$ (possibly $A_{\infty}=\emptyset$). 
\end{enumerate}
For a sectoroid subdivision $\{A_{i}\}_{i\in I}$, we call $A_{0}$
the \emph{non-twisted sectoroid, $A_{i}$, $i\in I\setminus\{0,\infty\}$
}the\emph{ twisted sectoroids, }and\emph{ }$A_{\infty}$ the \emph{exceptional
arc locus.} A closed substack $\cY$ as in (5) is called an \emph{exceptional
substack} for the sectoroid subdivision. 
\end{defn}

Note that since $A_{i}$, $i\ne\infty$ are closed under the equivalence,
so is $A_{\infty}$. Note also that $|\cJ_{\infty}\cY|$ is a measurable
subset of measure zero, which follows from \cite[Lemma 10.8]{yasuda2024motivic2}. 
\begin{rem}
We impose Condition (5) to avoid having one prime divisor contribute
doubly in the definition of augmented Néron--Severi space (Definition
\ref{def:augmented-NS}).
\end{rem}

\section{Heights associated to raised line bundles\label{sec:Heights}}
\begin{defn}
We say that a function $c\colon|\cJ_{\infty}\cX|\to\RR\cup\{\infty\}$
is a \emph{pseudo-raising function} if it is invariant under the equivalence
relation defined in Definition \ref{def:equiv}. We say that a pseudo-raising
function $c\colon|\cJ_{\infty}\cX|\to\RR\cup\{\infty\}$ is a \emph{raising
function} if there exists a sectoroid subdivision $\{A_{i}\}_{i\in I}$
of $|\cJ_{\infty}\cX|$ such that 
\begin{enumerate}
\item for each $i\in I$, the restriction $c|_{A_{i}}$ of $c$ to $A_{i}$
is constant, 
\item for $i\in I\setminus\{\infty\}$, $c(A_{i})<\infty$, and
\item $c(A_{0})=0$.
\end{enumerate}
We say that a raising function $c$ and a sectoroid subdivision $\{A_{i}\}_{i\in I}$
as above are \emph{compatible. }
\end{defn}

Let $F$ be a global field containing $k$ as the constant field.
For a place $v$ of $F$, let $F_{v}$ be the completion of $F$ at
$v$, let $F_{v}^{\nr}$ be its maximal unramified extension and let
$\widehat{F_{v}^{\nr}}$ be its completion. We fix an isomorphism
$\widehat{F_{v}^{\nr}}\cong\overline{k}\tpars$. For each $F$-point
$x\in\cX(F)$ and for each place $v$ of $F$, we get the induced
$\overline{k}\tpars$-point 
\[
x_{v}\colon\D_{\overline{k}}^{*}=\Spec\overline{k}\tpars=\Spec\widehat{F_{v}^{\nr}}\to\Spec F\xrightarrow{x}\cX.
\]
It has a unique extension to a twisted arc 
\[
\widetilde{x_{v}}\colon\cE\to\cX
\]
over $\overline{k}$. The equivalence class $[\widetilde{x_{v}}]\in|\cJ_{\infty}\cX|/{\sim}$
is independent of the choice of the isomorphism $\widehat{F_{v}^{\nr}}\cong\overline{k}\tpars$.Thus,
for a raising function $c$, the value $c([\widetilde{x_{v}}])$ is
well-defined and will be abbreviated as $c(\widetilde{x_{v}})$ or
simply as $c(x_{v})$. 
\begin{defn}
Let $c$ be a pseudo-raising function and let $x\in\cX(F)$. Let $M_{F}$
be the set of places of $F$ and let $q_{v}$ be the cardinality of
the residue field of $F_{v}$. We define the \emph{height of $x$
with respect to $c$} to be 
\[
H_{c}(x):=\prod_{v\in M_{F}}q_{v}^{c(x_{v})}\in\RR\cup\{\infty\}.
\]
\end{defn}

Note that if $\{A_{i}\}_{i\in I}$ is a sectoroid subdivision of $|\cJ_{\infty}\cX|$,
then for each $F$-point $x\in\cX(F)$ and for almost all places $v$,
the twisted arc $\widetilde{x_{v}}$ defines a point of the non-twisted
sectoroid $A_{0}$, thanks to condition (5) of Definition \ref{def:sectoroid-subdivision}.
This shows that we have $c(x_{v})=0$ except for finitely many places
$v$. In particular, if $c(\widetilde{x}_{v})<\infty$ for every $v$,
in particular, if $x$ does not lie in an exceptional closed subset
$\cY$ of a sectoroid subdivision compatible with $c$, then $H_{c}(x)<\infty$. 
\begin{example}[Discriminant exponents]
\label{exa:discriminant}Examples of raising functions are induced
from functions on $|\Lambda|$. For example, consider the discriminant
exponent function $\bd\colon|\Lambda|\to\ZZ_{\ge0}$, which maps a
point of $|\Lambda|$ to the discriminant exponent of the corresponding
Galois extension of $k'\tpars$. This is a locally constructible function
\cite[Theorem 9.8]{tonini2023moduliof}. This implies that he composite
function 
\[
\bd\circ\tau\colon|\cJ_{\infty}\cX|\xrightarrow{\tau}|\Lambda|\xrightarrow{\bd}\ZZ_{\ge0}
\]
is a raising function. Indeed, it is constantly zero on $|\J_{\infty}\cX|$
and we just need to decompose each fiber $(\bd\circ\tau)^{-1}(n)$
into sectoroids in order to find a compatible sectoroid subdivision. 
\end{example}

\begin{defn}
A \emph{raised (resp.~pseudo-raised) line bundle }on $\cX$ means
the pair $(\cL,c)$ of a line bundle $\cL$ on $\cX$ and a raising
(resp.~pseudo-raising) function $c\colon|\cJ_{\infty}\cX|\to\RR\cup\{\infty\}$.
We define the \emph{height of $x$ with respect to a pseudo-raised
line bundle $(\cL,c)$ }to be 
\[
H_{\cL,c}(x):=H_{\cL}(x)\cdot H_{c}(x)\in\RR\cup\{\infty\}.
\]
\end{defn}

\begin{prop}
Let $f\colon\cY\to\cX$ be a morphism of quasi-nice stacks over $k$.
Let $(\cL,c)$ be a pseudo-raised line bundle on $\cX$ and let $(f^{*}\cL,f^{*}c)$
be the induced pseudo-raised line bundle on $\cY$ with $f^{*}c:=c\circ f_{\infty}$.
Then, for $y\in\cY(F)$, we have
\[
H_{f^{*}\cL,f^{*}c}(y)=H_{\cL,c}(f(y)).
\]
\end{prop}

\begin{proof}
This follows from $H_{f^{*}\cL}(y)=H_{\cL}(f(y))$ and $H_{f^{*}c}(y)=H_{c}(f(y)).$
The former is a well-known property of the usual height function and
it is straightforward to generalize it to the stacky setting. The
latter is obvious from the definition as follows:
\[
H_{f^{*}c}(y)=\prod_{v}q_{v}^{f^{*}c(y_{v})}=\prod_{v}q_{v}^{c(f(x))}=H_{c}(x).
\]
\end{proof}

\section{The Northcott property\label{sec:The-Northcott-property}}

\subsection{Statement}
\begin{defn}
We say that a raising function $c\colon|\cJ_{\infty}\cX|\to\RR$ is
\emph{strongly positive} if there exists $\epsilon\in\RR_{>0}$ such
that for every $\gamma\in|\cJ_{\infty}\cX|$, we have $c(\gamma)\ge\epsilon\cdot(\bd\circ\tau)(\gamma)$,
where $\bd$ is the discriminant exponent function on $|\Lambda|$
(Example \ref{exa:discriminant}).
\end{defn}

In this section, we prove the following proposition. 
\begin{thm}[The Northcott property]
\label{thm:Northcott}Suppose that $\cL$ corresponds to an ample
$\QQ$-line bundle on the coarse moduli space $X$ and that $c$ is
a strongly positive raising function. Then, for every $B\in\RR$,
the set 
\[
S_{\cL,c}(B):=\{x\in\cX\langle F\rangle\mid H_{\cL,c}(x)\le B\}
\]
is finite.
\end{thm}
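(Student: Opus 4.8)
The strategy is to bound the two factors of $H_{\cL,c}(x) = H_{\cL}(x)\cdot H_{c}(x)$ separately and combine them. Since $c$ is strongly positive and the factors $q_v^{c(x_v)}$ are all $\ge 1$, we have $H_c(x) \ge 1$ for every $x$, and more precisely $H_c(x) \ge \prod_{v} q_v^{\epsilon \cdot (\bd\circ\tau)(x_v)}$. In particular $H_{\cL,c}(x) \ge H_{\cL}(x)$, so the set $S_{\cL,c}(B)$ maps into $\{x \mid H_{\cL}(x) \le B\}$ via the coarse-space map. The problem is that $H_{\cL}$ only sees the coarse moduli space $X$, and a single point of $X(F)$ (or of a bounded-height subset) can be the image of infinitely many stacky points $x \in \cX\langle F\rangle$ — these differ by their ramification behaviour at finitely many places. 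So the heart of the argument is: (a) Northcott on $X$ controls the coarse point, and (b) strong positivity of $c$ controls how much twisting is allowed over a given coarse point.

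\emph{Step 1: reduce to the coarse space.} Because $\cL$ corresponds to an ample $\QQ$-line bundle on $X$, and $X$ is a normal projective variety over the global function field… wait, over $k$, so $X_F := X \times_k F$ is a projective variety over $F$; the height $H_{\cL}$ on $\cX\langle F\rangle$ factors through $X(F)$ up to the usual comparison of heights attached to ample line bundles, and the classical Northcott property for $X_F/F$ (Northcott over global function fields with fixed constant field — here there is no bounded-degree issue since all points are $F$-rational) gives that $\{\bar{x} \in X(F) \mid H_{\cL}(\bar x) \le B\}$ is finite. So it suffices to fix one coarse point $\bar x \in X(F)$ and show that only finitely many $x \in \cX\langle F\rangle$ lying over $\bar x$ satisfy $H_{\cL,c}(x) \le B$.

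\emph{Step 2: bound the ramification over a fixed coarse point.} Fix $\bar x$. An $F$-point $x$ of $\cX$ over $\bar x$ determines, at each place $v$, a twisted arc $\widetilde{x_v}$, hence a point of $|\Lambda|$ with discriminant exponent $(\bd\circ\tau)(x_v) =: d_v(x) \ge 0$, which is $0$ for all but finitely many $v$. From $H_c(x) \le B / H_{\cL}(\bar x) \le B' $ (some constant depending only on $B$ and $\bar x$) and $H_c(x) = \prod_v q_v^{c(x_v)} \ge \prod_v q_v^{\epsilon d_v(x)}$, we get $\sum_v d_v(x) \log q_v \le \epsilon^{-1}\log B'$. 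Since $\log q_v \ge \log \#k$ for all $v$, this bounds $\sum_v d_v(x)$, hence the number of ramified places is bounded and the total ramification is bounded. The key finiteness input is then: over a fixed local coarse point $\bar x_v \in X(F_v)$, there are only finitely many twisted arcs of $\cX$ with bounded discriminant exponent — this should follow from the fact (used in Example~\ref{exa:discriminant}) that $\bd$ is locally constructible on $|\Lambda|$, which is exhausted by finite-type pieces $\Theta_{[G]}^{[\br]}$, together with properness/finite type of the relevant stacks of twisted jets; only finitely many $G$ and $[\br]$ occur for bounded $\bd$, and for each the fiber over $\bar x_v$ is finite because $\cX\to X$ is proper with finite inertia. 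Globally, an $F$-point over $\bar x$ is determined by its restriction to $\Spec F$ minus the ramification locus plus the local data at the (boundedly many, boundedly ramified) bad places, and there are finitely many such gluings — again using properness of $\cX$ and that $\bar x$ lifts to an $F$-point of $\cX$ only in finitely many ways once the ramification data is pinned down.

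\emph{Main obstacle.} The delicate point is Step 2: making precise that ``bounded total discriminant exponent over a fixed coarse $F$-point'' implies finiteness of the set of stacky lifts. One must combine the local statement (finitely many local twisted arcs with bounded $\bd_v$ over $\bar x_v$, from local constructibility of $\bd$ and finite type of $\Theta$) with a global gluing/properness argument controlling the finitely many places where twisting occurs. It may be cleanest to phrase this via the morphism $\cJ_{\infty}\cX \to \Gamma_\cX$ and the fact that each $\cJ_n\cX$ is ccft over $k$, or alternatively to pass to a finite cover trivializing the gerbe and descend; but in all approaches the real work is controlling the interaction between the global coarse-space Northcott and the local ramification bound, and checking that no infinitude of lifts hides in arbitrarily wild ramification at a single place — which is exactly what strong positivity, via the coefficient $\epsilon > 0$ multiplying $\bd$, rules out.
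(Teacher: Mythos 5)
There is a genuine gap, and it sits exactly where you yourself flag the ``main obstacle'': the finiteness of the fiber of $S_{\cL,c}(B)\to X(F)$ over a fixed coarse point is asserted but not proved, and the ingredients you propose do not suffice. First, your local claim -- that over a fixed $\bar x_v$ there are only finitely many twisted arcs with bounded discriminant exponent, ``from local constructibility of $\bd$ and finite type of $\Theta$'' -- is false as stated: finite-typeness only shows that the bounded-discriminant locus is a constructible set of finite type, which in the wild case is typically positive-dimensional (e.g.\ $\Delta_{\ZZ/p}$ in characteristic $p$) and so has infinitely many $\overline{k}$-points; the finiteness you actually need is for points over the local field $F_v$ (finite residue field), which requires local-field input of Hermite/Krasner type, not properties of the moduli stack $\Theta$. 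Second, and more seriously, even granting local finiteness at each place and a bound on the set of bad places, this does not bound the number of global lifts: distinct $F$-points of $\cX$ over $\bar x$ can induce identical local data, so a genuine global finiteness statement is needed, and your ``gluing/properness'' sentence does not supply one.

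The paper closes this gap by a different mechanism, which your proposal does not reach. Fixing $x\in X(F)$ and assuming it lifts, one forms the normalization $\cC$ of $(C\times_X\cX)_{\red}$ for the curve $C$ with function field $F$, its rigidification $\cC^{\rig}$, and shows (using neutrality of gerbes over $F$, cf.\ Proposition \ref{prop:neutral}) that $\Spec F\times_{\cC^{\rig}}\cC\cong\B\cH$ for a finite \'etale group scheme $\cH$ over $F$; the fiber of $\cX\langle F\rangle\to X(F)$ over $x$ is then identified with the set of $\cH$-torsors $T_{\widetilde x}\to\Spec F$. The key comparison (Lemma \ref{lem:torsor-factor} and Corollary \ref{cor:compare-d}) gives $\bd_{T_{\widetilde x,v}/F_v}\le\ord(\cH)\cdot\bd_{E/\D}$, so strong positivity turns the bound on $H_c$ into a bound $\fD_{T_{\widetilde x}/F}\le H_c(\widetilde x)^{\ord\cH\cdot\epsilon^{-1}}$ on the \emph{global} discriminant of the torsor. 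Finiteness then follows from the classical finiteness of \'etale $F$-algebras of bounded discriminant together with the finiteness of torsor structures on a fixed $F$-scheme (\cite[Lemma 4.7]{darda2024thebatyrevtextendashmanin}). Your Step 1 and the use of strong positivity to bound total ramification are fine and agree with the paper, but without the torsor reinterpretation (or some equivalent global finiteness argument) the proof is incomplete.
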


The proof of this theorem will be completed at the end of this section.

\subsection{Relating rational points with torsors}

First, we explain our strategy. It is relatively easy to see that
the image of $S_{\cL,c}(B)$ in $X(F)$ is finite. The difficult part
is to prove that each fiber of the map
\[
S_{\cL,c}(B)\to X(F)
\]
is finite. Consider an $F$-point $x\colon\Spec F\to X$ and the corresponding
morphism $C\to X$ from the irreducible smooth proper curve with the
function field $F$. To understand the preimage of the $F$-point
$x$ by the map $\cX\langle F\rangle\to X(F)$, we consider the normalization
$\cC$ of $(C\times_{X}\cX)_{\red}$. Let $\cC^{\rig}$ be its rigidification
and let $\overline{\cC}$ be the common coarse moduli space of $\cC$
and $\cC^{\rig}$. The natural morphism $\overline{\cC}\to C$ is
a universal homeomorphism. 

Let us now suppose that the $F$-point of $X$ lifts to an $F$-point
$\widetilde{x}\colon\Spec F\to\cX$. Then, the morphism $\overline{\cC}\to C$
admits a section and hence it is an isomorphism. Thus, $C$ is a coarse
moduli space of $\cC$ and $\cC^{\rig}$ and the morphism $\cC^{\rig}\to C$
is birational. In other words, the stack $\cC^{\rig}$ is the curve
$C$ with finitely many stacky points. Note that $\cC^{\rig}$ is
not generally isomorphic to $C\times_{X}\cX^{\rig}$.

The canonical $F$-point of $C$ uniquely lifts to $\cC^{\rig}$.
The fiber product $\Spec F\times_{\cC^{\rig}}\cC$ is a gerbe over
$\Spec F$. Since this gerbe has a section by the assumption, it is
a neutral gerbe and can be written as $\B\cH$ for some finite étale
group scheme $\cH$ over $\Spec F$. The $F$-point $\widetilde{x}\in\cX(F)$
corresponds to a morphism $\Spec F\to\B\cH$ given by an $\cH$-torsor
$T_{\widetilde{x}}\to\Spec F$. We summarize our construction so far
in the following diagram: 
\[
\xymatrix{\B\cH\ar[dd]\ar[r] & \cC\ar[d]\ar[r] & \cX\ar[dd]\\
 & \cC^{\rig}\ar[d]\\
\Spec F\ar[r]\ar@/_{1.5pc}/[rr]^{x}\ar@{-->}@/_{1.5pc}/[uurr]_{\widetilde{x}}\ar@{-->}@/^{1.5pc}/[uu]^{T_{\widetilde{x}}} & C\ar[r] & X
}
\]
Here the rectangles in this diagram are Cartesian. We have got the
one-to-one correspondence
\begin{equation}
\begin{aligned}\{\text{\ensuremath{F}-points of \ensuremath{\cX} mapping to \ensuremath{x}}\}/{\cong} & \leftrightarrow\{\text{\ensuremath{\cH}-torsors over \ensuremath{\Spec F}}\}/{\cong}\\
\widetilde{x} & \mapsto T_{\widetilde{x}}.
\end{aligned}
\label{eq:corr}
\end{equation}

\subsection{From the boundedness of heights to the boundedness of discriminants}

Let $v\in C$ be a closed point and let $\overline{v}$ be the induced
$\overline{k}$-point of $C$. We define 
\[
C_{\overline{v}}:=\Spec\widehat{\cO_{C_{\overline{k}},\overline{v}}}=\Spec\cO_{\widehat{F_{v}^{\nr}}}.
\]
Fixing an isomorphism $\widehat{F_{v}^{\nr}}\cong\overline{k}\tpars$,
we identify $C_{\overline{v}}$ with $\D=\Spec\overline{k}\tbrats$.
We then define $\cC_{\overline{v}}:=C_{\overline{v}}\times_{C}\cC$
and $\cC_{\overline{v}}^{\rig}:=C_{\overline{v}}\times_{C}\cC^{\rig}$.
Let us now fix a lift $\widetilde{x}\in\cX(F)$ of $x\in X(F)$, which
induces an $F$-point of $\cC$. We then get the induced $\widehat{F_{v}^{\nr}}$-point
$\widetilde{x}_{\cC}\colon\Spec\widehat{F_{v}^{\nr}}=\Spec\overline{k}\tpars\to\cC$,
which in turn extends to a twisted arc 
\[
\gamma_{\cC,x,v}\colon\cE=[E/G]\to\cC.
\]
Here $E$ is a regular connected Galois cover of the formal disk $\D$
with the Galois group $G$. The composition 
\[
\gamma_{x,v}\colon\cE\xrightarrow{\gamma_{\cC,x,v}}\cC\to\cX
\]
is the twisted arc of $\cX$ extending the induced $\widehat{F_{v}^{\nr}}$-point
of $\cX$. We will relate the discriminant exponent $\bd_{E/\D_{\overline{k}}}$
of $E\to\D$ with the discriminant exponent $\bd_{T_{\widetilde{x},\overline{v}}/\D^{*}}$
of the torsor 
\[
T_{\widetilde{x},\overline{v}}:=T_{\widetilde{x}}\otimes_{F}\widehat{F_{v}^{\nr}}\to\Spec\widehat{F_{v}^{\nr}}=\D^{*},
\]
which is equal to the discriminant exponent $\bd_{T_{\widetilde{x},v}/F_{v}}$
of the torsor $T_{\widetilde{x},v}:=T_{\widetilde{x}}\otimes_{F}F_{v}\to\Spec F_{v}$. 

 Let $\cH_{\D_{\overline{k}}^{*}}:=\cH\otimes_{F}\widehat{F_{v}^{\nr}}=\cH\times_{\Spec F}\D^{*}$.
Then $\D^{*}\times_{\cC^{\rig}}\cC\cong\B\cH_{\D^{*}}$ is an open
substack of $\cC_{\overline{v}}$. The twisted arc $\gamma_{\cC,x,v}\colon\cE\to\cC$
induces a morphism $\cE\to\cC_{\overline{v}}$ and the composition
\[
\D^{*}\hookrightarrow\cE\to\cC_{\overline{v}}
\]
maps into the open substack $\B\cH_{\D_{\overline{k}}^{*}}\subset\cC_{\overline{v}}$.
The obtained morphism $\D^{*}\to\B\cH_{\D^{*}}$ corresponds to the
torsor $T_{\widetilde{x},\overline{v}}\to\D^{*}$. We summarize the
obtained stacks and morphisms as follows:
\[
\xymatrix{\B\cH_{\D^{*}}\ar[rr]\ar[dd] &  & \cC_{\overline{v}}\ar[d]\ar[r] & \cC\ar[d]\ar[r] & \cX\ar[dd]\\
 & \cE=[E/G]\ar[ur]\ar[d] & \cC_{\overline{v}}^{\rig}\ar[d] & \cC^{\rig}\ar[d]\\
\D^{*}\ar@{^{(}->}[r]\ar@{^{(}->}[ur]\ar@{^{(}->}[urr]\ar@{-->}@/^{1.5pc}/[uu]^{T_{\widetilde{x},\overline{v}}} & \D\ar@{=}[r] & C_{\overline{v}}\ar[r] & C\ar[r] & X
}
\]
Here rectangles are Cartesian. In what follows, we use the following
fact several times:
\begin{lem}
For an algebraically closed field $k'$, if $f\colon T\to\D_{k'}$
is a finite étale morphism, then there exists an isomorphism $T\cong\coprod_{i=1}^{r}\D_{k'}$
for some $r\in\ZZ_{\ge0}$ compatible with $f$ and the projection
$\coprod_{i=1}^{r}\D_{k'}\to\D_{k'}$. 
\end{lem}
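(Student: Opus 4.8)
The plan is to reduce the statement to two standard facts: that $R:=k'\tbrats$ is a henselian local ring with residue field $k'$, and that a separably closed field admits no nontrivial finite étale covers. First I would note that, $f$ being finite and hence affine, $T$ is affine, say $T=\Spec A$, with $A$ a finite étale $R$-algebra and $\D_{k'}=\Spec R$. Since $R$ is a complete Noetherian local ring it is henselian, and its residue field is $k'$, which is algebraically closed and in particular separably closed.

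Next I would invoke the equivalence between the category of finite étale $R$-algebras and the category of finite étale $k'$-algebras, given by reduction modulo the maximal ideal, $A\mapsto A\otimes_R k'$; this is an equivalence precisely because $R$ is henselian local with residue field $k'$ (it rests on lifting of idempotents via Hensel's lemma together with the absence of nontrivial deformations of étale algebras, and can be cited from \cite{stacksprojectauthors2022stacksproject} or deduced from \cite{grothendieck1967elements}). Over the separably closed field $k'$, every finite étale algebra is isomorphic to $\prod_{i=1}^{r}k'$ for some $r\in\ZZ_{\ge0}$; transporting such an isomorphism back through the equivalence gives $A\cong\prod_{i=1}^{r}R$, that is $T\cong\coprod_{i=1}^{r}\Spec R=\coprod_{i=1}^{r}\D_{k'}$. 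Because the equivalence is implemented by a functor over $\Spec R$, the resulting isomorphism is automatically compatible with $f$ and with the projection $\coprod_{i=1}^{r}\D_{k'}\to\D_{k'}$, which is exactly the asserted compatibility.

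If one prefers to avoid invoking the henselian equivalence wholesale, I would argue by hand as follows. Since $R$ is complete local, idempotents lift, so $A$ decomposes as a finite product $\prod_{i=1}^{r}A_i$ of local rings, each $A_i$ finite étale over $R$. Then each $A_i\otimes_R k'$ is a finite étale $k'$-algebra which is also a quotient of the local ring $A_i$, hence a finite étale \emph{local} $k'$-algebra; over the separably closed field $k'$ this forces $A_i\otimes_R k'\cong k'$. Applying Nakayama's lemma to the finite $R$-module $A_i$, the unit $1$ generates $A_i$, so $A_i\cong R$; consequently $A\cong\prod_{i=1}^{r}R$ and $T\cong\coprod_{i=1}^{r}\D_{k'}$ over $\D_{k'}$.

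There is no substantial obstacle here: the only point needing care is the identification $A_i\otimes_R k'\cong k'$ (equivalently, the triviality of the residue-field extension), which is exactly where separable closedness of $k'$ is used; the rest of both arguments is formal. I would present the henselian-equivalence route as the main proof for brevity, mentioning the hands-on argument as a remark.
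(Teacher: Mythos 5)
Your proposal is correct. The paper itself states this lemma without proof, treating it as a standard fact, so there is no argument in the text to compare against; the justification you give — $k'\tbrats$ is a complete, hence henselian, local ring with separably closed residue field $k'$, and finite étale algebras over such a ring are classified by finite étale $k'$-algebras, which are all split — is exactly the standard reasoning one would cite (e.g.\ the henselian equivalence in \cite{stacksprojectauthors2022stacksproject}), and the compatibility over $\D_{k'}$ is automatic since the equivalence is $R$-linear. The only point worth tightening in your hands-on variant is the final step "$1$ generates $A_i$, so $A_i\cong R$": Nakayama gives surjectivity of the structure map $R\to A_i$, and injectivity should be noted separately, e.g.\ because a nonzero finite flat module over the local ring $R$ is faithfully flat (or because $R/I$ flat over the domain $R$ forces $I=0$). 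With that remark added, both of your arguments are complete.
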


\begin{lem}
\label{lem:torsor-factor}Let $E^{*}:=E\times_{\D}\D^{*}$. The morphism
$E^{*}\to\D^{*}$ factors through the torsor $\widehat{T_{\widetilde{x}}}\to\D^{*}$. 
\end{lem}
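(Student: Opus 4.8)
The plan is to show that $E^{*}$ is the Galois closure over $\D^{*}$ of a connected component of the torsor $\widehat{T_{\widetilde{x}}}$, from which the asserted factorization is immediate.

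I would first recall the construction of the twisted arc attached to a $\overline{k}\tpars$-point of a DM stack $\cZ$ proper over $\D$, which underlies the proof of \cite[Lemma 2.16]{darda2024thebatyrevtextendashmanin}: given $u\colon\D^{*}\to\cZ$, one takes a finite étale chart $V\to\cZ^{\circ}$ of an open substack $\cZ^{\circ}\subseteq\cZ$ containing the image of $u$, pulls $u$ back to the finite étale $\D^{*}$-scheme $\D^{*}\times_{\cZ^{\circ}}V$, chooses a connected component $\Spec K$ of it (so $K/\overline{k}\tpars$ is finite separable), and the twisted formal disk of the extending twisted arc is $[\Spec\cO_{\widetilde{K}}/\Gal(\widetilde{K}/\overline{k}\tpars)]$, where $\widetilde{K}$ is a Galois closure of $K/\overline{k}\tpars$; by uniqueness of the extending twisted arc, this is independent of all choices.

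Next I would apply this with $\cZ=\cC_{\overline{v}}$, which is proper over $\D=C_{\overline{v}}$, being the base change of $\cC\to C$. By the discussion preceding the lemma, $\widetilde{x}_{\cC}$ factors through the induced point $\D^{*}\to\cC_{\overline{v}}$, whose image lies in the open substack $\B\cH_{\D^{*}}=[\D^{*}/\cH_{\D^{*}}]$, the induced morphism $\D^{*}\to\B\cH_{\D^{*}}$ classifies the torsor $\widehat{T_{\widetilde{x}}}$, and $\gamma_{\cC,x,v}$ is the corresponding twisted arc of $\cC_{\overline{v}}$ composed with the representable morphism $\cC_{\overline{v}}\to\cC$. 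Taking $\cZ^{\circ}=\B\cH_{\D^{*}}$ and for $V$ the tautological chart $\D^{*}\to\B\cH_{\D^{*}}$ (an $\cH_{\D^{*}}$-torsor over $\B\cH_{\D^{*}}$), the pullback of the point is $\D^{*}\times_{\B\cH_{\D^{*}}}\D^{*}=\widehat{T_{\widetilde{x}}}$; hence $\widetilde{K}$ is a Galois closure over $\overline{k}\tpars$ of a connected component $\Spec K$ of $\widehat{T_{\widetilde{x}}}$. Consequently $E^{*}=\Spec\widetilde{K}$ admits a $\D^{*}$-morphism onto $\Spec K$, and composing with the inclusion $\Spec K\hookrightarrow\widehat{T_{\widetilde{x}}}$ gives a $\D^{*}$-morphism $E^{*}\to\widehat{T_{\widetilde{x}}}$; that is, $E^{*}\to\D^{*}$ factors through $\widehat{T_{\widetilde{x}}}\to\D^{*}$.

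The step I expect to require the most care is the reduction of the previous paragraph: that the twisted formal disk of $\gamma_{\cC,x,v}$ can be computed from the point $\D^{*}\to\cC_{\overline{v}}$ using only the classifying-stack chart of the open substack $\B\cH_{\D^{*}}$ — so the structure of $\cC_{\overline{v}}$ over its closed fibre plays no role in $E^{*}$ — together with the handling of Galois closures; one should also record the elementary fact that connected components of a torsor under a finite étale group scheme over $\D^{*}$ are finite separable over $\overline{k}\tpars$. A more self-contained variant is to note that the representable composite $E\to\cE\xrightarrow{\gamma_{\cC,x,v}}\cC$ carries $E^{*}$ into $\B\cH_{\D^{*}}$, where it classifies the $\cH_{\D^{*}}$-torsor $\widehat{T_{\widetilde{x}}}\times_{\D^{*}}E^{*}$ over $E^{*}$, and that this torsor is trivial; but a trivialization of it is exactly a $\D^{*}$-morphism $E^{*}\to\widehat{T_{\widetilde{x}}}$, so this route reduces to the same computation, and the chart-based argument seems preferable.
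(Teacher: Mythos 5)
Your argument rests on the claim that the twisted formal disk of $\gamma_{\cC,x,v}$ can be computed from a finite étale chart of \emph{any} open substack containing the image of the point $\D^{*}\to\cC_{\overline{v}}$, and in particular from the generic-fiber chart $\D^{*}\to\B\cH_{\D^{*}}$, so that $E^{*}$ would be the Galois closure of a connected component of $\widehat{T_{\widetilde{x}}}$. This is false: the twisted disk records how the point extends over the closed point of $\D$, and the stabilizer of $\cC_{\overline{v}}$ at its closed point is in general strictly larger than the generic gerbe group $\cH_{\D^{*}}$ (this is exactly the situation the lemma is meant for, since $E/\D$ is ramified where the curve meets the stacky locus of $\cX$). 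For a concrete failure of your recipe, take (in odd characteristic) $\cZ=[\Spec\overline{k}\sbrats/(\ZZ/4\ZZ)]$ with the generator acting by $s\mapsto-s$, so the generic fiber is a gerbe over $\D^{*}$ banded by $\ZZ/2\ZZ$ (here $t=s^{2}$). A section of this gerbe corresponds to a surjection of the tame Galois group of $\overline{k}\tpars$ onto $\ZZ/4\ZZ$ lifting the quadratic character, and the twisted arc extending such a section has $E^{*}=\Spec\overline{k}\tpars(t^{1/4})$ with $G=\ZZ/4\ZZ$; meanwhile the torsor obtained by pulling back the tautological chart of the generic gerbe (after neutralizing it by a fixed section) is a $\ZZ/2\ZZ$-torsor, possibly even trivial. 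So $E^{*}$ is \emph{not} the Galois closure of a component of that torsor; it is strictly larger. The factorization asserted by the lemma still holds in this example, but only because $E^{*}$ happens to dominate a component of the torsor — and that domination is precisely the content to be proven; it cannot be read off from data over $\D^{*}$ alone. (Note also that the paper's citation of \cite[Lemma 2.16]{darda2024thebatyrevtextendashmanin} comes with the explicit caveat that in the wild case the twisted disk is not of the expected cyclotomic form, i.e.\ the structure at the closed point genuinely intervenes.)

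Your ``more self-contained variant'' identifies the correct reduction — the lemma is equivalent to the triviality of the $\cH_{\D^{*}}$-torsor $\widehat{T_{\widetilde{x}}}\times_{\D^{*}}E^{*}$ over $E^{*}$ — but then supplies no proof of that triviality, and none is available from generic considerations: an étale torsor over the Laurent field $E^{*}$ need not split. This is where the paper's actual argument does its work: passing to the atlas $U=\Spec\overline{k}\sbrats\to\cC_{\overline{v}}^{\rig}$, over which $\cC_{\overline{v}}$ becomes $\B H_{U}$ with $H_{U}$ constant, one shows that the classifying map extends from $E^{*}$ to all of $E=\Spec\overline{k}\llbracket u\rrbracket$, and then the splitting follows because finite étale covers (the $B$-torsor $E_{U}\to E$ and the $H$-torsor $E\times_{\B H_{U}}U\to E$) over the strictly henselian $E$ are trivial; the diagram chase transports the resulting section back to the factorization $E^{*}\to\widehat{T_{\widetilde{x}}}$. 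That integral-extension-plus-henselian step is the substance of the proof and is missing from your proposal.
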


\begin{proof}
Let us write $\cC_{\overline{v}}^{\rig}=[U/B]$, where $U=\Spec\overline{k}\sbrats$
and $B$ is a finite group effectively acting on $U$. We take the
base-changes $E_{U}$, $\cE_{U}$ and $\cC_{\overline{v},U}$ of $E$,
$\cE$ and $\cC_{\overline{v}}$ by the atlas $U\to\cC_{\overline{v}}^{\rig}$,
respectively. We claim that $\cC_{\overline{v}}\times_{\cC_{\overline{v}}^{\rig}}U$
is isomorphic to $\B H_{U}$ for a constant finite group scheme $H_{U}$
over $U$. Indeed, $\cC_{\overline{v}}\times_{\cC_{\overline{v}}^{\rig}}U\to U$
is a finite étale gerbe. The closed point $\Spec\overline{k}\hookrightarrow U$
lifts $\cC_{\overline{v}}\times_{\cC_{\overline{v}}^{\rig}}U$. From
the formal étaleness, we see that the morphism $\cC_{\overline{v}}\times_{\cC_{\overline{v}}^{\rig}}U\to U$
has a section. Thus, the gerbe is neutral and can be written as $\B\cG$
for some finite étale group scheme over $U$. Since $U=\Spec\overline{k}\sbrats$,
every finite étale group scheme over $U$ is constant. 

Since $E_{U}\to E$ is an étale $B$-torsor over $E=\Spec\overline{k}\llbracket u\rrbracket$,
it is a trivial torsor and can be written as $E\times B$. The scheme
$E_{U}$ also has the $G$-action induced from the action on $E$.
Let us fix a connected component $E_{0}$ of $E_{U}=E\times B$ and
let $S$ be its stabilizer for the $G$-action. We have
\[
\cE_{U}=[E\times B/G]=[E_{0}/S].
\]
Note that if $U^{*}\subset U$ is the punctured open subset, then
the natural morphism $\cE_{U}\to U$ restricts to an isomorphism from
an open substack of $\cE_{U}$ onto $U^{*}\subset U$. We get the
following commutative diagram:
\[
\xymatrix{E_{U}=E\times B\ar[r]^{B}\ar[d]^{G} & E\ar[d]^{G}\\
\cE_{U}=[E/S]\ar[r]^{B}\ar[d]^{\textrm{rep.}}\ar@(l,l)[dd]_{\text{bir.}} & \cE=[E/G]\ar[d]^{\textrm{rep.}} & \D^{*}\ar@{_{(}->}[l]\ar[d]^{T_{\widetilde{x},\overline{v}}}\ar@{=}@(r,r)[dd]\\
\B H_{U}\ar[r]^{B}\ar[d] & \cC_{\overline{v}}\ar[d] & \B\cH_{\D^{*}}\ar@{_{(}->}[l]\ar[d]\\
U\ar[r]^{B} & \cC_{\overline{v}}^{\rig}=[U/B] & \D^{*}\ar@{_{(}->}[l]
}
\]
Here labels ``rep.,'' ``bir.'' and ``$B$'' mean representable
morphisms, a birational morphism and $B$-torsors respectively. Let
$\pi\colon\D^{*}\to\B\cH_{\D^{*}}$ and $\pi_{U}\colon U\to\B H_{U}=[U/H]$
be the canonical atlases, where $H$ denotes the finite group corresponding
to the constant group scheme $H_{U}$. 

Consider the following diagram. 
\[
\xymatrix{E\times_{\B H_{U}}U=E\times H\ar[rr]\ar[d] &  & E\ar[d]\ar@{-->}[dll]^{\diamondsuit}\ar@{-->}@/_{1.0pc}/[ll]\ar@{..>}[dddr]^{\spadesuit}\\
\cE_{U}\times_{\B H_{U}}U\ar[d]\ar[rr]\ar@{..>}[ddr]\sp(0.4){\heartsuit} &  & \cE_{U}\ar[d]\ar@{..>}[ddr]\\
U\ar[rr]^{\pi_{U}}\ar@{..>}[ddr] &  & \B H_{U}\ar@{..>}[ddr]\\
 & T_{\widetilde{x},\overline{v}}\ar[rr]^{\clubsuit}\ar[d] &  & \D^{*}\ar[d]^{T_{\widetilde{x},\overline{v}}}\\
 & \D^{*}\ar[rr]^{\pi} &  & \B\cH_{\D^{*}}
}
\]
Here dotted arrows are morphisms defined only on open dense substacks.
Rectangles are Cartesian, while parallelograms are Cartesian when
restricted to suitable open substacks. We need to show that the arrow
$\spadesuit$ factorizes through the arrow $\clubsuit$. Since the
projection $E\times_{\B H_{U}}U\to E$ is an étale $H$-torsor, it
should be trivial and we have an isomorphism $E\times_{\B H_{U}}U\cong E\times H$
over $E$. In particular, we have a section $E\to E\times_{\B H_{U}}U$
and the induced morphism $E\to\cE_{U}\times_{\B H_{U}}U$ as expressed
by dashed arrows in the last diagram. The diagram shows the factorization
$\spadesuit=\clubsuit\circ\heartsuit\circ\diamondsuit$. We have proved
the lemma. 
\end{proof}
\begin{cor}
\label{cor:compare-d}With the above notation, we have
\[
\bd_{T_{\widetilde{x},v}/F_{v}}\le\ord(\cH)\cdot\bd_{E/\D}.
\]
\end{cor}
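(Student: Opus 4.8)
The plan is to bound $\bd_{T_{\widetilde x,v}/F_v}$ by reducing everything to the tower (conductor--discriminant) formula for discriminant exponents. Since $\widehat{F_{v}^{\nr}}/F_{v}$ is unramified we have $\bd_{T_{\widetilde x,v}/F_{v}}=\bd_{T_{\widetilde x,\overline v}/\D^{*}}$, so it suffices to bound the latter. Write $\kappa:=\widehat{F_{v}^{\nr}}\cong\overline{k}\tpars$, $\cO:=\overline{k}\tbrats$, let $M_{E}:=\overline{k}\llparenthesis u\rrparenthesis$ be the function field of $E=\Spec\overline{k}\llbracket u\rrbracket$ (so that $\bd_{E/\D}$ is the discriminant exponent of the extension $\cO\subseteq\cO_{E}$ of complete discrete valuation rings, $\cO_{E}$ being the integral closure of $\cO$ in $M_{E}$), and abbreviate $T:=T_{\widetilde x,\overline v}$, a torsor under the finite étale group scheme $\cH_{\D^{*}}:=\cH\otimes_{F}\kappa$ of order $\ord(\cH)$ over $\kappa$. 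Decompose $T=\coprod_{j}\Spec L_{j}$ into connected components, so $\sum_{j}[L_{j}:\kappa]=\ord(\cH)$ and $\bd_{T/\D^{*}}=\sum_{j}\bd_{\cO_{L_{j}}/\cO}$. All residue fields occurring are the algebraically closed field $\overline{k}$, so all residue degrees equal $1$, and for a chain $\cO\subseteq\cO'\subseteq\cO''$ of complete discrete valuation rings with fraction fields $\kappa\subseteq M'\subseteq M''$ the tower formula reads $\bd_{\cO''/\cO}=[M'':M']\,\bd_{\cO'/\cO}+\bd_{\cO''/\cO'}\ge[M'':M']\,\bd_{\cO'/\cO}$.

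The first key point is that $M_{E}$ splits the band $\cH_{\D^{*}}$, i.e.\ $\cH_{\D^{*}}\otimes_{\kappa}M_{E}$ is a constant group scheme. In the proof of Lemma \ref{lem:torsor-factor} one writes $\cC_{\overline v}^{\rig}=[U/B]$ with $U=\Spec\overline{k}\sbrats$ and shows that $E\times_{\cC_{\overline v}^{\rig}}U$ is the trivial $B$-torsor $E\times B$ over $E$; hence the canonical morphism $E\to\cC_{\overline v}^{\rig}$ lifts to a morphism $E\to U$. Passing to generic points and using $\cC_{\overline v}\times_{\cC_{\overline v}^{\rig}}U\cong\B H_{U}$ with $H_{U}$ a constant finite group scheme of order $\ord(\cH)$, we obtain
\[
\B\big(\cH_{\D^{*}}\otimes_{\kappa}M_{E}\big)\;=\;\cC_{\overline v}\times_{\cC_{\overline v}^{\rig}}\Spec M_{E}\;\cong\;\B H_{U}\times_{U}\Spec M_{E}\;=\;\B\big((H_{U})_{M_{E}}\big),
\]
so $\cH_{\D^{*}}\otimes_{\kappa}M_{E}\cong\coprod_{i=1}^{\ord(\cH)}\Spec M_{E}$ as an $M_{E}$-scheme.

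By Lemma \ref{lem:torsor-factor} again, the $\D^{*}$-morphism $E^{*}=\Spec M_{E}\to T$ it furnishes is a section of $T\times_{\D^{*}}\Spec M_{E}\to\Spec M_{E}$, whence this base change is the trivial $(\cH_{\D^{*}}\otimes_{\kappa}M_{E})$-torsor and therefore $T\otimes_{\kappa}M_{E}\cong\cH_{\D^{*}}\otimes_{\kappa}M_{E}\cong\coprod_{i=1}^{\ord(\cH)}\Spec M_{E}$. Reading this componentwise, each $L_{j}\otimes_{\kappa}M_{E}$ is a product of copies of $M_{E}$, so there is a $\kappa$-embedding $L_{j}\hookrightarrow M_{E}$; then $\cO_{L_{j}}\subseteq\cO_{E}$ and the tower formula yields $[M_{E}:L_{j}]\,\bd_{\cO_{L_{j}}/\cO}\le\bd_{\cO_{E}/\cO}=\bd_{E/\D}$. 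Since $[M_{E}:L_{j}]=[M_{E}:\kappa]/[L_{j}:\kappa]$, summing over $j$ gives
\[
\bd_{T_{\widetilde x,v}/F_{v}}=\bd_{T/\D^{*}}=\sum_{j}\bd_{\cO_{L_{j}}/\cO}\le\frac{\bd_{E/\D}}{[M_{E}:\kappa]}\sum_{j}[L_{j}:\kappa]=\frac{\ord(\cH)}{[M_{E}:\kappa]}\,\bd_{E/\D}\le\ord(\cH)\cdot\bd_{E/\D},
\]
which is the desired inequality, with some room to spare.

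The step I expect to be the main obstacle is the first one --- verifying that $M_{E}$ splits $\cH_{\D^{*}}$. It requires carrying over precisely the description $\cC_{\overline v}^{\rig}=[U/B]$, $\cC_{\overline v}\times_{\cC_{\overline v}^{\rig}}U\cong\B H_{U}$ and the triviality of $E\times_{\cC_{\overline v}^{\rig}}U$ from the proof of Lemma \ref{lem:torsor-factor}, and checking that the lift $E\to U$ is compatible with restriction to generic points, so that the two fiber products in the display above really agree and $\cH_{\D^{*}}$ becomes constant after base change to $M_{E}$. Once this is granted, the remaining steps are purely formal bookkeeping with the tower formula for discriminant exponents over complete discrete valuation rings with algebraically closed residue field.
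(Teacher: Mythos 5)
Your route is genuinely different from the paper's: you decompose $T_{\widetilde x,\overline v}$ into its components $\Spec L_j$, embed each $L_j$ into the fraction field $M_E$ of $E$, and run the tower formula componentwise (which would even give the sharper bound $\ord(\cH)\cdot\bd_{E/\D}/[M_E:\overline{k}\tpars]$), whereas the paper extracts from Lemma \ref{lem:torsor-factor} only that one component is dominated by $E^{*}$ and then bounds the total by the number of components. The bookkeeping in your second half is fine. The problem is exactly the step you flag as the crux: from the identifications $\B\bigl(\cH_{\D^{*}}\otimes_{\overline{k}\tpars}M_{E}\bigr)\cong\cC_{\overline v}\times_{\cC_{\overline v}^{\rig}}\Spec M_{E}\cong\B\bigl((H_{U})_{M_{E}}\bigr)$ you conclude that $\cH_{\D^{*}}\otimes_{\overline{k}\tpars}M_{E}$ is split. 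That inference is not valid: an equivalence of classifying stacks over a field only identifies the bands, i.e.\ determines the group up to inner forms, because the two neutralizations may come from different sections of the same gerbe, and the automorphism groups of two sections differ by twisting by the torsor comparing them. An inner form of a constant nonabelian finite group need not be constant (twist a constant group by a nontrivial torsor acting through conjugation), so constancy of $(H_{U})_{M_{E}}$ does not by itself give constancy of $\cH_{M_{E}}$, and hence does not give that every $L_{j}$ embeds into $M_{E}$ --- which is precisely what the summation step consumes.

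The statement you need is nevertheless true, but it has to be proved with the section pinned down. One repair: Lemma \ref{lem:torsor-factor} gives a section of $T_{\widetilde x,\overline v}\times_{\D^{*}}E^{*}\to E^{*}$, so $T_{\widetilde x,\overline v}\otimes_{\overline{k}\tpars}M_{E}$ is the trivial torsor and therefore $\cH_{M_{E}}\cong\ulAut_{\cH}(T_{\widetilde x,\overline v})\otimes_{\overline{k}\tpars}M_{E}$. Now $\ulAut_{\cH}(T_{\widetilde x,\overline v})$ is the automorphism group scheme of the section of the étale gerbe $\cC_{\overline v}\to\cC_{\overline v}^{\rig}$ determined by the twisted arc over the open substack $\D^{*}\subset\cE$; this group scheme extends to a finite étale group scheme over all of $\cE$ (the automorphism group of the section of $\cE\times_{\cC_{\overline v}^{\rig}}\cC_{\overline v}\to\cE$ given by $\gamma_{\cC,x,v}$), and its pullback along the atlas $E\to\cE$ is finite étale over the strictly henselian $E=\Spec\overline{k}\llbracket u\rrbracket$ with algebraically closed residue field, hence constant; restricting to $E^{*}$ shows $\cH_{M_{E}}$ is constant, and then your argument closes. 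Without an argument of this kind (or some other control of the inner twist), the claim that all components of $T_{\widetilde x,\overline v}$ embed into $M_{E}$, and with it your proof, is not established.
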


\begin{proof}
From Lemma \ref{lem:torsor-factor}, $T_{\widetilde{x},\overline{v}}$
has a connected component $(T_{\widetilde{x},\overline{v}})_{0}$
dominated by $E^{*}$ and hence 
\[
\bd_{(T_{\widetilde{x},\overline{v}})_{0}/\D^{*}}\le\bd_{E/\D}.
\]
Since $T_{\widetilde{x},\overline{v}}$ is the disjoint union of at
most $\ord(\cH)$ copies of $(T_{\widetilde{x},\overline{v}})_{0}$,
we have
\[
\bd_{T_{\widetilde{x},v}/F_{v}}=\bd_{T_{\widetilde{x},\overline{v}}/\D^{*}}\le\ord(\cH)\cdot\bd_{(T_{\widetilde{x},\overline{v}})_{0}/\D^{*}}\le\ord(\cH)\cdot\bd_{E/\D}.
\]
\end{proof}

\subsection{Completing the proof of Theorem \ref{thm:Northcott}}

If we bound the raised height $H_{\cL,c}$, then $H_{\cL}$ is also
bounded. Thus, the image of $S_{\cL,c}(B)$ in $X\langle F\rangle$
is a finite set. We need to show that fibers of $S_{\cL,c}(B)\to X(F)$
are finite. Let us fix a point $x\in X(F)$ as before. From Corollary
\ref{cor:compare-d}, if we denote the discriminant of $T_{\widetilde{x}}\to\Spec F$
by $\fD_{T_{\widetilde{x}}/F}$, then 
\begin{align*}
\fD_{T_{\widetilde{x}}/F} & =\prod_{v}q_{v}^{\bd_{T_{\widetilde{x},v}}}\\
 & \le\prod_{v}q_{v}^{\ord(\cH)\cdot(\bd\circ\tau(\gamma_{x,v}))}\\
 & \le\prod_{v}q_{v}^{\ord(\cH)\cdot\epsilon^{-1}\cdot c(\gamma_{x,v})}\\
 & \le H_{c}(\widetilde{x})^{\ord\cH\cdot\epsilon^{-1}}.
\end{align*}
Since $H_{c}$ is bounded on $S_{\cL,c}(B)$, the map $\widetilde{x}\mapsto\fD_{T_{\widetilde{x}}/F}$
is also bounded on $S_{\cL,c}(B)$. This shows that there are only
finitely many underlying $F$-schemes for torsors $T_{\widetilde{x}}$
corresponding to lifts $\widetilde{x}\in S_{\cL,c}(B)$ of $x$. From
\cite[Lemma 4.7]{darda2024thebatyrevtextendashmanin}, there are only
finitely many torsor-structures which can be given to each $F$-scheme.
Thus, there are only finite many $\cH$-torsors of the form $T_{\widetilde{x}}\to\Spec F$,
$\widetilde{x}\in S_{\cL,c}(B)$, which completes the proof of Theorem
\ref{thm:Northcott}. 

\section{Augmented pseudo-effective cones and the main conjecture\label{sec:Augmented-Main-Conj}}

We fix a sectoroid subdivision $\{A_{i}\}_{i\in I}$ of $|\cJ_{\infty}\cX|$
and let $I^{*}:=I\setminus\{0,\infty\}$. Let $\N^{1}(\cX)_{\RR}$
be the Néron--Severi space of $\cX$ and $\N_{1}(\cX)_{\RR}$ its
dual. Namely, they are the real vector spaces consisting of numerical
classes $\RR$-divisors and 1-cycles respectively. 
\begin{defn}
\label{def:augmented-NS}Let $[A_{i}]$, $i\in I^{*}$ and $[A_{i}]^{*}$,
$i\in I^{*}$ be indeterminants. We define the \emph{augmented} \emph{Néron--Severi
space} of $\cX$ to be 
\begin{align*}
\fN^{1}(\cX) & :=\N^{1}(\cX)_{\RR}\oplus\prod_{i\in I^{*}}\RR[A_{i}]^{*}.
\end{align*}
and the \emph{augmented 1-cycle space }of $\cX$ to be
\begin{align*}
\fN_{1}(\cX) & :=\N_{1}(\cX)_{\RR}\oplus\bigoplus_{i\in I^{*}}\RR[A_{i}].
\end{align*}
We then write an element of $\fN^{1}(\cX)$ as a formal sum
\[
D+\sum_{i\in I}c_{i}[A_{i}]^{*}\quad(D\in\N^{1}(\cX)_{\RR},c_{i}\in\RR).
\]
Similarly for an element of $\fN_{1}(\cX)$. 
\end{defn}

There exists the following natural pairing between $\fN_{1}(\cX)$
and $\fN^{1}(\cX)$: 
\begin{align*}
\fN_{1}(\cX)\times\fN^{1}(\cX) & \to\RR\\
\left(C+\sum_{i\in I^{*}}b_{i}[A_{i}],D+\sum_{i\in I^{*}}c_{i}[A_{i}]^{*}\right) & \mapsto(C,D)+\sum_{i\in I^{*}}b_{i}c_{i}
\end{align*}
This pairing makes $\fN^{1}(\cX)$ the algebraic dual of $\fN_{1}(\cX)$,
that is, the space of linear functions on $\fN_{1}(\cX)$. We give
the weak topologies to $\fN_{1}(\cX)$ and $\fN^{1}(\cX)$ relative
to this pairing; the weak topology on $\fN_{1}(\cX)$ is the weakest
topology such that for every $\beta\in\fN^{1}(\cX)$, $\alpha\mapsto(\alpha,\beta)$
is continuous. Similarly for the weak topology on $\fN^{1}(\cX)$.
See \cite[pages II.42 and II.51]{bourbaki1987topological} for more
details. Since the pairing between $\fN_{1}(\cX)$ and $\fN^{1}(\cX)$
are separating, from \cite[p. II.46]{bourbaki1987topological}, $\fN^{1}(\cX)$
and $\fN_{1}(\cX)$ are topological duals to each other; $\fN^{1}(\cX)$
consists of all the continuous linear functions of $\fN_{1}(\cX)$
and vice versa.
\begin{defn}[{\cite[Definition 8.4]{darda2024thebatyrevtextendashmanin}}]
Let $k'$ be an algebraically closed field over $k$. By a \emph{stacky
curve} on $\cX_{k'}$, we mean a representable morphism $\cC\to\cX_{k'}$
over $k'$, where $\cC$ is a one-dimensional, irreducible, proper,
and smooth DM stack over $k'$ which has trivial generic stabilizer.
A \emph{covering family of stacky curves} on $\cX_{k'}$ is the pair
$(\pi\colon\widetilde{\cC}\to T,\widetilde{f}\colon\widetilde{\cC}\to\cX_{k'})$
of $k'$-morphisms of DM stacks such that 
\begin{enumerate}
\item $\pi$ is smooth and surjective,
\item $T$ is an integral scheme of finite type over $k'$,
\item for each point $t\in T(k')$, the morphism
\[
\widetilde{f}|_{\pi^{-1}(t)}\colon\pi^{-1}(t)\to\cX_{k'}
\]
is a stacky curve on $\cX_{k'}$,
\item $\widetilde{f}$ is dominant. 
\end{enumerate}
\end{defn}

For a stacky curve $f\colon\cC\to\cX_{k'}$, the source stack $\cC$
has only finitely many stacky points. For a stacky point $v$, let
$\cC_{v}:=[\Spec\widehat{\cO_{\cC,v}}/\Stab(v)]$ be the formal completion
of $\cC$ at $v$, where $\Stab(v)$ denotes the stabilizer group
at $v$. The natural morphism 
\[
f_{v}\colon\cC_{v}\to\cX_{k'}\to\cX
\]
can be regarded as a twisted arc of $\cX$ over $k'$, provided that
we choose an isomorphism of a coarse moduli space with the formal
disk $\D_{k'}$. There exists a unique $i_{v}\in I\setminus\{0\}$
such that $[f_{v}]\in A_{i_{v}}$, which is independent of the choice
of an isomorphism. 

\begin{defn}
We define the \emph{augmented class} of a stacky curve $f\colon\cC\to\cX_{k'}$
to be 
\[
\llbracket f\rrbracket=\llbracket\cC\rrbracket:=[\cC]+\sum_{\nu}[A_{i_{v}}]\in\fN_{1}(\cX).
\]
Here $v$ runs over the stacky points of $\cC$ and we put $[A_{i_{v}}]=0$
when $A_{i_{v}}=A_{\infty}$. For a covering family $(\pi\colon\widetilde{\cC}\to T,\widetilde{f}\colon\widetilde{\cC}\to\cX_{\overline{k}})$,
we define its \emph{augmented class $\llbracket\widetilde{\cC}\rrbracket$
}to be $\llbracket\widetilde{f}|_{\pi^{-1}(\overline{\eta})}\rrbracket$,
where $\overline{\eta}$ denotes the geometric generic point of $T$. 
\end{defn}

\begin{defn}
We define the \emph{cone of augmented classes of moving stacky curves},
denoted by $\fMov_{1}(\cX)$, to be the closure (with respect to the
weak topology of $\fN_{1}(\cX)$) of 
\[
\sum_{\widetilde{\cC}}\RR_{\ge0}\llbracket\widetilde{\cC}\rrbracket,
\]
where $\overline{\cC}$ runs over covering families of stacky curves
on $\cX_{\overline{k}}$. 
\end{defn}

\begin{defn}
Using the pairing between $\fN^{1}(\cX)$ and $\fN_{1}(\cX)$, we
define the \emph{augmented pseudo-effective cone }$\fPEff(\cX)$ to
be the dual cone of $\fMov_{1}(\cX)$:
\[
\fPEff(\cX):=\{\alpha\in\fN^{1}(\cX)\mid\forall\beta\in\fMov_{1}(\cX),\,(\beta,\alpha)\ge0\}.
\]
\end{defn}

\begin{rem}
Note that for each $\beta\in\fN_{1}(\cX)$, the inequality $(\beta,\alpha)\ge0$
defines a closed subset of $\fN^{1}(\cX)$. It follows that $\fPEff(\cX)$
is also a closed subset of $\fN^{1}(\cX)$. We also see that the cone
$\fPEff(\cX)$ is identical to 
\[
Q:=\{\alpha\in\fN^{1}(\cX)\mid\text{for every moving stack curve \ensuremath{\cC}, }(\llbracket\cC\rrbracket,\alpha)\ge0\}.
\]
Indeed, we obviously have $Q\supset\fPEff(\cX)$. To see the opposite
inclusion, let $\alpha\in Q$ and $\beta\in\fMov_{1}(\cX)$. There
exists a sequence $\beta_{i}=\sum_{i=1}^{n_{i}}b_{i,j}\llbracket\cC_{i,j}\rrbracket$
with $\beta_{i}\to\beta$. Then, we have $(\alpha,\beta_{i})\to(\alpha,\beta)$.
Since $(\alpha,\beta_{i})\ge0$ for every $i$, we get $(\alpha,\beta)\ge0$.
This shows $\alpha\in\fPEff(\cX)$ and $Q\subset\fPEff(\cX)$. 
\end{rem}

\begin{defn}
For a raised line bundle $(\cL,c)$ with $c$ compatible with the
fixed sectoroid subdivision $\{A_{i}\}$, we can define its \emph{augmented
class} 
\[
\llbracket\cL,c\rrbracket:=[\cL]+\sum_{i\in I^{*}}c(A_{i})[A_{i}]^{*}.
\]
We define the\emph{ augmented canonical class} as follows:
\[
\fK_{\cX}:=[\omega_{\cX}]+\sum_{i\in I^{*}}(-\gw(A_{i})-1)[A_{i}]^{*}.
\]
\end{defn}

\begin{defn}
We say that a raised line bundle $(\cL,c)$ is \emph{nef} if for every
stacky curve $f\colon\cC\to\cX$, we have $(\llbracket\cL,c\rrbracket,\llbracket f\rrbracket)\ge0$.
We say that a raised line bundle $(\cL,c)$ is \emph{big} if $\llbracket\cL,c\rrbracket$
is in the interior of $\fPEff(\cX)$ and if for $a\gg0$, $a\llbracket\cL,c\rrbracket+\fK_{\cX}$
is contained in $\fPEff(\cX)$. Suppose that $\fK_{\cX}\notin\fPEff(\cX)$.
For a big raised line bundle $(\cL,c)$, we define 
\[
a(\cL,c):=\inf\{a'\in\RR_{\ge0}\mid a'\llbracket\cL,c\rrbracket+\fK_{\cX}\in\fPEff(\cX)\}\in\RR_{\ge0}.
\]
We define $b(\cL,c)$ to be the dimension of the linear subspace of
$\fN_{1}(\cX)$ spanned by 
\[
\fMov_{1}(\cX)\cap(a(\cL,c)\llbracket\cL,c\rrbracket+\fK_{\cX})^{\perp},
\]
which is either a non-negative integer or the infinity. 
\end{defn}

\begin{defn}
Let $\cX$ be a quasi-nice DM stack over $k$ and let $F$ be a global
field over $k$. A subset $T\subset\cX\langle F\rangle$ is said to
be \emph{thin }if there exist finitely many morphisms $f_{i}\colon\cY_{i}\to\cX_{F}$,
$1\le i\le n$, of DM stacks of finite type over $F$ such that $T\subset\bigcup_{i=1}^{n}f_{i}(\cY_{i}\langle F\rangle)$
and for each $i$, 
\end{defn}

\begin{enumerate}
\item $\cY_{i}$ is irreducible,
\item $f_{i}$ is representable and generically finite onto its image,
\item $f_{i}$ is not birational, that is, there is no open dense substack
$\cU_{i}\subset\cY$ such that $f_{i}|_{\cU_{i}}\colon\cU_{i}\to\cX_{F}$
is an open immersion. 
\end{enumerate}
We are now ready to formulate a Batyrev--Manin type conjecture for
quasi-nice DM stacks over $k$. 
\begin{conjecture}
\label{conj:main}Let $\cX$ be a quasi-nice DM stack over $k$ and
let $F$ be a global field over $k$. We fix a sectoroid subdivision
$\{A_{i}\}_{i\in I}$ of $|\cJ_{\infty}\cX|$. Let $(\cL,c)$ be a
raised line bundle which is compatible with $\{A_{i}\}_{i\in I}$
and big. Additionally, we assume:
\begin{enumerate}
\item $\fK_{\cX}\notin\fPEff(\cX)$,
\item $(\cL,c)$ is nef and big,
\item $b(\cL,c)<\infty$,
\item \label{enu:raising}$c$ belongs to a suitable large enough class
of functions, 
\item $X$ is geometrically rationally connected, and
\item $F$-points of $\cX$ are Zariski dense.
\end{enumerate}
Then, there exists a thin subset $T\subset\cX\langle F\rangle$ such
that the following asymptotic formula holds:
\[
\#\{x\in\cX\langle F\rangle\setminus T\mid H_{\cL,c}(x)\le B\}\asymp B^{a(\cL,c)}(\log B)^{b(\cL,c)-1}\quad(B\to\infty)
\]
\end{conjecture}

\begin{rem}
We impose Condition (\ref{enu:raising}) in the above conjecture to
avoid pathologies which might arise for artificially constructed bizarre
functions. For the case where $\cX$ is the classifying stack of an
abelian $p$-group, some classes of nice functions are proposed in
\cite{darda2025counting}. 
\end{rem}

\begin{rem}
It would be possible to take an approach free of the choice of a sectoroid
subdivision, for example, by replacing the factor $\bigoplus_{i\in I}\RR[A_{i}]^{*}$
of $\fN^{1}(\cX)$ with the space of real-valued functions on $|\cJ_{\infty}^{*}\cX|$
satisfying a certain condition. Such an approach would have some theoretical
advantages. However, we take the present more ad-hoc approach in this
paper, which would require less preparation. 
\end{rem}

\section{Tame smooth stacks\label{sec:Tame-smooth-stacks}}

Suppose that our quasi-nice stack $\cX$ is smooth and tame, that
is, the stabilizer of every geometric point has order prime to $p$.
It would be quite natural to expect that the framework of \cite[Definition 9.1]{darda2024thebatyrevtextendashmanin}
is valid also in this situation without any substantial change. We
now explain how that framework becomes a special case of the one considered
in the present paper.

Recall that the stack of twisted 0-jets (also known as the cyclotomic
inertia stack), denoted by $\cJ_{0}\cX$, is the stack parametrizing
representable morphisms $\B\mu_{l}\to\cX$ with $l$ varying over
positive integers. For an algebraically closed field $k'$ over $k$,
a twisted arc over $k'$ of $\cX$ is of the form $[\Spec k'\llbracket t^{1/l}\rrbracket/\mu_{l}]\to\cX$.
The composite morphism 
\[
\B\mu_{l,k'}\hookrightarrow[\Spec k'\llbracket t^{1/l}\rrbracket/\mu_{l}]\to\cX
\]
is then a $k'$-point of $\cJ_{0}\cX$. Thus, we have a natural map
\[
|\cJ_{\infty}\cX|\to|\cJ_{0}\cX|.
\]
Note that this is the $n=0$ case of truncation maps $|\cJ_{\infty}\cX|\to|\cJ_{n}\cX|$
to twisted $n$-jets. 
\begin{rem}
The term “twisted 0-jet” above is used in the sense of \cite{yasuda2006motivic,yasuda2004twisted},
but not in the sense of \cite{yasuda2024motivic2}. See \cite[Remark 15.6]{yasuda2024motivic2}
for details on the difference. 
\end{rem}

\begin{defn}
Connected components of $|\cJ_{0}\cX|$ are called \emph{sectors }and
the set of sectors is denoted by $\pi_{0}(\cJ_{0}\cX)$. There is
a special sector called the \emph{non-twisted sector}, which is naturally
isomorphic to $\cX$; we denote the non-twisted sector again by $\cX$.
For a sector $\cY$, we denote its preimage in $|\cJ_{\infty}\cX|$
by $\widetilde{\cY}$. Then $\widetilde{\cX}=|\J_{\infty}\cX|$. 
\end{defn}

We choose $\{\widetilde{\cY}\}_{\cY\in\pi_{0}(\cJ_{0}\cX)}$ as our
sectoroid subdivision. The index set $I$ is now $\pi_{0}(\cJ_{\infty}\cX)$
and we have $A_{0}=\widetilde{\cX}$ and $A_{\infty}=\emptyset$.
With the notation from \cite{yasuda2006motivic} and from computation
in \cite[page 753]{yasuda2006motivic}, we have
\begin{align*}
\gw(\widetilde{\cY}) & =\dim\cY-\mathrm{sht(\cY)}-\dim\cX\\
 & =-\age(\cY).
\end{align*}
Therefore, 
\[
\fK_{\cX}=[\omega_{\cX}]+\sum_{\cY\in\pi_{0}(\cJ_{0}\cX)\setminus\{\cX\}}(\age(\cY)-1)[\widetilde{\cY}]^{*}.
\]
This coincides with the orbifold canonical class defined in \cite[Definition 9.1]{darda2024thebatyrevtextendashmanin}
via the obvious isomorphism $\fN^{1}(\cX)\cong\N_{\orb}^{1}(\cX)_{\RR}$.

\section{Classifying stacks\label{sec:Classifying-stacks}}

\subsection{The general case}

Consider the case where $\cX=\B G$ for a finite étale group scheme
$G$ over $k$. By definition, for a field $k'$ over $k$, the $k'\tpars$-point
of $\cX$ corresponds to a $G$-torsor over $\D_{k'}^{*}$. If the
base-change $G_{\overline{k}}$ to an algebraic closure $\overline{k}$
of $k$ is the semi-direct product $H\rtimes C$ of a $p$-group and
a tame cyclic group $C$, then there exists a moduli stack defined
over $k$, denoted by $\Delta_{G}$, which is an inductive limit of
finite-type DM stacks \cite{tonini2020moduliof} (see Appendix \ref{sec:Moduli-of-formal}
for generalization to the case where $G$ is defined over $k\tpars$).
When $G$ is a general finite étale group $G$ over $k$, we have
a weaker result that there exists a P-moduli space of $G$-torsors
over $\D^{*}$, which is represented by the disjoint union of countably
many $k$-varieties \cite{tonini2023moduliof}. 

For our purpose, it is harmless to think that $\Delta_{G}$ \emph{is
}the disjoint union $\coprod_{j\in J}Z_{j}$ of countably many $k$-varieties
$Z_{j}$. We may identify $|\cJ_{\infty}\cX|$ with $|\Delta_{G}|$.
The set $|\J_{\infty}\cX|$ of \emph{untwisted} arcs is then identified
with $\{o\}$, the singleton corresponding to the trivial $G$-torsor.
In this situation, a sectoroid in $|\Delta_{G}|$ is a constructible
subset of $\coprod_{j\in J'}Z_{j}$ for a finite subset $J'\subset J$
which has only one irreducible component of the maximal dimension.
For a sectoroid $A\subset|\Delta_{G}|$, we have
\[
\gw(A)=\dim A.
\]

Let us now fix a sectoroid subdivision $\{A_{i}\}_{i\in I}$ with
$A_{0}=\{o\}$ and $A_{\infty}=\emptyset$. Since $\N_{1}(X)_{\RR}=\N^{1}(X)_{\RR}=0$,
we have
\[
\fN_{1}(\cX)=\bigoplus_{i\in I^{*}}\RR[A_{i}]\text{ and }\fN^{1}(\cX)=\prod_{i\in I^{*}}\RR[A_{i}]^{*}.
\]
The canonical augmented class is given by
\[
\fK_{\cX}=\sum_{i\in I^{*}}(-\dim A_{i}-1)[A_{i}]^{*}.
\]

\begin{prop}
\label{prop:mov-BG}We have 
\[
\fMov_{1}(\cX)=\sum_{i\in I^{*}}\RR_{\ge0}[A_{i}]\text{ and }\fPEff(\cX)=\prod_{i\in I^{*}}\RR_{\ge0}[A_{i}]^{*}.
\]
\end{prop}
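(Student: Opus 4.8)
The plan is to compute $\fMov_1(\cX)$ and then read off $\fPEff(\cX)$, which by definition is its dual cone. Since the coarse space of $\cX=\B G$ is $\Spec k$ one has $\N^1(\cX)_{\RR}=\N_1(\cX)_{\RR}=0$, so, as recorded above, $\fN_1(\cX)=\bigoplus_{i\in I^*}\RR[A_i]$ and $\fN^1(\cX)=\prod_{i\in I^*}\RR[A_i]^*$. Two soft observations will be invoked at the end: the cone $\sum_{i\in I^*}\RR_{\ge0}[A_i]$ is weakly closed in $\fN_1(\cX)$ (each coordinate functional $[A_i]^*$ is continuous, and a weak limit of elements all of whose $i$-th coordinates are $\ge0$ keeps that property), and once $\fMov_1(\cX)=\sum_{i\in I^*}\RR_{\ge0}[A_i]$ is known, pairing against the individual rays $[A_i]$ identifies its dual with $\{\sum_i c_i[A_i]^*\mid c_i\ge0\ \forall i\}=\prod_{i\in I^*}\RR_{\ge0}[A_i]^*$. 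So everything reduces to the formula for $\fMov_1(\cX)$.

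For $\fMov_1(\cX)\subseteq\sum_{i\in I^*}\RR_{\ge0}[A_i]$ I would analyze an arbitrary stacky curve $f\colon\cC\to\B G_{k'}$ over an algebraically closed $k'/k$. Pulling back the atlas $\Spec k'\to\B G$ along $f$ yields a $G$-torsor $\widetilde\cC\to\cC$; representability makes $\widetilde\cC$ an algebraic space, hence --- being smooth, proper and one-dimensional over $k'$ --- a smooth projective $k'$-curve, carrying a $G$-action with $\cC=[\widetilde\cC/G]$, the action generically free (trivial generic stabilizer) and transitive on components ($\cC$ irreducible). The stacky points of $\cC$ are the orbits of points $\tilde v\in\widetilde\cC$ with $S_v:=\Stab(\tilde v)\ne1$; the faithful $S_v$-action on $\widehat{\cO_{\widetilde\cC,\tilde v}}\cong k'\sbrats$ exhibits the completion $\cC_v=[\Spec k'\sbrats/S_v]$ and makes the fraction field an $S_v$-Galois extension $L_v$ of its invariant subfield $\cong k'\tpars$, the twisted arc $f_v$ being the $G$-torsor over $k'\tpars$ induced from $L_v$ along $S_v\hookrightarrow G$ --- in particular a nontrivial torsor. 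Hence $[f_v]\notin A_0=\{o\}$, and since $A_\infty=\emptyset$ we have $i_v\in I^*$. As $[\cC]=0$ in $\N_1(\cX)_{\RR}=0$, the augmented class is $\llbracket f\rrbracket=\sum_v[A_{i_v}]$, a finite sum lying in $\sum_{i\in I^*}\ZZ_{\ge0}[A_i]$. Because $|\B G|$ is a single point, every nonempty stacky curve on $\B G_{\overline{k}}$ is dominant, hence is the geometric generic fibre of a covering family (with base $\Spec\overline{k}$); and the geometric generic fibre of any covering family is in turn such a stacky curve over an algebraically closed field, to which the analysis applies verbatim. Therefore $\sum_{\widetilde\cC}\RR_{\ge0}\llbracket\widetilde\cC\rrbracket\subseteq\sum_{i\in I^*}\RR_{\ge0}[A_i]$, and the weak closure stays in this closed cone.

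For the reverse inclusion it suffices to show $[A_i]\in\fMov_1(\cX)$ for every $i\in I^*$. Fix such an $i$ and a point $\gamma\in A_i$, which is a $G$-torsor over $\overline{k}\tpars$, i.e.\ a homomorphism $\rho\colon\Gal(\overline{k}\tpars^{\sep}/\overline{k}\tpars)\to G$ with image $S$ and fixed field $L$, so $\Gal(L/\overline{k}\tpars)\cong S\hookrightarrow G$. Using formal patching for covers of curves in characteristic $p$, in the form that allows one to prescribe the local extension at each branch point, I would produce a connected $S$-Galois cover $\widetilde P\to\PP^1_{\overline{k}}$, étale away from a finite nonempty set of $n$ points over each of which the local extension is isomorphic to $L$ (two branch points already suffice). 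Then $\cC:=[\widetilde P/S]$ is an irreducible, proper, smooth, one-dimensional DM stack with trivial generic stabilizer, and the composite $\cC\to\B S\hookrightarrow\B G$ (induced by $S\hookrightarrow G$) is representable --- its pullback of $\Spec\overline{k}\to\B G$ being the scheme $\widetilde P\times^{S}G$ --- hence a stacky curve on $\B G_{\overline{k}}$. Its stacky points are precisely the $n$ totally ramified points above the branch locus, each carrying the twisted arc $\gamma$, so of type $A_i$; thus $\llbracket\cC\rrbracket=n[A_i]$ and $[A_i]=n^{-1}\llbracket\cC\rrbracket\in\fMov_1(\cX)$. Since $\fMov_1(\cX)$ is a closed convex cone and every element of $\sum_{i\in I^*}\RR_{\ge0}[A_i]$ is a finite nonnegative combination of the $[A_i]$, the reverse inclusion follows, and with the first paragraph both formulas are proved.

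The hard part is the globalization in the third paragraph: one must realize the purely local datum $(L,\,S\hookrightarrow G)$ as the \emph{entire} ramification of a global $S$-cover, so that the augmented class of the resulting stacky curve is a pure positive multiple of $[A_i]$ and no extra (typically tame) sectoroid is forced into it. This is exactly where positive-characteristic patching enters --- the circle of ideas around Abhyankar's conjecture, Harbater--Katz--Gabber covers, and the realization theorems of Harbater, Raynaud and Pop for covers of curves with prescribed local behaviour --- together with the observation that $\ge2$ branch points of a prescribed common local type can always be imposed, whereas a single one is in general impossible (already for $S=\ZZ/2$ and $p\ne2$, by a parity constraint on the branch locus). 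Checking that $[\widetilde P/S]$ is a legitimate stacky curve and that the morphism to $\B G$ is representable is routine and can be dispatched in a line.
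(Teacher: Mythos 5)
Your first inclusion ($\fMov_{1}\subset\sum_{i}\RR_{\ge0}[A_{i}]$, weak closedness, and the passage to the dual cone) matches the paper. The gap is in the reverse inclusion, precisely at the point you yourself flag as "the hard part": you invoke a patching theorem producing a connected $S$-Galois cover of $\PP^{1}_{\overline{k}}$, étale outside $n$ points, with the \emph{same} prescribed local extension $L$ (as an $S$-extension, hence inducing the same $G$-torsor class $\gamma$) at every branch point, and you assert that $n=2$ always suffices. That assertion is false, and the corrected statement is not an off-the-shelf citation. The obstruction is the tame product-one relation: the prime-to-$p$ quotient of $\pi_{1}(\PP^{1}\setminus\{x_{1},\dots,x_{n}\})$ is the prime-to-$p$ free group on $\gamma_{1},\dots,\gamma_{n}$ modulo $\prod\gamma_{i}=1$, so the images of the tame inertia characters in $S/p(S)$ (an abelian prime-to-$p$ group) must sum to zero. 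If all $n$ local classes equal $\gamma$, this forces $n\cdot\bar{c}=0$ where $\bar{c}$ is the tame character of $\gamma$ in $S/p(S)$; already for $G=S=\ZZ/\ell$ with $\ell\ge3$ prime to $p$, a two-point cover has \emph{inverse} (not equal) local classes at its two branch points, and since a sectoroid subdivision may separate these $\ell-1$ distinct torsor classes into distinct sectoroids $A_{i}$, your construction does not put $[A_{i}]$ into $\fMov_{1}(\cX)$. Even after replacing $n=2$ by a multiple of $\mathrm{ord}(\bar{c})$, "prescribe the full local extension (wild and tame) at every branch point, with no extra branch points and Galois group exactly $S$" is a strong realization statement in the orbit of Abhyankar-type inertia questions; it is plausibly provable, but it needs an argument, not a citation.

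The paper's proof avoids needing any such strong prescribed-ramification theorem, and you may want to compare. It uses only Harbater/Katz--Gabber to extend the given local torsor $T_{\infty}$ over $\D^{*}_{\overline{k},\infty}$ to a $G$-torsor on $\PP^{1}\setminus\{0,\infty\}$; the resulting stacky curve has augmented class $[A(T_{0})]+[A(T_{\infty})]$ with an uncontrolled term at $0$. The unwanted term is then killed by an auxiliary cover: choose a totally ramified $H$-torsor $S_{0}\to\D^{*}_{\overline{k},0}$ trivializing $T_{0}$, extend it to an $H$-cover $S\to\PP^{1}$ which (after a coordinate change) is étale over $\infty$, and take the stacky curve extending $S\to\PP^{1}\dasharrow\cX_{\overline{k}}$. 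Its only stacky points are the $|H|$ points over $\infty$, all of class $[A(T_{\infty})]$, so $\llbracket\cS\rrbracket=|H|\cdot[A(T_{\infty})]$ and $[A(T_{\infty})]\in\fMov_{1}(\cX)$. In other words, rather than forcing all ramification of a single cover into the prescribed class, the paper accepts one uncontrolled branch point and removes it by a base-change trick that simultaneously replicates the prescribed local class $|H|$ times; if you want to salvage your route, you would essentially need to rediscover this (or an equivalent) device to dispose of the extra tame/wild branch behavior that patching leaves behind.
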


\begin{proof}
Since 
\[
\sum_{i\in I^{*}}\RR_{\ge0}[A_{i}]=\bigcap_{i\in I^{*}}\{\alpha\mid(\alpha,[A_{i}]^{*})\ge0\},
\]
the cone $\sum_{i\in I^{*}}\RR_{\ge0}[A_{i}]$ is closed relative
to the weak topology. Since the augmented class of every stacky curve
lies in $\sum_{i\in I^{*}}\RR_{\ge0}[A_{i}]$, we have $\fMov_{1}(\cX)\subset\sum_{i\in I^{*}}\RR_{\ge0}[A_{i}]$.
To show the opposite inclusion, it is enough to show that for every
$i\in I^{*}$, $[A_{i}]\in\fMov_{1}(\cX)$. Let $\D_{\overline{k},0}^{*}$
and $\D_{\overline{k},\infty}^{*}$ be two copies of formal disks
over $\overline{k}$ which are obtained by taking the formal completions
of $\PP_{\overline{k}}^{1}$ at $0$ and $\infty$, respectively.
Let us take an arbitrary $G$-torsor $T_{\infty}\to\D_{\overline{k},\infty}^{*}$.
From Harbater \cite{harbater1980moduliof2} and Katz-Gabber \cite[Theorem 2.1.6]{katz1986localtoglobal},
the given $G$-torsor $T_{\infty}\to\D_{\overline{k},\infty}^{*}$
can be lifted to a torsor $T\to\PP_{\overline{k}}^{1}\setminus\{0,\infty\}.$
We have the corresponding morphism $\PP_{\overline{k}}^{1}\setminus\{0,\infty\}\to\cX_{\overline{k}}$,
which uniquely extends to a stacky curve $\cC\to\cX_{\overline{k}}$.
Note that since $\cX$ is zero-dimensional, every stacky curve is
automatically a covering family of stacky curves. The only potential
stacky points of $\cC$ are $0$ and $\infty$. The twisted arc $\cC_{\infty}\to\cX_{\overline{k}}$
at $\infty$ corresponds to the point of $|\Delta_{G}|$ given by
the original torsor $T_{\infty}\to\D_{\overline{k}}^{*}$, while the
twisted arc $\cC_{\infty}\to\cX_{\overline{k}}$ at $0$ corresponds
to the point given by the torsor $T_{0}\to\D_{\overline{k},0}^{*}$
obtained by localizing $T$ around $0$. Let $A(T_{0})$ denote the
sectoroid $A_{i}$ with $[T_{0}]\in A_{i}$, and similarly for $A(T_{\infty})$.
Then, we have
\[
\llbracket\cC\rrbracket=[A(T_{0})]+[A(T_{\infty})].
\]
Thus, we get $[A(T_{0})]+[A(T_{\infty})]\in\fMov_{1}(\cX)$, while
what we want is $[A(T_{\infty})]\in\fMov_{1}(\cX)$. 

Let $S_{0}\to\D_{\overline{k},0}^{*}$ be a totally ramified $H$-torsor
for some finite group $H$ which trivialize the $G$-torsor $T_{0}\to\D_{\overline{k},0}^{*}$
and let $S\to\PP_{\overline{k}}^{1}$ be an $H$-torsor extending
$S_{0}$. By a suitable coordinate change of $\PP_{\overline{k}}^{1}$,
we may assume that $S\to\PP_{\overline{k}}^{1}$ is étale over $\infty$.
Let $\cS\to\cX_{\overline{k}}$ to be the stacky curve obtained by
extending the composite rational map $S\to\PP_{\overline{k}}^{1}\dasharrow\cX_{\overline{k}}$.
From the construction, the $|H|$ points of $\cS$ over $\infty$
are the only stacky points of $\cS$. Moreover, all the twisted arcs
of $\cX_{\overline{k}}$ derived from these points give the same sectoroid
class $[A(T_{\infty})]$. Thus, $\llbracket\cS\rrbracket=|H|\cdot[A(T_{\infty})]\in\fMov_{1}(\cX)$.
We conclude that for every $i\in I^{*}$, $[A_{i}]\in\fMov_{1}(\cX)$,
as desired. 
\end{proof}
The $a$- and $b$-invariants can be computed as follows. Let $c$
be a raising function compatible with $\{A_{i}\}_{i\in I}$ and let
$\cL$ be a line bundle on $\cX$. Note that since $\N^{1}(\cX)_{\RR}=0$,
the choice of a line bundle does not cause any difference. From Proposition
\ref{prop:mov-BG},
\begin{equation}
\begin{aligned}a(\cL,c) & =\inf\{\alpha\in\RR\mid\forall i\in I^{*},\,\alpha c_{i}-\dim A_{i}-1\ge0\}\\
 & =\sup_{i\in I^{*}}\frac{\dim A_{i}+1}{c_{i}}.
\end{aligned}
\label{eq:a}
\end{equation}
\begin{align*}
a(\cL,c) & =\inf\{\alpha\in\RR\mid\forall i\in I^{*},\,\alpha c_{i}-\dim A_{i}-1\ge0\}\\
 & =\sup_{i\in I^{*}}\frac{\dim A_{i}+1}{c_{i}}.
\end{align*}
Let 
\[
I_{0}:=\left\{ i\in I^{*}\bigmid\frac{\dim A_{i}+1}{c_{i}}=a(L,c).\right\} 
\]
Then, 
\[
(a(L,c)\llbracket L,c\rrbracket+\fK_{\cX})^{\perp}\cap\fMov_{1}(\cX)=\sum_{i\in I_{0}}\RR_{\ge0}[A_{i}]
\]
and hence 
\begin{equation}
b(L,c)=\#I_{0}.\label{eq:b}
\end{equation}

\begin{rem}
\label{rem:well-def}We also have the following expressions which
are independent of the chosen sectoroid subdivision: 
\begin{align*}
a(\cL,c) & =\sup_{o\ne x\in|\Delta_{G}|}\frac{\dim\overline{\{x\}}+1}{c(x)},\\
b(\cL,c) & =\#x\left\{ \in|\Delta_{G}|\setminus\{o\}\bigmid\frac{\dim\overline{\{x\}}+1}{c(x)}=\alpha(\cL,c)\right\} .
\end{align*}
In particular, the $a$- and $b$-invariants are independent of the
sectoroid subdivision in the case of classifying stacks.
\end{rem}

\begin{rem}
\label{rem:pathological}If $a(\cL,c)$ is not equal to $(\dim A_{i}+1)/c_{i}$
for any $i\in I^{*}$, equivalently if the supremum is not maximum,
then Conjecture \ref{conj:main} together with Formula (\ref{eq:b})
predicts the asymptotic ``$\asymp B^{a(\cL,c)}(\log B)^{-1}$.''
This appears strange, since the log factor usually has non-negative
exponents in arithmetic statistics. Maybe we should exclude such a
case by carefully choosing the class of functions in Condition (\ref{enu:raising})
of Conjecture \ref{conj:main}.
\end{rem}

\begin{rem}
A good candidate for a sectoroid subdivision of $|\Delta_{G}|$ is
the decomposition by the ramification filtration. However, there is
a raising function of geometric origin which is not compatible with
that subdivision \cite[Section 5]{yamamoto2021pathological}. 
\end{rem}

\begin{rem}
An interesting class of raising functions on $|\Delta_{G}|$ is the
weight function $\bw$ and a relative of it denoted by $\bv$ associated
to a representation of $G$; the former was introduced in \cite{yasuda2017towardmotivic}
and the latter in \cite{wood2015massformulas}. As discussed in these
cited papers, these functions are closely related to quotient singularities
and hence geometrically important. Thanks to \cite[Theorem 9.8]{tonini2023moduliof},
they are indeed raising functions. 
\end{rem}

\subsection{Counting with conductors and Artin--Schreier conductors; the case
of abelian $p$-groups}

Let $G$ be an abelian $p$-group with exponent $p^{e}$. We show
that results of Lagemann \cite{lagemann2012distribution,lagemann2015distribution}
and Gundlach \cite{gundlach2024counting} on counting $G$-extensions
or $G$-torsors by conductors and Artin--Schreier conductors match
with Conjecture \ref{conj:main} and formulas (\ref{eq:a}) and (\ref{eq:b}). 

Let $r_{i}(G)$ be its $p^{i}$-rank, which is defined by 
\[
r_{i}(G):=\log_{p}(p^{i-1}G:p^{i}G).
\]
If we write $G=\bigoplus_{j=1}^{l}C_{p^{n_{j}}}$ with $C_{l}$ denoting
the cyclic group of order $l$, then 
\[
r_{i}(G)=\#\{j\mid n_{j}\ge i\}.
\]
These numbers form a descending sequence
\[
r_{1}(G)\ge r_{2}(G)\ge\cdots\ge r_{e}(G)>r_{e+1}(G)=0.
\]

From the local class field theory, for a finite extension $\FF_{q}/k$,
a $G$-torsor over $\D_{\FF_{q}}^{*}$ corresponds to a continuous
homomorphism $\FF_{q}\tpars^{*}\to G$. 
\begin{defn}
The \emph{conductor exponent} $\bc(x)$ of a $G$-torsor $x$ over
$\D_{\FF_{q}}^{*}$ is the least integer $n\ge0$ such that the corresponding
map $\FF_{q}\tpars^{*}\to G$ kills $1+\fm^{n}$ with $\fm$ the maximal
ideal of $\FF_{q}\tbrats$.
\end{defn}

\begin{lem}
There exists a unique function $\bc\colon|\Delta_{G}|\to\ZZ_{\ge0}$
which is compatible with the functions $\bc\colon\Delta_{G}\langle\FF_{q}\rangle\to\ZZ_{\ge0}$
defined above.
\end{lem}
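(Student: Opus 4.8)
The plan is to construct $\bc$ directly from the geometry of $\Delta_{G}$ and to deduce compatibility and uniqueness afterwards. The underlying point is that the conductor exponent is a geometric invariant of a $G$-torsor over the punctured formal disk that is insensitive to extension of the ground field. Concretely, for any field $k'$ over $k$ and any $G$-torsor $T$ over $\D_{k'}^{*}$ one still has a conductor exponent of $T$: since $G$ is an abelian $p$-group, $T$ is classified by Artin--Schreier--Witt theory, and its conductor exponent is the least $n\ge0$ for which the corresponding class is killed by $1+\fm^{n}$; equivalently it is read off the upper-numbering ramification filtration of the cover $T\to\D_{k'}^{*}$, which is unambiguous for abelian $T$ even when $k'$ is not quasi-finite. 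For $k'=\FF_{q}$ this is exactly the conductor exponent of the statement.

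First I would prove that the conductor exponent is unchanged under a base extension $k'\subseteq k''$. The essential case is a finite extension of residue fields $\FF_{q}\subseteq\FF_{q^{n}}$: by local class field theory, $T$ over $\FF_{q}\tpars$ corresponds to a continuous homomorphism $\chi\colon\FF_{q}\tpars^{*}\to G$, while $T\otimes_{\FF_{q}\tpars}\FF_{q^{n}}\tpars$ corresponds to $\chi\circ N$, where $N$ is the norm of the unramified extension $\FF_{q^{n}}\tpars/\FF_{q}\tpars$; since that extension is unramified, $N(1+\fm^{m})=1+\fm^{m}$ for every $m\ge1$ (the norm is surjective on each graded piece of the unit filtration because the trace of a separable extension of finite fields is surjective; see Serre, \emph{Local Fields}, Ch.~V, \S3), so $\chi$ and $\chi\circ N$ kill $1+\fm^{m}$ for the same values of $m$, and the conductor exponents agree. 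Passing to the colimit over $n$ covers $\FF_{q}\subseteq\overline{k}$, and the general case follows from this together with the fact that the ramification filtration of a cover of $\D^{*}$ is unchanged under extension of an algebraically closed base field. By the identification of the point set recalled earlier, $|\Delta_{G}|=\varinjlim_{k'/k}(\B G)\langle k'\tpars\rangle$ with $k'$ running over algebraically closed fields over $k$, so this invariance shows that sending a point $x$ to the conductor exponent of any torsor representing it is a well-defined function $\bc\colon|\Delta_{G}|\to\ZZ_{\ge0}$; moreover the class of a torsor $T$ over $\FF_{q}\tpars$ is represented by $T\otimes_{\FF_{q}\tpars}\overline{k}\tpars$, which has the same conductor exponent, so $\bc$ restricts to the given function on $\Delta_{G}\langle\FF_{q}\rangle$. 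This proves existence.

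For uniqueness I would use that $\bc$ is a locally constructible function on $|\Delta_{G}|$: the moduli stack $\Delta_{G}$ is the increasing union of its finite-type substacks cut out by a bound on the conductor exponent, so on each of the countably many $k$-varieties $Z_{j}$ in $\Delta_{G}=\coprod_{j}Z_{j}$ the function $\bc$ takes finitely many values, each on a constructible subset; this can also be deduced from local constructibility of the discriminant exponent \cite[Theorem 9.8]{tonini2023moduliof} through the conductor--discriminant relation $\bd=\sum_{\chi\neq1}\bc_{\chi}$ and $\bc=\max_{\chi\neq1}\bc_{\chi}$, the sum and maximum running over the nontrivial characters of $G$. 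Since each $Z_{j}$ is of finite type over the finite field $k$, it is Jacobson, and every nonempty locally closed subset of $|\Delta_{G}|$ contains a point with finite residue field --- that is, a point in the image of $\Delta_{G}\langle\FF_{q}\rangle$ for some finite $\FF_{q}$. Hence any function compatible with all the $\bc$ on $\Delta_{G}\langle\FF_{q}\rangle$ agrees with $\bc$ at every such point, and therefore, being constant on each stratum of the locally closed stratification of $|\Delta_{G}|$ on which $\bc$ is constant, agrees with $\bc$ everywhere.

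The step I expect to be the main obstacle is the base-change invariance of the conductor exponent under unramified extension of the residue field, i.e.\ the identification of the purely geometric conductor exponent with the one of the statement together with the verification of its stability; this is where local class field theory and ramification theory genuinely enter. The local constructibility used in the uniqueness step is the other nontrivial ingredient, but it is of the same nature as --- indeed can be imported from --- the already established local constructibility of the discriminant exponent.
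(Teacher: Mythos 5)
Your existence argument is sound but takes a genuinely different route from the paper. The paper does not define the conductor geometrically over an arbitrary base and prove base-change invariance; instead it treats the cyclic case by citing Tanno's decomposition of $\Delta_{G}$ into pieces $G\textrm{-}\mathrm{Cov}(D^{*};\mathbf{j})$, compatible over all algebraically closed fields, on which the conductor exponent is already known to be constant, and then handles a general abelian $p$-group by setting $\bc:=\max_{\chi\colon G\to H,\ H\ \text{cyclic}}\bc_{H}\circ\Delta_{\chi}$. That reduction buys, almost for free, the (local) constructibility of the fibers $\bc^{-1}(n)$, which is the property the paper actually needs downstream (to make $\bc$ usable as a raising function). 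Your route --- reading the conductor off the upper-numbering filtration and checking invariance under unramified extension via $N(U_{L}^{(m)})=U_{K}^{(m)}$ --- is more self-contained and does prove well-definedness on $|\Delta_{G}|$ directly, but it delivers constructibility only indirectly and only in sketch form: the conductor--discriminant import is immediate when the cyclic quotients have order $p$ (there $\bd=(p-1)\bc$), whereas for cyclic quotients of higher order you would have to recover the top conductor jump from the discriminants of all sub-quotients, which needs an actual argument.

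The uniqueness step, however, has a genuine gap. Compatibility with the functions on $\Delta_{G}\langle\FF_{q}\rangle$ constrains the value of a candidate function only at points of $|\Delta_{G}|$ induced from torsors over some $\FF_{q}\tpars$, i.e.\ essentially at the closed points of the varieties $Z_{j}$. But $|\Delta_{G}|$ has many other points: non-closed points of positive-dimensional $Z_{j}$, equivalently classes of torsors over $k'\tpars$ with transcendental Artin--Schreier--Witt coefficients (for instance the class of $ut^{-1}$ over $\overline{\FF_{p}(u)}$ is not equivalent to any torsor defined over a finite field). An arbitrary compatible function may take arbitrary values at those points, so your concluding clause --- that the competing function, ``being constant on each stratum'' of your stratification, must agree with $\bc$ everywhere --- is unjustified: nothing in the compatibility hypothesis forces the competing function to be constant on those strata, and that constancy is exactly what is at stake. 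What your Jacobson-density argument does prove is uniqueness \emph{within the class of locally constructible functions} (two locally constructible functions agreeing at all closed points of a Jacobson scheme agree everywhere), and that is presumably the intended content; but then the restriction to that class (or a correspondingly stronger notion of compatibility, e.g.\ invariance under the equivalence on $|\Delta_{G}|$ together with constructibility) has to be stated, since as written the asserted uniqueness is stronger than what is proved. For comparison, the paper's own proof is silent on uniqueness: its content is the construction of $\bc$ and the constructibility of its fibers.
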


\begin{proof}
First consider the case where $G$ is cyclic. From \cite[Proposition 2.7]{tanno2021thewild},
for each algebraically closed field $k'/k$, the set $\Delta_{G}\langle k'\rangle$
is decomposed into countably many $k'$-varieties (denoted by $G\textrm{-}\mathrm{Cov}(D^{*};\mathbf{j})$
in the cited paper). Since the decompositions for all algebraically
closed fields $k'$ are compatible, we get a decomposition of $|\Delta_{G}|$.
From \cite[Theorem 3.9]{tanno2021thewild}, the conductor exponent
is constant on each component $G\textrm{-}\mathrm{Cov}(D^{*};\mathbf{j})$,
which shows the lemma in this case. 

In the general case, given a homomorphism $\chi\colon G\to H$ of
abelian $p$-groups, we have a morphism of moduli stacks $\Delta_{\chi}\colon\Delta_{G}\to\Delta_{H}$.
The desired function $\bc$ on $|\Delta_{G}|$ can be defined by
\[
\bc(\gamma):=\max_{\substack{\chi\colon G\to H\\
H\colon\text{ cyclic}
}
}\bc_{H}\circ\Delta_{\chi}(\gamma).
\]
Here $\bc_{H}$ is the conductor exponent on $|\Delta_{H}|$. This
shows that for each $n$, $\bc^{-1}(n)$ is locally constructible.
To show that this is in fact constructible, equivalently, quasi-compact,
we only need to observe that if we writ $G$ as the product $G=H_{1}\times\cdots\times H_{l}$
of cyclic groups $H_{i}$, then $\bc^{-1}(n)$ is embedded into $\prod_{i=1}^{l}\bc_{H_{i}}^{-1}(\{0,\dots,n\})$. 
\end{proof}
\begin{defn}
The height function $H_{\bc}$ on $\cX\langle F\rangle$ associated
to the conductor exponent $\bc$ is called \emph{conductors} of $G$-torsors
over $F$.
\end{defn}

\begin{lem}[{\cite[Lemma 4]{kluners2020theconductor}}]
\label{lem:kluners}For a positive integer $n$, let $U_{n,q}:=(1+\fm)/(1+\fm^{n})$
and let $f:=\log_{p}(q)$. We have 
\[
r_{i}(U_{n,q})=f\left(\left\lfloor \frac{n-1}{p^{i-1}}\right\rfloor -\left\lfloor \frac{n-1}{p^{i}}\right\rfloor \right).
\]
\end{lem}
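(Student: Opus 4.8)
The plan is to reduce the statement to a computation of the orders $|p^{i}U_{n,q}|$ for all $i\ge 0$, and then to read off the $p^{i}$-ranks from the defining identity $r_{i}(U_{n,q})=\log_{p}\bigl[p^{i-1}U_{n,q}:p^{i}U_{n,q}\bigr]$ (which makes sense because $U_{n,q}=(1+\fm)/(1+\fm^{n})$, being a finite quotient of the pro-$p$ group $1+\fm$ of principal units of $\FF_{q}\llbracket t\rrbracket$, is a finite abelian $p$-group). First I would record that $|U_{n,q}|=q^{\,n-1}$, since $1+a\mapsto a$ descends to a bijection $U_{n,q}\isoto\fm/\fm^{n}$. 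Since the group is abelian, $p^{i}U_{n,q}$ is the subgroup of $p^{i}$-th powers, equal to the image of $(1+\fm)^{p^{i}}$ under the surjection $1+\fm\twoheadrightarrow U_{n,q}$.

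The crux is that in characteristic $p$ the $p^{i}$-th power map is completely explicit: $(1+a)^{p^{i}}=1+a^{p^{i}}$ for every $a\in\fm$. Since the $p^{i}$-th power map is bijective on $\FF_{q}$, raising to the $p^{i}$-th power carries $\fm=t\FF_{q}\llbracket t\rrbracket$ onto $\fm_{i}:=t^{p^{i}}\FF_{q}\llbracket t^{p^{i}}\rrbracket$, so $(1+\fm)^{p^{i}}=1+\fm_{i}$ and hence $p^{i}U_{n,q}=(1+\fm_{i})/\bigl(1+(\fm_{i}\cap\fm^{n})\bigr)$. Next I would observe that a power series $\sum_{k\ge 1}c_{k}t^{kp^{i}}\in\fm_{i}$ lies in $\fm^{n}$ exactly when $c_{k}=0$ for all $k<\lceil n/p^{i}\rceil$, so that $\fm_{i}\cap\fm^{n}=t^{\lceil n/p^{i}\rceil p^{i}}\FF_{q}\llbracket t^{p^{i}}\rrbracket$.

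It then remains to count. Filtering $1+\fm_{i}$ by the subgroups $1+t^{kp^{i}}\FF_{q}\llbracket t^{p^{i}}\rrbracket$ for $k\ge 1$, each successive quotient is isomorphic to the additive group $(\FF_{q},+)$ via $1+c\,t^{kp^{i}}+\cdots\mapsto c$, and there are exactly $\lceil n/p^{i}\rceil-1$ steps between $1+\fm_{i}$ and $1+(\fm_{i}\cap\fm^{n})$; hence, using the identity $\lceil n/m\rceil-1=\lfloor(n-1)/m\rfloor$ for positive integers $n,m$,
\[
|p^{i}U_{n,q}|=q^{\lceil n/p^{i}\rceil-1}=q^{\lfloor(n-1)/p^{i}\rfloor}.
\]
Since $p^{i}U_{n,q}\subseteq p^{i-1}U_{n,q}$, the index is the ratio of these orders, and with $f=\log_{p}q$ this gives
\[
r_{i}(U_{n,q})=\log_{p}\frac{|p^{i-1}U_{n,q}|}{|p^{i}U_{n,q}|}=f\left(\left\lfloor\frac{n-1}{p^{i-1}}\right\rfloor-\left\lfloor\frac{n-1}{p^{i}}\right\rfloor\right),
\]
as claimed.

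I do not expect a genuine obstacle here. The only feature that truly uses positive characteristic is that $1+\fm$ cannot be linearized by a logarithm (as it can be in the tame case), so the $p^{i}$-th power map must be analysed directly; fortunately in characteristic $p$ it is given by Frobenius on the coefficients, and everything else is elementary bookkeeping with the $t$-adic filtration and floor functions. Alternatively, since this statement is precisely \cite[Lemma 4]{kluners2020theconductor}, one may simply invoke that reference.
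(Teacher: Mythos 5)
Your proof is correct. Note that the paper does not actually prove this lemma: it states it with a citation to Kl\"uners' Lemma 4 and moves on, which is also the fallback you mention at the end. What you have written is a sound self-contained replacement. The key steps all check out: $U_{n,q}\cong\fm/\fm^{n}$ as sets gives $|U_{n,q}|=q^{n-1}$; in characteristic $p$ the identity $(1+a)^{p^{i}}=1+a^{p^{i}}$ together with bijectivity of Frobenius on $\FF_{q}$ gives $(1+\fm)^{p^{i}}=1+t^{p^{i}}\FF_{q}\llbracket t^{p^{i}}\rrbracket$; the intersection with $1+\fm^{n}$ is $1+t^{\lceil n/p^{i}\rceil p^{i}}\FF_{q}\llbracket t^{p^{i}}\rrbracket$; and the filtration by the subgroups $1+t^{kp^{i}}\FF_{q}\llbracket t^{p^{i}}\rrbracket$, each successive quotient being $(\FF_{q},+)$, yields $|p^{i}U_{n,q}|=q^{\lceil n/p^{i}\rceil-1}=q^{\lfloor(n-1)/p^{i}\rfloor}$, from which the stated formula for $r_{i}$ follows by taking the ratio of consecutive orders. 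The only point worth making explicit (and which you implicitly use) is the second isomorphism theorem identifying the image of $1+\fm_{i}$ in $U_{n,q}$ with $(1+\fm_{i})/\bigl(1+(\fm_{i}\cap\fm^{n})\bigr)$; with that said, there is no gap, and your argument is essentially the standard one behind the cited lemma.
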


\begin{lem}[{\cite[Equation (10)]{delsarte1948fonctions}, \cite[p.~310]{lagemann2015distribution}}]
\label{lem:=000023Hom}Let $A$ be a finitely generated abelian group.
Then 
\[
\#\Hom(A,G)=p^{\sum_{i\ge1}r_{i}(A)r_{i}(G)}.
\]
\end{lem}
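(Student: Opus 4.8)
The plan is to reduce the identity to the case of cyclic $A$ and then evaluate both sides directly; the formula is classical (a proof is already contained in the cited references), so the point is just to record a clean self-contained argument. \emph{First, reduce to finite abelian $p$-groups.} Let $p^{e}$ be the exponent of $G$. Every homomorphism $A\to G$ kills $p^{e}A$, so $\Hom(A,G)=\Hom(A/p^{e}A,G)$, and $A/p^{e}A$ is a finite abelian $p$-group because $A$ is finitely generated. Moreover $r_{i}(A)=r_{i}(A/p^{e}A)$ for every $i\le e$ (both equal $\dim_{\FF_{p}}(p^{i-1}A/p^{i}A)$, using $p^{e}A\subseteq p^{i}A$), while $r_{i}(G)=0$ for $i>e$; hence $\sum_{i\ge1}r_{i}(A)r_{i}(G)=\sum_{i\ge1}r_{i}(A/p^{e}A)r_{i}(G)$. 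So one may assume $A$ is a finite abelian $p$-group.

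\emph{Next, reduce to cyclic $A$.} If $A=A_{1}\oplus A_{2}$ then $\#\Hom(A,G)=\#\Hom(A_{1},G)\cdot\#\Hom(A_{2},G)$, while $r_{i}(A)=r_{i}(A_{1})+r_{i}(A_{2})$ gives $\sum_{i}r_{i}(A)r_{i}(G)=\sum_{i}r_{i}(A_{1})r_{i}(G)+\sum_{i}r_{i}(A_{2})r_{i}(G)$. Writing $A$ as a direct sum of cyclic $p$-groups, it therefore suffices to treat $A=\ZZ/p^{a}\ZZ$.

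\emph{Finally, the cyclic case.} Here $\Hom(\ZZ/p^{a}\ZZ,G)\cong G[p^{a}]:=\{g\in G\mid p^{a}g=0\}$, and writing $G=\bigoplus_{k=1}^{l}\ZZ/p^{n_{k}}\ZZ$ gives $G[p^{a}]=\bigoplus_{k=1}^{l}\ZZ/p^{\min(a,n_{k})}\ZZ$, so $\#\Hom(\ZZ/p^{a}\ZZ,G)=p^{\sum_{k}\min(a,n_{k})}$. On the other hand $r_{i}(\ZZ/p^{a}\ZZ)=1$ for $1\le i\le a$ and is $0$ otherwise, and $r_{i}(G)=\#\{k\mid n_{k}\ge i\}$, so
\begin{align*}
\sum_{i\ge1}r_{i}(\ZZ/p^{a}\ZZ)\,r_{i}(G)&=\sum_{i=1}^{a}\#\{k\mid n_{k}\ge i\}\\
&=\sum_{k=1}^{l}\#\{i\mid 1\le i\le\min(a,n_{k})\}\\
&=\sum_{k=1}^{l}\min(a,n_{k}).
\end{align*}
Hence both sides of the claimed identity equal $p^{\sum_{k}\min(a,n_{k})}$.

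I do not expect any serious obstacle; the only step needing a little care is the bookkeeping in the first reduction, namely checking that passing from $A$ to $A/p^{e}A$ leaves both $\Hom(A,G)$ and the exponent $\sum_{i}r_{i}(A)r_{i}(G)$ unchanged. One could also bypass the argument altogether and simply invoke \cite[Equation~(10)]{delsarte1948fonctions}.
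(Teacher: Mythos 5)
Your argument is correct in all three steps: the reduction $\Hom(A,G)=\Hom(A/p^{e}A,G)$ together with $r_{i}(A)=r_{i}(A/p^{e}A)$ for $i\le e$ and $r_{i}(G)=0$ for $i>e$, the multiplicativity over direct summands, and the cyclic computation $\#\Hom(\ZZ/p^{a}\ZZ,G)=\#G[p^{a}]=p^{\sum_{k}\min(a,n_{k})}=p^{\sum_{i=1}^{a}r_{i}(G)}$ all check out against the paper's definition $r_{i}(G)=\log_{p}(p^{i-1}G:p^{i}G)$. Note that the paper does not prove this lemma at all; it is quoted from Delsarte and Lagemann, so your reduction-to-cyclic argument is a clean self-contained substitute for the citation rather than a variant of an argument in the text. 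The only point worth being explicit about, which you already flag, is that $A/p^{e}A$ is a finite abelian $p$-group (the prime-to-$p$ torsion of $A$ dies because multiplication by $p^{e}$ is an automorphism on it), after which everything is routine bookkeeping.
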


\begin{cor}
\label{cor:num-tors}For an integer $n\ge2$, the number of $G$-torsors
over $\D_{\FF_{q}}^{*}$with conductor exponent $n$ is 
\[
\#G\cdot\left(q^{\sum_{i\ge1}\left(\left\lfloor \frac{n-1}{p^{i-1}}\right\rfloor -\left\lfloor \frac{n-1}{p^{i}}\right\rfloor \right)r_{i}(G)}-q{}^{\sum_{i\ge1}\left(\left\lfloor \frac{n-2}{p^{i-1}}\right\rfloor -\left\lfloor \frac{n-2}{p^{i}}\right\rfloor \right)r_{i}(G)}\right).
\]
\end{cor}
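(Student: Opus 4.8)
The plan is to translate the count into one of continuous homomorphisms out of $\FF_{q}\tpars^{*}$ via local class field theory and then feed in the group-theoretic input provided by Lemmas~\ref{lem:=000023Hom} and~\ref{lem:kluners}. First I would recall, as in the discussion preceding the definition of the conductor exponent, that for the abelian group $G$ local class field theory identifies the set of $G$-torsors over $\D_{\FF_{q}}^{*}$ with $\Hom_{\mathrm{cont}}(\FF_{q}\tpars^{*},G)$, and that under this identification a torsor has conductor exponent $\le n$ (for $n\ge1$) precisely when the associated homomorphism is trivial on $1+\fm^{n}$, since $\bc$ is by definition the least such exponent and the subgroups $1+\fm^{m}$ are nested. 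Hence the number of $G$-torsors over $\D_{\FF_{q}}^{*}$ with $\bc\le n$ equals $\#\Hom\bigl(\FF_{q}\tpars^{*}/(1+\fm^{n}),G\bigr)$.

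Next I would fix a uniformizer $t$ and use the topological decomposition $\FF_{q}\tpars^{*}\cong t^{\ZZ}\times\FF_{q}^{*}\times(1+\fm)$, which yields $\FF_{q}\tpars^{*}/(1+\fm^{n})\cong\ZZ\times\FF_{q}^{*}\times U_{n,q}$ with $U_{n,q}=(1+\fm)/(1+\fm^{n})$ as in Lemma~\ref{lem:kluners}. Because $G$ is a $p$-group and $\#\FF_{q}^{*}=q-1$ is prime to $p$, every homomorphism to $G$ kills the tame factor $\FF_{q}^{*}$, so
\[
\#\{G\text{-torsors over }\D_{\FF_{q}}^{*}\text{ with }\bc\le n\}=\#\Hom(\ZZ,G)\cdot\#\Hom(U_{n,q},G)=\#G\cdot\#\Hom(U_{n,q},G).
\]
Then Lemma~\ref{lem:=000023Hom} gives $\#\Hom(U_{n,q},G)=p^{\sum_{i\ge1}r_{i}(U_{n,q})r_{i}(G)}$, and substituting the value of $r_{i}(U_{n,q})$ from Lemma~\ref{lem:kluners}, together with $p^{\log_{p}q}=q$, rewrites the count as $\#G\cdot q^{\sum_{i\ge1}\bigl(\lfloor\frac{n-1}{p^{i-1}}\rfloor-\lfloor\frac{n-1}{p^{i}}\rfloor\bigr)r_{i}(G)}$.

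Finally, since $n\ge2$ we also have $n-1\ge1$, so the same computation applies verbatim with $n$ replaced by $n-1$; the number of $G$-torsors with $\bc=n$ exactly is the difference between the count for $\bc\le n$ and the count for $\bc\le n-1$, which is exactly the asserted formula, with the common factor $\#G$ remaining out front. I do not expect a genuine obstacle here: the points that need care are purely bookkeeping — checking that the unramified ($t^{\ZZ}$) contribution is identical at the two truncation levels and so does not cancel, that the tame factor $\FF_{q}^{*}$ is harmlessly killed by the $p$-group hypothesis, and that the degenerate case $U_{1,q}=0$ (relevant when $n=2$) is consistent with the floor-function expression, in which all the exponents then vanish.
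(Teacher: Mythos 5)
Your proposal is correct and follows essentially the same route as the paper's proof: decompose $\FF_{q}\tpars^{*}\cong\ZZ\times\FF_{q}^{*}\times(1+\fm)$, kill the tame factor since $G$ is a $p$-group, count $\#G$ homomorphisms from $\ZZ$, apply Lemma \ref{lem:=000023Hom} together with Lemma \ref{lem:kluners}, and take the difference of the counts for conductor exponent $\le n$ and $\le n-1$. The only difference is cosmetic bookkeeping (e.g.\ your explicit check of the $n=2$ boundary case), so nothing further is needed.
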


\begin{proof}
Recall that we have 
\[
\FF_{q}((t))^{*}=\ZZ\times\FF_{q}^{*}\times(1+\fm)
\]
(for example, see \cite[Chapter II, Proposition 5.3]{neukirch1999algebraic}).
Giving a torsor of conductor exponent $\le n$ corresponds to giving
a map
\[
\ZZ\times\FF_{q}^{*}\times U_{n,q}\to G
\]
and hence corresponds to a triple $(\ZZ\to G,\FF_{q}^{*}\to G,U_{n,q}\to G)$.
Because of orders, there are no nontrivial maps $\FF_{q}^{*}\to G$.
There are $\#G$ maps $\ZZ\to G$. From Lemma \ref{lem:=000023Hom},
the desired number of torsors is 
\[
\#G\cdot(\#\Hom(U_{n},G)-\#\Hom(U_{n-1},G))=\#G\cdot\left(p^{\sum_{i}r_{i}(U_{n,q})r_{i}(G)}-p^{\sum_{i}r_{i}(U_{n-1,q})r_{i}(G)}\right).
\]
The formula of the lemma follows from Lemma \ref{lem:kluners}.
\end{proof}

Let $h\in\ZZ_{\ge0}$ be the minimal integer such that $p^{h}G$ is
cyclic, which is characterized by 
\[
r_{i}(G)\begin{cases}
\le1 & (i>h)\\
>1 & (i\le h).
\end{cases}
\]

\begin{lem}
We have $\bc^{-1}(n)=\emptyset$ if and only if $p^{e}\mid(n-1)$.
\end{lem}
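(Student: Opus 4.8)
The plan is to handle the exponents $n\le1$ by direct inspection and the exponents $n\ge2$ via the counting formula of Corollary \ref{cor:num-tors}, reducing the latter to an elementary identity for floor functions; throughout recall that $e\ge1$ because $G$ is nontrivial. For $n=0$: any unramified $G$-torsor over $\D_{k'}^{*}$ with $k'$ algebraically closed is trivial (the residue field is already algebraically closed), so $\bc^{-1}(0)=\{o\}\neq\emptyset$, while the condition $p^{e}\mid(n-1)$ fails for $n=0$ since $p^{e}\ge2>1$. For $n=1$: a continuous homomorphism $\phi\colon\FF_{q}\tpars^{*}\to G$ kills $\FF_{q}^{*}$ because $\#\FF_{q}^{*}$ is prime to $p$, and, via $\FF_{q}\tpars^{*}=\ZZ\times\FF_{q}^{*}\times(1+\fm)$, it annihilates the full unit group $\FF_{q}^{*}\times(1+\fm)$ if and only if it annihilates $1+\fm$; hence the conductor exponent of $\phi$, being the least $n\ge0$ with $\phi(1+\fm^{n})=0$, can never equal $1$, so $\bc^{-1}(1)=\emptyset$ while $p^{e}\mid(1-1)=0$ holds.

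Now suppose $n\ge2$. Since $\bc^{-1}(n)\subset|\Delta_{G}|$ is constructible (as shown in the proof of the preceding lemma), it is empty if and only if it contains no $\FF_{q}$-point for any finite extension $\FF_{q}/k$ — a nonempty constructible subset of the $k$-varieties $Z_{j}$ contains a closed point, whose residue field is finite — equivalently, if and only if there is no $G$-torsor over $\D_{\FF_{q}}^{*}$ of conductor exponent exactly $n$ for any such $q$. By Corollary \ref{cor:num-tors} the number of these torsors equals $\#G\cdot(q^{A(n-1)}-q^{A(n-2)})$, where $A(m):=\sum_{i\ge1}\bigl(\lfloor m/p^{i-1}\rfloor-\lfloor m/p^{i}\rfloor\bigr)r_{i}(G)$; since $\#G>0$ and $q\ge2$, this vanishes for one (equivalently every) $q$ exactly when $A(n-1)=A(n-2)$.

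It remains to show $A(n-1)-A(n-2)=r_{v+1}(G)$ with $v:=v_{p}(n-1)$. Setting $m=n-1$ and using that $\lfloor m/p^{j}\rfloor-\lfloor(m-1)/p^{j}\rfloor$ is $1$ when $p^{j}\mid m$ and $0$ otherwise, the difference $A(m)-A(m-1)$ telescopes and only the index $i=v+1$ survives; in particular $A$ is nondecreasing. Since $r_{1}(G)\ge\cdots\ge r_{e}(G)>r_{e+1}(G)=0$ and $r_{i}(G)=0$ for $i>e$, one has $r_{v+1}(G)=0$ if and only if $v\ge e$, i.e. if and only if $p^{e}\mid(n-1)$; combined with the previous paragraph this gives the equivalence for $n\ge2$. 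The step I expect to require the most care is the reduction from emptiness of the constructible set $\bc^{-1}(n)$ to the simultaneous vanishing of all the torsor counts over finite residue fields; the remainder is bookkeeping with floors and the $p$-ranks $r_{i}(G)$.
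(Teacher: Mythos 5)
Your proof is correct, and at its core it takes the same route as the paper: reduce the emptiness of $\bc^{-1}(n)$ to the vanishing of the torsor count of Corollary \ref{cor:num-tors} (equivalently to $\#\Hom(U_{n,q},G)=\#\Hom(U_{n-1,q},G)$) and then to floor-function arithmetic via Lemma \ref{lem:kluners}. The differences are in execution, and they work in your favor. You make explicit the reduction the paper leaves implicit: since $\bc^{-1}(n)$ is constructible in $\coprod_{j}Z_{j}$ and $k$ is finite, nonemptiness is detected by closed points, i.e.\ by $G$-torsors over $\D_{\FF_{q}}^{*}$ of conductor exactly $n$; and you treat $n=0,1$ separately, which the counting formula (stated for $n\ge2$) does not cover. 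More substantively, instead of the paper's argument (equality of the two weighted sums forces $r_{i}(U_{n})=r_{i}(U_{n-1})$ termwise, then unwinding the floors), you compute the increment $A(n-1)-A(n-2)=r_{v_{p}(n-1)+1}(G)$ directly from the indicator identity $\lfloor m/p^{j}\rfloor-\lfloor(m-1)/p^{j}\rfloor=1$ exactly when $p^{j}\mid m$; this avoids the monotonicity step and, importantly, lands on the divisibility $p^{e}\mid(n-1)$ actually asserted in the lemma. Note that the paper's own write-up stumbles here: after invoking Lemma \ref{lem:kluners} its displayed conditions should involve $n-1$ and $n-2$ rather than $n$ and $n-1$ (and the two cases of its displayed identity for $\lfloor n/p^{i}\rfloor-\lfloor(n-1)/p^{i}\rfloor$ are swapped), so its conclusion reads ``$p^{e}\mid n$'' instead of $p^{e}\mid(n-1)$; your indexing is the one consistent with the statement and with the dimension formula $\sum_{i\ge1}(\lfloor(n-1)/p^{i-1}\rfloor-\lfloor(n-1)/p^{i}\rfloor)r_{i}(G)$ used in the subsequent corollary.
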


\begin{proof}
We need to show
\[
\sum_{i=1}^{e}r_{i}(U_{n})r_{i}(G)-\sum_{i=1}^{e}r_{i}(U_{n-1})r_{i}(G)=0\Leftrightarrow p^{e}\mid(n-1).
\]
We easily see that the left equality is equivalent to 
\[
r_{i}(U_{n})=r_{i}(U_{n-1})\quad(i=1,\dots,e)
\]
and also to 
\begin{equation}
\left\lfloor \frac{n}{p^{i-1}}\right\rfloor -\left\lfloor \frac{n-1}{p^{i-1}}\right\rfloor =\left\lfloor \frac{n}{p^{i}}\right\rfloor -\left\lfloor \frac{n-1}{p^{i}}\right\rfloor \quad(i=1,\dots,e).\label{eq:cond}
\end{equation}
Now we observe that 
\[
\left\lfloor \frac{n}{p^{i}}\right\rfloor -\left\lfloor \frac{n-1}{p^{i}}\right\rfloor =\begin{cases}
0 & (p^{i}\mid n)\\
1 & (\text{otherwise}).
\end{cases}
\]
Thus, condition (\ref{eq:cond}) is equivalent to ``for every $1\le i\le e$,
if $p^{i-1}\mid n$, then $p^{i}\mid n$.'' Inductively this shows
$p^{e}\mid n$. Conversely, if $p^{e}\mid n$, then (\ref{eq:cond})
clearly holds. We have completed the proof. 
\end{proof}
\begin{cor}
For an integer $n>0$ with $p^{e}\nmid(n-1)$, the constructible subset
$\bc^{-1}(n)\subset|\Delta_{G}|$ has dimension
\[
\sum_{i\ge1}\left(\left\lfloor \frac{n-1}{p^{i-1}}\right\rfloor -\left\lfloor \frac{n-1}{p^{i}}\right\rfloor \right)r_{i}(G).
\]
Moreover, there exists only one geometric irreducible component of
maximal dimension, that is, $\bc^{-1}(n)$ is a sectoroid.
\end{cor}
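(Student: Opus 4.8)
The plan is to read off the dimension of $\bc^{-1}(n)$ and the number of its top-dimensional geometric irreducible components from the numbers of $\FF_{q^{m}}$-rational points, using Corollary~\ref{cor:num-tors} and the Lang--Weil estimate. Throughout I write $e_{m}:=\sum_{i\ge1}\bigl(\lfloor\tfrac{m-1}{p^{i-1}}\rfloor-\lfloor\tfrac{m-1}{p^{i}}\rfloor\bigr)r_{i}(G)$, so the claimed dimension is $e_{n}$ and Corollary~\ref{cor:num-tors} says the number of $G$-torsors over $\D_{\FF_{q}}^{*}$ with conductor exponent $n$ is $\#G\cdot(q^{e_{n}}-q^{e_{n-1}})$. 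The hypothesis $p^{e}\nmid(n-1)$ excludes $n=1$, so $n\ge2$ and that formula applies; moreover $e_{n}\ge e_{n-1}$ always (as $U_{n-1,q}$ is a quotient of $U_{n,q}$), and combining this with the lemma above that $\bc^{-1}(n)=\emptyset$ iff $p^{e}\mid(n-1)$ shows that $p^{e}\nmid(n-1)$ is equivalent to $e_{n}>e_{n-1}$.

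The first ingredient is a general dictionary between point counts, dimension, and components. Pass to a finite extension $\FF_{q}$ of $k$ over which $\bc^{-1}(n)$ --- a constructible subset of $\coprod_{j\in J'}Z_{j}$ for a finite $J'\subset J$, as established above --- and all of its irreducible components are defined. For a constructible subset $W$ of a scheme of finite type over $\FF_{q}$, stratifying into irreducible locally closed pieces and applying Lang--Weil on each piece yields, with implied constant depending only on $W$,
\[
\#W(\FF_{q^{m}})=c_{m}\cdot q^{m\dim W}+O_{W}\bigl(q^{m(\dim W-1/2)}\bigr)\qquad(m\ge1),
\]
where $c_{m}$ is the number of geometric irreducible components of $W_{\overline{\FF_{q}}}$ of dimension $\dim W$ fixed by the $m$-th power of Frobenius. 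Hence $\dim W$ is the largest $d$ for which $q^{md}$ occurs in these counts; and if $W\neq\emptyset$ and $\#W(\FF_{q^{m}})=q^{m\dim W}+O(q^{m(\dim W-1/2)})$ for all $m$, then $W$ has exactly one top-dimensional geometric irreducible component, because $c_{m}=1$ for large $m$ while $c_{m}$ reaches the total number of such components once $m$ is divisible by the sizes of all their Frobenius-orbits.

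The second ingredient computes $\#(\bc^{-1}(n))(\FF_{q^{m}})$. The $\FF_{q^{m}}$-points of the P-moduli space are the isomorphism classes of $G$-torsors over $\D_{\overline{\FF_{q}}}^{*}$ fixed by the $m$-th power of Frobenius; each such class descends to a $G$-torsor over $\D_{\FF_{q^{m}}}^{*}$ of the same conductor exponent (the descent obstruction lies in $\H^{2}(\widehat{\ZZ},G)=0$), and its set of descents is a torsor under $\H^{1}(\widehat{\ZZ},G)\cong G$, because $\ulAut_{G}(T)\cong G$ canonically for every $G$-torsor $T$ ($G$ being abelian). Therefore $\#(\bc^{-1}(n))(\FF_{q^{m}})$ is the count of Corollary~\ref{cor:num-tors} over $\FF_{q^{m}}$ divided by $\#G$, namely $q^{me_{n}}-q^{me_{n-1}}$. (This matches the proof of Corollary~\ref{cor:num-tors}: the factor $\#G=\#\Hom(\ZZ,G)$ there records the unramified twists coming from the valuation of $\FF_{q^{m}}\tpars^{*}$, which leave the torsor over the geometric disk $\D_{\overline{\FF_{q}}}^{*}$ unchanged.) Since $e_{n}>e_{n-1}$, this equals $q^{me_{n}}+O(q^{m(e_{n}-1)})$ and $\bc^{-1}(n)\neq\emptyset$, so the dictionary above gives $\dim\bc^{-1}(n)=e_{n}$ and that $\bc^{-1}(n)$ has exactly one top-dimensional geometric irreducible component. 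By the characterization of sectoroids in $|\Delta_{G}|$ recalled before Proposition~\ref{prop:mov-BG}, $\bc^{-1}(n)$ is a sectoroid.

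The step I expect to require the most care is identifying $\#(\bc^{-1}(n))(\FF_{q^{m}})$ with the \emph{geometric} torsor count, i.e.\ being precise about the relation between $\FF_{q^{m}}$-points of $\Delta_{G}$, isomorphism classes of $G$-torsors over $\D_{\FF_{q^{m}}}^{*}$, and those over $\D_{\overline{\FF_{q}}}^{*}$, and pinning down the factor $\#G$ (through $\ulAut_{G}(T)\cong G$, $\H^{1}(\widehat{\ZZ},G)\cong G$, $\H^{2}(\widehat{\ZZ},G)=0$, or equivalently through the unramified-twist bookkeeping in the proof of Corollary~\ref{cor:num-tors}). An alternative avoiding point counts is also available: over an algebraically closed $k'$, Artin--Schreier--Witt theory realizes the set of $G$-torsors over $\D_{k'}^{*}$ of conductor exponent $\le n$ as a $k'$-affine space of dimension $e_{n}$, whence $\bc^{-1}(n)=\bc^{-1}(\{0,\dots,n\})\setminus\bc^{-1}(\{0,\dots,n-1\})$ is a dense open subset of a geometrically irreducible variety of dimension $e_{n}$ and so is itself geometrically irreducible of that dimension; but making this precise requires matching it against the P-moduli space, for which the cyclic case is \cite{tanno2021thewild} and the general case follows by taking products.
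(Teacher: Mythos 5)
Your argument is correct and is essentially the paper's own proof: the paper likewise counts points via Corollary \ref{cor:num-tors}, observes the count is a monic polynomial in $q$ of the claimed degree, and concludes by the Lang--Weil estimate. You simply make explicit two steps the paper leaves implicit — the $\#G$ descent bookkeeping relating torsors over $\D_{\FF_{q^{m}}}^{*}$ to $\FF_{q^{m}}$-points of $|\Delta_{G}|$, and the Lang--Weil dictionary over all extensions $\FF_{q^{m}}$ that pins down both the dimension and the uniqueness of the top-dimensional geometric component.
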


\begin{proof}
Corollary \ref{cor:num-tors} shows that the number of $\FF_{q}$-points
in $\bc^{-1}(n)$ is 
\[
\left(q^{\sum_{i\ge1}\left(\left\lfloor \frac{n-1}{p^{i-1}}\right\rfloor -\left\lfloor \frac{n-1}{p^{i}}\right\rfloor \right)r_{i}(G)}-q^{\sum_{i\ge1}\left(\left\lfloor \frac{n-2}{p^{i-1}}\right\rfloor -\left\lfloor \frac{n-2}{p^{i}}\right\rfloor \right)r_{i}(G)}\right).
\]
This is a monic polynomial in $q$ of degree $\sum_{i\ge1}\left(\left\lfloor \frac{n-1}{p^{i-1}}\right\rfloor -\left\lfloor \frac{n-1}{p^{i}}\right\rfloor \right)r_{i}(G)$.
The corollary follows from the Lang--Weil estimate. 
\end{proof}
\begin{lem}
\label{lem:<=00003D-1}For $n>0$ with $p^{e}\nmid(n-1)$, we have
\[
\dim\bc^{-1}(n)-\frac{1+(p-1)\sum_{i=1}^{e}p^{e-i}r_{i}(G)}{p^{e}}n\le-1.
\]
Moreover, the equality holds exactly when $p^{h}\mid n$ and $n\le p^{e}$.
\end{lem}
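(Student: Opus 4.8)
The plan is to start from the dimension formula of the preceding corollary,
\[
\dim\bc^{-1}(n)=\sum_{i=1}^{e}\Bigl(\Bigl\lfloor\frac{n-1}{p^{i-1}}\Bigr\rfloor-\Bigl\lfloor\frac{n-1}{p^{i}}\Bigr\rfloor\Bigr)r_{i}(G),
\]
and to rewrite both sides of the asserted inequality using the cyclic decomposition $G=\bigoplus_{j=1}^{l}C_{p^{n_{j}}}$, for which $r_{i}(G)=\#\{j\mid n_{j}\ge i\}$ and $\max_{j}n_{j}=e$ (the exponent of $G$ is $p^{e}$). Put $m:=n-1$. Interchanging the order of summation and telescoping the inner sum $\sum_{i=1}^{n_{j}}(\lfloor m/p^{i-1}\rfloor-\lfloor m/p^{i}\rfloor)$ gives
\[
\dim\bc^{-1}(n)=\sum_{j=1}^{l}\Bigl(m-\Bigl\lfloor\frac{m}{p^{n_{j}}}\Bigr\rfloor\Bigr),
\]
and the same interchange together with the geometric sum $\sum_{i=1}^{n_{j}}p^{e-i}=p^{e-n_{j}}\frac{p^{n_{j}}-1}{p-1}$ yields $(p-1)\sum_{i=1}^{e}p^{e-i}r_{i}(G)=\sum_{j=1}^{l}(p^{e}-p^{e-n_{j}})$. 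Hence the coefficient occurring in the statement equals $C:=p^{-e}\bigl(1+\sum_{j=1}^{l}(p^{e}-p^{e-n_{j}})\bigr)$.

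Next I would reduce the inequality $\dim\bc^{-1}(n)-Cn\le-1$ to an elementary statement about residues. Multiplying by $p^{e}$, substituting $n=m+1$ and the two displayed expressions, and simplifying each summand by the identity $p^{n_{j}}\lfloor m/p^{n_{j}}\rfloor=m-(m\bmod p^{n_{j}})$, the inequality becomes
\[
\sum_{j=1}^{l}\Bigl(p^{e}-p^{e-n_{j}}\bigl(1+(m\bmod p^{n_{j}})\bigr)\Bigr)\ \ge\ p^{e}-m-1 .
\]
Since $0\le m\bmod p^{n_{j}}\le p^{n_{j}}-1$, every summand on the left is nonnegative. Choosing one index $j_{0}$ with $n_{j_{0}}=e$, its summand equals $p^{e}-1-(m\bmod p^{e})\ge p^{e}-1-m$, because $m\bmod p^{e}\le m$. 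This single term already bounds the left-hand side from below by $p^{e}-m-1$, which proves the inequality.

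For the equality statement I would track when both of these estimates are sharp. We need $m\bmod p^{e}=m$, i.e., $m<p^{e}$, i.e., $n\le p^{e}$; and we need every remaining summand to vanish, i.e., $m\equiv-1\pmod{p^{n_{j}}}$ for all $j\ne j_{0}$. If $G$ is cyclic there is no such $j$ and $h=0$, so equality holds exactly when $n\le p^{e}$, which is the assertion. Otherwise $r_{h}(G)\ge2$ and $r_{h+1}(G)\le1$ by the definition of $h$. If $h=e$ there is a second factor of level $e$, for which the vanishing condition $m\equiv-1\pmod{p^{e}}$ together with $m<p^{e}$ forces $m=p^{e}-1$, i.e., $n=p^{e}=p^{h}$. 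If $h<e$ then $j_{0}$ is the unique factor of level $e$, all other factors have level $\le h$, and at least one has level exactly $h$; the conditions for $j\ne j_{0}$ are nested and hence jointly equivalent to $m\equiv-1\pmod{p^{h}}$, that is $p^{h}\mid n$. In every case, equality holds precisely when $p^{h}\mid n$ and $n\le p^{e}$.

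The manipulations in the first two paragraphs are routine rearrangements, and the inequality ultimately collapses to the trivial bound $m\bmod p^{e}\le m$; the only step that genuinely requires care is the equality analysis — specifically verifying that the combined congruence conditions contributed by the factors $j\ne j_{0}$ are governed exactly by the modulus $p^{h}$, which is where the characterization of $h$ in terms of the $p^{i}$-ranks of $G$ is used.
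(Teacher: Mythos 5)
Your proof is correct, but it takes a genuinely different route from the paper. You pass to a cyclic decomposition $G=\bigoplus_{j}C_{p^{n_{j}}}$, telescope the dimension formula to $\sum_{j}\bigl(m-\lfloor m/p^{n_{j}}\rfloor\bigr)$ with $m=n-1$, rewrite the coefficient as $p^{-e}\bigl(1+\sum_{j}(p^{e}-p^{e-n_{j}})\bigr)$, and reduce the whole inequality to the per-factor statement $\sum_{j}\bigl(p^{e}-p^{e-n_{j}}(1+(m\bmod p^{n_{j}}))\bigr)\ge p^{e}-m-1$, which follows from nonnegativity of each summand plus the trivial bound $m\bmod p^{e}\le m$ applied to one factor of maximal order; equality then becomes the congruences $m\equiv-1\pmod{p^{n_{j}}}$ for the remaining factors together with $m<p^{e}$, and your case analysis correctly shows these are governed exactly by the modulus $p^{h}$ (I checked the algebraic equivalence and the $h=e$, $h<e$, and cyclic cases; the only direction you leave implicit is the easy converse that $p^{h}\mid n$, $n\le p^{e}$ does force all summands to vanish, which is immediate since $n_{j}\le h$ for $j\ne j_{0}$ and $p^{n_{j}}\mid p^{e}$). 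The paper instead works intrinsically with the rank sequence $r_{i}(G)$: it rewrites $D(n)$ via fractional parts, proves the quasi-periodicity $D(n+mp^{e})=D(n)-m$ to reduce to $2\le n\le p^{e}$, and then, with $v=v_{p}(n)$, regroups by $r_{i}(G)-r_{i+1}(G)$ and bounds $D(n)$ by a chain of four inequalities whose simultaneous equality is $r_{v+1}(G)=\cdots=r_{e}(G)=1$, i.e.\ $h\le v$. Your version avoids both the reduction modulo $p^{e}$ and the chained estimates and makes the locus of equality conceptually transparent (residue $-1$ modulo each factor's order within one period), at the cost of choosing a decomposition; the paper's argument is decomposition-free and exhibits explicitly how the defect grows by $1$ with each period $p^{e}$, which is also mildly informative beyond the statement.
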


\begin{proof}
We denote the left hand side by $D(n)$. We have
\begin{align*}
 & D(n)\\
 & =-\frac{n}{p^{e}}+\sum_{i=1}^{e}r_{i}(G)\left(\left\lfloor \frac{n-1}{p^{i-1}}\right\rfloor -\left\lfloor \frac{n-1}{p^{i}}\right\rfloor -\frac{(p-1)n}{p^{i}}\right)\\
 & =-\frac{n}{p^{e}}+\sum_{i=1}^{e}r_{i}(G)\left(\left(\frac{n-1}{p^{i}}-\left\lfloor \frac{n-1}{p^{i}}\right\rfloor +\frac{1}{p^{i}}\right)-\left(\frac{n-1}{p^{i-1}}-\left\lfloor \frac{n-1}{p^{i-1}}\right\rfloor +\frac{1}{p^{i-1}}\right)\right)\\
 & =-\frac{n}{p^{e}}+\sum_{i=1}^{e}r_{i}(G)\left(\left(\left\{ \frac{n-1}{p^{i}}\right\} +\frac{1}{p^{i}}\right)-\left(\left\{ \frac{n-1}{p^{i-1}}\right\} +\frac{1}{p^{i-1}}\right)\right).
\end{align*}
In particular, we have 
\[
D(n+mp^{e})=D(n)-m.
\]
Thus, the function $D(n)$ on the set $\{n\in\ZZ_{>0}\mid p^{e}\nmid(n-1)\}$
can attain the maximum only in the domain $\{2,\dots,p^{e}\}$. If
we denote the fractional part of a rational number $r$ by $\{r\}$,
then we have 
\[
\left\{ \frac{n-1}{p^{i}}\right\} +\frac{1}{p^{i}}=\begin{cases}
1 & (p^{i}\mid n)\\
\left\{ \frac{n}{p^{i}}\right\}  & (\text{otherwise}).
\end{cases}
\]
This shows $D(p^{e})=-1$. Now let $n\le p^{e}$ and let $v\le e$
be the $p$-adic valuation $v_{p}(n)$ of $n$. We have
\begin{align*}
 & D(n)\\
 & =-\frac{n}{p^{e}}+\sum_{i=1}^{v}r_{i}(G)\left(1-1\right)+r_{v+1}(G)\left(\left\{ \frac{n}{p^{v+1}}\right\} -1\right)\\
 & \qquad+\sum_{i=v+2}^{e}r_{i}(G)\left(\left\{ \frac{n}{p^{i}}\right\} -\left\{ \frac{n}{p^{i-1}}\right\} \right)\\
 & =-\frac{n}{p^{e}}-r_{v+1}(G)+r_{e}(G)\left\{ \frac{n}{p^{e}}\right\} +\sum_{i=v+1}^{e}\left(r_{i}(G)-r_{i+1}(G)\right)\left\{ \frac{n}{p^{i}}\right\} \\
 & \le-\frac{n}{p^{e}}-r_{v+1}(G)+r_{e}(G)\frac{n}{p^{e}}\\
 & \le-r_{v+1}(G)+(r_{e}(G)-1)\frac{n}{p^{e}}\\
 & \le-r_{v+1}(G)+(r_{e}(G)-1)\\
 & \le-1.
\end{align*}
Here equalities hold in all the last four inequalities if and only
if
\[
r_{v+1}(G)=r_{v+2}(G)=\cdots=r_{e}(G)=1.
\]
The last condition is equivalent to that $h\le v$. This proves the
lemma. 
\end{proof}
From (\ref{eq:a}) and (\ref{eq:b}) and from Lemma \ref{lem:<=00003D-1},
\begin{align*}
a(\cL,\bc) & =\frac{1+(p-1)\sum_{i=1}^{e}p^{e-i}r_{i}(G)}{p^{e}}
\end{align*}
and $b(\cL,\bc)$ is equal to the cardinality of the set
\begin{align*}
 & \{m\in\ZZ_{>0}\mid p^{e}\nmid(m-1),p^{h}\mid m\text{ and }m\le p^{e}\}\\
 & =\begin{cases}
\{2,3,\dots,p^{e}\} & (h=0)\\
\{p^{h}i\mid i=1,\dots,p^{e-h}\} & (h>0).
\end{cases}
\end{align*}
Thus,
\begin{align*}
b(\cL,\bc) & =\begin{cases}
p^{e}-1 & (h=0)\\
p^{e-h} & (h>0).
\end{cases}
\end{align*}
These values of the $a$- and $b$-invariants complies with Lagemann's
result \cite[Theorem 1.2]{lagemann2015distribution} on the number
of $G$-extensions with bounded conductor. 
\begin{defn}
We define a raising function $\bc'$ on $|\Delta_{G}|$ by 
\[
\bc'(x):=\begin{cases}
0 & (x=o)\\
\bc(x)-1 & (x\ne o).
\end{cases}
\]
The associated height is called the \emph{Artin--Schreier conductor
}in \cite{potthast2025onthe,gundlach2024counting}.
\end{defn}

\begin{lem}
When $n$ runs over positive integers with $p^{e}\nmid(n-1)$, the
fractions
\[
\frac{1+\sum_{i\ge1}(\lfloor(n-1)/p^{i-1}\rfloor-\lfloor(n-1)/p^{i}\rfloor)r_{i}(G)}{n-1}
\]
have the maximum and it is attained only when $n=2$. 
\end{lem}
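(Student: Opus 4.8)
The plan is to prove the slightly stronger statement that, writing $m:=n-1\in\ZZ_{>0}$ and
\[
a_i:=a_i(m):=\left\lfloor\frac{m}{p^{i-1}}\right\rfloor-\left\lfloor\frac{m}{p^{i}}\right\rfloor\ge 0,
\]
one has
\[
1+\sum_{i\ge 1}a_i\,r_i(G)\le\bigl(1+r_1(G)\bigr)\,m
\]
for every $m\ge 1$, with equality if and only if $m=1$. Dividing by $m$, and noting that $m=1$ (that is, $n=2$) is among the admissible indices since $p^{e}\nmid 1$, this yields the assertion, the maximum value of the fractions being $1+r_1(G)$.

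First I would record two elementary facts. The numbers $r_i(G)$ form a descending sequence $r_1(G)\ge r_2(G)\ge\cdots$, so $r_i(G)\le r_1(G)$ for all $i\ge 1$; moreover $r_i(G)=0$ for $i>e$. And the partial sums of the $a_i$ telescope: for every $k\ge 1$ we have $\sum_{i=1}^{k}a_i=m-\lfloor m/p^{k}\rfloor\le m$. In particular $\sum_{i\ge 1}a_i\,r_i(G)=\sum_{i=1}^{e}a_i\,r_i(G)$ and $\sum_{i=1}^{e}a_i\le m$.

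The main step is then a short chain of inequalities, using $a_i\ge 0$ and $r_1(G)\ge 0$:
\[
1+\sum_{i\ge 1}a_i\,r_i(G)\le 1+r_1(G)\sum_{i=1}^{e}a_i\le 1+r_1(G)\,m\le m+r_1(G)\,m=\bigl(1+r_1(G)\bigr)\,m.
\]
For the equality analysis, the last inequality $1+r_1(G)\,m\le m+r_1(G)\,m$ is an equality exactly when $m=1$ (this covers also the degenerate case $r_1(G)=0$, i.e.\ $G$ trivial, where the lemma is vacuous anyway). Hence equality in the whole chain forces $m=1$; conversely, at $m=1$ we have $a_1=1$ and $a_i=0$ for $i\ge 2$, and all three inequalities are equalities. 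Therefore the fraction attains its maximum $1+r_1(G)$ precisely at $n=2$.

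There is no serious obstacle here: the argument is entirely driven by the monotonicity $r_i(G)\le r_1(G)$ together with the telescoping identity for the $a_i$. The only points that deserve a line of care are checking that the equality case of the final inequality is exactly $m=1$, and recalling that $n=2$ is an admissible index so the supremum over admissible $n$ is genuinely attained. This lemma then feeds into the computation of the $a$- and $b$-invariants for the Artin--Schreier conductor $\bc'$ via formulas \eqref{eq:a} and \eqref{eq:b}, giving $a(\cL,\bc')=1+r_1(G)$ and $b(\cL,\bc')=1$.
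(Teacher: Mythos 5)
Your proof is correct. It differs from the paper's only in bookkeeping: the paper proves the same bound by an exact rearrangement, writing $(n-1)(1+r_{1}(G))-\bigl(1+\sum_{i\ge1}a_{i}r_{i}(G)\bigr)=n-2+\sum_{i=1}^{e-1}\lfloor(n-1)/p^{i}\rfloor\,(r_{i}(G)-r_{i+1}(G))+\lfloor(n-1)/p^{e}\rfloor\,r_{e}(G)$ (a summation-by-parts identity using the full monotonicity $r_{i}(G)\ge r_{i+1}(G)$), and then observes that each term is nonnegative and $n-2>0$ for $n\ge3$. You instead use only the weaker input $r_{i}(G)\le r_{1}(G)$ together with the telescoping bound $\sum_{i=1}^{e}a_{i}=(n-1)-\lfloor(n-1)/p^{e}\rfloor\le n-1$, and extract strictness for $n\ge3$ from the inequality $1<n-1$; this is marginally more elementary, while the paper's identity gives a slightly sharper lower bound on the gap (growing linearly in $n$) that is not needed for the lemma. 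Both arguments yield the same maximum value $1+r_{1}(G)$, attained exactly at $n=2$, and your remarks that the sum is finite (since $r_{i}(G)=0$ for $i>e$) and that $n=2$ is an admissible index are exactly the points that make the conclusion legitimate.
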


\begin{proof}
When $n=2$, the fraction is equal to $1+r_{1}(G)$. For every $n\ge3$,
we have
\begin{align*}
 & (n-1)(1+r_{1}(G))-\left(1+\sum_{i\ge1}\left(\left\lfloor \frac{n-1}{p^{i-1}}\right\rfloor -\left\lfloor \frac{n-1}{p^{i}}\right\rfloor \right)r_{i}(G)\right)\\
 & =n-2+\sum_{i=1}^{e-1}\left\lfloor \frac{n-1}{p^{i}}\right\rfloor (r_{i}(G)-r_{i+1}(G))+\left\lfloor \frac{n-1}{p^{e}}\right\rfloor r_{e}(G)\\
 & >0.
\end{align*}
This shows that the fraction doesn't take the maximum for $n\ge0$. 
\end{proof}
This lemma shows 
\[
a(\cL,\bc')=1+r_{1}(G)\text{ and }b(\cL,\bc')=1.
\]
This complies with Gundlach's result \cite[Corollary 4.5(b)]{gundlach2024counting}. 

\subsection{Counting with discriminants; the case of elementary abelian $p$-groups}

We now assume that $G$ is an elementary abelian $p$-group of rank
$r\ge2$. We identify $G$ with $(\FF_{p})^{r}$, adopting the additive
notation. For simplicity, we fix an algebraically closed field $L/k$
and consider only $L$-points of $\Delta_{G}$. We then have
\[
\Delta_{G}\langle L\rangle=(L\tpars/\wp(L\tpars))^{r}=\left(\bigoplus_{j>0;p\nmid j}L\cdot t^{-j}\right)^{r}.
\]
When $r=1$, for an element $h\in\Delta_{G}\langle L\rangle$ identified
with an element $\bigoplus_{j>0;p\nmid j}L\cdot t^{-j}$, its conductor
exponent $\bc(h)$ is given by 
\[
\bc(h)=\begin{cases}
-\ord(h)+1 & (h\ne0)\\
0 & (h=0).
\end{cases}
\]

Let $G^{*}:=\Hom(G,\FF_{p})$, which we again identify with $(\FF_{p})^{r}$.
For each $\chi=(\chi_{1},\dots,\chi_{r})\in G^{*}$, we have the induced
map
\[
\Delta_{G}\langle L\rangle\to\Delta_{\FF_{p}}\langle L\rangle,\quad h=(h_{1},\dots,h_{r})\mapsto\chi(h)=\sum_{i}\chi_{i}h_{i}.
\]

\begin{prop}
Let $h=(h_{1},\dots,h_{r})\in\Delta_{G}\langle L\rangle$. Suppose
that $\langle h_{1},\dots,h_{s}\rangle_{\FF_{p}}$ has dimension $u$,
equivalently that the corresponding $G$-torsor over $\D_{L}^{*}$
consists of $p^{r-u}$ copies of a $\FF_{p}^{u}$-torsor. Then, there
exists a sequence of $\FF_{p}$-linear subspaces
\[
V_{\bullet}^{h}\colon G^{*}=V_{1}^{h}\supsetneq V_{2}^{h}\supsetneq\cdots\supsetneq V_{s+1}^{h}\cong\FF_{p}^{r-u}
\]
and integers $j_{1}>j_{2}>\cdots>j_{s}$ such that 
\[
\bc(\chi(h))=\begin{cases}
j_{i}+1 & (\chi\in V_{i}^{h}\setminus V_{i+1}^{h}\text{ for }i\le s)\\
0 & (\chi\in V_{s+1}^{h}).
\end{cases}
\]
\end{prop}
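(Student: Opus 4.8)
The plan is to reduce the statement to elementary Artin--Schreier theory of $L\tpars$. By the description of the morphisms $\Delta_\chi$ recalled above, for $\chi=(\chi_1,\dots,\chi_r)\in G^*$ the element $\bc(\chi(h))$ is simply $\bc$ applied to $\phi_h(\chi)$, where
$\phi_h\colon G^*\to L\tpars/\wp(L\tpars)$, $\chi\mapsto\sum_i\chi_i h_i$,
is $\FF_p$-linear with image $W:=\langle h_1,\dots,h_r\rangle_{\FF_p}$ (of dimension $u$) and kernel of dimension $r-u$. So the whole problem is about the single function $\bc$ restricted to the finite $\FF_p$-subspace $W\subseteq L\tpars/\wp(L\tpars)$, pulled back along the surjection $\phi_h\colon G^*\twoheadrightarrow W$.

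Next I would record the structure of $\bc$ on $L\tpars/\wp(L\tpars)$. Since $L$ is algebraically closed, hence perfect, every class has a unique reduced representative $\sum_{j>0,\,p\nmid j}a_j t^{-j}$: the power-series part is removed because $\wp$ is bijective on $\fm\subseteq L\tbrats$, the constant term because $\wp\colon L\to L$ is surjective, and a term $a\,t^{-pk}$ because $a\,t^{-pk}=(a^{1/p}t^{-k})^p\equiv a^{1/p}t^{-k}\pmod{\wp(L\tpars)}$, applied repeatedly with strictly decreasing pole order; uniqueness follows since a nonzero $\wp(g)$ with $g$ having a pole has pole order divisible by $p$. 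Writing $\mathfrak{d}(w)$ for the pole order of the reduced representative of $w$ (and $\mathfrak{d}(0)=-\infty$), the given formula for $r=1$ reads $\bc(w)=\mathfrak{d}(w)+1$ for $w\ne0$ and $\bc(0)=0$. The point to check is that $\mathfrak{d}$ behaves like a negated valuation: $\mathfrak{d}(\lambda w)=\mathfrak{d}(w)$ for $\lambda\in\FF_p^{\times}$, and $\mathfrak{d}(w+w')\le\max(\mathfrak{d}(w),\mathfrak{d}(w'))$, with equality when $\mathfrak{d}(w)\ne\mathfrak{d}(w')$ --- the inequality because re-reducing the $p\mid j$ terms produced by summing two reduced representatives can only lower pole orders, the equality because a prime-to-$p$ leading term is neither created nor cancelled by such re-reduction. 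Consequently $W_{\le c}:=\{w\in W:\mathfrak{d}(w)\le c\}$ is an $\FF_p$-subspace of $W$ for every $c$.

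Finally I would assemble the flag. Let $j_1>\dots>j_s$ be the distinct values of $\mathfrak{d}$ on $W\setminus\{0\}$ (finitely many, as $W$ is finite), put $U_i:=W_{\le j_i}$ for $i\le s$ and $U_{s+1}:=\{0\}$; by the previous step these form a chain $W=U_1\supsetneq\dots\supsetneq U_{s+1}=0$ of subspaces with $\bc(w)=j_i+1$ exactly on $U_i\setminus U_{i+1}$, the inclusions being strict because each $j_i$ is attained. Setting $V_i^h:=\phi_h^{-1}(U_i)$ and using that $\phi_h$ is surjective onto $W$ with kernel of dimension $r-u$, we obtain $G^*=V_1^h\supsetneq\dots\supsetneq V_{s+1}^h=\ker\phi_h\cong\FF_p^{\,r-u}$, and $\bc(\chi(h))=\bc(\phi_h(\chi))$ equals $j_i+1$ for $\chi\in V_i^h\setminus V_{i+1}^h$ and $0$ for $\chi\in V_{s+1}^h$, which is the assertion. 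The only delicate step is the ultrametric property of $\mathfrak{d}$, i.e.\ making the reduction of $p\mid j$ terms precise enough to see that pole orders never increase and that prime-to-$p$ leading terms are stable; this is exactly where perfectness of $L$ is used. (Equivalently one could identify $V_\bullet^h$ with the upper-numbering ramification filtration of $\Gal\bigl(L\tpars(\wp^{-1}W)/L\tpars\bigr)$ transported to $W$, but the hands-on computation with reduced representatives seems shortest.)
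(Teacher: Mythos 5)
Your proof is correct and takes essentially the same route as the paper: the flag is the chain of conductor sublevel sets (the paper's $W_j=\{\chi\in G^*\mid\bc(\chi(h))\le j\}$, which you realize as preimages under $\phi_h$ of the sublevel sets in $W=\langle h_1,\dots,h_r\rangle_{\FF_p}$), and the only real content is their $\FF_p$-linearity, which you make explicit via the ultrametric behaviour of the pole order on reduced Artin--Schreier representatives while the paper leaves it implicit. One small remark: the step you flag as delicate is in fact vacuous, since an $\FF_p$-linear combination of representatives supported on prime-to-$p$ exponents is already reduced (no $p\mid j$ terms arise), so the pole order is just $-\ord_t$ restricted to that subspace and the ultrametric inequality is immediate.
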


\begin{proof}
For each $j\in\ZZ_{\ge0}$, we define the $\FF_{p}$-linear subspace
\[
W_{j}:=\{\chi\in G^{*}\mid\bc_{\chi}(h)\le j\}.
\]
The sequence $(W_{j})_{j}$ is increasing and $W_{j}=G^{*}$ for $j\gg0$.
We put $V_{1}^{h}$ to be $G^{*}$, $V_{2}^{h}$ to be the second
largest subspace among $\{W_{j}\}$ and so on. This construction ends
at $V_{s+1}^{h}=W_{0}$, which has dimension $r-u$. The value of
$\bc_{\chi}(h)$ is constant on $V_{i}^{h}\setminus V_{i+1}^{h}$
for each $i\le s$, which we set to be $j_{i}+1$. 
\end{proof}
Let $\bd\colon|\Delta_{G}|\to\ZZ$ be the discriminant exponent. From
the conductor-discriminant formula \cite[p. 104]{serre1979localfields},
we obtain:
\begin{cor}
We keep the notation of the last proposition. Let $j_{1}>j_{2}>\cdots>j_{s}$
be the indices such that $V_{j_{i}-1}\ne V_{j_{i}}$. Let $r_{i}:=\dim_{\FF_{p}}V_{j_{i}}$.
Then, 
\begin{align*}
\bd(h) & =\sum_{i=1}^{s}(j_{i}+1)(p^{r_{i}}-p^{r_{i+1}}).
\end{align*}
\end{cor}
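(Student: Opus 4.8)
The plan is to reduce the computation of $\bd(h)$, via the conductor--discriminant formula, to a sum of conductor exponents of the one-dimensional torsors $\chi(h)$ over $\chi\in G^{*}$, and then to evaluate that sum using the stratification of $G^{*}$ furnished by the preceding proposition. First I would establish the identity
\[
\bd(h)=\sum_{\chi\in G^{*}}\bc(\chi(h)).
\]
To do this I would put $K:=L\tpars$, let $\phi\colon\Gal(K^{\sep}/K)\to G$ be the monodromy homomorphism attached to $h$, and set $B:=\Image(\phi)\subseteq G$; then the $G$-torsor $h$ is a disjoint union of $[G:B]$ copies of the connected $B$-Galois extension $K_{h}/K$ cut out by $\ker\phi$. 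Since the discriminant ideal of a finite étale $K$-algebra is the product of those of its factors, $\bd$ is additive over connected components, so $\bd(h)=[G:B]\cdot\bd(K_{h}/K)$. The conductor--discriminant formula \cite[p.~104]{serre1979localfields} gives $\bd(K_{h}/K)=\sum_{\psi}\bc(\psi)$, the sum over characters $\psi$ of $\Gal(K_{h}/K)=B$, where $\bc(\psi)$ is the Artin conductor exponent; this agrees with the notion of conductor exponent defined earlier (via annihilation of $1+\fm^{n}$) through local class field theory. Finally the restriction map $G^{*}\twoheadrightarrow\widehat{B}$, $\chi\mapsto\chi|_{B}$, has all fibres of cardinality $[G:B]$, its kernel consists exactly of the $\chi$ for which $\chi(h)$ is trivial, and $\bc(\chi(h))=\bc(\chi|_{B})$ for every $\chi$; summing over $G^{*}$ and collecting terms yields the displayed identity.

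I would then substitute the preceding proposition, which produces the chain $G^{*}=V_{1}^{h}\supsetneq V_{2}^{h}\supsetneq\cdots\supsetneq V_{s+1}^{h}$ and the integers $j_{1}>\cdots>j_{s}$ with $\bc(\chi(h))=j_{i}+1$ on $V_{i}^{h}\setminus V_{i+1}^{h}$ and $\bc(\chi(h))=0$ on $V_{s+1}^{h}$. Setting $r_{i}:=\dim_{\FF_{p}}V_{i}^{h}$ (so $r_{1}=r$ and $r_{s+1}=r-u$), the stratum $V_{i}^{h}\setminus V_{i+1}^{h}$ has $p^{r_{i}}-p^{r_{i+1}}$ elements, whence
\[
\bd(h)=\sum_{\chi\in G^{*}}\bc(\chi(h))=\sum_{i=1}^{s}(j_{i}+1)\bigl(p^{r_{i}}-p^{r_{i+1}}\bigr),
\]
which is the asserted formula.

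The one genuinely delicate point will be the first step: the conductor--discriminant formula is classically stated for a Galois \emph{field} extension, so care is needed in using it to compute $\bd(h)$ when the $G$-torsor is disconnected. This is exactly the bookkeeping with $B=\Image(\phi)$, the multiplicity $[G:B]$, and the fibre sizes of $G^{*}\twoheadrightarrow\widehat{B}$ indicated above, together with the routine identification of the two notions of conductor exponent. Once the identity $\bd(h)=\sum_{\chi}\bc(\chi(h))$ is in hand, the remainder is a purely combinatorial substitution from the preceding proposition and presents no further difficulty.
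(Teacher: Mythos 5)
Your proof is correct and follows essentially the same route as the paper's: the conductor--discriminant formula applied to the connected component cut out by $B=\Image(\phi)$, additivity of the discriminant exponent over the $[G:B]=p^{r-u}$ copies, and the count of the strata $V_{i}^{h}\setminus V_{i+1}^{h}$ from the preceding proposition. The only difference is organizational — you package the paper's two cases ($u=r$ and $u<r$) into the single identity $\bd(h)=\sum_{\chi\in G^{*}}\bc(\chi(h))$ via the fibre count of the restriction map $G^{*}\to\Hom(B,\FF_{p})$, which is a tidy but equivalent bookkeeping of the same argument.
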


\begin{proof}
The case $u=r$ is a direct consequence of the conductor-discriminant
formula \cite[p. 104]{serre1979localfields}. If $u<r$, the relevant
$G$-torsor is the disjoint union of $p^{r-u}$ copies of a connected
$H$-torsor for a subgroup $\FF_{p}^{u}\cong H\subset G$. The filtration
of $H^{*}=\Hom(H,\FF_{p})=G^{*}$ induced by this $H$-torsor is identical
to 
\[
V_{1}^{h}/V_{s+1}^{h}\supset V_{2}^{h}/V_{s+1}^{h}\supset\cdots\supset V_{s+1}^{h}/V_{s+1}^{h}=0.
\]
Thus, the discriminant exponent of the $H$-torsor is 
\[
\sum_{i=1}^{s}(j_{i}+1)(p^{r_{i}-(r-u)}-p^{r_{i+1}-(r-u)})=p^{u-r}\sum_{i=1}^{s}(j_{i}+1)(p^{r_{i}}-p^{r_{i+1}}).
\]
The discriminant exponent of the original $G$-torsor is $p^{r-u}$
times the discriminant exponent of the $H$-torsor, which shows the
corollary. 
\end{proof}

Let us now fix a flag
\[
G^{*}=\FF_{p}^{r}=V_{1}\supsetneq V_{2}\supsetneq\cdots\supsetneq V_{s}\supsetneq V_{s+1}
\]
with $\dim V_{i}=r_{i}$. (For the number of such flags, see \cite[Section 1.5]{morrison2006integer}.)
By a suitable choice of bases of $G$ and $G^{*}$ which are dual
to each other, we may assume that 
\[
V_{j_{i}}=\{\chi=(\chi_{1},\dots,\chi_{r})\in\FF_{p}^{r}\mid\chi_{l}=0\text{ for \ensuremath{l>r_{i}}}\}.
\]
Let us also fix $j_{1}>j_{2}>\cdots>j_{s}$. An element $h=(h_{1},\dots,h_{r})\in\Delta_{G}\langle L\rangle$
satisfies 
\[
\bc(\chi(h))=\begin{cases}
j_{i}+1 & (\chi\in V_{i}\setminus V_{i+1}\text{ for }i\le s)\\
0 & (\chi\in V_{s+1}).
\end{cases}
\]
if and only if for each $i$, $\ord h_{\lambda}=-j_{i}$ for $r_{i+1}<\lambda\le r_{i}$
and their leading coefficients of $h_{\lambda}$, $r_{i+1}<\lambda\le r_{i}$
are linearly independent over $\FF_{p}$. Such $h$'s are parametrized
by 
\begin{equation}
\prod_{i=1}^{s}\left(L^{r_{i}-r_{i+1}}\setminus W\right)\times\left(L^{j_{i}-\lfloor j_{i}/p\rfloor-1}\right)^{r_{i}-r_{i+1}},\label{eq:sect}
\end{equation}
where 
\[
W=\bigcup_{(a_{m})\in(\FF_{p})^{r_{i}-r_{i+1}}}\left\{ (z_{m})_{1\le m\le r_{i-}r_{i-1}}\in L^{r_{i}-r_{i+1}}\bigmid\sum_{m}a_{m}z_{m}=0\right\} .
\]
In particular, the dimension of the parameter space is 
\begin{align*}
 & \sum_{i=1}^{s}(r_{i}-r_{i+1})+\sum_{i=1}^{s}(r_{i}-r_{i+1})\left(j_{i}-\left\lfloor \frac{j_{i}}{p}\right\rfloor -1\right)\\
 & =\sum_{i=1}^{s}(r_{i}-r_{i+1})j_{i}-\sum_{i=1}^{s}(r_{i}-r_{i+1})\left\lfloor \frac{j_{i}}{p}\right\rfloor .
\end{align*}

\begin{prop}
\label{prop:disc-max}Consider the rational number
\[
\alpha(s,j_{\bullet},r_{\bullet}):=\frac{1+\sum_{i=1}^{s}(r_{i}-r_{i+1})j_{i}-\sum_{i=1}^{s}(r_{i}-r_{i+1})\left\lfloor \frac{j_{i}}{p}\right\rfloor }{\sum_{i=1}^{s}(j_{i}+1)(p^{r_{i}}-p^{r_{i+1}})}
\]
where $s$, $j_{\bullet}$ and $r_{\bullet}$ varies under the conditions
as above.
\begin{enumerate}
\item Under the extra condition $u\ge2$, $\alpha(s,j_{\bullet},r_{\bullet})$
has the maximum value $\frac{1+r(p-1)}{p(p^{r}-1)}$, exactly when
$u=r$, $s=1$ and $j_{1}=p-1$.
\item Under the extra condition $u=1$ (and hence $s=1$), $\alpha(s,j_{\bullet},r_{\bullet})$
has the maximum value $\frac{1}{p^{r}-p^{r-1}}$, exactly when $j_{1}=1,2,\dots,p-1$.
\end{enumerate}
\end{prop}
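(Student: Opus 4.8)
The quantity $\alpha(s,j_\bullet,r_\bullet)$ is, by construction, the ratio $(\dim A + 1)/\bd(A)$ attached to the sectoroid $A$ cut out by a flag $G^{*} = V_1 \supsetneq \cdots \supsetneq V_{s+1}$ (with $\dim_{\FF_p} V_i = r_i$) and a strictly decreasing sequence $j_1 > \cdots > j_s$ of positive integers prime to $p$: the numerator is the dimension of the parameter space displayed above increased by $1$, and the denominator is the discriminant exponent computed in the preceding corollary. By formula (\ref{eq:a}) (see also Remark \ref{rem:well-def}), its supremum over all admissible data is $a(\cL,\bd)$, so the task is to carry out this discrete optimization. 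I would begin by recording three elementary facts: $j - \lfloor j/p \rfloor$ equals the number of integers in $[1,j]$ prime to $p$; one has $j - \lfloor j/p\rfloor \le (p-1)(j+1)/p$ with equality exactly when $j \equiv -1 \pmod p$; and $p^{d} - 1 \ge (p-1)d$ with equality exactly when $d = 1$, so that $\tfrac{r_i - r_{i+1}}{p^{r_i} - p^{r_{i+1}}} \le \tfrac{1}{p-1}$ for every step of the flag.

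Part (2) is immediate. When $u = 1$ one is forced to have $s = 1$ and $(r_1, r_2) = (r, r-1)$, so that
\[
\alpha = \frac{1 + j_1 - \lfloor j_1/p \rfloor}{(j_1+1)(p^{r} - p^{r-1})};
\]
since $\lfloor j_1/p\rfloor \ge 0$, the numerator is at most $j_1 + 1$, with equality precisely when $1 \le j_1 \le p-1$. Hence $\alpha \le (p^{r} - p^{r-1})^{-1}$, with equality exactly for $j_1 \in \{1,\dots,p-1\}$, as claimed.

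For part (1) the plan has three stages. First, reduce to flags reaching $0$, i.e.\ $\dim V_{s+1} = 0$ (equivalently $u = r$): a configuration whose flag has $\dim V_{s+1} = r - u$ factors through the rank-$u$ quotient of $G^{*}$, the dimension of its parameter space being the rank-$u$ dimension and its discriminant exponent being $p^{\,r-u}$ times the rank-$u$ discriminant exponent, so that $\alpha = \alpha_u / p^{\,r-u}$ where $\alpha_u$ is the corresponding rank-$u$ value; arguing by induction on the rank (with $u = 1$, i.e.\ part (2), as base case), one reduces the bound to $u = r$ modulo the elementary numerical inequality $(1 + u(p-1))(p^{r}-1) \le (1 + r(p-1))(p^{r} - p^{\,r-u})$ for $2 \le u < r$, which shows that the used rank $u = r$ strictly dominates. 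Second, with $u = r$ the case $s = 1$ is a direct computation: $\alpha = \frac{1 + r(j_1 - \lfloor j_1/p\rfloor)}{(j_1+1)(p^{r}-1)}$, and since $r \ge 2$ the function $j \mapsto \frac{1+rj}{j+1}$ is increasing on $\{1,\dots,p-1\}$ while for $j_1 \ge p$ the estimate $j_1 - \lfloor j_1/p\rfloor \le (p-1)(j_1+1)/p$ pushes the value strictly below the one at $j_1 = p-1$, so the maximum over $s = 1$ is $\frac{1 + r(p-1)}{p(p^{r}-1)}$, attained only at $j_1 = p-1$. Third, for $s \ge 2$ one runs an exchange/smoothing argument: taking a maximizer with the fewest flag steps, a comparison with the configuration obtained by merging two adjacent steps (keeping the larger of the two jumps) or by moving the top jump, together with the bounds $\tfrac{r_i - r_{i+1}}{p^{r_i} - p^{r_{i+1}}} \le \tfrac{1}{p-1}$ and $\tfrac{n_i - n_{i+1}}{j_i - j_{i+1}} \le 1$ (where $n_i := j_i - \lfloor j_i/p\rfloor$), shows that $s \ge 2$ cannot be maximal; combined with the $s = 1$ analysis this yields part (1) and the equality characterization $u = r$, $s = 1$, $(r_1, r_2) = (r, 0)$, $j_1 = p-1$.

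The main obstacle is the $s \ge 2$ step for $u = r$. Naive mediant or term-by-term bounds are too weak: a single bottom flag step can have local ratio as large as $1/p$, which already exceeds $\frac{1+r(p-1)}{p(p^{r}-1)}$ once $p \ge 3$ and $r \ge 2$, so the whole configuration must be controlled simultaneously. One must isolate a canonical move (a merge or a jump adjustment) that strictly increases $\alpha$ whenever $s \ge 2$, verify it always applies, and then carefully track the strict-inequality cases to pin down the equality statement — an extra subtlety being that several of the estimates above degenerate to equalities when $p = 2$ (where $p - 1 = 1$ and the extremal jump is already $j_1 = 1$).
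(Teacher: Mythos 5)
Your part (2) and the two easy stages of part (1) are fine: the $u=1$ computation is the same one-line argument as in the paper, the reduction from $2\le u<r$ to the full-rank case via $\alpha=\alpha_{u}/p^{\,r-u}$ is correct and amounts to Lemma \ref{lem:r-r'} (which the paper invokes at the level of the corollary rather than inside this proof), and your $u=r$, $s=1$ analysis, including the strict comparison for $j_{1}\ge p$ via $j_{1}-\lfloor j_{1}/p\rfloor\le (p-1)(j_{1}+1)/p$, is sound.

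The genuine gap is your third stage, which is the heart of the proposition: for $u=r$ and $s\ge2$ you only gesture at an ``exchange/smoothing argument'' with an unspecified ``canonical move,'' and you yourself concede it is the main obstacle. As stated it does not work: merging two adjacent flag steps while keeping the larger jump increases \emph{both} the numerator (since $n_i>n_{i+1}$) and the denominator (since $j_i>j_{i+1}$), so the effect on the ratio $\alpha$ is ambiguous, and the auxiliary bounds $\frac{r_i-r_{i+1}}{p^{r_i}-p^{r_{i+1}}}\le\frac{1}{p-1}$ and $\frac{n_i-n_{i+1}}{j_i-j_{i+1}}\le1$ do not by themselves rescue it — as you note, a single bottom step can have local ratio $1/p$, exceeding the target bound. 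No move is exhibited, verified to be always applicable, or shown to be strict, and the equality bookkeeping (delicate when $p=2$) is deferred. The paper avoids local exchanges altogether: it works with the affine functional $f:=\mathrm{num}-a\cdot\mathrm{denom}$ (so that $\alpha\le a\Leftrightarrow f\le-1$), encodes the data as $k_{1}\ge\cdots\ge k_{r}$ prime to $p$ and substitutes $k_{i}=\sum_{u\ge i}l_{u}$, obtaining $f=\sum_{i}\bigl(\frac{i(p-1)}{p}-a(p^{r}-p^{r-i})\bigr)l_{i}+\sum_{i}\{k_{i}/p\}$ with all linear coefficients negative; after fixing residues mod $p$ this reduces the maximization to finitely many extremal configurations, which are disposed of by \cite[Lemmas 5.27 and 5.28]{potthast2024onthe}. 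To complete your route you would either have to produce and verify a concrete strictly-improving move for every $s\ge2$ configuration (tracking equality cases), or switch, as the paper does, from the ratio to the linearized difference $f$, where monotonicity in each variable is transparent.
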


\begin{proof}
(1) We put $a:=\frac{1+r(p-1)}{p(p^{r}-1)}$ and define
\[
f(s,j_{\bullet},r_{\bullet}):=\sum_{i=1}^{s}(r_{i}-r_{i+1})j_{i}-\sum_{i=1}^{s}(r_{i}-r_{i+1})\left\lfloor \frac{j_{i}}{p}\right\rfloor -a\left(\sum_{i=1}^{s}(j_{i}+1)(p^{r_{i}}-p^{r_{i+1}})\right).
\]
Note that 
\[
\frac{1+\sum_{i=1}^{s}(r_{i}-r_{i+1})j_{i}-\sum_{i=1}^{s}(r_{i}-r_{i+1})\left\lfloor \frac{j_{i}}{p}\right\rfloor }{\sum_{i=1}^{s}(j_{i}+1)(p^{r_{i}}-p^{r_{i+1}})}\le a\Leftrightarrow f(s,j_{\bullet},r_{\bullet})\le-1.
\]
Moreover, the left inequality is strict if and only if so is the right
one. We need to show that the inequality $f(s,j_{\bullet},r_{\bullet})\le-1$
always holds and that the equality in the last inequality holds exactly
when $s=1$ and $j_{1}=p-1$. 

Consider $r$ variables $k_{1},\dots,k_{r}$ running over positive
integers satisfying $k_{1}\ge k_{2}\ge\cdots\ge k_{r}$ and $p\nmid k_{\lambda}$.
To each such tuple $(k_{1},\dots,k_{r})\in\ZZ^{r}$, we associate
integers $s>0$, $j_{1}>\cdots>j_{s}$, $r_{1}>\cdots>r_{s}$ so that
for $r_{i+1}<\lambda\le r_{i}$, $k_{\lambda}=j_{i}$. We define $f(k_{1},\dots,k_{r}):=f(s,j_{\bullet},r_{\bullet})$.
We also associate $(l_{1},\dots,l_{r})\in(\ZZ_{\ge0})^{r}$ given
by 
\begin{align*}
k_{\lambda} & =\sum_{u=\lambda}^{r}l_{u}.
\end{align*}
Thus, $\{k_{1},\dots,k_{r}\}=\{j_{1},\dots,j_{s}\}$. We then consider
the following change of variables
\begin{align*}
k_{i} & =\sum_{u=i}^{r}l_{u}.
\end{align*}
Then
\begin{align*}
f & =\sum_{i=1}^{r}k_{i}-\sum_{i=1}^{r}\left\lfloor \frac{k_{i}}{p}\right\rfloor -a\left(\sum_{i=1}^{r}(k_{i}+1)(p^{r-i+1}-p^{r-i})\right)\\
 & =\frac{p-1}{p}\sum_{i=1}^{r}k_{i}+\sum_{i=1}^{r}\left\{ \frac{k_{i}}{p}\right\} -a\left(\sum_{i=1}^{r}(k_{i}+1)(p^{r-i+1}-p^{r-i})\right)\\
 & =\frac{p-1}{p}\sum_{i=1}^{r}il_{i}-a\left(p^{r}\sum_{i=1}^{r}l_{i}-\sum_{i=1}^{r}l_{i}p^{r-i}\right)+\sum_{i=1}^{r}\left\{ \frac{k_{i}}{p}\right\} \\
 & =\sum_{i=1}^{r}\left(\frac{i(p-1)}{p}-a(p^{r}-p^{r-i})\right)l_{i}+\sum_{i=1}^{r}\left\{ \frac{k_{i}}{p}\right\} .
\end{align*}

We claim that 
\[
\frac{i(p-1)}{p}-a(p^{r}-p^{r-i})<0\quad(1\le i\le r).
\]
The claim is equivalent to
\[
a>\frac{p-1}{p^{r+1}}\max\left\{ \frac{i}{1-p^{-i}}\mid i\in\{1,\dots,r\}\right\} .
\]
It is easy to check that the function $\frac{x}{1-p^{-x}}$ is monotonically
increasing for $x\ge0$. The claim follows from
\[
a=\frac{1+r(p-1)}{p^{r+1}-p}>\frac{r(p-1)}{p^{r+1}-p}=\frac{p-1}{p^{r+1}}\cdot\frac{r}{1-p^{-r}}.
\]

From the claim, when we fix residues $\overline{k_{i}}\in\{1,\dots,p-1\}$
of $k_{i}$'s modulo $p$, then $f$ is a strictly decreasing function
and the maximum is attained when $k_{i}$'s satisfies the following
condition:
\begin{itemize}
\item $\lfloor k_{r}/p\rfloor=0$ and
\item if $\overline{k_{i}}<\overline{k_{i-1}}$, then$\lfloor k_{i}/p\rfloor=\lfloor k_{i-1}/p\rfloor+1$. 
\end{itemize}
We can write
\[
\left(\left\lfloor \frac{k_{1}}{p}\right\rfloor ,\dots,\left\lfloor \frac{k_{r}}{p}\right\rfloor \right)=(\overset{b_{1}}{\overbrace{\lambda-1,\dots,\lambda-1}},\dots,\overset{b_{\lambda-1}}{\overbrace{1,\dots,1}},\overset{b_{\lambda}}{\overbrace{0,\dots,0}}).
\]
Let $B_{\nu}:=\sum_{\mu=1}^{\nu}b_{\mu}.$ Since $k_{i}-\lfloor k_{i}/p\rfloor=\overline{k_{i}}+(p-1)\lfloor k_{i}/p\rfloor$,
\begin{align*}
f & =\sum_{i=1}^{r}\overline{k_{i}}+(p-1)\sum_{i=1}^{r}\left\lfloor \frac{k_{i}}{p}\right\rfloor -a\left(\sum_{i=1}^{r}\left(\overline{k_{i}}+p\left\lfloor \frac{k_{i}}{p}\right\rfloor +1\right)(p^{r-i+1}-p^{r-i})\right)\\
 & =\sum_{i=1}^{r}\overline{k_{i}}+(p-1)\sum_{\mu=1}^{\lambda-1}(\lambda-\mu)b_{\mu}\\
 & \qquad-a\left((p-1)\sum_{i=1}^{r}(\overline{k_{i}}+1)p^{r-i}+(p-1)\sum_{i=1}^{r}\left\lfloor \frac{k_{i}}{p}\right\rfloor p^{r+1-i}\right)\\
 & =\sum_{i=1}^{r}\overline{k_{i}}+(p-1)\sum_{\mu=1}^{\lambda-1}(\lambda-\mu)b_{\mu}\\
 & \qquad-a\left((p-1)\sum_{i=1}^{r}(\overline{k_{i}}+1)p^{r-i}+(p-1)\sum_{\mu=1}^{\lambda-1}(\lambda-\mu)\sum_{\xi=1+B_{\nu-1}}^{B_{\nu}}p^{r+1-\xi}\right).
\end{align*}
From \cite[Lemma  5.30]{potthast2025onthe}, if $\lambda\ge2$, then
$f<-1$. If $\lambda=1$, then 
\[
p-1\ge k_{1}\ge\cdots\ge k_{r}\ge1.
\]
From \cite[Lemma  5.29]{potthast2025onthe}, we have $f\le-1$, where
the equality holds exactly when $k_{1}=\cdots=k_{r}=p-1$, equivalently
when $u=r$, $s=1$ and $j_{1}=p-1$. 

(2) In this case, $s=1$ and $r_{1}=r$ and $r_{2}=r-1$. Thus, $\alpha(s,j_{\bullet},r_{\bullet})$
reduces to 
\[
\frac{1+j_{1}-\left\lfloor \frac{j_{1}}{p}\right\rfloor }{(j_{1}+1)(p^{r}-p^{r-1})}=\frac{1}{p^{r}-p^{r-1}}-\frac{\left\lfloor \frac{j_{1}}{p}\right\rfloor }{(j_{1}+1)(p^{r}-p^{r-1})},
\]
which shows the assertion.
\end{proof}
\begin{lem}[{{]}\cite[Lemma  5.24]{potthast2025onthe}}]
\label{lem:r-r'}For $1\le r'<r$, we have 
\[
\frac{1+r'(p-1)}{p^{r-r'+1}(p^{r'}-1)}\le\frac{1+r(p-1)}{p(p^{r}-1)}.
\]
Moreover, the equality holds only when $p=r=2$ (and hence $r'=1$). 
\end{lem}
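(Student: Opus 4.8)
The plan is to reduce the claimed inequality to the monotonicity of a single function of one integer variable. First I would clear the common factor: dividing numerator and denominator on each side by $p^{r+1}$ gives
\[
\frac{1+r'(p-1)}{p^{r-r'+1}(p^{r'}-1)}=\frac{1}{p^{r+1}}\cdot\frac{1+r'(p-1)}{1-p^{-r'}},\qquad
\frac{1+r(p-1)}{p(p^{r}-1)}=\frac{1}{p^{r+1}}\cdot\frac{1+r(p-1)}{1-p^{-r}}.
\]
Hence, writing $h(t):=\frac{1+t(p-1)}{1-p^{-t}}$ for $t\in\ZZ_{\ge1}$, the lemma is exactly the statement that $h(r')\le h(r)$ whenever $1\le r'<r$, with equality only for $p=r=2$. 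So it suffices to prove that $h$ is nondecreasing on $\ZZ_{\ge1}$ and to determine when two consecutive values of $h$ agree.

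Next I would compute $h(t+1)-h(t)$. Putting the difference over the (positive) common denominator $(1-p^{-t-1})(1-p^{-t})$ and simplifying the numerator — the key manipulation being to use $(p-1)/p=1-p^{-1}$ to collapse the mixed terms — one finds that the numerator equals $\frac{p-1}{p^{t+1}}\bigl(p^{t+1}-(p+1)-t(p-1)\bigr)$. Therefore $h(t+1)\ge h(t)$ if and only if
\[
p^{t+1}\ \ge\ p+1+t(p-1),
\]
and the two inequalities are strict together. This reduces everything to the elementary estimate $p^{t+1}\ge p+1+t(p-1)$ for all primes $p$ and all $t\ge1$.

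For the elementary estimate, I would fix $t\ge1$ and view $\phi(p):=p^{t+1}-(t+1)p+(t-1)$ as a function of a real variable $p\ge1$. Since $\phi'(p)=(t+1)(p^{t}-1)\ge0$, $\phi$ is nondecreasing (strictly so for $p>1$), hence $\phi(p)\ge\phi(2)=2^{t+1}-t-3\ge0$, where the last inequality $2^{t+1}\ge t+3$ is an easy induction on $t$ with equality only at $t=1$. Thus $p^{t+1}-p-1-t(p-1)\ge0$ always, with equality exactly when $p=2$ and $t=1$. Consequently $h$ is nondecreasing on $\ZZ_{\ge1}$, which yields $h(r')\le h(r)$ for all $r'<r$; and if equality holds then $h(j)=h(j+1)$ for every $j$ with $r'\le j\le r-1$, forcing $j=1$ and $p=2$ for each such $j$, which is possible only when $r'=1$ and $r=2$, giving the asserted equality case $p=r=2$. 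The only real obstacle is keeping the algebra in the simplification of $h(t+1)-h(t)$ honest and propagating the equality condition correctly through the chain of equivalences; the remaining steps are routine.
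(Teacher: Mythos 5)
Your proof is correct. Note that the paper itself does not prove this lemma at all: it is quoted verbatim from Potthast (\cite[Lemma 5.22]{potthast2024onthe}), so your argument is a self-contained replacement rather than a variant of anything in the text. The reduction is sound: since $p^{r-r'+1}(p^{r'}-1)=p^{r+1}(1-p^{-r'})$ and $p(p^{r}-1)=p^{r+1}(1-p^{-r})$, the inequality is indeed equivalent to monotonicity of $h(t)=\frac{1+t(p-1)}{1-p^{-t}}$ on the integers $t\ge 1$. Your key algebraic identity checks out: over the positive common denominator, the numerator of $h(t+1)-h(t)$ simplifies to $\frac{p-1}{p^{t+1}}\bigl(p^{t+1}-(t+1)p+(t-1)\bigr)$, which agrees with your expression $p^{t+1}-(p+1)-t(p-1)$, and the sign analysis via $\phi(p)=p^{t+1}-(t+1)p+(t-1)$, $\phi'(p)=(t+1)(p^{t}-1)\ge 0$, $\phi(2)=2^{t+1}-t-3\ge 0$ with equality only at $t=1$, is correct. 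The propagation of the equality case is also handled properly: equality $h(r')=h(r)$ forces $h(j)=h(j+1)$ for every intermediate $j$, each of which requires $p=2$ and $j=1$, so necessarily $r'=1$, $r=2$, $p=2$ (where equality does occur, both sides being $1/2$). The argument in fact works for any real $p\ge 2$, so it is slightly more general than needed for a prime characteristic.
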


We can get the following corollary, which complies with \cite[Theorem 1.1]{potthast2025onthe}:
\begin{cor}
For any line bundle $\cL$ on $\cX=\B G$ and any sectoroid subdivision
of $|\cJ_{\infty}\cX|=|\Delta_{G}|$, we have
\[
a(\cL,\bd)=\frac{1+r(p-1)}{p(p^{r}-1)}\text{ and }b(\cL,\bd)=\begin{cases}
p-1 & (r=1)\\
4 & (p=r=2)\\
1 & (\text{otherwise}).
\end{cases}
\]
\end{cor}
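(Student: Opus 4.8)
The plan is to deduce the corollary from Proposition \ref{prop:disc-max}, Lemma \ref{lem:r-r'} and the subdivision-free expressions recorded in Remark \ref{rem:well-def}. Since $\N^{1}(\cX)_{\RR}=0$ the line bundle $\cL$ is irrelevant, and by Remark \ref{rem:well-def} the invariants do not depend on the chosen sectoroid subdivision; concretely
\[
a(\cL,\bd)=\sup_{o\ne x\in|\Delta_{G}|}\frac{\dim\overline{\{x\}}+1}{\bd(x)},\qquad b(\cL,\bd)=\#\Bigl\{x\ne o\Bigm|\frac{\dim\overline{\{x\}}+1}{\bd(x)}=a(\cL,\bd)\Bigr\}.
\]
First I would stratify $|\Delta_{G}|\setminus\{o\}$ by the data $(u,V_{\bullet}^{h},j_{\bullet})$ attached to a nontrivial $G$-torsor $h$ in the proposition and corollary preceding Proposition \ref{prop:disc-max}: the stratum determined by a flag $G^{*}=V_{1}\supsetneq\cdots\supsetneq V_{s+1}$ with $\dim V_{s+1}=r-u$ and integers $j_{1}>\cdots>j_{s}>0$ coprime to $p$ is parametrised by (\ref{eq:sect}), hence is irreducible, has dimension $\sum_{i}(r_{i}-r_{i+1})j_{i}-\sum_{i}(r_{i}-r_{i+1})\lfloor j_{i}/p\rfloor$, and carries the constant value $\bd\equiv\sum_{i}(j_{i}+1)(p^{r_{i}}-p^{r_{i+1}})$; thus the ratio $(\dim+1)/\bd$ on this stratum is precisely the number $\alpha(s,j_{\bullet},r_{\bullet})$ of Proposition \ref{prop:disc-max}. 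Since $\bd$ is constant on each stratum while $\dim\overline{\{x\}}$ is maximal only at the unique generic point, every $x$ attaining the supremum is the generic point of exactly one maximising stratum, so $b(\cL,\bd)$ counts the strata on which $\alpha$ attains its maximum.

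Next I would identify that maximum. By Proposition \ref{prop:disc-max}(1), over the strata with $u\ge2$ — which occur only for $r\ge2$ — the quantity $\alpha$ is maximal, with value $\frac{1+r(p-1)}{p(p^{r}-1)}$, exactly at $u=r$, $s=1$, $j_{1}=p-1$; there the flag must be $G^{*}\supsetneq\{0\}$, so this is a single stratum. By Proposition \ref{prop:disc-max}(2), over the strata with $u=1$ the maximum of $\alpha$ is $\frac{1}{p^{r}-p^{r-1}}$, attained for $j_{1}\in\{1,\dots,p-1\}$ and an arbitrary hyperplane $V_{2}\subset G^{*}$. Applying Lemma \ref{lem:r-r'} with $r'=1$ gives $\frac{1}{p^{r}-p^{r-1}}=\frac{1+1\cdot(p-1)}{p^{r-1+1}(p^{1}-1)}\le\frac{1+r(p-1)}{p(p^{r}-1)}$, with equality only for $p=r=2$; and when $r=1$ the $u\ge2$ case is empty while the two fractions coincide (both equal $\frac{1}{p-1}$). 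Hence in all cases $a(\cL,\bd)=\frac{1+r(p-1)}{p(p^{r}-1)}$.

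Finally I would count the maximising strata. If $r=1$, the maximum is attained only on the $u=1$ strata with $j_{1}\in\{1,\dots,p-1\}$ (one flag, $p-1$ values of $j_{1}$), so $b(\cL,\bd)=p-1$. If $r\ge2$ and $(p,r)\ne(2,2)$, the $u=1$ maximum is strictly smaller than $a(\cL,\bd)$, so the maximum is attained only on the single $u=r$, $s=1$, $j_{1}=p-1$ stratum, giving $b(\cL,\bd)=1$. If $p=r=2$, the maximum is attained both on that stratum and, by the equality case of Lemma \ref{lem:r-r'}, on the $u=1$ strata with $j_{1}=1$; these are indexed by the hyperplanes $V_{2}\subset\FF_{2}^{2}$, of which there are $(2^{2}-1)/(2-1)=3$, so $b(\cL,\bd)=1+3=4$, matching \cite[Theorems 5.30 and 6.20]{potthast2024onthe}. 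The analytic content is entirely contained in Proposition \ref{prop:disc-max} and Lemma \ref{lem:r-r'}; the main point to be careful about is that the stratification of $|\Delta_{G}|\setminus\{o\}$ is \emph{exhaustive} — every nontrivial $G$-torsor over some $\D_{k'}^{*}$ lies in exactly one $(u,V_{\bullet},j_{\bullet})$-stratum, which is the content of the proposition preceding Proposition \ref{prop:disc-max} — and that in the tie case $p=r=2$ the count correctly records the free hyperplane parameter $V_{2}$.
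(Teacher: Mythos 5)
Your proposal is correct and follows essentially the same route as the paper: independence of the subdivision via Remark \ref{rem:well-def}, the stratification of $|\Delta_{G}|\setminus\{o\}$ by $(u,V_{\bullet},j_{\bullet})$ with the parametrization (\ref{eq:sect}), the maximum of $\alpha(s,j_{\bullet},r_{\bullet})$ from Proposition \ref{prop:disc-max} and Lemma \ref{lem:r-r'}, and the same count of maximising strata ($p-1$ for $r=1$, $1+3=4$ for $p=r=2$, $1$ otherwise). The only cosmetic difference is that you work with the point-wise formulas of Remark \ref{rem:well-def} while the paper takes the strata themselves as the sectoroid subdivision and applies (\ref{eq:a}) and (\ref{eq:b}) directly.
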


\begin{proof}
From Remark \ref{rem:well-def}, we may choose any sectoroid subdivision.
We choose the subdivision into loci of $h$'s determined by given
$s$, $j_{\bullet}$, $r_{\bullet}$ and a flag $V_{\bullet}$ of
$G^{*}$; each locus is isomorphic to the irreducible variety given
in (\ref{eq:sect}) after the base change to an algebraically closed
field $L$. Then, the $a$-invariant is the maximum of the rational
numbers $\alpha(s,j_{\bullet},r_{\bullet})$ in Proposition \ref{prop:disc-max}.
Note that $\frac{1}{p^{r}-p^{r-1}}=\frac{1+r'(p-1)}{p^{r-r'+1}(p^{r'}-1)}$
with $r'=1$. The assertion for the $a$-invariant now follows from
Proposition \ref{prop:disc-max} and Lemma \ref{lem:r-r'}. 

To show the assertion for the $b$-invariant, we need to find how
many sectoroids give the maximum of $\alpha(s,j_{\bullet},r_{\bullet})$.
If $r=1$, then $s$, $r_{\bullet}$ and $V_{\bullet}$ are uniquely
determined as $s=1$, $r_{1}=1>r_{2}=0$ and $V_{\bullet}=\{G^{*}=V_{1}\supset V_{2}=0\}$.
Thus, there are exactly $p-1$ sectoroids corresponding to $j_{1}=1,\dots,p-1$
and hence $b(\cL,\bd)=p-1$ in this case. If $r\ge2$ and $(p,r)\ne2$,
then Proposition \ref{prop:disc-max} and Lemma \ref{lem:r-r'} show
that the maximum of $\alpha(s,j_{\bullet},r_{\bullet})$ is attained
exactly when $u=r$, $s=1$ and $j_{1}=p-1$. Then, the only possible
flag $V_{\bullet}$ is $\{G^{*}=V_{1}\supset V_{2}=0\}$. Thus, $b(\cL,\bd)=1$.
Finally, if $(p,r)=(2,2)$, then the maximum of $\alpha(s,j_{\bullet},r_{\bullet})$
is attained when $u=2$, $s=1$ and $j_{1}=1$ and when $u=1$, $s=1$
and $j_{1}=1$. In the former case, the only possible flag $V_{\bullet}$
is again $\{G^{*}=V_{1}\supset V_{2}=0\}$. In the latter case, the
possible flags $V_{\bullet}$ are ones of the form $\{(\FF_{2})^{2}\cong G^{*}=V_{1}\supset V_{2}\cong\FF_{2}\}$;
there are three of them. In total, four sectoroids give the maximum
of $\alpha(s,j_{\bullet},r_{\bullet})$ and hence $b(\cL,\bd)=4$.
\end{proof}

\section{Fano varieties with canonical singularities\label{sec:Fano-varieties}}

In this section, we consider the case where $-K_{X}$ is ample and
$X$ has only canonical singularities. For simplicity, we assume that
there exists a resolution of singularities, $f\colon Y\to X$. Then,
there are only finitely many crepant divisors over $X$ and denote
its number by $\gamma(X)$. A version of the Manin conjecture for
Fano varieties with canonical singularities \cite[Conjecture 2.3]{yasuda2014densities}
(applied to the positive characteristic and isotrivial case without
change) says that for some thin subset $T\subset X(F)$ and a positive
constant $C>0$, we have
\[
\#\{x\in X(F)\setminus T\mid H_{-K_{X}}(x)\le B\}\sim CB(\log X)^{\rho(X)+\gamma(X)-1}\quad(B\to\infty).
\]
The anti-canonical height function $H_{-K_{X}}$ is regarded as the
height function associated to the trivially raised canonical line
bundle $(\omega_{X}^{-1},0)$. To see that our main ``conjecture''
is compatible with this conjecture, we will show $a(\omega_{X}^{-1},0)=1$
and $b(\omega_{X}^{-1},0)$ under certain extra assumptions in this
section.

We now choose an arbitrary sectoroid subdivision $\{A_{i}\}_{i\in I}$
of $|\cJ_{\infty}\cX|$. Then, for every $i\in I^{*}$, $\gw(A_{i})\le-1$.
Let $I_{0}:=\{i\in I^{*}\mid\gw(A_{i})=-1\}$. 
\begin{prop}
\label{prop:a=00003D1-1}We have $a(\omega_{X}^{-1},0)=1$.
\end{prop}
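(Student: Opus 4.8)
The plan is to compute the set $\Sigma:=\{a'\in\RR_{\ge 0}\mid a'\llbracket\omega_X^{-1},0\rrbracket+\fK_\cX\in\fPEff(\cX)\}$ and to show that $\Sigma=[1,\infty)$, so that $a(\omega_X^{-1},0)=\inf\Sigma=1$; the case $a'=0$ then also yields $\fK_\cX\notin\fPEff(\cX)$, which is what makes the $a$-invariant defined in the first place. First I would unwind the classes. Since the raising function is identically $0$, $\llbracket\omega_X^{-1},0\rrbracket=[\pi^{*}\omega_X^{-1}]=-\pi^{*}K_X\in\N^1(\cX)_\RR\subset\fN^1(\cX)$. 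Since $\cX\to(X,B)$ is crepant, $\omega_\cX=\pi^{*}(K_X+B)$, so $\fK_\cX=\pi^{*}(K_X+B)+\sum_{i\in I^{*}}(-\gw(A_i)-1)[A_i]^{*}$, where $B$ is effective (it is the descent of the ramification divisor of $\cX\to X$) and $-\gw(A_i)-1\ge 0$ for every $i\in I^{*}$ by the standing weight inequality of this section. Consequently $a'\llbracket\omega_X^{-1},0\rrbracket+\fK_\cX=\pi^{*}\big((1-a')K_X+B\big)+\sum_{i\in I^{*}}(-\gw(A_i)-1)[A_i]^{*}$.

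For $\Sigma\supseteq[1,\infty)$ it suffices, by the definition of $\fPEff(\cX)$ as the dual cone of $\fMov_1(\cX)$, to check that for $a'\ge 1$ this class pairs non-negatively with $\llbracket\cC\rrbracket=[\cC]+\sum_{v}[A_{i_v}]$ for every moving stacky curve $\cC\to\cX_{\overline k}$. Because $\pi^{*}D$ has stacky degree equal to the ordinary intersection number $D\cdot\overline{\cC}$ on the coarse curve, this pairing equals $(1-a')(K_X\cdot\overline{\cC})+(B\cdot\overline{\cC})+\sum_{v}(-\gw(A_{i_v})-1)$. Here $(1-a')(K_X\cdot\overline{\cC})=(a'-1)(-K_X\cdot\overline{\cC})\ge 0$ since $-K_X$ is ample; each term $-\gw(A_{i_v})-1\ge 0$; and $B\cdot\overline{\cC}\ge 0$ because $B$ is an effective $\QQ$-Cartier divisor and $\overline{\cC}$, being the coarse curve of the geometric generic fibre of a covering family with dominant structure map, is not contained in $\operatorname{Supp}B$. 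Hence $[1,\infty)\subseteq\Sigma$, so $a(\omega_X^{-1},0)\le 1$.

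For the reverse inclusion I would fix $a'$ with $0\le a'<1$ and exhibit a covering family against which $a'\llbracket\omega_X^{-1},0\rrbracket+\fK_\cX$ pairs negatively, which forces $a'\notin\Sigma$. Since $X$ is projective of dimension $\ge 1$, cutting $X_{\overline k}$ by general very ample divisors produces a smooth irreducible curve $\overline{\cC}$ moving in a covering family of curves on $X_{\overline k}$; by Bertini I may take $\overline{\cC}$ disjoint from $\operatorname{Sing}X$ and from the image of the stacky locus of $\cX$. In the situation of this section $\cX\to X$ is an isomorphism in codimension one — otherwise a general curve meeting a codimension-one stacky divisor with stabilizer of order $e$ would traverse a twisted sectoroid of Gorenstein weight $-1/e>-1$, contradicting $\gw(A_i)\le -1$ — so $B=0$ and $\overline{\cC}$ avoids every non-isomorphism locus, hence lifts to a stacky curve $\cC\to\cX_{\overline k}$ with \emph{no} stacky points, $\llbracket\cC\rrbracket=[\cC]\in\N_1(\cX)_\RR$. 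The pairing then collapses to $(1-a')(K_X\cdot\overline{\cC})=-(1-a')(-K_X\cdot\overline{\cC})<0$. Thus $\Sigma\subseteq[1,\infty)$, giving $a(\omega_X^{-1},0)\ge 1$, and the proposition follows.

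The step I expect to be the main obstacle is the reduction $B=0$, i.e. that the crepancy of $\cX\to(X,B)$ is exactly absorbed by the twisted terms of $\fK_\cX$: one must reconcile the coefficient $1-1/e$ of a codimension-one stacky divisor in $B$ with the Gorenstein weight $-1/e$ of the associated divisorial sectoroid and with the section's inequality $\gw(A_i)\le -1$. Were $B\ne 0$ permitted, the lower bound would instead demand that the non-negative ``twisted correction'' $(B\cdot\overline{\cC})+\sum_{v}(-\gw(A_{i_v})-1)$ be made of lower order than $-K_X\cdot\overline{\cC}$ along some covering family — equivalently that its ratio with $-K_X\cdot(\,\cdot\,)$ has infimum $0$ over movable curve classes — which is the only nontrivial point; the upper bound, by contrast, is unconditional.
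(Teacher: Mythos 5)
Your proposal is correct and follows essentially the same route as the paper's proof: for $a'\ge1$ the class $a'\llbracket\omega_{X}^{-1},0\rrbracket+\fK_{X}$ pairs non-negatively with augmented classes of covering families because every twisted coefficient $-\gw(A_{i})-1$ is non-negative (canonicity), while for $a'<1$ a covering family of curves pairs negatively since $-K_{X}$ is ample and curves on the variety contribute no twisted sectoroid terms. Your extra maneuvers --- carrying along a possibly nonzero crepant divisor $B$ and the heuristic reduction to $B=0$ via the claimed weight $-1/e$ (which is only valid in the tame case) --- are not needed here, since in the setting of this proposition the relevant stack is $X$ itself (or $\pi$ is assumed to be an isomorphism in codimension one), so $B=0$ from the outset, exactly as in the paper's shorter argument.
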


\begin{proof}
Let $\widetilde{C}\to X_{\overline{k}}$ be a covering family of curves.
Then, for $a'<1$
\[
(\llbracket\widetilde{C}\rrbracket,a'\llbracket\omega_{X}^{-1},0\rrbracket+\fK_{X})=([\widetilde{C}],(1-a')[\omega_{X}])<0
\]
and hence $a'\llbracket\omega_{X}^{-1},0\rrbracket+\fK_{X}\notin\fPEff(X)$.
On the other hand, 
\begin{equation}
\llbracket\omega_{\cX}^{-1},0\rrbracket+\fK_{\cX}\in\sum_{i\in I^{*}\setminus I_{0}}\RR_{>0}[A_{i}]^{*}.\label{eq:expr}
\end{equation}
This class intersects non-negatively with the augmented class of any
covering family of curves. Hence $\llbracket\omega_{\cX}^{-1},0\rrbracket+\fK_{\cX}\in\fPEff(\cX)$.
Thus, we get $a(\omega_{X}^{-1},0)=1$. 
\end{proof}
\begin{prop}
We have $\gamma(X)=\#I_{0}$. Moreover, there exists a natural one-to-one
correspondence between the set of crepant divisors over $X$ and $I_{0}$. 
\end{prop}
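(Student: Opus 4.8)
The plan is to exhibit both $\gamma(X)$ and $\#I_{0}$ as counting the same objects: the Gorenstein-codimension-one ``building blocks'' of the arc space, which are the maximal divisorial sets of the crepant divisors over $X$. First I would fix a log resolution $f\colon Y\to X$ of the pair $(X,B)$, where $\cX\to(X,B)$ is the crepant structure, and write $K_{Y}=f^{*}(K_{X}+B)+\sum_{j=1}^{N}a_{j}E_{j}$. Since $X$ has canonical singularities one has $a_{j}\ge0$ for all $j$, and the crepant divisors over $X$ are exactly the exceptional $E_{j}$ with $a_{j}=0$; thus $\gamma(X)$ equals the number of these, and the task is to match them bijectively with $I_{0}$.

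The key input is the arc-theoretic meaning of discrepancies. For a prime divisor $E$ over $X$, the maximal divisorial set $N_{E}\subset|\J_{\infty}X|$ of Ein--Lazarsfeld--Mustata is an irreducible stable set on which $\ord\cI_{(X,B),r}$ is constant, and with respect to the Gorenstein motivic measure $\mu^{\Gor}_{(X,B)}$ of \cite{denef2002motivic} one has $\gw(N_{E})=-(a(E;X,B)+1)$; hence $\gw(N_{E})\le-1$, with equality precisely when $E$ is crepant. Conversely, any irreducible stable $C\subset|\J_{\infty}X|$ not contained in $|\J_{\infty}(X_{\sing})|$ and with $\ord\cI_{(X,B),r}$ constant has an associated divisorial valuation $v_{C}=q\cdot\ord_{E}$ (by the structure theory of cylinders in arc spaces of de Fernex--Ein--Ishii and Ein--Lazarsfeld--Mustata), satisfies $C\subset N_{v_{C}}$, and $\gw(N_{v_{C}})=-q(a(E;X,B)+1)$; so if $\gw(C)=-1$ then necessarily $q=1$, $a(E;X,B)=0$, and $C$ is dense in $N_{E}$ (i.e.\ $N_{E}\setminus C$ has Gorenstein weight $<-1$).

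Now I would build $\phi\colon I_{0}\to\{\text{crepant divisors over }X\}$. Given $i\in I_{0}$, write $A_{i}$ as in the definition of a sectoroid and let $C_{0}$ be its irreducible stable part, which has $\gw(C_{0})=-1$ and $\ord\cI_{(X,B),r}$ constant; pushing forward by $\pi_{\infty}^{\rig}\circ\rho_{\infty}$ (using that $\pi_{\infty}^{\rig}$ is bijective off measure zero and $\gw(A_{i})=\gw(\rho_{\infty}A_{i})+m_{A_{i}}$ with $m_{A_{i}}\ge0$, so here $m_{A_{i}}=0$) we obtain an irreducible stable set in $|\J_{\infty}X|$ of Gorenstein weight $-1$, whence by the previous paragraph it is dense in $N_{E}$ for a unique prime divisor $E=\phi(i)$ over $X$ with $a(E;X,B)=0$; moreover $E$ is exceptional, for otherwise $N_{E}$ would be generically covered by arcs from $|\J_{\infty}\cU|\subset A_{0}$, contradicting $A_{i}\cap A_{0}=\emptyset$. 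Injectivity of $\phi$ is then clear: the $A_{i}$ are pairwise disjoint and $N_{E}$ is irreducible, so no two of them can have stable parts dense in the same $N_{E}$. For surjectivity, let $E$ be a crepant divisor; $N_{E}$ has Gorenstein weight $-1$, and its generic point, transported into $|\cJ_{\infty}\cX|$, lies in some $A_{i}$ with $i\notin\{0,\infty\}$; by construction this $A_{i}$ has $\gw(A_{i})=-1$, so $i\in I_{0}$ and $\phi(i)=E$. Therefore $\phi$ is the desired natural bijection and $\#I_{0}=\gamma(X)$.

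The step I expect to be the main obstacle is the bundle of arc-space facts invoked in the middle two paragraphs: that the Gorenstein motivic measure is normalised so that log discrepancies are exactly the codimensions of maximal divisorial sets (the change-of-variables formula of Denef--Loeser through $f$, together with the Ein--Lazarsfeld--Mustata computation), and that every irreducible stable cylinder carries a divisorial valuation satisfying the stated codimension formula. When $\cX$ genuinely has nontrivial stacky structure there is a further delicate point hidden in ``transported into $|\cJ_{\infty}\cX|$'': one must verify that over the generic point of $N_{E}$, with $E$ crepant, the locus of twisted arcs that is isolated in its $\rho_{\infty}$-fibre (hence contributes $m_{A_{i}}=0$) forms a single irreducible family --- this is the incarnation here of the fact that the relevant monodromy has age $a(E;X,B)+1=1$ --- and it is this irreducibility that makes $\phi$ both well defined and injective. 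For tame $\cX$ and for varieties it reduces to the classical McKay correspondence, and carrying it out in general is where most of the effort goes.
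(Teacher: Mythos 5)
Your route is genuinely different from the paper's. The paper never invokes maximal divisorial sets or the cylinder--valuation correspondence: it computes the Gorenstein measure of the locus $(\J_{\infty}X)_{X_{\sing}}$ of arcs centered in the singular locus twice --- once through a resolution via the change of variables formula, which exhibits the top-weight part as a sum of one irreducible class $\{E_{j}^{\circ}\}$ (up to a power of $\LL$) for each crepant divisor, and once through the sectoroid subdivision, where the fact that a sectoroid's Gorenstein measure has a single irreducible top-dimensional term shows that the top-weight part is indexed by $I_{0}$ --- and then matches irreducible top-weight summands; the correspondence sends $E_{j}$ to the unique $i\in I_{0}$ with $\gw(f_{\infty}((\J_{\infty}Y)_{E_{j}^{\circ}})\cap A_{i})=-1$. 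Your argument instead imports the Ein--Lazarsfeld--Musta\c{t}\u{a} / de Fernex--Ein--Ishii package (maximal divisorial sets $N_{E}$ with $\gw(N_{E})=-(a_{E}+1)$, divisorial valuations attached to irreducible stable sets). This buys a more explicit, valuation-theoretic description of the bijection, at the price of a heavier citation load whose positive-characteristic validity (granting a resolution, as the paper does) you would have to check; the paper's measure comparison avoids all of that.

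There are, however, concrete gaps as written. The serious one is surjectivity: from ``the generic point of $N_{E}$ lies in some $A_{i}$ with $i\notin\{0,\infty\}$'' you conclude ``by construction this $A_{i}$ has $\gw(A_{i})=-1$,'' but membership of a single point gives no lower bound on the weight of $A_{i}$: a priori the weight $-1$ mass of $N_{E}$ could sit in sectoroids other than the one containing that point, or be absorbed into the lower part $C_{1}$ of $A_{0}$ (the definition of a sectoroid only requires $\gw(C_{1})<\gw(C_{0})$, so $\gw(C_{1})=-1$ is not excluded for $A_{0}$). What you need is the additivity argument the paper uses: terms of dimension $<-1$ cannot sum to dimension $-1$, so some member of the subdivision meets $N_{E}$ in a weight $-1$ set, and one then argues this member is not $A_{0}$ (a step the paper itself asserts rather than proves, so you are not worse off, but your deduction from the generic point is a non sequitur and should be replaced). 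Two smaller points: exceptionality of $\phi(i)$ should come from condition (5) of the definition of a sectoroid subdivision, $\gw(|\J_{\infty}\cX^{\circ}|\setminus A_{0})<-1$, not from condition (4) --- the open set $\cU$ need not meet a given prime divisor of $X$; and in the stacky case your inequality does not force $m_{A_{i}}=0$, since $\gw(A_{i})=-1$ is compatible with $m_{A_{i}}>0$ and $\gw(\rho_{\infty}(A_{i}))<-1$ --- harmless where the proposition is actually applied ($\cX=X$, or $\pi$ an isomorphism in codimension one), but exactly the wild phenomenon you flag yourself, so it cannot be waved away by that inequality.
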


\begin{proof}
Let $f\colon Y\to X$ be a resolution and let $E_{1},\dots,E_{\gamma(X)}$
be those prime divisors on $Y$ which are crepant over $X$. Let $(\J_{\infty}X)_{X_{\sing}}$
be the preimage of the singular locus $X_{\sing}\subset X$ by the
projection $\J_{\infty}X\to X$. From the change of variables formula,
there exist open dense subsets $E_{j}^{\circ}\subset E_{j}$, $1\le j\le\gamma(X)$
such that 
\[
\mu_{X}^{\Gor}(|(\J_{\infty}X)_{X_{\sing}}|)=\sum_{j=1}^{\gamma(X)}\{E_{j}^{\circ}\}\LL^{-\dim X}+(\text{terms of dimension \ensuremath{<-1}}).
\]
On the other hand, we can get another expression of $\mu_{X}^{\Gor}(|(\J_{\infty}X)_{X_{\sing}}|)$
by using the sectoroid subdivision as follows:
\begin{align*}
\mu_{X}^{\Gor}(|(\J_{\infty}X)_{X_{\sing}}|) & =\mu_{\cX}^{\Gor}(A_{0}\cap|(\J_{\infty}\cX)_{X_{\sing}}|)+\sum_{i\in I^{*}}\mu_{\cX}^{\Gor}(A_{i}\cap|(\J_{\infty}\cX)_{X_{\sing}}|)\\
 & =\sum_{i\in I^{*}}\mu_{\cX}^{\Gor}(A_{i})+(\text{terms of dimension \ensuremath{<-1}})\\
 & =\sum_{i\in I_{0}}\mu_{\cX}^{\Gor}(A_{i})+(\text{terms of dimension \ensuremath{<-1}}).
\end{align*}
Comparing the two obtained expressions of $\mu_{X}^{\Gor}(|(\J_{\infty}X)_{X_{\sing}}|)$
shows the first assertion of the proposition. To show the second assertion,
we see that there exists a unique $i\in I_{0}$ such that $\gw(f_{\infty}((\J_{\infty}X)_{E_{j}^{\circ}})\cap A_{i})=-1$
and for every $i'\in I_{0}\setminus\{i\}$, $\gw(f_{\infty}((\J_{\infty}X)_{E_{j}^{\circ}})\cap A_{i'})<-1$.
Associating this $i$ to $E_{j}$ defines the desired one-to-one correspondence. 
\end{proof}
Let $\theta\colon\{1,\dots,\gamma(X)\}\to I_{0}$ be the bijection
obtained in the last proposition. Its proof shows that for each $j\in\{1,\dots,\gamma(X)\}$,
$A_{\theta(j)}$ and $f_{\infty}((\J_{\infty}X)_{E_{j}^{\circ}})$
have the intersection with large volume. We now show $b(\omega_{X}^{-1},0)=\rho(X)+\gamma(X)$
under extra conditions:
\begin{prop}
\label{prop:b-rho-gamma-1}Assume that for each $j\in\{1,\dots,\gamma(X)\}$,
there exists an open dense subset $E_{j}^{\circ}$ such that $A_{\theta(j)}$
contains $f_{\infty}((\J_{\infty}Y)_{E_{j}^{\circ}})$. Assume also
that $I$ is finite. Then, we have $b(\omega_{X}^{-1},0)=\rho(X)+\gamma(X)$. 
\end{prop}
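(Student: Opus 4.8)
The plan is to compute $b(\omega_X^{-1},0)$ straight from the definition, using that $a(\omega_X^{-1},0)=1$ by Proposition \ref{prop:a=00003D1-1}. Write $\kappa:=a(\omega_X^{-1},0)\llbracket\omega_X^{-1},0\rrbracket+\fK_{\cX}=\llbracket\omega_X^{-1},0\rrbracket+\fK_{\cX}$. Unwinding the definitions of $\llbracket\omega_X^{-1},0\rrbracket$ and of $\fK_{\cX}$, using $[\omega_{\cX}]=[\omega_X]$ in $\N^1(\cX)_{\RR}$ and $\gw(A_i)=-1$ exactly for $i\in I_0$, one has, as in (\ref{eq:expr}),
\[
\kappa=\sum_{i\in I^*\setminus I_0}\bigl(-\gw(A_i)-1\bigr)[A_i]^*,
\]
a linear functional with strictly positive coordinates supported precisely on $I^*\setminus I_0$, which lies in $\fPEff(\cX)$. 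So $b(\omega_X^{-1},0)$ is the dimension of the span of $\fMov_1(\cX)\cap\kappa^{\perp}$, and I would show this span equals $\N_1(X)_{\RR}\oplus\bigoplus_{i\in I_0}\RR[A_i]$, which has dimension $\rho(X)+\#I_0=\rho(X)+\gamma(X)$ by the previous proposition.

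For the upper bound $b(\omega_X^{-1},0)\le\rho(X)+\gamma(X)$ I would observe that the augmented class $\llbracket\widetilde{\cC}\rrbracket=[\cC]+\sum_v[A_{i_v}]$ of any covering family lies in $P:=\N_1(X)_{\RR}\oplus\bigoplus_{i\in I^*}\RR_{\ge 0}[A_i]$; since $I$ is finite, $\fN_1(\cX)$ is finite-dimensional and $P$ is closed, so $\fMov_1(\cX)\subseteq P$. For $w=v+\sum_i b_i[A_i]\in P$ one gets $(w,\kappa)=\sum_{i\in I^*\setminus I_0}(-\gw(A_i)-1)b_i\ge 0$, with equality iff $b_i=0$ for all $i\in I^*\setminus I_0$; hence $\fMov_1(\cX)\cap\kappa^{\perp}\subseteq\N_1(X)_{\RR}\oplus\bigoplus_{i\in I_0}\RR_{\ge 0}[A_i]$, whose span has dimension at most $\rho(X)+\gamma(X)$.

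For the lower bound I would produce covering families whose augmented classes lie in $\kappa^{\perp}$ and whose span is all of $\N_1(X)_{\RR}\oplus\bigoplus_{i\in I_0}\RR[A_i]$. The $\N_1(X)_{\RR}$-directions come from covering families of free rational curves whose general member has no twisted stacky point (possible since $X_{\sing}$ has codimension $\ge 2$ and, $-K_X$ being ample, $X$ carries free rational curves through general points): their augmented classes lie in $\N_1(X)_{\RR}\subseteq\kappa^{\perp}$, and as $-K_X$ is ample they generate a full-dimensional subcone of $\N_1(X)_{\RR}$. The remaining $\gamma(X)$ directions I would get one at a time: fix $j$, pick a general point $x$ of the image of $E_j$ in $X$ chosen off the images of the other crepant divisors, and take a covering family of free rational curves on $X$ through such points $x$ whose general member meets $X_{\sing}$ only at $x$. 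Its relative coarse moduli lift to $\cX_{\overline{k}}$ is a covering family of stacky curves $\widetilde{\cC_j}$ whose general member has a single twisted stacky point $v$ over $x$; the twisted arc $\cC_{j,v}\to\cX$ is the image under $f_\infty$ of the lifted arc of the curve at $x$, which (for general $x$ and general arc) lies in $(\J_\infty Y)_{E_j^{\circ}}$, so $[\cC_{j,v}]\in A_{\theta(j)}$ by the hypothesis. Thus $\llbracket\widetilde{\cC_j}\rrbracket=[\cC_j]+m_j[A_{\theta(j)}]$ with $m_j\in\ZZ_{\ge 1}$, and this lies in $\kappa^{\perp}$ because $\theta(j)\in I_0$. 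Subtracting $[\cC_j]\in\N_1(X)_{\RR}$ (in the span by the first step) puts $[A_{\theta(j)}]$ in the span for every $j$; since $\theta$ is a bijection onto $I_0$, the span is all of $\N_1(X)_{\RR}\oplus\bigoplus_{i\in I_0}\RR[A_i]$, and together with the upper bound this gives $b(\omega_X^{-1},0)=\rho(X)+\gamma(X)$.

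The hard part is the construction of the families $\widetilde{\cC_j}$: one must check that a general free rational curve through a general point $x$ of the image of $E_j$ meets $X_{\sing}$ only at $x$, that its lift to $\cX$ is a genuine stacky curve with exactly one twisted stacky point, and --- most importantly --- that the twisted arc there is the $f_\infty$-image of an arc in $(\J_\infty Y)_{E_j^{\circ}}$, so that the assumption on the sectoroid subdivision applies. This identification should follow from the functoriality of the morphisms between stacks of twisted arcs in Appendix \ref{sec:Morphisms}, combined with the change-of-variables description of arcs through crepant divisors used in the previous proposition. A subsidiary issue is guaranteeing enough free rational curves in characteristic $p$ --- where one may need $X$ to be separably rationally connected in addition to $-K_X$ being ample --- and checking that the families obtained are genuinely covering.
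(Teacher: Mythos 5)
Your upper bound is fine and is essentially the paper's argument: from (\ref{eq:expr}) the class $\llbracket\omega_{X}^{-1},0\rrbracket+\fK_{\cX}$ has strictly positive coefficients exactly on $I^{*}\setminus I_{0}$, so any class of $\fMov_{1}$ with nonnegative sectoroid coefficients that pairs to zero with it must be supported on $\N_{1}(X)_{\RR}\oplus\bigoplus_{i\in I_{0}}\RR[A_{i}]$, giving $b\le\rho(X)+\gamma(X)$. The genuine gap is in the lower bound, and it is twofold. First, you build your curves inside $X$ through a general point $x$ of the image of $E_{j}$ and assert that the lifted arc at $x$ lies in $(\J_{\infty}Y)_{E_{j}^{\circ}}$; nothing forces this. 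The lift of that arc to $Y$ can be centered on any exceptional divisor over $x$, including non-crepant ones, in which case the arc lands in a sectoroid of Gorenstein weight $<-1$ and the augmented class is not even in the perpendicular of $\llbracket\omega_{X}^{-1},0\rrbracket+\fK_{\cX}$. The paper avoids this entirely by constructing the curve \emph{inside the resolution} $Y$, as an intersection of general members of a very ample linear system through a point of $E_{j}^{\circ}$, so that the arc at that point is literally an arc of $Y$ centered on $E_{j}^{\circ}$ and the hypothesis $f_{\infty}((\J_{\infty}Y)_{E_{j}^{\circ}})\subset A_{\theta(j)}$ applies on the nose.

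Second, you try to arrange that the general member contributes no other sectoroid terms by making it meet $X_{\sing}$ only at $x$. That is not enough: sectoroid contributions occur at every point of the curve mapping outside an open dense $U$ with $|\J_{\infty}U|\subset A_{0}$, and $X\setminus U$ may well be a divisor, which no member of a covering family can avoid. The same defect undermines your claim that the free-rational-curve families used for the $\N_{1}(X)_{\RR}$-directions have augmented class in $\N_{1}(X)_{\RR}$. The paper's mechanism for this is the covering-and-limit trick: take degree-$n$ tame cyclic covers (Lemma 8.12 of the preceding paper) étale over the chosen point of $E_{j}^{\circ}$ and totally ramified over the finitely many uncontrolled points, so that $\frac{1}{n}\llbracket C_{n}\rrbracket=[C]+[A_{\theta(j)}]+\frac{1}{n}\sum_{j'}[A_{i_{j',n}}]$ converges to $[C]+[A_{\theta(j)}]$ in $\fMov_{1}$; this limit argument is precisely where the hypothesis that $I$ is finite is used, and the fact that your proposal never invokes that hypothesis is a symptom of the missing step. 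Finally, the detour through free rational curves and separable rational connectedness is unnecessary (the paper only needs moving complete-intersection curve classes, handled by the same covering-and-limit argument) and would silently add an assumption not present in the statement.
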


\begin{proof}
From the definition we have $\fMov_{1}(X)\subset\Mov_{1}(\cX)+\sum_{i\in I^{*}}\RR_{\ge0}[A_{i}]$.
From (\ref{eq:expr}) in the proof of Proposition \ref{prop:a=00003D1-1},
we see that 
\[
(\llbracket\omega_{X}^{-1},0\rrbracket+\fK_{X})^{\perp}\cap\fMov_{1}(X)\subset\Mov_{1}(X)+\sum_{i\in I_{0}}\RR_{\ge0}[A_{i}].
\]
In particular, we have $b(\omega_{\cX}^{-1},0)\le\rho(\cX)+\gamma(\cX)$. 

We now show the opposite inequality. We fix $j\in\{1,\dots,\gamma(X)\}$.
As the intersection of general members of a very ample complete linear
system, we can find a curve $C\subset Y_{\overline{k}}$ that meets
$E_{i}^{\circ}$ but does not meet $Y\setminus\bigcup_{j}E_{j}$.
Let $C'$ be its normalization and let $v_{0}\in C'$ be a point mapping
to $E_{i}^{\circ}$ and let $v_{1},\dots,v_{m}\in C'\setminus\{v_{0}\}$
be the points mapping to $Y\setminus f^{-1}(U)$, where $U$ is an
open dense subset of $X$ such that $|\J_{\infty}U|\subset A_{0}$.
For a positive integer $n$ with $p\nmid n$, we can take a cover
$C_{n}\to C'$ from a smooth irreducible curve $C_{n}$ which is étale
over $e_{i}$ and totally ramified over $v_{1},\dots,v_{m}$ \cite[Lemma 8.12]{darda2024thebatyrevtextendashmanin}.
Then,
\[
\frac{1}{n}\llbracket C_{n}\rrbracket=[C]+[A_{i}]+\frac{1}{n}\sum_{j=1}^{m}[A_{i_{j,n}}].
\]
Here $i_{j,n}$ are some elements of $I$. As $n$ increases, this
element approaches $[C]+[A_{i}]$ (we use the finiteness of $I$ here),
hence $[C]+[A_{i}]\in\fMov_{1}(X)$. We can do this construction for
every $i\in\{1,\dots,\gamma(X)\}$ by using the same very ample complete
linear system and get $[C]+[A_{i}]\in\fMov_{1}(X)$ for every $i$
with the same class $[C]$. 

Let $[C_{1}],\dots,[C_{\rho(X)}]$ be the moving curve classes that
are linearly independent. A similar covering-and-limit argument shows
that $[C_{1}],\dots,[C_{\rho(X)}]$ are also in $\fMov_{1}(X)$. Now,
$\rho(X)+\gamma(X)$ elements 
\[
[C_{1}],\dots,[C_{\rho(X)}],[C]+[A_{1}],\dots,[C]+[A_{\gamma(X)}]\in(\llbracket\omega_{X}^{-1},0\rrbracket+\fK_{X})^{\perp}\cap\fMov_{1}(X)
\]
are linearly independent. This shows $b(\omega_{\cX}^{-1},0)\ge\rho(\cX)+\gamma(\cX)$. 
\end{proof}
Suppose that the coarse moduli space $\pi\colon\cX\to X$ is an isomorphism
in codimension one. Let $\cU\subset\cX$ and $U\subset X$ be the
largest open substack/subscheme such that $\pi$ restricts to an isomorphism
$\cU\xrightarrow{\sim}U$. In this situation, the canonical line bundle
$\omega_{\cX}$ of $\cX$ corresponds to the canonical line bundle
$\omega_{X}$ of $X$ via the identification $\N^{1}(\cX)_{\RR}=\N^{1}(X)_{\RR}$.
Hence, the height function $H_{\omega_{\cX}^{-1}}$ coincides with
the anti-canonical height $H_{\omega_{X}^{-1}}$ via the identification
$\cU(F)=U(F)$. Let us choose the sectoroid subdivision of $|\cJ_{\infty}\cX|$
that corresponds to the sectoroid subdivision $\{A_{i}\}_{i\in I}$
of $|\J_{\infty}X|$ that was chosen above. The same argument as above
shows that $a(\omega_{\cX}^{-1},0)=1$ and $b(\omega_{\cX}^{-1},0)=\rho(X)+\gamma(X)$. 
\begin{example}
Suppose that $k$ has characteristic 3. Let $\cX$ and $X$ be the
quotient stack and the quotient variety associated to the cyclic permutation
action of the cyclic group $C_{3}$ of order 3 on $\PP^{1}\times_{k}\PP^{1}\times_{k}\PP^{1}$,
respectively. The singular locus $X_{\sing}$ of $X$ is the image
of the small diagonal $\PP_{k}^{1}\hookrightarrow\PP^{1}\times_{k}\PP^{1}\times_{k}\PP^{1}$,
which is again isomorphic to $\PP_{k}^{1}$. The blowup $Y=\Bl_{X_{\sing}}(X)\to X$
of $X$ along $X_{\sing}$ is known to be a crepant resolution having
two exceptional prime (necessarily crepant) divisors. Note also that
$\rho(X)=1$. Thus, the Manin conjecture for $X$ \cite[Conjecture 1.1]{yasuda2015maninsconjecture}
says that for a suitable thin $T$, we have 
\[
\#\{x\in X(F)\setminus T\mid H_{-K_{X}}(x)\le B\}\sim CB(\log B)^{2}.
\]
Under the assumption in Proposition \ref{prop:b-rho-gamma-1}, we
have $a(\omega_{X}^{-1},0)=a(\omega_{\cX}^{-1},0)=1$ and $b(\omega_{X}^{-1},0)=b(\omega_{\cX}^{-1},0)=3$.
Thus, the above prediction is compatible with Conjecture \ref{conj:main}. 
\end{example}

\begin{example}
We have a similar example in characteristic 2. Let $S$ be a del Pezzo
surface over $k$ with Picard number $\rho=\rho(S)$. Let $X=(S\times S)/C_{2}$
and $\cX=[(S\times S)/C_{2}]$ be the quotient variety and the quotient
stack associated to the transposition action. Then, $X$ is singular
along the image of the diagonal. The blowup $Y\to X$ along it is
a crepant resolution and has only one exceptional prime divisor. Therefore,
the Manin conjecture for $X$ predicts an asymptotic formula of the
form $\#\{\dots\}\sim CB(\log B)^{\rho(X)}$. Again, Conjecture \ref{conj:main}
predicts a formula of the same form. 
\end{example}

\section{Compatibility with products\label{sec:Compatibility-with-products}}

Let $\cX$ and $\cY$ be quasi-nice DM stacks over $k$. Then the
product $\cZ:=\cX\times_{k}\cY$ is also a quasi-nice DM stack. Let
$\{A_{i}\}_{i\in I}$ and $\{B_{j}\}_{j\in J}$ be sectoroid subdivisions
of $|\cJ_{\infty}\cX|$ and $|\cJ_{\infty}\cY|$, respectively. Using
them, we construct a sectoroid subdivision of $|\cJ_{\infty}\cZ|$.
For each $(i,j)\in I\times J$, identifying $|\cJ_{\infty}\cZ|$ with
$|\cJ_{\infty}\cX|\times|\cJ_{\infty}\cY|$, we put 
\[
C_{ij}:=A_{i}\times B_{j}\subset|\cJ_{\infty}\cZ|.
\]
We then put $C_{0}:=C_{00}$ and 
\[
C_{\infty}:=(A_{\infty}\times|\cJ_{\infty}\cY|)\cup(|\cJ_{\infty}\cX|\times B_{\infty}).
\]
If $i\ne\infty$ and $j\ne\infty$ and if either $i$ or $j$ is zero,
then $C_{ij}$ is a sectoroid. For $(i,j)\in I^{*}\times J^{*}$,
we decompose $C_{ij}$ into the disjoint union 
\[
C_{ij}=\bigsqcup_{n\in N_{ij}}C_{ijn}
\]
of countably many sectoroids $C_{ijn}$, $n\in N_{ij}$. Note that
such a decomposition is necessary, because the product of two irreducible
schemes/stacks over $k$ may not be irreducible. We choose the collection
\[
\{C_{0},C_{\infty}\}\cup\{C_{0j}\mid j\in J^{*}\}\cup\{C_{i0}\mid i\in I^{*}\}\cup\{C_{ijn}\mid(i,j)\in I^{*}\times J^{*},n\in N_{ij}\}
\]
as our sectoroid subdivision of $|\cJ_{\infty}\cZ|$. Thus, its index
set is identified with 
\[
K:=\{0,\infty\}\cup J^{*}\cup I^{*}\cup(I^{*}\times J^{*})
\]
and we have
\[
K^{*}=J^{*}\cup I^{*}\cup(I^{*}\times J^{*}).
\]

\begin{lem}
\label{lem:gw-sum}If $i\ne\infty$ and $j\ne\infty$, then $\gw(C_{ij})=\gw(A_{i})+\gw(B_{j})$. 
\end{lem}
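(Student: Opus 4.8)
The plan is to reduce the statement to the behaviour of the Gorenstein weight under external products. The three facts I would use are: (i)~the rigidification, the coarse space, the crepant boundary divisor, and the fractional ideals $\cI_{(-),r}$ are all compatible with products; (ii)~the Gorenstein motivic measure, and more generally the motivic integral of $\LL^{\frac{1}{r}\ord(\cI_{(-),r})}$, is multiplicative for external products; (iii)~the virtual dimension on the completed localized Grothendieck ring is additive under products, as is the fibre-dimension invariant $m_{(-)}$ of a Gorenstein pseudo-measurable set. Throughout write $d_X=\dim X$, $d_Y=\dim Y$, $d_Z=d_X+d_Y$.

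First I would record the geometric compatibilities. Since $\I(\cX\times_k\cY)=\I\cX\times_k\I\cY$ and $\I^{\dom}\cX\to\cX$, $\I^{\dom}\cY\to\cY$ are surjective (in fact étale), the substack $\I^{\dom}\cX\times_k\I^{\dom}\cY$ is the largest open and closed substack of $\I(\cX\times\cY)$ that is étale over $\cX\times\cY$, hence equals $\I^{\dom}(\cX\times\cY)$; rigidifying gives $\cZ^{\rig}=\cX^{\rig}\times_k\cY^{\rig}$, with common coarse space $Z=X\times_k Y$. A computation of canonical classes, using $K_{\cX\times\cY}=p_\cX^{*}K_\cX+p_\cY^{*}K_\cY$, $K_{X\times Y}=q_X^{*}K_X+q_Y^{*}K_Y$ and crepancy of $\cX\to(X,B_X)$ and $\cY\to(Y,B_Y)$, shows that $\cZ\to(Z,B_Z)$ is crepant with $B_Z=q_X^{*}B_X+q_Y^{*}B_Y$. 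Because $k$ is finite, hence perfect, $X$ and $Y$ are geometrically reduced, so $\Omega_{K(X)/k}$ and $\Omega_{K(Y)/k}$ have dimensions $d_X$ and $d_Y$; therefore the only generically nonzero summand of $\bigwedge^{d_Z}\Omega_Z=\bigwedge^{d_Z}(q_X^{*}\Omega_X\oplus q_Y^{*}\Omega_Y)$ is $q_X^{*}\bigwedge^{d_X}\Omega_X\otimes q_Y^{*}\bigwedge^{d_Y}\Omega_Y$, and comparing images inside $\omega_Z^{\otimes r}\otimes K(Z)=q_X^{*}(\omega_X^{\otimes r}\otimes K(X))\otimes q_Y^{*}(\omega_Y^{\otimes r}\otimes K(Y))$ yields $\cI_{(Z,B_Z),r}=q_X^{*}\cI_{(X,B_X),r}\cdot q_Y^{*}\cI_{(Y,B_Y),r}$. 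Hence, along an arc $(\gamma_X,\gamma_Y)$ of $Z=X\times Y$,
\[
\ord(\cI_{(Z,B_Z),r})(\gamma_X,\gamma_Y)=\ord(\cI_{(X,B_X),r})(\gamma_X)+\ord(\cI_{(Y,B_Y),r})(\gamma_Y),
\]
i.e.\ $\LL^{\frac{1}{r}\ord(\cI_{(Z,B_Z),r})}$ is the external product of the analogous functions on $\J_\infty X$ and $\J_\infty Y$. Finally, the truncation maps, $\pi^{\rig}_\infty$ and $\rho_\infty$ are all compatible with the product identifications $|\J_\infty Z|=|\J_\infty X|\times|\J_\infty Y|$, $|\cJ_\infty\cZ^{\rig}|=|\cJ_\infty\cX^{\rig}|\times|\cJ_\infty\cY^{\rig}|$ and $|\cJ_\infty\cZ|=|\cJ_\infty\cX|\times|\cJ_\infty\cY|$; on point sets this is immediate from the corresponding identifications of sets of $k'\tpars$-points.

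Next I would invoke multiplicativity. For stable $S\subset|\J_\infty X|$ and $S'\subset|\J_\infty Y|$ of level $n$, the set $S\times S'$ is stable of level $n$ with $\pi_n(S\times S')=\pi_n(S)\times\pi_n(S')$, so $\mu_Z(S\times S')=\{\pi_n(S)\times\pi_n(S')\}\LL^{-nd_Z}=\mu_X(S)\mu_Y(S')$; passing to the limits of sums defining the motivic integral (a Fubini argument) gives that $f\boxtimes g$ is integrable on $S\times S'$ whenever $f$ is integrable on $S$ and $g$ on $S'$, with $\int_{S\times S'}(f\boxtimes g)\,d\mu_Z=\bigl(\int_S f\,d\mu_X\bigr)\bigl(\int_{S'}g\,d\mu_Y\bigr)$. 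Applying this with $f=\LL^{\frac{1}{r}\ord(\cI_{(X,B_X),r})}$, $g=\LL^{\frac{1}{r}\ord(\cI_{(Y,B_Y),r})}$ over the Gorenstein measurable sets $\pi^{\rig}_\infty(\rho_\infty A_i)$ and $\pi^{\rig}_\infty(\rho_\infty B_j)$, together with the compatibilities established above, shows that $\rho_\infty(C_{ij})=\rho_\infty(A_i)\times\rho_\infty(B_j)$ is Gorenstein measurable and
\[
\mu^{\Gor}_{\cZ^{\rig}}(\rho_\infty C_{ij})=\mu^{\Gor}_{\cX^{\rig}}(\rho_\infty A_i)\cdot\mu^{\Gor}_{\cY^{\rig}}(\rho_\infty B_j).
\]
Since $\rho_\infty(A_i)$ and $\rho_\infty(B_j)$ are sectoroids when $i\in I^{*}$, $j\in J^{*}$ (the cases $i=0$ or $j=0$ being handled identically, with $\gw(A_0)=\gw(B_0)=0$), each of these measures has a well-defined dominant term of the shape $\{\text{irreducible variety}\}\LL^{m}$, so the dominant term of the product has dimension $\dim V_X+\dim V_Y+m_X+m_Y$ and is not cancelled by the lower-order contributions; subtracting $d_Z=d_X+d_Y$ gives $\gw(\rho_\infty C_{ij})=\gw(\rho_\infty A_i)+\gw(\rho_\infty B_j)$. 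For the fibre part, since $\rho^{\cZ}_\infty$ is the product of $\rho^{\cX}_\infty$ and $\rho^{\cY}_\infty$ we have $(\rho^{\cZ}_\infty)^{-1}(\gamma_X,\gamma_Y)=(\rho^{\cX}_\infty)^{-1}(\gamma_X)\times(\rho^{\cY}_\infty)^{-1}(\gamma_Y)$ and $C_{ij}\cap(\rho^{\cZ}_\infty)^{-1}(\gamma_X,\gamma_Y)=\bigl(A_i\cap(\rho^{\cX}_\infty)^{-1}(\gamma_X)\bigr)\times\bigl(B_j\cap(\rho^{\cY}_\infty)^{-1}(\gamma_Y)\bigr)$, a product of constructible sets of dimensions $m_{A_i}$ and $m_{B_j}$, hence constructible of dimension $m_{A_i}+m_{B_j}$; thus $C_{ij}$ is Gorenstein pseudo-measurable with $m_{C_{ij}}=m_{A_i}+m_{B_j}$. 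Combining,
\[
\gw(C_{ij})=\gw(\rho_\infty C_{ij})+m_{C_{ij}}=\bigl(\gw(\rho_\infty A_i)+m_{A_i}\bigr)+\bigl(\gw(\rho_\infty B_j)+m_{B_j}\bigr)=\gw(A_i)+\gw(B_j).
\]

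I expect the main obstacle to be the multiplicativity used above: one must verify that the class $\{\pi_n(S\times S')\}$ factors in the Grothendieck ring, that the normalizing factor $\LL^{-nd}$ splits across the product, and that the limiting procedure defining the motivic integral commutes with forming products. The only other delicate point is the rank count for $\Omega$ at the generic point, where perfectness of the finite field $k$ is used; the remaining verifications are routine bookkeeping.
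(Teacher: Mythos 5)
Your proof is correct and follows essentially the same route as the paper's: decompose $\Omega_{Z}$ as $q_X^{*}\Omega_X\oplus q_Y^{*}\Omega_Y$, observe the cross terms in the top exterior power are torsion so the defining fractional ideal factorizes, and deduce multiplicativity of the Gorenstein motivic measure (hence additivity of $\gw$) from multiplicativity of the motivic measure. The only difference is presentational: the paper first treats the simplified case (trivial generic stabilizers, $1$-Gorenstein, étale in codimension one) and declares the generalization straightforward, whereas you carry out the general $\QQ$-Gorenstein case directly, including the product compatibility of $\rho_{\infty}$ and the additivity of the fibre dimensions $m_{C}$, which is a legitimate filling-in of the same argument rather than a new one.
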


\begin{proof}
First, we consider the case where $\cX$ and $\cY$ have trivial generic
stabilizers and hence so does $\cZ$. It is easy to see that for measurable
subsets $A\subset|\J_{\infty}X|$ and $B\subset|\J_{\infty}Y|$, we
have 
\[
\mu_{X\times_{k}Y}(A\times B)=\mu_{X}(A)\cdot\mu_{Y}(B).
\]
For simplicity, we first consider the case where $X$ and $Y$ are
1-Gorenstein (that is, $\omega_{X}$ and $\omega_{Y}$ are invertible)
and $\cX^{\rig}\to X$ and $\cY^{\rig}\to Y$ are étale in codimension
one. Then, the $\QQ$-divisors induced on $X$ and $Y$ as in Section
\ref{sec:Gorenstein-weights} are both the zero divisors. Define an
ideal sheaf $\cI_{X}\subset\cO_{X}$ by 
\[
\left(\bigwedge^{\dim X}\Omega_{X/k}\right)/\tors=\cI_{X}\cdot\omega_{X}.
\]
Similarly for ideal sheaves $\cI_{Y}\subset\cO_{Y}$ and $\cI_{Z}\subset\cO_{Z}$.
Since $\Omega_{Z/k}=\pr_{1}^{*}\Omega_{X/k}\oplus\pr_{2}^{*}\Omega_{Y/k}$,
we have
\[
\bigwedge^{\dim Z}\Omega_{Z/k}=\bigoplus_{i+j=Z}\bigwedge^{i}\pr_{1}^{*}\Omega_{X/k}\otimes\bigwedge^{j}\pr_{2}^{*}\Omega_{Y/k}.
\]
The summand $\bigwedge^{i}\pr_{1}^{*}\Omega_{X/k}\otimes\bigwedge^{j}\pr_{2}^{*}\Omega_{Y/k}$
for $(i,j)\ne(\dim X,\dim Y)$ is a torsion sheaf. Thus, 
\[
\left(\bigwedge^{\dim Z}\Omega_{Z/k}\right)/\tors=\left(\left(\bigwedge^{\dim X}\pr_{1}^{*}\Omega_{X}\right)/\tors\otimes\left(\bigwedge^{\dim Y}\pr_{2}^{*}\Omega_{Y}\right)/\tors\right)/\tors.
\]
This shows that 
\[
\cI_{Z}=\pr_{1}^{-1}\cI_{X}\cdot\pr_{2}^{-1}\cI_{Y}.
\]
It follows that for Gorenstein measurable subsets $A\subset|\cJ_{\infty}\cX|$
and $B\subset|\cJ_{\infty}\cY|$, 
\begin{align*}
\mu_{Z}^{\Gor}(A\times B) & =\int_{A\times B}\LL^{\ord_{\cI_{X}}\circ(\pr_{1})_{\infty}+\ord_{\cI_{Y}}\circ(\pr_{2})_{\infty}}\,d\mu_{Z}\\
 & =\left(\int_{A}\LL^{\ord_{\cI_{X}}\circ(\pr_{1})_{\infty}}\,d\mu_{X}\right)\cdot\left(\int_{B}\LL^{\ord_{\cI_{Y}}\circ(\pr_{2})_{\infty}}\,d\mu_{Y}\right)\\
 & =\mu_{X}^{\Gor}(A)\cdot\mu_{Y}^{\Gor}(B).
\end{align*}
and $\gw(A\times B)=\gw(A)+\gw(B)$. It is straightforward to generalize
this argument and show the lemma by dropping the assumptions that
$\cX$ and $\cY$ have the trivial generic stabilizers, that $\cX^{\rig}\to X$
and $\cY^{\rig}\to Y$ are étale in codimension one, and that the
induced log canonical divisors on $X$ and $Y$ are not 1-Gorenstein
but only $\QQ$-Gorenstein. 
\end{proof}
Let $(\cL,c_{\cX})$ and $(\cM,c_{\cY})$ be raised line bundles on
$\cX$ and $\cY$ which are compatible with $\{A_{i}\}$ and $\{B_{j}\}$,
respectively. We define a raised line bundle $(\cN,c_{\cZ})$ on $\cZ$
by $\cN:=\pr_{1}^{*}\cL\otimes\pr_{2}^{*}\cM$ and $c_{\cZ}:=c_{\cX}\circ(\pr_{1})_{\infty}+c_{\cY}\circ(\pr_{2})_{\infty}$.
The raising function $c_{\cZ}$ is compatible with the sectoroid subdivision
of $|\cJ_{\infty}\cZ|$ constructed above. 

Let $\cC_{\cZ}\to\cZ_{\overline{k}}$ be a stacky curve. Let $\cC_{\cX}\to\cX_{\overline{k}}$
be the induced stacky curve obtained by the canonical factorization
of the composition $\cC_{\cZ}\to\cZ_{\overline{k}}\to\cX_{\overline{k}}$
(see \cite[Lemma 25]{yasuda2006motivic}). Similarly, we define a
stacky curve $\cC_{\cY}\to\cY_{\overline{k}}$. 
\begin{lem}
\label{lem:ineq-intersection}Suppose that the images of these stacky
curves are not contained in exceptional substacks for chosen sectoroid
subdivisions, respectively. 
\begin{enumerate}
\item We have
\[
((\cN,c_{\cZ}),\cC_{\cZ})=((\cL,c_{\cX}),\cC_{\cX})+((\cM,c_{\cY}),\cC_{\cY}).
\]
\item For a real number $a$, we have 
\[
(a\llbracket\cN,c_{\cZ}\rrbracket+\fK_{\cZ},\cC_{\cZ})\ge(a\llbracket\cL,c_{\cX}\rrbracket+\fK_{\cX},\cC_{\cX})+(a\llbracket\cM,c_{\cY}\rrbracket+\fK_{\cY},\cC_{\cY}).
\]
Moreover, the equality holds if and only if the augmented class $\llbracket\cC_{\cZ}\rrbracket$
has zero coefficients at the terms $[C_{ijn}]$ for $(i,j)\in I^{*}\times J^{*}$
and $n\in N_{i,j}$. 
\end{enumerate}
\end{lem}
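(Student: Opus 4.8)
The plan is to split each side into a ``line bundle part'', a ``raising part'' and a ``Gorenstein weight part'', and to compare these term by term. The inputs are: $\cN=\pr_{1}^{*}\cL\otimes\pr_{2}^{*}\cM$; the identity $\omega_{\cZ}=\pr_{1}^{*}\omega_{\cX}\otimes\pr_{2}^{*}\omega_{\cY}$, already observed in the proof of Lemma~\ref{lem:gw-sum}; the formula $c_{\cZ}=c_{\cX}\circ(\pr_{1})_{\infty}+c_{\cY}\circ(\pr_{2})_{\infty}$; Lemma~\ref{lem:gw-sum}; and the fact that under the identification $|\cJ_{\infty}\cZ|=|\cJ_{\infty}\cX|\times|\cJ_{\infty}\cY|$ the maps $(\pr_{1})_{\infty}$ and $(\pr_{2})_{\infty}$ are the two projections. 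Writing $\llbracket\cC_{\cZ}\rrbracket=[\cC_{\cZ}]+\sum_{v}[C_{k_{v}}]$, where $v$ runs over the stacky points of $\cC_{\cZ}$, $\gamma_{v}\in|\cJ_{\infty}\cZ|$ is the class of the twisted arc at $v$, and $k_{v}\in K$ is the index with $\gamma_{v}\in C_{k_{v}}$, the whole statement reduces to understanding the degrees $(\cC_{\cZ},\pr_{1}^{*}\cL)$, $(\cC_{\cZ},\pr_{1}^{*}\omega_{\cX})$ and their $\pr_{2}$-analogues, the numbers $c_{\cX}((\pr_{1})_{\infty}\gamma_{v})$, $c_{\cY}((\pr_{2})_{\infty}\gamma_{v})$, and the numbers $\gw(C_{k_{v}})$.

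Next I would record the geometry of the canonical factorizations $\cC_{\cZ}\to\cC_{\cX}\to\cX_{\overline{k}}$ and $\cC_{\cZ}\to\cC_{\cY}\to\cY_{\overline{k}}$. The morphisms $\cC_{\cZ}\to\cC_{\cX}$ and $\cC_{\cZ}\to\cC_{\cY}$ are birational, since source and target are stacky curves (hence have trivial generic stabilizer), so they are isomorphisms over a dense open substack; in particular $\cC_{\cZ}$, $\cC_{\cX}$ and $\cC_{\cY}$ share a common coarse moduli space $C$, and the induced maps $|\cC_{\cZ}|\to|\cC_{\cX}|$, $|\cC_{\cZ}|\to|\cC_{\cY}|$ are compatible with the identifications $|\cC_{\cZ}|=|C|=|\cC_{\cX}|=|\cC_{\cY}|$. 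Because the stabilizer of the image in $\cC_{\cX}$ of a point $v$ is a quotient of $\Stab_{\cC_{\cZ}}(v)$, every stacky point of $\cC_{\cX}$ is the image of a unique stacky point of $\cC_{\cZ}$, whereas a stacky point of $\cC_{\cZ}$ may have non-stacky image. Passing to formal completions at such a point, the canonical factorization localizes (cf.~\cite[Lemma~25]{yasuda2006motivic}), so that $(\pr_{1})_{\infty}\gamma_{v}$ is precisely the class of the twisted arc of $\cX$ at the image $w$ of $v$ in $\cC_{\cX}$; this class lies in the non-twisted sectoroid $A_{0}$ exactly when $w$ is not a stacky point of $\cC_{\cX}$, and otherwise it lies in $A_{i_{w}}$. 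The same holds for $(\pr_{2})_{\infty}$. Finally, the hypothesis that the images of $\cC_{\cZ}$, $\cC_{\cX}$, $\cC_{\cY}$ are not contained in exceptional substacks for the chosen subdivisions forces every twisted arc occurring at a stacky point not to factor through an exceptional substack; hence all the indices $k_{v}$, and the corresponding $\cX$- and $\cY$-indices, lie in $K^{*}$, $I^{*}$, $J^{*}$ and in particular are never $\infty$ (they are never $0$ by the definition of the augmented class).

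For part (1): pulling $\pr_{1}^{*}\cL$ back to $\cC_{\cZ}$ is the same as pulling $\cL$ back along $\cC_{\cZ}\to\cC_{\cX}\to\cX$, so by the projection formula and birationality of $\cC_{\cZ}\to\cC_{\cX}$ one gets $(\cC_{\cZ},\pr_{1}^{*}\cL)=(\cC_{\cX},\cL)$, and symmetrically $(\cC_{\cZ},\pr_{2}^{*}\cM)=(\cC_{\cY},\cM)$; hence $(\cC_{\cZ},\cN)=(\cC_{\cX},\cL)+(\cC_{\cY},\cM)$. For the raising part, $\sum_{v}c_{\cZ}(C_{k_{v}})=\sum_{v}c_{\cX}((\pr_{1})_{\infty}\gamma_{v})+\sum_{v}c_{\cY}((\pr_{2})_{\infty}\gamma_{v})$; in the first sum a term vanishes whenever the image of $v$ in $\cC_{\cX}$ is non-stacky (as $c_{\cX}(A_{0})=0$), and the surviving terms sum, via the bijection onto the stacky points of $\cC_{\cX}$, to $\sum_{w}c_{\cX}(A_{i_{w}})$; likewise for $\cC_{\cY}$. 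Adding the two parts yields $((\cN,c_{\cZ}),\cC_{\cZ})=((\cL,c_{\cX}),\cC_{\cX})+((\cM,c_{\cY}),\cC_{\cY})$.

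For part (2): the $\cN$-, $\omega_{\cZ}$- and $c_{\cZ}$-contributions split exactly as in part (1), so it remains to compare, for each stacky point $v$ of $\cC_{\cZ}$, the summand $-\gw(C_{k_{v}})-1$ it produces on the left with the summands it produces on the right. If $k_{v}\in I^{*}$, then $C_{k_{v}}=A_{k_{v}}\times B_{0}$, and Lemma~\ref{lem:gw-sum} together with $\gw(B_{0})=0$ gives $\gw(C_{k_{v}})=\gw(A_{k_{v}})$, while on the right $v$ contributes only the term $-\gw(A_{k_{v}})-1$ on the $\cX$-side (its image in $\cC_{\cY}$ is non-stacky), so the contributions agree; the case $k_{v}\in J^{*}$ is symmetric. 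If $k_{v}=(i,j,n)\in I^{*}\times J^{*}\times N_{ij}$, then $C_{k_{v}}=C_{ijn}\subset C_{ij}=A_{i}\times B_{j}$; the left contributes $-\gw(C_{ijn})-1$, and on the right $v$ contributes to both sides, namely $-\gw(A_{i})-1$ and $-\gw(B_{j})-1$; by Lemma~\ref{lem:gw-sum} ($\gw(A_{i})+\gw(B_{j})=\gw(C_{ij})$) and monotonicity of $\gw$ under inclusion ($\gw(C_{ijn})\le\gw(C_{ij})$), the left-minus-right contribution of this $v$ equals $(\gw(C_{ij})-\gw(C_{ijn}))+1\ge 1>0$. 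Summing over all stacky points $v$ gives the inequality, with equality if and only if no stacky point $v$ has $k_{v}\in I^{*}\times J^{*}\times N_{ij}$, i.e.\ if and only if $\llbracket\cC_{\cZ}\rrbracket$ has vanishing coefficient at every $[C_{ijn}]$ with $(i,j)\in I^{*}\times J^{*}$. The step I expect to be the main obstacle is the localization of the canonical factorization at a stacky point---equivalently, the identity that $(\pr_{1})_{\infty}$ carries the twisted arc of $\cZ$ at $v$ to the twisted arc of $\cX$ at the image of $v$ in $\cC_{\cX}$---which underlies essentially everything; once it is in hand, the rest is bookkeeping with Lemma~\ref{lem:gw-sum}, monotonicity of Gorenstein weight, and the combinatorics of the product sectoroid subdivision.
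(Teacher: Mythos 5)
Your proposal is correct and follows essentially the same route as the paper's proof: identify the point sets of $\cC_{\cZ}$, $\cC_{\cX}$, $\cC_{\cY}$, split $c_{\cZ}$, $\cN$ and $\omega_{\cZ}$ into their $\cX$- and $\cY$-components for (1), and for (2) combine Lemma \ref{lem:gw-sum}, the monotonicity $\gw(C_{ijn})\le\gw(C_{ij})$, and the observation that a stacky point with both indices nonzero produces one ``$-1$'' on the left but two on the right. Your per-point case analysis versus the paper's aggregated rearrangement with the correction term $\#\{z\mid i_{z}\ne0\text{ and }j_{z}\ne0\}$ is only a difference in bookkeeping, and your extra care with the canonical factorization at stacky points is consistent with what the paper leaves implicit.
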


\begin{proof}
(1) We identify the point sets $|\cC_{\cZ}|$, $|\cC_{\cY}|$ and
$|\cC_{\cX}|$ via the canonical bijections. Let $z\in|\cC_{1}|=|\cC_{2}|=|\cC_{3}|$
be a closed point. Let $A_{i_{z}}$ and $B_{j_{z}}$ be the sectoroids
in $|\cJ_{\infty}\cX|$ and $|\cJ_{\infty}\cY|$ derived from $z$,
respectively. Note that from the assumption, for every $z$, we have
$A_{i_{z}}\ne A_{\infty}$ and $B_{j_{z}}\ne B_{\infty}$. Then, the
sectoroid $C_{z}$ in $|\cJ_{\infty}\cZ|$ derived from $z$ is given
by
\[
C_{z}=\begin{cases}
C_{0} & (i_{z}=0\text{ and }j_{z}=0)\\
C_{0j_{z}} & (i_{z}=0\text{ and }j_{z}\ne0)\\
C_{i0} & (i_{z}\ne0\text{ and }j_{z}=0)\\
C_{i_{z}j_{z}n_{z}} & (i_{z}\ne0\text{ and }j_{z}\ne0).
\end{cases}
\]
Here $n_{z}$ is some element of $N_{i_{z}j_{z}}$. In each case,
we have 
\[
c_{\cZ}(C_{z})=c_{\cX}(A_{i_{z}})+c_{\cY}(B_{j_{z}}).
\]
Thus, we get
\begin{align*}
(\llbracket\cL_{3},c_{3}\rrbracket,\cC_{3}) & =(\cL_{3},\cC_{3})+\sum_{z}c_{\cZ}(C_{z})\\
 & =(\cL_{1},\cC_{1})+(\cL_{2},\cC_{2})+\sum_{z}(c_{\cX}(A_{i_{z}})+c_{\cY}(B_{j_{z}}))\\
 & =(\cL_{1},\cC_{1})+\sum_{z}c_{1}(A_{i_{z}})+(\cL_{2},\cC_{2})+\sum_{z}c_{2}(A_{i_{z}})\\
 & =(\llbracket\cL_{1},c_{1}\rrbracket,\cC_{1})+(\llbracket\cL_{2},c_{2}\rrbracket,\cC_{2}).
\end{align*}

(2) Since 
\[
\gw(C_{z})\le\gw(C_{i_{z}j_{z}})=\gw(A_{i_{z}})+\gw(B_{j_{z}}),
\]
we have
\begin{align*}
 & (a\llbracket\cN,c_{\cZ}\rrbracket+\fK_{\cZ},\cC_{\cZ})\\
 & =a([\cN]+[\omega_{\cZ}],\cC_{\cZ})+\sum_{z:C_{z}\ne C_{0}}(-\gw(C_{z})-1)\\
 & \ge a([\cN]+[\omega_{\cX}],\cC_{\cX})+a([\cN]+[\omega_{\cY}],\cC_{\cY})+\sum_{z:C_{z}\ne C_{0}}(-\gw(A_{i_{z}})-\gw(B_{j_{z}})-1).
\end{align*}
Moreover, in the last inequality, the equality holds if for every
$z$ with $C_{z}\ne C_{0}$, either $i_{z}$ or $j_{z}$ is zero.
We also have
\begin{multline*}
\sum_{z:C_{z}\ne C_{0}}(-\gw(A_{i_{z}})-\gw(B_{j_{z}})-1)=\\
\sum_{z:i_{z}\ne0}(-\gw(A_{i_{z}})-1)+\sum_{z:j_{z}\ne0}(-\gw(B_{j_{z}})-1)+\#\{z\mid i_{z}\ne0\text{ and }j_{z}\ne0\}.
\end{multline*}
Combining these shows the assertion.
\end{proof}
\begin{lem}
\label{lem:equiv-PEff}Let $a$ be a real number. The following are
equivalent:
\begin{enumerate}
\item $a\llbracket\cL,c_{\cX}\rrbracket+\fK_{\cX}\in\fPEff(\cX)$ and $a\llbracket\cM,c_{\cY}\rrbracket+\fK_{\cY}\in\fPEff(\cY)$.
\item $a\llbracket\cN,c_{\cZ}\rrbracket+\fK_{\cZ}\in\fPEff(\cZ)$.
\end{enumerate}
\end{lem}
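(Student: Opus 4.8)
The plan is to prove both implications by testing against moving stacky curves. Recall from the remark following the definition of $\fPEff(\cX)$ (which applies verbatim to any quasi-nice DM stack and any sectoroid subdivision, in particular to $\cZ$) that an element $\alpha\in\fN^{1}$ lies in the pseudo-effective cone precisely when $(\llbracket\cC\rrbracket,\alpha)\ge0$ for every moving stacky curve $\cC$, i.e.\ every geometric generic member of a covering family of stacky curves. The two main inputs will be Lemma~\ref{lem:ineq-intersection} and the observation that a moving stacky curve has dense image and hence never lies in an exceptional substack, so that Lemma~\ref{lem:ineq-intersection} is always applicable to the curves produced below.

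For $(1)\Rightarrow(2)$: let $\cC_{\cZ}$ be a moving stacky curve on $\cZ_{\overline k}$, the geometric generic member of a covering family $\pi\colon\widetilde{\cC}\to S$, $\widetilde f\colon\widetilde{\cC}\to\cZ_{\overline k}$. Composing $\widetilde f$ with $\pr_{1}$ and forming the canonical factorization fibrewise — equivalently, the relative coarse moduli space of $\widetilde{\cC}$ over $\cX_{\overline k}$, which is compatible with base change since the inertia kernel defining it is finite \'etale over $\widetilde{\cC}$ — yields, after shrinking $S$ to a dense open, a covering family of stacky curves on $\cX_{\overline k}$ whose geometric generic member is the canonical-factorization projection $\cC_{\cX}$ of $\cC_{\cZ}$; similarly for $\cC_{\cY}$. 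Thus $\llbracket\cC_{\cX}\rrbracket\in\fMov_{1}(\cX)$ and $\llbracket\cC_{\cY}\rrbracket\in\fMov_{1}(\cY)$, so by (1) both $(\llbracket\cC_{\cX}\rrbracket,a\llbracket\cL,c_{\cX}\rrbracket+\fK_{\cX})$ and $(\llbracket\cC_{\cY}\rrbracket,a\llbracket\cM,c_{\cY}\rrbracket+\fK_{\cY})$ are $\ge0$; Lemma~\ref{lem:ineq-intersection}(2) then gives $(\llbracket\cC_{\cZ}\rrbracket,a\llbracket\cN,c_{\cZ}\rrbracket+\fK_{\cZ})\ge0$. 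Since $\cC_{\cZ}$ was arbitrary, $a\llbracket\cN,c_{\cZ}\rrbracket+\fK_{\cZ}\in\fPEff(\cZ)$.

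For $(2)\Rightarrow(1)$ I will prove $a\llbracket\cL,c_{\cX}\rrbracket+\fK_{\cX}\in\fPEff(\cX)$; the assertion for $\cY$ is symmetric. Fix a moving stacky curve $\cC_{\cX}$ on $\cX_{\overline k}$, the geometric generic member of a covering family $\pi_{\cX}\colon\widetilde{\cC}_{\cX}\to T$, $\widetilde f_{\cX}\colon\widetilde{\cC}_{\cX}\to\cX_{\overline k}$. Choose a smooth atlas $\psi\colon V\to\cY_{\overline k}$ with $V$ an integral finite-type scheme, shrunk so that $\psi(V)$ meets no exceptional substack of $\cY$ and lies in the open dense substack $\cU_{\cY}$ with $|\J_{\infty}\cU_{\cY}|\subset B_{0}$. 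Then
\[
\Bigl(\pi_{\cX}\times\id_{V}\colon\widetilde{\cC}_{\cX}\times_{\overline k}V\to T\times_{\overline k}V,\quad(c,v)\mapsto(\widetilde f_{\cX}(c),\psi(v))\Bigr)
\]
is a covering family of stacky curves on $\cZ_{\overline k}$: the fibrewise maps to $\cZ$ are representable because their first components already are, and dominance holds because $\widetilde f_{\cX}$ and $\psi$ are dominant and, over the algebraically closed $\overline k$, products of dense subsets of the geometrically irreducible $\cX$ and $\cY$ are dense. Its geometric generic member $\cC_{\cZ}$ is $\cC_{\cX}$ mapped to $\cZ$ by $(\widetilde f_{\cX},\mathrm{const}_{y})$, where $y\in\cU_{\cY}$ is the image of the generic point of $V$; the constant twisted arc at $y$ is untwisted (gerbes over $\overline{k'}\tpars$ being neutral, cf.\ Proposition~\ref{prop:neutral}) and lies in $B_{0}$. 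Hence at every stacky point of $\cC_{\cZ}$ the $\cY$-component of the induced twisted arc is non-twisted, so $\llbracket\cC_{\cZ}\rrbracket$ has no $[C_{ijn}]$-term with $(i,j)\in I^{*}\times J^{*}$, and the $\cY$-projection $\cC_{\cY}$ of $\cC_{\cZ}$ is a constant map, whence $\llbracket\cC_{\cY}\rrbracket=0$ in $\fN_{1}(\cY)$. By the equality clause of Lemma~\ref{lem:ineq-intersection}(2),
\[
(\llbracket\cC_{\cZ}\rrbracket,a\llbracket\cN,c_{\cZ}\rrbracket+\fK_{\cZ})=(\llbracket\cC_{\cX}\rrbracket,a\llbracket\cL,c_{\cX}\rrbracket+\fK_{\cX}).
\]
By (2) the left side is $\ge0$, hence so is the right; since $\cC_{\cX}$ was arbitrary, $a\llbracket\cL,c_{\cX}\rrbracket+\fK_{\cX}\in\fPEff(\cX)$.

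The step I expect to be the main obstacle is the construction in $(2)\Rightarrow(1)$: producing a genuine covering family on $\cZ$ whose parameter space is a \emph{scheme} (forcing the use of an atlas $V$ of $\cY$ rather than $\cY$ itself), whose total family is dominant (forcing $\dim V=\dim\cY$), yet whose generic member has a constant, non-twisted $\cY$-direction — together with the check that this constant twisted arc really lands in the non-twisted sectoroid $B_{0}$, which rests on the neutrality of gerbes over $\overline{k'}\tpars$. The base-change compatibility of relative coarse moduli spaces invoked in $(1)\Rightarrow(2)$ is routine but should be referenced with care (\cite[Lemma 25]{yasuda2006motivic}, \cite[Theorem 3.1]{abramovich2011twisted}).
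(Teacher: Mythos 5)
Your proof is correct. The $(1)\Rightarrow(2)$ direction is essentially the paper's own argument: project a moving stacky curve on $\cZ$ to $\cX$ and $\cY$ via the canonical factorization and apply Lemma \ref{lem:ineq-intersection}(2) (the paper asserts, just as tersely as you do, that the projected curves are again general members of covering families). Your $(2)\Rightarrow(1)$, however, takes a genuinely different route. The paper argues by contraposition: if, say, $(a\llbracket\cL,c_{\cX}\rrbracket+\fK_{\cX},\cC_{\cX})<0$ for some moving stacky curve, it picks an auxiliary moving stacky curve $\cC_{\cY}$ on $\cY_{\overline{k}}$, takes a general member of a very ample linear system of bidegree $(d_{\cX},d_{\cY})$ on $\overline{\cC_{\cX}}\times_{\overline{k}}\overline{\cC_{\cY}}$ avoiding the simultaneously twisted points $(s_{i},t_{j})$ and meeting the rulings transversally, computes the intersection number of the induced stacky curve on $\cZ_{\overline{k}}$ as $d_{\cX}(\cdots)_{\cX}+d_{\cY}(\cdots)_{\cY}$ directly (it does not invoke Lemma \ref{lem:ineq-intersection} here), and forces negativity by $d_{\cX}\gg d_{\cY}$. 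You instead argue directly: for each moving stacky curve on $\cX$ you build a covering family on $\cZ$ whose parameter space is the product of the original base with a dominant integral scheme over $\cY$, with constant $\cY$-component on each fibre; the generic member then has all its $\cY$-directions untwisted and in $B_{0}$ and constant $\cY$-projection, so the equality clause of Lemma \ref{lem:ineq-intersection}(2) yields $(a\llbracket\cN,c_{\cZ}\rrbracket+\fK_{\cZ},\cC_{\cZ})=(a\llbracket\cL,c_{\cX}\rrbracket+\fK_{\cX},\cC_{\cX})\ge0$. Your route avoids the auxiliary curve on $\cY$ and the asymptotic bidegree trick, giving an exact equality of intersection numbers, at the cost of verifying that the constant-in-$\cY$ family is a genuine covering family (dominance supplied by the atlas factor, representability by the $\cX$-component), which you do; the paper's construction only ever uses curves honestly moving in both factors. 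One small correction: the untwistedness of the constant $\cY$-component of the twisted arc at a stacky point does not rest on Proposition \ref{prop:neutral}; since the composite $\cC_{\cZ,v}\to\cY$ factors through the field-valued point $\Spec K\to\cY$, it kills all stabilizers, so its canonical factorization passes through the untwisted formal disk, and it lies in $B_{0}$ because the point was chosen in the open dense substack of condition (4) of Definition \ref{def:sectoroid-subdivision}.
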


\begin{proof}
(1) $\Rightarrow$ (2). Suppose that $\cC_{\cZ}$ is a general member
of a covering family of stacky curves and hence so are $\cC_{\cY}$
and $\cC_{\cX}$. Thus, $(a\llbracket\cL,c_{\cX}\rrbracket+\fK_{\cX},\cC_{\cX})\ge0$
and $(a\llbracket\cM,c_{\cY}\rrbracket+\fK_{\cY},\cC_{\cY})\ge0$.
From Lemma \ref{lem:ineq-intersection}, we have $(a\llbracket\cN,c_{\cZ}\rrbracket+\fK_{\cZ},\cC_{\cZ})\ge0$,
which shows (2).

(2) $\Rightarrow$ (1). We prove the contraposition. Suppose that
$a\llbracket\cL,c_{\cX}\rrbracket+\fK_{\cX}\notin\fPEff(\cX)$ and
hence that $(a\llbracket\cL,c_{\cX}\rrbracket+\fK_{\cX},\cC_{\cX})<0$
for a general member $\cC_{\cX}$ for some covering family of stacky
curves on $\cX_{\overline{k}}$. Let $\cC_{\cY}\to\cY_{\overline{k}}$
be a general member of a covering family of stacky curves. Let $s_{1},\dots,s_{m}\in|\cC_{\cX}|$
be the points $s$ such that the corresponding sectoroid $A_{i_{s}}$
is not $A_{0}$. Similarly for $t_{1},\dots,t_{n}\in|\cC_{\cY}|$.
Let 
\[
C'\subset\overline{\cC_{\cX}\times_{\overline{k}}\cC_{\cY}}=\overline{\cC_{\cX}}\times_{\overline{k}}\overline{\cC_{\cY}}
\]
be a general member of a very ample complete linear system of bi-degree
$(d_{\cX},d_{\cY})$ which does not meet any of the points $(s_{i},t_{j})$
and does meet transversally with all of $\{s_{i}\}\times\overline{\cC_{\cY}}$
and $\overline{\cC_{\cX}}\times\{t_{j}\}$. Let $\cC'\to\cZ_{\overline{k}}$
be the stacky curve induced by the rational map $C'\dasharrow\cZ_{\overline{k}}$,
which is again a general member of a covering family. If $u\in|\cC'|$
lies over $s_{i}$ (then does not lie over any of $t_{1},\dots,t_{n}$),
then it gives the locus $C_{i_{s_{i}}0}$. Moreover, for each $s_{i}$
(resp.~$t_{j}$), there are exactly $d_{\cX}$ (resp.~$d_{\cY}$)
points of $|\cC'|$ over it. Therefore, 
\[
(a\llbracket\cM,c_{\cZ}\rrbracket+\fK_{\cZ},\cC')=d_{\cX}(a\llbracket\cL,c_{\cX}\rrbracket+\fK_{\cX},\cC_{\cX})+d_{\cY}(a\llbracket\cM,c_{\cY}\rrbracket+\fK_{\cY},\cC_{\cY}).
\]
It follows that if we choose the bi-degree $(d_{\cX},d_{\cY})$ with
$d_{\cX}\gg d_{\cY}>0$, then the right hand side is negative. Thus,
$a\llbracket\cN,c_{\cZ}\rrbracket+\fK_{\cZ}\notin\fPEff(\cZ)$. Similarly
for the case $a\llbracket\cM,c_{\cY}\rrbracket+\fK_{\cY}\notin\fPEff(\cY)$.
\end{proof}
The formulas for the $a$- and $b$-invariants associated to a product
height is compatible with the asymptotic formula for a product height
obtained in Theorem \ref{thm:product-lemma} of Appendix \ref{sec:A-product-lemma}.
\begin{prop}
\label{prop:a-b-prod}We have
\[
a(\cN,c_{\cZ})=\max\{a(\cL,c_{\cX}),a(\cM,c_{\cY})\}
\]
and
\[
b(\cN,c_{\cZ})=\sum_{\substack{(\cF,e)\in\{(\cL,c_{\cX}),(\cM,c_{\cY})\}\\
a(\cF,e)=a(\cN,c_{\cZ})
}
}b(\cF,e).
\]
\end{prop}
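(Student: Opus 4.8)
The plan is to obtain both identities from the two lemmas just established, Lemma~\ref{lem:equiv-PEff} and Lemma~\ref{lem:ineq-intersection}, which already encapsulate the comparison of the augmented cones of $\cZ$ with those of $\cX$ and $\cY$; the remaining content is linear-algebra bookkeeping in the extra $[C_{ijn}]$-directions together with a product-of-curves construction. Write $a_{\cX}:=a(\cL,c_{\cX})$, $a_{\cY}:=a(\cM,c_{\cY})$, $a_{\cZ}:=a(\cN,c_{\cZ})$ and, for $a\in\RR$, $\alpha_{\cX}(a):=a\llbracket\cL,c_{\cX}\rrbracket+\fK_{\cX}$, similarly $\alpha_{\cY}(a)$ and $\alpha_{\cZ}(a)$. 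First I would note that a routine variant of Lemma~\ref{lem:equiv-PEff} shows $(\cN,c_{\cZ})$ is big and $\fK_{\cZ}\notin\fPEff(\cZ)$ as soon as the factors have the analogous properties, so that $a_{\cZ}$ and $b(\cN,c_{\cZ})$ are defined. For the $a$-invariant: since $(\cL,c_{\cX})$ is big, $\llbracket\cL,c_{\cX}\rrbracket$ is an interior point of the closed convex cone $\fPEff(\cX)$, so $\{a\mid\alpha_{\cX}(a)\in\fPEff(\cX)\}$ is the closed half-line $[a_{\cX},\infty)$ (upward-closed because $\alpha_{\cX}(a')=\alpha_{\cX}(a)+(a'-a)\llbracket\cL,c_{\cX}\rrbracket\in\fPEff(\cX)$ for $a'\ge a$, and the endpoint is attained since $\fPEff(\cX)$ is closed), and likewise for $\cY$; by Lemma~\ref{lem:equiv-PEff} the set $\{a\mid\alpha_{\cZ}(a)\in\fPEff(\cZ)\}$ is the intersection $[\max\{a_{\cX},a_{\cY}\},\infty)$, whose infimum is $a_{\cZ}=\max\{a_{\cX},a_{\cY}\}$.

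For the $b$-invariant, set $\alpha_{\cX}:=\alpha_{\cX}(a_{\cZ})$, $\alpha_{\cY}:=\alpha_{\cY}(a_{\cZ})$, $\alpha_{\cZ}:=\alpha_{\cZ}(a_{\cZ})$, so $\alpha_{\cX}\in\fPEff(\cX)$ and $\alpha_{\cY}\in\fPEff(\cY)$ by the previous step. Sending a moving stacky curve $\cC_{\cZ}$ on $\cZ_{\overline{k}}$ to the stacky curves $\cC_{\cX}$, $\cC_{\cY}$ obtained by the canonical factorizations of $\cC_{\cZ}\to\cZ_{\overline{k}}\to\cX_{\overline{k}}$ and $\cC_{\cZ}\to\cZ_{\overline{k}}\to\cY_{\overline{k}}$ extends (via $(\pr_{1})_{*}$, $(\pr_{2})_{*}$ on $\N_{1}$ and by $[C_{i0}],[C_{ijn}]\mapsto[A_{i}]$, $[C_{0j}]\mapsto0$, resp.\ symmetrically on the sectoroid parts) to a continuous linear map $q=(q_{\cX},q_{\cY})\colon\fN_{1}(\cZ)\to\fN_{1}(\cX)\times\fN_{1}(\cY)$ carrying $\fMov_{1}(\cZ)$ into $\fMov_{1}(\cX)\times\fMov_{1}(\cY)$. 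Lemma~\ref{lem:ineq-intersection}(2) gives, for general members, $(\alpha_{\cZ},\cC_{\cZ})\ge(\alpha_{\cX},\cC_{\cX})+(\alpha_{\cY},\cC_{\cY})\ge0$, with the first inequality an equality exactly when $\llbracket\cC_{\cZ}\rrbracket$ has no $[C_{ijn}]$-components, in which case also $(\alpha_{\cX},\cC_{\cX})=(\alpha_{\cY},\cC_{\cY})=0$. Hence $\fMov_{1}(\cZ)\cap\alpha_{\cZ}^{\perp}$ lies in the subspace $V'\subset\fN_{1}(\cZ)$ of classes with no $[C_{ijn}]$-component, $q$ restricts to an isomorphism $V'\isoto\fN_{1}(\cX)\times\fN_{1}(\cY)$ (using the Künneth splitting $\N_{1}(\cZ)_{\RR}=\N_{1}(\cX)_{\RR}\oplus\N_{1}(\cY)_{\RR}$ together with $[C_{i0}]\mapsto[A_{i}]$, $[C_{0j}]\mapsto[B_{j}]$), and $q$ maps $\fMov_{1}(\cZ)\cap\alpha_{\cZ}^{\perp}$ into $\bigl(\fMov_{1}(\cX)\cap\alpha_{\cX}^{\perp}\bigr)\times\bigl(\fMov_{1}(\cY)\cap\alpha_{\cY}^{\perp}\bigr)$.

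The crux, which I expect to be the main obstacle, is the reverse inclusion: that $q$ identifies $\operatorname{span}\bigl(\fMov_{1}(\cZ)\cap\alpha_{\cZ}^{\perp}\bigr)$ with the \emph{full} product $\operatorname{span}\bigl(\fMov_{1}(\cX)\cap\alpha_{\cX}^{\perp}\bigr)\times\operatorname{span}\bigl(\fMov_{1}(\cY)\cap\alpha_{\cY}^{\perp}\bigr)$. Here I would invoke the product-of-curves construction from the proof of Lemma~\ref{lem:equiv-PEff}: given general members $\cC_{\cX}$, $\cC_{\cY}$ of covering families with $(\alpha_{\cX},\cC_{\cX})=(\alpha_{\cY},\cC_{\cY})=0$ and a general member $\cC'$ of a very ample complete linear system of bidegree $(d_{\cX},d_{\cY})$, both large, on $\overline{\cC_{\cX}}\times_{\overline{k}}\overline{\cC_{\cY}}$, the induced moving stacky curve on $\cZ_{\overline{k}}$ satisfies $(\alpha_{\cZ},\cC')=0$ and $q\bigl(\tfrac{1}{d_{\cX}}\llbracket\cC'\rrbracket\bigr)=\bigl(\llbracket\cC_{\cX}\rrbracket,\tfrac{d_{\cY}}{d_{\cX}}\llbracket\cC_{\cY}\rrbracket\bigr)$; varying the ratio $d_{\cY}/d_{\cX}$ and subtracting yields $(\llbracket\cC_{\cX}\rrbracket,0)$ and $(0,\llbracket\cC_{\cY}\rrbracket)$ in the span of the image, and then varying $\cC_{\cX}$, $\cC_{\cY}$ and passing to the weak closure recovers the product of spans. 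Granting this, $b(\cN,c_{\cZ})=\dim\operatorname{span}\bigl(\fMov_{1}(\cX)\cap\alpha_{\cX}^{\perp}\bigr)+\dim\operatorname{span}\bigl(\fMov_{1}(\cY)\cap\alpha_{\cY}^{\perp}\bigr)$, and the first summand equals $b(\cL,c_{\cX})$ when $a_{\cZ}=a_{\cX}$ and equals $0$ when $a_{\cZ}>a_{\cX}$ --- in the latter case $\alpha_{\cX}$ is the sum of a pseudo-effective class and a positive multiple of the big class $\llbracket\cL,c_{\cX}\rrbracket$, hence lies in the interior of $\fPEff(\cX)$ and pairs strictly positively with $\fMov_{1}(\cX)\setminus\{0\}$ --- and similarly for $\cY$; summing the contributions of the factors whose $a$-invariant equals $a_{\cZ}$ produces the stated formula. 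The delicate points throughout are the infinite-dimensional ones: the weak topologies on $\fN_{1}$ and $\fN^{1}$, justifying the Künneth splitting of $\N_{1}(\cZ)_{\RR}$ (which is where geometric rational connectedness of the coarse spaces, or an explicit added hypothesis, enters), the ``interior of the dual cone pairs strictly positively'' fact, and making rigorous that the face $\fMov_{1}(\cX)\cap\alpha_{\cX}^{\perp}$ is the weak closure of the cone on the augmented classes of covering families it contains.
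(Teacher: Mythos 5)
Your proposal follows essentially the same route as the paper's proof: the $a$-invariant is read off from Lemma \ref{lem:equiv-PEff}, and the $b$-invariant is computed via the natural map $\fN_{1}(\cZ)\to\fN_{1}(\cX)\times\fN_{1}(\cY)$, using Lemma \ref{lem:ineq-intersection}(2) to force vanishing of the $[C_{ijn}]$-coefficients and the fact that $a\llbracket\cM,c_{\cY}\rrbracket+\fK_{\cY}$ lies in the interior of $\fPEff(\cY)$ (hence pairs strictly positively with nonzero moving classes) when $a(\cL,c_{\cX})>a(\cM,c_{\cY})$. The only difference is that you make explicit the reverse inclusion onto the product of faces via the bidegree-$(d_{\cX},d_{\cY})$ curve construction, a step the paper leaves implicit by appealing to the same construction already used in its proof of Lemma \ref{lem:equiv-PEff}.
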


\begin{proof}
The first equality is a direct consequence of Lemma \ref{lem:equiv-PEff}.
Let $a=a(\cN,c_{\cZ})$. To show the second equality, we first observe
that there exists a natural map $\fN_{1}(\cZ)\to\fN_{1}(\cX)\times\fN_{2}(\cY)$
sending $[C_{ijn}]$ to $([A_{i}],[B_{j}])$. We claim that if $a=a(\cL,c_{\cX})>a(\cM,c_{\cY})$,
then this map gives the identification
\[
\fMov_{1}(\cZ)\cap(a\llbracket\cN,c_{\cZ}\rrbracket+\fK_{\cZ})^{\perp}=\fMov_{1}(\cX)\cap(a\llbracket\cL,c_{\cX}\rrbracket+\fK_{\cX})^{\perp}
\]
and if $a=a(\cL,c_{\cX})=a(\cM,c_{\cY})$, then the identification
\begin{gather*}
\fMov_{1}(\cZ)\cap(a\llbracket\cN,c_{\cZ}\rrbracket+\fK_{\cZ})^{\perp}=\\
\left(\fMov_{1}(\cX)\cap(a\llbracket\cL,c_{\cX}\rrbracket+\fK_{\cX})^{\perp}\right)\times\left(\fMov_{1}(\cY)\cap(a\llbracket\cM,c_{\cY}\rrbracket+\fK_{\cY})^{\perp}\right).
\end{gather*}

First, we consider the case $a(\cL,c_{\cX})>a(\cM,c_{\cY})$. In this
case, $a\llbracket\cM,c_{\cY}\rrbracket+\fK_{\cY}$ is in the interior
of $\fPEff(\cY)$. If $\theta\in\fMov_{1}(\cZ)$ maps to a nonzero
element of $\fMov_{1}(\cY)$, then $(a\llbracket\cN,c_{\cZ}\rrbracket+\fK_{\cZ})\cdot\theta>0$.
Thus, for an element $\theta\in\fMov_{1}(\cZ)$ to have zero intersection
with $a\llbracket\cN,c_{\cZ}\rrbracket+\fK_{\cZ}$, it needs to map
to zero in $\fMov_{1}(\cY)$. This shows the first equality of the
claim. 

Next, we consider the case $a(\cL,c_{\cX})=a(\cM,c_{\cY})$. From
(2) of Lemma \ref{lem:ineq-intersection}, for an element $\theta\in\fMov_{1}(\cZ)$
to have zero intersection with $a\llbracket\cN,c_{\cZ}\rrbracket+\fK_{\cZ}$,
it needs to have zero coefficients at the terms $[C_{ijn}]$ for $(i,j)\in I^{*}\times J^{*}$
and $n\in N_{i,j}$. Thus, the cone $\fMov_{1}(\cZ)\cap(a\llbracket\cN,c_{\cZ}\rrbracket+\fK_{\cZ})^{\perp}$
is contained in the linear subspace
\[
N_{1}(X)\oplus N_{1}(Y)\oplus\bigoplus_{i\in I^{*}}\RR[C_{i0}]\oplus\bigoplus_{j\in J^{*}}\RR[C_{0j}]\subset\fN_{1}(\cZ),
\]
which is isomorphic to $\fN_{1}(\cX)\times\fN_{1}(\cY)$. Now, the
second equality of the claim follows again from (2) of Lemma \ref{lem:ineq-intersection}. 
\end{proof}

\section{Elliptic curves in characteristic 3\label{sec:Elliptic-curves}}

\subsection{Setup\label{subsec:Setup}}

Suppose that $k$ has characteristic 3. Let $\cV:=\cM_{1,1}$ be the
moduli stack of elliptic curves over $k$ and let $\cX:=\overline{\cM_{1,1}}$
be its Deligne--Mumford compactification. Let $V:=M_{1,1}$ and $X:=\overline{M_{1,1,}}$
be their coarse moduli spaces, respectively. From \cite{igusa1959fibresystems}
(see also \cite[(8.2.1)]{katz1985arithmetic}), $V=\Spec k[j]=\AA_{k}^{1}$
and $X=\PP_{k}^{1}=V\cup\{\infty\}$. Here the variable $j$ corresponds
to the $j$-invariant of elliptic curves. The automorphism group of
a geometric point of $\cM_{1,1}$ is of order 2 except when $j=0$
\cite[Chapter IV, Corollary 4.7]{hartshorne1977algebraic}. The automorphism
group of a geometric point with $j=0$ is the dicyclic group of order
12. Note that the automorphism group of a nodal elliptic curve (given
with a marked point) is also of order two. Indeed, the group is identified
with the group of automorphisms of $\PP^{1}$ that fixes a point $x_{1}$
(the marked point) and a pair $\{x_{2},x_{3}\}$ of two points (the
two points over the node), where $x_{1},x_{2},x_{3}$ are three distinct
points. Thus, the group is identical to $S_{2}=C_{2}$. Thus, the
rigidification $\cX^{\rig}$ has only one stacky point at $j=0$,
which has an automorphism group isomorphic to $S_{3}$. 

The rigidification $\cX^{\rig}$ contains $U:=\Spec k[j^{\pm}]$ as
an open substack. Let $\cU\to U$ be the restriction of $\cX\to\cX^{\rig}$,
which is a $\mu_{2}$-gerbe. The equation
\[
z^{2}=w^{3}+w^{2}-1/j
\]
defines a family of elliptic curves over $U$ which give a section
of this gerbe (from \cite[p. 42]{silverman2009thearithmetic}, the
Elliptic curve defined by the above equation has $j$-invariant $j$).
Thus, the gerbe $\cU\to U$ is neutral and we get:
\begin{prop}
The open substack $\cU\subset\cX$ is isomorphic to the trivial $\mu_{2}$-gerbe
$\B\mu_{2}\times U$. In particular, 
\[
\cU\langle F\rangle=(\B\mu_{2})\langle F\rangle\times U(F).
\]
\end{prop}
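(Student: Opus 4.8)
The plan is to produce an explicit splitting of the $\mu_2$-gerbe $\cU\to U$ from the Weierstrass family in the statement, and then invoke the standard fact that a gerbe carrying a global object is canonically equivalent to the classifying stack of the automorphism sheaf of that object. Concretely, the argument splits into two parts: (i) check that $z^2=w^3+w^2-1/j$ defines an elliptic curve over all of $U=\Spec k[j^{\pm}]$ with $j$-invariant $j$, so that it yields a section $s\colon U\to\cU$ of the projection; and (ii) identify $\ulAut_U(s)$ with $\mu_{2,U}$ and conclude.

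For (i), write the cubic as a Weierstrass equation with $a_1=a_3=a_4=0$, $a_2=1$, $a_6=-1/j$ over $\cO_U=k[j^{\pm}]$. Computing the standard invariants $b_i$, $c_4$, $\Delta$ and reducing modulo $3$ gives $c_4=1$ and $\Delta=1/j$; in particular $\Delta\in\cO_U^{\times}$, so the cubic, together with the point at infinity, defines an elliptic curve $E/U$, and $j(E)=c_4^3/\Delta=j$ (this is the content of the computation cited from \cite[p.~42]{silverman2009thearithmetic}). The classifying morphism of $E/U$ has $j$-invariant the unit $j$, hence misses both the cusp $\infty$ and the point $j=0$, so it factors as a morphism $U\to\cU=\cX\times_{\cX^{\rig}}U$, which is a section $s$ of the gerbe $\cU\to U$.

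For (ii), recall that any gerbe $\cG\to S$ equipped with an object $s\in\cG(S)$ is canonically equivalent to $\B\ulAut_S(s)$, the classifying stack of the sheaf of automorphisms of $s$. Here $\ulAut_U(s)=\ulAut_U(E/U)$ is the automorphism group scheme of $E$; since $j\ne 0$ everywhere on $U$, every geometric fibre of $E$ has automorphism group $\{\pm1\}$, so $\ulAut_U(E/U)=\mu_{2,U}=\mu_2\times_k U$. Hence $\cU\cong\B\ulAut_U(s)=\B\mu_{2,U}=\B\mu_2\times U$, which is the first assertion. The ``in particular'' then follows by taking $F$-points: the isomorphism $\cU\cong\B\mu_2\times U$ induces an equivalence of groupoids $\cU(F)\simeq(\B\mu_2)(F)\times U(F)$, and $U(F)$ is discrete since $U$ is a scheme, so passing to isomorphism classes gives $\cU\langle F\rangle=(\B\mu_2)\langle F\rangle\times U(F)$.

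The one step requiring genuine (if modest) attention is the characteristic-$3$ reduction in part (i): one must verify that the Weierstrass invariants really do collapse to $c_4=1$ and $\Delta=1/j\in\cO_U^{\times}$ (rather than, say, $\Delta$ acquiring a zero, which would prevent $E/U$ from being an elliptic curve over all of $U$), and correspondingly that the excluded point $j=0$ is exactly the locus of extra automorphisms, so that $\ulAut_U(E/U)$ is honestly $\mu_{2,U}$ and not something larger or twisted. Everything downstream of these checks is formal gerbe theory.
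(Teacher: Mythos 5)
Your proof is correct and follows essentially the same route as the paper: the paper also exhibits the section of the $\mu_2$-gerbe $\cU\to U$ via the Weierstrass family $z^{2}=w^{3}+w^{2}-1/j$ (citing Silverman for the $j$-invariant) and concludes neutrality, hence triviality. Your write-up merely makes explicit the characteristic-3 computation $c_4=1$, $\Delta=1/j$ and the identification $\ulAut_U(s)=\mu_{2,U}$, which the paper leaves implicit.
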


To study the ramification of $\cX\to X$ at $j=0$, consider the family
of elliptic curves given by the Legendre form
\[
z^{2}=w(w-1)(w-\lambda).
\]
The equation defines a family of (possibly nodal) elliptic curves
over $A:=\AA^{1}$ (with parameter $\lambda$) and defines a morphism
$A\to\cX.$
\begin{prop}
\label{prop:LegEt}The morphism $A\to\cX$ as well as the composition
$A\to\cX\to\cX^{\rig}$ is étale.
\end{prop}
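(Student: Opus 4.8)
The plan is to verify that $f\colon A\to\cX$ is étale Zariski-locally on $A$. Since $A=\AA^{1}_{\lambda}$ and $\cX=\overline{\cM_{1,1}}$ are both smooth of dimension one over $k$, it suffices to show that $f$ is unramified — i.e. that the map of invertible sheaves $f^{*}\Omega_{\cX/k}\to\Omega_{A/k}$ is surjective, hence an isomorphism — at every point, since flatness of $f$ is then automatic by ``miracle flatness'' over the smooth curve $A$; equivalently one checks that $f$ induces isomorphisms on completed local rings up to the residual gerbe. I would treat separately the open locus $A^{\circ}:=A\setminus\{0,1\}$, where $w(w-1)(w-\lambda)$ has distinct roots and $E_{\lambda}$ is a smooth elliptic curve, and the two boundary points $\lambda=0,1$, where $E_{\lambda}$ is nodal.

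Over $A^{\circ}$ the family $E_{\lambda}$ carries a tautological full level-$2$ structure: its nontrivial $2$-torsion points are $(0,0),(1,0),(\lambda,0)$, and the ordered basis $\bigl((0,0),(1,0)\bigr)$ of $E_{\lambda}[2]$ varies algebraically with $\lambda$, so the Legendre family defines a morphism $g\colon A^{\circ}\to\cM_{1,1}[2]$ to the moduli stack of elliptic curves with full level-$2$ structure. Since $2\in k^{\times}$, the subgroup scheme $E_{\lambda}[2]$ is finite étale of rank four, so the forgetful morphism $\cM_{1,1}[2]\to\cM_{1,1}$ is representable, finite and étale (of degree $|\GL_{2}(\FF_{2})|=6$); hence it is enough to prove $g$ étale. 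The rigidification $\cM_{1,1}[2]^{\rig}$ is the affine scheme $Y(2)$, whose coarse space is identified with $A^{\circ}$ through the $\lambda$-coordinate, and under this identification the composite $A^{\circ}\xrightarrow{g}\cM_{1,1}[2]\to Y(2)$ is the identity; thus $g$ is a section of the residual $\mu_{2}$-gerbe $\cM_{1,1}[2]\to Y(2)$. That gerbe is neutral — it is $\mu_{2}$-banded and admits the section $g$ — and, $2$ being invertible, it is an étale gerbe; a section of an étale gerbe is étale, so $g$ is étale. Composing, $A^{\circ}\to\cM_{1,1}\subset\cX$ is étale. In particular this already covers the point $\lambda=2$ lying over $j=0$, the case that motivates the proposition.

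For $\lambda\in\{0,1\}$ I would argue on complete local rings. By properness of $\cX$ the morphism $f$ extends over these points, necessarily sending them to the cuspidal point of $\cX$; near that point the Tate curve identifies $\cX$ with $\B\mu_{2}\times\Spf k\llbracket t\rrbracket$, where $t$ is a uniformizer of $X$ at the cusp with $j=t^{-1}\cdot(\mathrm{unit})$. Via this description, étaleness of $f$ at $\lambda=0,1$ becomes an explicit computation involving $j(\lambda)=(\lambda+1)^{6}/\bigl(\lambda^{2}(\lambda-1)^{2}\bigr)$, the Legendre discriminant $\Delta=\lambda^{2}(\lambda^{2}+\lambda+1)$, and the $\mu_{2}$-torsor on $\widehat{\cO_{A,0}}$ carried by the $2$-torsion — equivalently, checking that $f^{*}\Omega_{\cX/k}\to\Omega_{A/k}$ is an isomorphism there. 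This boundary case is where the argument requires the most care: the induced morphism $A\to X$ is genuinely ramified over the cusp, and one must pin down precisely how the stacky structure of $\cX$ there enters.

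Granting all of this, the assertion for $A\to\cX\to\cX^{\rig}$ is immediate, since $\rho\colon\cX\to\cX^{\rig}$ is an étale gerbe and the composite of two étale morphisms is étale. I expect the genuine obstacle to be the two boundary points $\lambda=0,1$: over $A^{\circ}$ the argument is essentially formal once one has the finite étaleness of $\cM_{1,1}[2]\to\cM_{1,1}$ and the neutrality of the residual $\mu_{2}$-gerbe, whereas at $\lambda=0,1$ no level structure is available and one is forced into the local cusp computation above.
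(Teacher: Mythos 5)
Your argument over $A^{\circ}=A\setminus\{0,1\}$ is correct, and it is a genuinely different route from the paper's: the paper compares $A$ and $\cX^{\rig}$ with the coarse space $X$ away from $j=0$ and then, at $j=0$, passes to completions and runs a degree count against the atlas $V\to\cY=[V/S_{3}]$ (a degree-one finite morphism of normal formal disks is an isomorphism), whereas you obtain étaleness at every $\lambda\neq 0,1$ — in particular at $\lambda=-1$ over $j=0$, which is the only point the paper actually uses — in one stroke from the tautological level-$2$ structure, the finite étale forgetful morphism $\cM_{1,1}[2]\to\cM_{1,1}$ of degree $6$, and the fact that a section of the residual $\mu_{2}$-gerbe over $Y(2)$ is étale. (The one thing to check, which does hold in characteristic $3$, is that the inertia of $\cM_{1,1}[2]$ is exactly $\{\pm1\}$ even over $j=0$: the order-$12$ automorphism group there surjects onto $\GL_{2}(\FF_{2})$ with kernel $\{\pm1\}$.)

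The deferred computation at $\lambda=0,1$ is, however, a genuine gap, and it cannot be closed, because the map is \emph{not} étale there. At the cusp the inertia of $\cX$ is only the generic $C_{2}$ (the paper itself records that $\cX^{\rig}$ has its unique stacky point at $j=0$), so formally at the cusp $\cX\cong\B\mu_{2}\times\Spf k\llbracket q\rrbracket$ with $1/j=q\cdot(\mathrm{unit})$; in particular $\cX\to X$ is étale near the cusp, and $\cX^{\rig}\to X$ is an isomorphism there. Hence étaleness of $A\to\cX$ (or of $A\to\cX^{\rig}$) at $\lambda=0$ would force $\lambda\mapsto 1/j$ to be unramified at $\lambda=0$, whereas $v_{\lambda=0}(1/j)=v_{\lambda=0}(\Delta)=2$ by your own formula $\Delta=\lambda^{2}(\lambda-1)^{2}$; equivalently, the reduced boundary divisor of $\cX$ pulls back to $2[\lambda=0]+2[\lambda=1]$, which is impossible for an étale morphism. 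So your hope that ``the stacky structure of $\cX$ there enters'' cannot materialize — the extra inertia needed to absorb the index-$2$ ramification simply is not present at the cusp (this already fails in characteristic $0$). The proposition is therefore only correct after removing $\lambda=0,1$, i.e.\ over the complement of the cusp, which is exactly what your level-$2$ argument proves; note that the paper's own proof shares this blind spot, since its claim that $A\to X$ is étale over every point with $j\neq0$ fails at $j=\infty$.
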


\begin{proof}
Since $\cX\to\cX^{\rig}$ is étale, we only need to show that $A\to\cX^{\rig}$
is étale. Over a point $x\in X$ with $j\ne0$, both morphisms $A\to X$
and $\cX^{\rig}\to X$ are étale (see \cite[Proposition 1.7, page 49]{silverman2009thearithmetic}).
It follows that $A\to\cX^{\rig}$ is étale over such a point $x$.
Let $\cY=[V/S_{3}]$ be the completion of $\cX^{\rig}$ at the point
$j=0$. Similarly, we define $\widehat{A}$ and $\widehat{X}$. Since
$V\to\cY$ is étale, we have a unique dashed arrow in:
\[
\xymatrix{\Spec k\ar[r]\ar[d] & V\ar[d]\\
\widehat{A}\ar[r]\ar@{-->}[ur] & \cY
}
\]
Since $\widehat{A}\to\cY$ and $V\to\cY$ are of degree 6, the morphism
$\widehat{A}\to V$ is a degree-1 finite morphism of normal integral
schemes. Therefore, it is an isomorphism. It follows that $\widehat{A}\to\cY$
is étale. 
\end{proof}
The composition 
\[
\AA^{1}=\Spec k[\lambda]\to\overline{\cM_{1,1}}\to\overline{M_{1,1}}=\Spec k[j]
\]
is given by the formula
\[
j=\frac{2^{8}(\lambda^{2}-\lambda+1)^{3}}{\lambda^{2}(\lambda-1)^{2}}=\frac{(\lambda+1)^{6}}{\lambda^{2}(\lambda-1)^{2}}.
\]
The only point over $j=0$ is the point $\lambda=-1$. 

From \cite[Theorem 6]{igusa1959fibresystems}, the higher ramification
groups of $\cX^{\rig}\to X$ at $j=0$ are given by
\[
G_{0}=S_{3},\,G_{1}=C_{3},\,G_{i}=1\,(i\ge2).
\]
From \cite[Proposition 4,  p.64]{serre1979localfields}, the different
exponent of the corresponding $S_{3}$-extension of $\overline{k}\tpars$
is 
\[
(6-1)+(3-1)=7.
\]
Thus, 
\[
[\omega_{\cX^{\rig}}]=[K_{X}]+\frac{7}{6}[\pt]=-\frac{5}{6}[\pt].
\]
Here $[\pt]$ is the class of a degree-1 point of $X$ and we use
identifications $\N^{1}(\cX)_{\RR}=\N^{1}(X)_{\RR}=\RR$ through which
we have $[K_{X}]=-2$ and $[\pt]=1$. Therefore $\cX^{\rig}$ is a
Fano stack in the sense that the canonical line bundle $\omega_{\cX^{\rig}}$
corresponds to an ample $\QQ$-line bundle on the coarse moduli space
$X$ \cite[Definition 5.1]{darda2024thebatyrevtextendashmanin}. Let
$x\in X$ be the $j=0$ point. Then, the $\QQ$-divisor $D$ such
that $\cX^{\rig}\to(X,D)$ is crepant is given by $D=(7/6)x$. In
particular, $(X,D)$ is not log canonical. 

\subsection{\label{subsec:working on X^=00007Brig=00007D}Working on $\protect\cX^{\protect\rig}$}

Let $A_{i}'\subset\J_{\infty}X$ be the locus of arcs with order $i$
at $j=0$ and let 
\[
A_{i}:=(\pi_{\infty}^{\rig})^{-1}(A_{i}')\setminus|\J_{\infty}\cX|\subset|\cJ_{\infty}\cX^{\rig}|.
\]
We have
\begin{align*}
\mu_{\cX^{\rig}}^{\Gor}(A_{i}) & =\mu_{(X,D)}^{\Gor}(A_{i}')-\mu_{\cX^{\rig}}^{\Gor}((\pi_{\infty}^{\rig})^{-1}(A_{i}')\cap|\J_{\infty}\cX|)\\
 & =(\LL-1)\LL^{-i-1+(7/6)i}-(\LL-1)\LL^{-6i-1+(7/6)i}\\
 & =(\LL-1)(\LL^{(1/6)i-1}-\LL^{-(29/6)i-1}).
\end{align*}
In particular, $\gw(A_{i})=i/6$ and $A_{i}$'s are sectoroids. Thus,
$\{A_{i}\}_{i\in I}$ is a sectoroid subdivision, which is our choice
of the sectoroid subdivision. Then, 
\begin{align*}
\fN_{1}(\cX^{\rig}) & =\RR[X]\oplus\bigoplus_{i\in\ZZ_{>0}}\RR[A_{i}],\\
\fN^{1}(\cX^{\rig}) & =\RR[\pt]\oplus\bigoplus_{i\in\ZZ_{>0}}\RR[A_{i}]^{*}.
\end{align*}
The augmented  canonical class is computed as
\begin{align*}
\fK_{\cX^{\rig}} & =-\frac{5}{6}[\pt]+\sum_{i\in\ZZ_{>0}}\left(-\frac{i}{6}-1\right)[A_{i}]^{*}.
\end{align*}

\begin{lem}
\label{lem:gens-Mov}The cone $\fMov_{1}(\cX^{\rig})$ is generated
by
\[
[X]\text{ and }i[X]+[A_{i}]\quad(i\in\ZZ_{>0}).
\]
\end{lem}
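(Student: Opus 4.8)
The plan is to identify $\fMov_{1}(\cX^{\rig})$ with the cone
\[
\mathcal{K}:=\left\{\,a[X]+\sum_{i>0}b_{i}[A_{i}]\in\fN_{1}(\cX^{\rig})\ \bigmid\ b_{i}\ge 0\ \text{for all }i,\ \ a\ge\textstyle\sum_{i>0}i\,b_{i}\,\right\},
\]
and to note that $\mathcal{K}$ is precisely the cone generated by $[X]$ and the classes $i[X]+[A_{i}]$, $i\in\ZZ_{>0}$: every element of $\mathcal{K}$ involves only finitely many $[A_{i}]$, and $a[X]+\sum_{i}b_{i}[A_{i}]=(a-\sum_{i}i\,b_{i})[X]+\sum_{i}b_{i}\,(i[X]+[A_{i}])$ exhibits it as a finite non-negative combination. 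Moreover $\mathcal{K}$ is closed in the weak topology of $\fN_{1}(\cX^{\rig})$, being the intersection of the closed half-spaces $(\,\cdot\,,[A_{i}]^{*})\ge0$ $(i>0)$ and $(\,\cdot\,,[\pt]-\sum_{i>0}i[A_{i}]^{*})\ge 0$; here $[\pt]-\sum_{i>0}i[A_{i}]^{*}$ is a legitimate element of $\fN^{1}(\cX^{\rig})=\RR[\pt]\oplus\prod_{i>0}\RR[A_{i}]^{*}$ because the latter is a direct product.

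First I would prove $\fMov_{1}(\cX^{\rig})\subseteq\mathcal{K}$. Since the coarse space of $\cX^{\rig}$ is $X=\PP^{1}$ and $\cX^{\rig}$ restricted to $X\setminus\{x_{0}\}$ (with $x_{0}=\{j=0\}$) is isomorphic to the scheme $X\setminus\{x_{0}\}$, any stacky curve $f\colon\cC\to\cX^{\rig}_{\overline{k}}$ is a scheme over the complement of the fiber over $x_{0}$, so by representability all stacky points of $\cC$ lie over $x_{0}$. Writing $d=\deg(\overline{\cC}\to X)$ and $e_{1},\dots,e_{m}$ for the ramification indices of $\overline{\cC}\to X$ at the stacky points of $\cC$, the twisted arc attached to the $l$-th stacky point has image arc of order $e_{l}$ at $x_{0}$ and is itself twisted, hence lies in $A_{e_{l}}$; therefore $\llbracket\cC\rrbracket=d[X]+\sum_{l=1}^{m}[A_{e_{l}}]$. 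Since the ramification indices over $x_{0}$ sum to $d$ we have $\sum_{l}e_{l}\le d$, so $\llbracket\cC\rrbracket\in\mathcal{K}$. The same computation applies to the generic member of an arbitrary covering family, so $\sum_{\widetilde{\cC}}\RR_{\ge0}\llbracket\widetilde{\cC}\rrbracket\subseteq\mathcal{K}$, and $\mathcal{K}$ being closed gives $\fMov_{1}(\cX^{\rig})\subseteq\mathcal{K}$.

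For the reverse inclusion it suffices to place $[X]$ and each $i[X]+[A_{i}]$ in $\fMov_{1}(\cX^{\rig})$, and since $\cX^{\rig}$ is one-dimensional any dominant stacky curve over $\overline{k}$ is the generic member of a covering family (take $T=\Spec\overline{k}$). Fix the local presentation $\cX^{\rig}\cong[E/S_{3}]$ near $x_{0}$, where $E\to\D_{\overline{k}}$ is the connected $S_{3}$-Galois cover with $G_{0}=S_{3}$, $G_{1}=C_{3}$, $G_{\ge2}=1$. For a finite connected cover $\phi\colon\overline{\cC}\to X$ put $\cC:=$ the normalization of $\overline{\cC}\times_{X}\cX^{\rig}$. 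Using that $\overline{k}\llbracket s\rrbracket$ is flat over $\overline{k}\llbracket t\rrbracket$ (so the fiber product is reduced), one checks that $\cC$ is a smooth proper DM curve with trivial generic stabilizer, $\cC\to\cX^{\rig}$ is representable and dominant of coarse degree $\deg\phi$, and the stacky points of $\cC$ lie over $\phi^{-1}(x_{0})$; over such a point $v$ the stack $\cC$ is the quotient by $S_{3}$ of the normalization of $\overline{\cC}_{v}\times_{\D}E$, which by Galois theory is a stacky disk whose stabilizer is the inertia group of the compositum of $K(E)$ with the completion of the function field of $\overline{\cC}$ at $v$, so $v$ is non-stacky precisely when that local field contains $K(E)$. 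Now, by Katz--Gabber and Harbater \cite{harbater1980moduliof,katz1986localtoglobal}, any finite separable extension of $\overline{k}\tpars$ is the completion at $0$ of a connected cover of $\PP^{1}$ that is étale over $\PP^{1}\setminus\{0,\infty\}$ and tame over $\infty$ (for non-Galois extensions, apply this to the Galois closure and pass to the appropriate subcover). Applied to $K(E)/\overline{k}\tpars$ this gives $\phi$ of degree $6$ totally ramified over $x_{0}$, for which $\overline{\cC}_{v}\times_{\D}E$ has normalization $\coprod_{g\in S_{3}}E$ with $S_{3}$ acting freely and transitively on the components; hence $\cC$ is a scheme with no stacky points, $\llbracket\cC\rrbracket=6[X]$, and $[X]=\tfrac16\llbracket\cC\rrbracket\in\fMov_{1}(\cX^{\rig})$. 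For $i\in\ZZ_{>0}$, choosing instead a degree-$i$ totally ramified separable extension of $\overline{k}\tpars$ not containing $K(E)$ (automatic for $i<6$, and possible in general since $K(E)$ is a single extension) and realizing it likewise gives $\cC$ with a single stacky point, lying over $x_{0}$ with ramification index $i$ and hence of type $A_{i}$, so $\llbracket\cC\rrbracket=i[X]+[A_{i}]\in\fMov_{1}(\cX^{\rig})$. Together with the first step this yields $\fMov_{1}(\cX^{\rig})=\mathcal{K}$, as desired.

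The main obstacle will be the local computation at $v$: verifying, in characteristic $3$ with the wild part $C_{3}$ present, that the normalization of $\overline{\cC}\times_{X}\cX^{\rig}$ is as claimed, that $\cC$ is smooth and $\cC\to\cX^{\rig}$ representable, and that the twisted arc attached to $v$ lands in the intended sectoroid $A_{i}$. The global realization via Katz--Gabber is essentially a black box, and the only care needed there is to pass through the Galois closure when the prescribed local extension is not Galois.
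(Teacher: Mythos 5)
Your proposal is correct and follows essentially the same route as the paper's proof: closedness of the generated cone via the dual functionals $[A_{i}]^{*}$ and $[\pt]-\sum_{i}i[A_{i}]^{*}$, the inclusion $\fMov_{1}(\cX^{\rig})\subseteq\mathcal{K}$ by reading off the coarse degree and the ramification indices over $j=0$ of a covering, and realization of the generators by covers of $X$ totally ramified at $j=0$ whose completed local field there either equals the $S_{3}$-extension (giving $6[X]$, hence $[X]$) or does not contain it (giving $i[X]+[A_{i}]$). Your appeal to Katz--Gabber/Harbater to produce these covers and your explicit treatment of the case $6\mid i$ merely supply details the paper leaves implicit (in the paper's proof the phrase ``is in fact untwisted'' should read ``twisted,'' matching its own ``does not factor through'' clause and the conclusion $i[X]+[A_{i}]$), so no changes are needed.
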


\begin{proof}
For each $i\in\ZZ_{>0}$, we choose a degree-$i$ cover $C_{i}\to X_{\overline{k}}$
from a smooth projective curve $C_{i}$ which is totally ramified
at $x\in X_{\overline{k}}$. When $6$ divides $i$, we also assume
that the induced twisted arc of $\cX^{\rig}$ at $x$ is in fact untwisted,
equivalently, that the formal completion $\widehat{C_{i}}\to\widehat{X_{\overline{k}}}$
at $x$ does not factor through the affine line $\AA_{\overline{k}}^{1}=\Spec k[\lambda]$
with the Legendre parameter $\lambda$. Then, the augmented class
of $C_{i}$ is $i[X]+[A_{i}]$. Thus, we see that $[X]$ and $i[X]+[A_{i}]$,
$i\in\ZZ_{>0}$ are indeed in $\fMov_{1}(\cX^{\rig})$. 

To show that every element of $\fMov_{1}(\cX^{\rig})$ is generated
by these elements, let us take a degree-$m$ covering $Y\to X_{\overline{k}}$
and let $y_{1},\dots,y_{n}\in Y$ be the points over the $j=0$ point.
For each $s\in\{1,\dots,n\}$, let $\widehat{Y}_{s}\to X$ be the
induced arc and let $i_{s}$ be its degree. We have $m=\sum_{s=1}^{n}i_{s}$.
Then, the induced twisted arc at $y_{s}$ either is untwisted or belongs
to the sectoroid $A_{i_{s}}$. Let $[A_{i_{s}}]'$ be zero in the
former case and $[A_{i_{s}}]$ in the latter case. We have
\[
[Y]=m[X]+\sum_{s=1}^{n}[A_{i_{s}}]'=\sum_{s=1}^{n}(i_{s}[X]+[A_{i_{s}}]').
\]
In particular, $[Y]$ is generated by $[X]$ and $i[X]+[A_{i}]$,
$i\in\ZZ_{>0}$. 

It remains to show that the cone generated by these elements is closed
for the weak topology. We easily see that these elements form a basis
of $\fN_{1}(\cX)$. If $\alpha_{j}\in\fN^{1}(\cX^{\rig})$, $j\in J$
is its dual basis, then the cone is given by 
\[
\bigcap_{j\in J}\{\beta\in\fN_{1}(\alpha)\mid(\alpha,\beta)\ge0\},
\]
which is clearly closed. 
\end{proof}
\begin{prop}
The cone $\fPEff(\cX^{\rig})$ is described as
\[
\{a[\pt]+\sum_{i>0}b_{i}[A_{i}]^{*}\mid a\ge0\text{ and }ai+b_{i}\ge0\,(\forall i>0)\}.
\]
\end{prop}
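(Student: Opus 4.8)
The plan is to obtain $\fPEff(\cX^{\rig})$ by dualizing the explicit description of $\fMov_1(\cX^{\rig})$ furnished by Lemma \ref{lem:gens-Mov}. Recall that $\fPEff(\cX^{\rig})$ was \emph{defined} to be the dual cone of $\fMov_1(\cX^{\rig})$ with respect to the pairing between $\fN_1(\cX^{\rig})$ and $\fN^1(\cX^{\rig})$. The proof of Lemma \ref{lem:gens-Mov} shows that the cone generated by $[X]$ and the classes $i[X]+[A_i]$, $i\in\ZZ_{>0}$, is already closed for the weak topology and equals $\fMov_1(\cX^{\rig})$; hence its dual cone is simply the intersection of the closed half-spaces $\{\alpha\mid(\alpha,[X])\ge 0\}$ and $\{\alpha\mid(\alpha,i[X]+[A_i])\ge 0\}$, $i\in\ZZ_{>0}$, because an element pairs non-negatively with every non-negative combination of the generators precisely when it pairs non-negatively with each generator.

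It then remains only to evaluate these pairings on a general element $\alpha=a[\pt]+\sum_{i>0}b_i[A_i]^{*}\in\fN^1(\cX^{\rig})$. First I would invoke the defining formula for the pairing together with the identification $\N_1(X)_\RR=\N^1(X)_\RR=\RR$ under which $([X],[\pt])=1$, and the orthogonality relations $([\pt],[A_i])=0$ and $([A_j]^{*},[A_i])=\delta_{ij}$ coming from the block form of $\fN^1(\cX^{\rig})=\RR[\pt]\oplus\bigoplus_{i}\RR[A_i]^{*}$ and $\fN_1(\cX^{\rig})=\RR[X]\oplus\bigoplus_{i}\RR[A_i]$. This gives $(\alpha,[X])=a$ and $(\alpha,i[X]+[A_i])=ai+b_i$. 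Therefore $\alpha\in\fPEff(\cX^{\rig})$ if and only if $a\ge 0$ and $ai+b_i\ge 0$ for every $i>0$, which is exactly the asserted description.

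There is no genuine obstacle here: all the substance sits in Lemma \ref{lem:gens-Mov}, and in particular in the verification there that the generating cone is weakly closed, so that the infinite-dimensionality of $\fN^1(\cX^{\rig})$ introduces no subtlety in the dualization. The only point worth stating explicitly is that a general element of $\fN^1(\cX^{\rig})$ may have infinitely many nonzero coordinates $b_i$; this is harmless, since each inequality $ai+b_i\ge 0$ constrains only a single coordinate, and the defining condition of $\fPEff(\cX^{\rig})$ is imposed coordinate-wise through the pairing with the generators listed above.
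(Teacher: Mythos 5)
Your proposal is correct and takes essentially the same route as the paper: the paper's proof consists of observing that the description is a direct consequence of Lemma \ref{lem:gens-Mov}, and your argument simply spells out that dualization. The explicit pairing computations $(\alpha,[X])=a$ and $(\alpha,i[X]+[A_i])=ai+b_i$, together with the remark that the weak closedness established in Lemma \ref{lem:gens-Mov} removes any subtlety from the infinite-dimensional setting, are exactly the details the paper leaves implicit.
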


\begin{proof}
This is a direct consequence of Lemma \ref{lem:gens-Mov}.
\end{proof}
\begin{prop}
Let $L$ be a line bundle on $X$ of degree $d>0$ and let $\cL$
be its pullback to $\cX^{\rig}$. Then, $a(\cL,0)=2/d$ and $b(\cL,0)=1$.
\end{prop}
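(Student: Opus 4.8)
The plan is a direct computation from the descriptions of $\fN^1(\cX^{\rig})$, $\fN_1(\cX^{\rig})$, $\fK_{\cX^{\rig}}$, $\fPEff(\cX^{\rig})$ and the generators of $\fMov_1(\cX^{\rig})$ (Lemma~\ref{lem:gens-Mov}) obtained above. First I would note that, since $\cL$ is the pullback of a line bundle of degree $d$ on $X$, its class in $\N^1(\cX^{\rig})_{\RR}=\RR[\pt]$ equals $d[\pt]$, and as the raising function is $0$ we get $\llbracket\cL,0\rrbracket=d[\pt]$. Thus, for $a'\in\RR_{\ge0}$,
\[
a'\llbracket\cL,0\rrbracket+\fK_{\cX^{\rig}}=\Bigl(a'd-\tfrac{5}{6}\Bigr)[\pt]+\sum_{i\in\ZZ_{>0}}\Bigl(-\tfrac{i}{6}-1\Bigr)[A_i]^{*}.
\]
By the description of $\fPEff(\cX^{\rig})$, this class is pseudo-effective exactly when $a'd\ge\tfrac{5}{6}$ and $\bigl(a'd-\tfrac{5}{6}\bigr)i-\tfrac{i}{6}-1\ge0$ for every $i>0$; the second family of inequalities simplifies to $a'd\ge1+\tfrac{1}{i}$. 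Since $\sup_{i>0}\bigl(1+\tfrac{1}{i}\bigr)=2$, attained at $i=1$, and $2\ge\tfrac{5}{6}$, the whole system is equivalent to $a'\ge\tfrac{2}{d}$ and this value is admissible; hence $a(\cL,0)=\tfrac{2}{d}$.

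Next, for the $b$-invariant I would set $\alpha:=\tfrac{2}{d}\llbracket\cL,0\rrbracket+\fK_{\cX^{\rig}}=\tfrac{7}{6}[\pt]+\sum_{i>0}\bigl(-\tfrac{i}{6}-1\bigr)[A_i]^{*}$ and pair it against the generators of $\fMov_1(\cX^{\rig})$ given by Lemma~\ref{lem:gens-Mov}, namely $[X]$ and $i[X]+[A_i]$ for $i>0$. Using $([X],[\pt])=1$, $([A_i],[A_j]^{*})=\delta_{ij}$ and vanishing of the remaining cross-pairings, one finds $([X],\alpha)=\tfrac{7}{6}>0$ and $\bigl(i[X]+[A_i],\alpha\bigr)=i-1$, which is strictly positive for $i\ge2$ and vanishes only for $i=1$. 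Since $\fN_1(\cX^{\rig})$ is a direct sum, every element of $\fMov_1(\cX^{\rig})$ is a \emph{finite} nonnegative combination of these generators, so it lies in $\alpha^{\perp}$ if and only if it is a nonnegative multiple of $[X]+[A_1]$. Therefore $\fMov_1(\cX^{\rig})\cap\alpha^{\perp}=\RR_{\ge0}\bigl([X]+[A_1]\bigr)$, whose linear span is one-dimensional, so $b(\cL,0)=1$.

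It remains to check that $(\cL,0)$ is big, so that $a(\cL,0)$ and $b(\cL,0)$ are defined: here $\bigl(i[X]+[A_i],\fK_{\cX^{\rig}}\bigr)=-\tfrac{5i}{6}-\tfrac{i}{6}-1=-i-1<0$ shows $\fK_{\cX^{\rig}}\notin\fPEff(\cX^{\rig})$, while $\llbracket\cL,0\rrbracket=d[\pt]$ and, for $a'\gg0$, $a'\llbracket\cL,0\rrbracket+\fK_{\cX^{\rig}}$ satisfy all the defining inequalities of $\fPEff(\cX^{\rig})$ strictly. The only delicate point is the last step of the $b$-computation, i.e.\ that $\fMov_1(\cX^{\rig})\cap\alpha^{\perp}$ contains nothing beyond the ray through $[X]+[A_1]$: this relies on Lemma~\ref{lem:gens-Mov} identifying $\fMov_1(\cX^{\rig})$ with the \emph{exact} closed cone on the listed classes, so that no limiting phenomenon in the weak topology introduces extra elements of $\alpha^{\perp}$; together with the direct-sum structure of $\fN_1(\cX^{\rig})$ this makes the argument unambiguous, and it is the step I would be most careful about.
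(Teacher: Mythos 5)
Your proposal is correct and follows essentially the same route as the paper: compute $a(\cL,0)$ from the explicit description of $\fPEff(\cX^{\rig})$ (the binding inequality being $a'd\ge 1+\tfrac{1}{i}$ at $i=1$), then intersect $(a(\cL,0)\llbracket\cL,0\rrbracket+\fK_{\cX^{\rig}})^{\perp}$ with $\fMov_1(\cX^{\rig})$ written via the generators $[X]$ and $i[X]+[A_i]$ from Lemma \ref{lem:gens-Mov}, getting the ray $\RR_{\ge0}([X]+[A_1])$ and hence $b(\cL,0)=1$. The extra bigness check and your remark that the generating cone is already closed (so no weak-topology limits add elements to $\alpha^{\perp}$) are consistent with the paper's argument.
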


\begin{proof}
We first compute the $a$-invariant. For a real number $a'$, we have
\begin{align*}
 & a'\llbracket\cL,0\rrbracket+\fK_{\cX^{\rig}}\in\fPEff(\cX)\\
 & \Leftrightarrow a'd-\frac{5}{6}\ge0\text{ and }\left(a'd-\frac{5}{6}\right)i+\left(-\frac{i}{6}-1\right)\ge0\,(\forall i>0)\\
 & \Leftrightarrow a'\ge\frac{5}{6d}\text{ and }a'\ge\frac{i+1}{id}\,(\forall i>0)\\
 & \Leftrightarrow a'\ge\frac{2}{d}.
\end{align*}
Hence $a(\cL,0)=2/d$. Next, we compute the $b$-invariant. We have
\[
a(\cL,0)\llbracket L,0\rrbracket+\fK_{\cX^{\rig}}=\frac{7}{6}[\pt]+\sum_{i>0}\left(-\frac{i}{6}-1\right)[A_{i}]^{*}.
\]
Thus, 
\begin{align*}
 & \fMov_{1}(\cX^{\rig})\cap(a(\cL,0)\llbracket L,0\rrbracket+\fK_{\cX^{\rig}})^{\perp}\\
 & =\left\{ a[X]+\sum_{i>0}b_{i}(i[X]+[A_{i}])\bigmid a,b_{i}\ge0\right\} \cap(a(\cL,0)[L,0]+\fK_{\cX^{\rig}})^{\perp}\\
 & =\left\{ a[X]+\sum_{i>0}b_{i}(i[X]+[A_{i}])\bigmid a,b_{i}\ge0,\,\frac{7}{6}a+\frac{7}{6}b_{i}i+\sum_{i>0}b_{i}\left(-\frac{i}{6}-1\right)=0\right\} \\
 & =\left\{ a[X]+\sum_{i>0}b_{i}(i[X]+[A_{i}])\bigmid a,b_{i}\ge0,\,\frac{7}{6}a+\sum_{i>0}b_{i}(i-1)=0\right\} \\
 & =\left\{ a[X]+\sum_{i>0}b_{i}(i[X]+[A_{i}])\bigmid a=0,\,b_{1}\ge0,\,b_{i}=0\,(i>1)\right\} \\
 & =\RR_{\ge0}([X]+[A_{1}]).
\end{align*}
This shows $b(\cL,0)=1$. 
\end{proof}
This proposition shows that our conjecture applied to $\cX^{\rig}$
and the pullback of $L$ is compatible with the conjecture for $X$
and $L$. 

\subsection{Working on $\protect\cX$}

We saw that the rigidification $\cX\to\cX^{\rig}$ is a neutral $\mu_{2}$-gerbe
over the open substack $U\subset\cX^{\rig}$. From Proposition \ref{prop:neutral},
for every point $\Spec k'\tpars\to\cX^{\rig}$, the fiber product
$\Spec k'\tpars\times_{\cX^{\rig}}\cX$ is isomorphic to $\B\mu_{2}$.
Here we used the fact that the group $\mu_{2}$ has no nontrivial
automorphism and hence has no nontrivial twisted form. Thus, every
fiber of the map $\rho_{\infty}\colon|\cJ_{\infty}\cX|\to|\cJ_{\infty}\cX^{\rig}|$
is identified with the space $|\Delta_{\mu_{2},k'}|$ of $\mu_{2}$-torsors
over $\D_{k'}^{*}$ for an algebraically closed field $k'$. The identification
is unique, since $|\Delta_{\mu_{2}}|$ has only two points say $o$
and $\xi$ corresponding to the trivial torsor and the nontrivial
torsor. We can identify $|\cJ_{\infty}\cX|$ with $|\cJ_{\infty}\cX^{\rig}|\times\{o,\xi\}$.
Thus, the stack $\cX$ almost looks like the product $\cX^{\rig}\times_{k}\B\mu_{2}$.
As we will see below, we can apply arguments in the case of products
from Section \ref{sec:Compatibility-with-products} to the current
``pseudo-product'' case without much change. 

For a sectoroid subdivision $\{A_{i}\}_{i\in I}$ of $|\cJ_{\infty}\cX^{\rig}|$
as Section \ref{subsec:working on X^=00007Brig=00007D} and for $j\in\{0,1\}$,
we define 
\[
C_{ij}:=\begin{cases}
A_{i}\times\{o\} & (j=0)\\
A_{i}\times\{\xi\} & (j=1).
\end{cases}
\]
We see that $C_{ij}$ are sectoroids unless $i\ne\infty$ with Gorenstein
weight $\gw(C_{ij})=\gw(A_{i})$ and $\{C_{ij}\}_{i\in I,j\in\{0,1\}}$
is a sectoroid subdivision of $|\cJ_{\infty}\cX|$. Let $c\colon\{o,\xi\}=|\Delta_{\mu_{2}}|\to\RR$
be a raising function with $c(\xi)=c_{0}$. We define a raising function
$\widetilde{c}\colon|\cJ_{\infty}\cX|\to\RR$ by 
\[
\widetilde{c}(\gamma):=\begin{cases}
0 & (\gamma\in|\cJ_{\infty}\cX^{\rig}|\times\{o\})\\
c_{0} & (\gamma\in|\cJ_{\infty}\cX^{\rig}|\times\{\xi\}).
\end{cases}
\]
This is an analog of $0\boxplus c$ considered in the product case.
Let $\cL'$ be the pullback of $\cL$ to $\cX$. We consider the height
function $H_{\cL',\widetilde{c}}$ associated to the raised line bundle
$(\cL',\widetilde{c})$. If we restrict it to $\cU\langle F\rangle$,
it is nothing but the product 
\[
H_{\cL}\times H_{c}\colon U\langle F\rangle\times(\B\mu_{2})\langle F\rangle\to\RR,\,(a,b)\mapsto H_{\cL}(a)\cdot H_{c}(b).
\]

Let $\cC_{\cX}\to\cX_{\overline{k}}$ be a dominating stacky curve
and let $\cC_{\cX^{\rig}}\to\cX_{\overline{k}}^{\rig}$ be the stacky
curve obtained by the canonical factorization of the composition $\cC_{\cX}\to\cX_{\overline{k}}\to\cX_{\overline{k}}^{\rig}$.
Denoting the function field of $\cC_{\cX}$ by $F$, let $\cC_{\B\mu_{2}}\to\B\mu_{2}$
be the stacky curve obtained by extending the point 
\[
\Spec F\to\cU=U\times_{k}\B\mu_{2}\xrightarrow{\pr_{2}}\B\mu_{2}.
\]

\begin{lem}
\label{lem:ineq-intersection-2}
\begin{enumerate}
\item We have
\[
(\cL',\widetilde{c})\cdot\cC_{\cX}=(\cL,0)\cdot\cC_{\cX^{\rig}}+(\cO,c)\cdot\cC_{\B\mu_{2}}.
\]
\item For a real number $a$, we have 
\[
(a\llbracket\cL',\widetilde{c}\rrbracket+\fK_{\cX})\cdot\cC_{\cX}\ge(a\llbracket\cL,0\rrbracket+\fK_{\cX})\cdot\cC_{\cX}+(a\llbracket\cO,c\rrbracket+\fK_{\B\mu_{2}})\cdot\cC_{\B\mu_{2}}.
\]
Moreover, the equality holds if and only if the augmented class $\llbracket\cC_{\cX}\rrbracket$
has zero coefficients at the terms $[C_{ij}]$ for $(i,j)\in I^{*}\times\{1\}$. 
\end{enumerate}
\end{lem}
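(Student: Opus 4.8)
The plan is to transcribe, essentially verbatim, the proof of Lemma \ref{lem:ineq-intersection} from Section \ref{sec:Compatibility-with-products}, using the identification $|\cJ_{\infty}\cX|\cong|\cJ_{\infty}\cX^{\rig}|\times\{o,\xi\}$ in place of the product decomposition of $|\cJ_{\infty}\cZ|$ used there. Relative to the genuine product case the situation simplifies: $\B\mu_{2}$ is zero-dimensional (so every stacky curve on it is automatically a covering family, and its augmented $1$-cycle space is just $\RR[\{\xi\}]$), the set $|\Delta_{\mu_{2}}|=\{o,\xi\}$ forces no secondary subdivision, and $\gw(C_{ij})=\gw(A_{i})$ holds exactly, with no strict drop, as already recorded.

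First I would fix the bookkeeping of points. The canonical factorizations give a bijection $|\cC_{\cX}|\xrightarrow{\sim}|\cC_{\cX^{\rig}}|$ compatible with the maps to $X$, and the function field $F$ of $\cC_{\cX}$ is also that of $\cC_{\B\mu_{2}}$. For a closed point $z$, let $A_{i_{z}}$ be the sectoroid of $|\cJ_{\infty}\cX^{\rig}|$ derived from $z$ and let $j_{z}\in\{0,1\}$ record whether the induced twisted arc of $\cX$ lies over $o$ or over $\xi$. Because $\cU\to U$ is the trivial $\mu_{2}$-gerbe (established above) and $\mu_{2}$ has no nontrivial twisted form, the sectoroid of $|\cJ_{\infty}\cX|$ derived from $z$ is exactly $C_{i_{z}j_{z}}$, and $\widetilde{c}$ is ``additive'' along the pseudo-product: $\widetilde{c}(C_{i_{z}j_{z}})$ equals $0$ when $j_{z}=0$ and $c_{0}$ when $j_{z}=1$, i.e.\ it is the value of $c$ at the point of $|\Delta_{\mu_{2}}|$ through which $\cC_{\B\mu_{2}}$ passes over $z$. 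Combining this with $[\cC_{\cX}]=[\cC_{\cX^{\rig}}]$ in $\N_{1}(X)_{\RR}$, the triviality of the class of $\cO$, and $\cL'=\rho^{*}\cL$, $\omega_{\cX}=\rho^{*}\omega_{\cX^{\rig}}$ (as $\rho$ is étale), part (1) follows by expanding the intersection number $(\llbracket-,-\rrbracket,\cC)=(\text{line bundle},\cC)+\sum_{z}c(\text{sectoroid at }z)$ exactly as in Lemma \ref{lem:ineq-intersection}(1); the hypothesis that $\cC_{\cX}$ dominates $\cX_{\overline{k}}$ guarantees no $z$ lands in an exceptional sectoroid (and the relevant exceptional arc loci are empty here in any case).

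For part (2) I would expand both sides into a linear contribution plus a sum over the stacky points $z$ with nontrivial derived sectoroid. The linear contributions agree by the identifications of the previous paragraph, so, using $\gw(C_{i_{z}j_{z}})=\gw(A_{i_{z}})$ and $\gw(\{\xi\})=0=\gw(A_{0})$, the excess of the left-hand side over the right-hand side is a sum over the points $z$: one with exactly one of $i_{z},j_{z}$ nonzero contributes $0$, while one with $i_{z}\ne0$ and $j_{z}=1$ contributes $+1$, because the single left-hand term $ac_{0}-\gw(A_{i_{z}})-1$ is compared with the two right-hand terms $(-\gw(A_{i_{z}})-1)+(ac_{0}-1)$. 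Hence the left-hand side exceeds the right-hand side by $\#\{z\mid i_{z}\ne0,\ j_{z}=1\}\ge0$, which vanishes exactly when $\llbracket\cC_{\cX}\rrbracket$ has zero $[C_{ij}]$-coefficient for every $(i,j)\in I^{*}\times\{1\}$, yielding the equality clause.

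The only step requiring genuine (though still routine) care is the compatibility assertion that the sectoroid of $|\cJ_{\infty}\cX|$ derived from $z$ is $C_{i_{z}j_{z}}$ with no interaction between the two factors — that $\rho_{\infty}$ sends it to $A_{i_{z}}$ while the fibre coordinate records precisely the locally constant $\mu_{2}$-torsor class. This rests on the triviality of the gerbe $\cU\cong\B\mu_{2}\times U$ and on Proposition \ref{prop:neutral}, in the same way the factorization of $\rho_{\infty}$-fibres is used throughout Section \ref{sec:Compatibility-with-products}; I expect it to be the main obstacle, everything else being the same two-term-versus-one-term comparison as in the product case.
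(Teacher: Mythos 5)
Your proposal is correct and follows exactly the route the paper takes: the paper's proof is literally the observation that the argument of Lemma \ref{lem:ineq-intersection} carries over to this pseudo-product situation, which is what you carry out, with the point-by-point accounting (excess $=\#\{z\mid i_{z}\ne0,\ j_{z}=1\}$) and the compatibility of the fibre coordinate with $\cC_{\B\mu_{2}}$ via the trivialization $\cU\cong\B\mu_{2}\times U$ and Proposition \ref{prop:neutral} being exactly the details the paper leaves implicit.
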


\begin{proof}
The proof of Lemma \ref{lem:ineq-intersection} applies to this situation. 
\end{proof}
\begin{prop}
We have
\[
a(\cL',\widetilde{c})=\min\{a(\cL,0),a(\cO,c)\}
\]
and
\[
b(\cL',\widetilde{c})=\sum_{\substack{(\cF,e)\in\{(\cL,0),(\cO,c)\}\\
a(\cF,e)=a(\cL',\widetilde{c})
}
}b(\cF,e).
\]
\end{prop}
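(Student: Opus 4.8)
The plan is to transfer the product arguments of Section~\ref{sec:Compatibility-with-products} to the present ``pseudo-product'' situation, with $\cX^{\rig}$ in the role of one factor, $\B\mu_{2}$ in the role of the other, and the subdivision $\{C_{ij}\}_{i\in I,\,j\in\{0,1\}}$ in the role of the product subdivision; the needed geometric inputs --- the identification $|\cJ_{\infty}\cX|\cong|\cJ_{\infty}\cX^{\rig}|\times\{o,\xi\}$, the equality $\gw(C_{ij})=\gw(A_{i})$, and the intersection inequality of Lemma~\ref{lem:ineq-intersection-2} --- are already in place. The first step is to prove the analogue of Lemma~\ref{lem:equiv-PEff}: for $a\in\RR$, the class $a\llbracket\cL',\widetilde{c}\rrbracket+\fK_{\cX}$ lies in $\fPEff(\cX)$ if and only if both $a\llbracket\cL,0\rrbracket+\fK_{\cX^{\rig}}\in\fPEff(\cX^{\rig})$ and $a\llbracket\cO,c\rrbracket+\fK_{\B\mu_{2}}\in\fPEff(\B\mu_{2})$. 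The ``if'' direction is immediate from Lemma~\ref{lem:ineq-intersection-2}(2) applied to a general member $\cC_{\cX}$ of a covering family of stacky curves on $\cX_{\overline{k}}$, together with its induced general members $\cC_{\cX^{\rig}}$ and $\cC_{\B\mu_{2}}$ on the two factors and the description of $\fPEff$ as the cone of classes pairing nonnegatively with all moving stacky curves. For the ``only if'' direction one argues by contraposition exactly as in the proof of Lemma~\ref{lem:equiv-PEff}: if pseudo-effectivity fails on $\cX^{\rig}$, lift a witnessing covering family through the section of the neutral gerbe $\cU\cong\B\mu_{2}\times_{k}U$ given by the trivial (hence unramified) $\mu_{2}$-torsor, so that no stacky point in the $\xi$-direction appears and equality holds in Lemma~\ref{lem:ineq-intersection-2}(2); if pseudo-effectivity fails on $\B\mu_{2}$, start from a totally ramified $\mu_{2}$-torsor on $\PP^{1}_{\overline{k}}$ as in the proof of Proposition~\ref{prop:mov-BG}, spread it over a generic pencil of maps into $U$, and run the bidegree construction of Lemma~\ref{lem:equiv-PEff} inside $\overline{\cC_{\cX^{\rig}}}\times_{\overline{k}}\overline{\cC_{\B\mu_{2}}}$ with the two degrees chosen very unequal.

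Granting this characterization, the value of $a(\cL',\widetilde{c})$ follows at once, since it is the infimum of those $a$ for which $a\llbracket\cL',\widetilde{c}\rrbracket+\fK_{\cX}\in\fPEff(\cX)$ and, by the characterization, the set of such $a$ is the intersection of the corresponding sets for $(\cL,0)$ on $\cX^{\rig}$ and for $(\cO,c)$ on $\B\mu_{2}$, each of which is a closed half-line by the bigness hypotheses. For the $b$-invariant one copies the proof of Proposition~\ref{prop:a-b-prod}: one uses the analogous linear projection $\fN_{1}(\cX)\to\fN_{1}(\cX^{\rig})\times\fN_{1}(\B\mu_{2})$, invokes the equality clause of Lemma~\ref{lem:ineq-intersection-2}(2) to force every element of $\fMov_{1}(\cX)\cap(a(\cL',\widetilde{c})\llbracket\cL',\widetilde{c}\rrbracket+\fK_{\cX})^{\perp}$ to have vanishing $[C_{i1}]$-coefficients for $i\in I^{*}$ and hence to lie in a subspace isomorphic to $\fN_{1}(\cX^{\rig})\times\fN_{1}(\B\mu_{2})$, and then identifies this face with the product of $\fMov_{1}(\cX^{\rig})\cap(a\llbracket\cL,0\rrbracket+\fK_{\cX^{\rig}})^{\perp}$ and $\fMov_{1}(\B\mu_{2})\cap(a\llbracket\cO,c\rrbracket+\fK_{\B\mu_{2}})^{\perp}$, the factor whose own $a$-invariant is not equal to $a(\cL',\widetilde{c})$ collapsing to the zero cone. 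Taking spans and dimensions yields the asserted sum; here $b(\cL,0)=1$ by the computation made earlier in this section, while $b(\cO,c)=1$ because $\fMov_{1}(\B\mu_{2})$ is the one-dimensional cone spanned by the class of the unique nontrivial sectoroid of $\B\mu_{2}$.

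I expect the main obstacle to be the ``only if'' half of the first step: one must produce genuine covering families of stacky curves on $\cX$ itself whose augmented classes separately witness failure of pseudo-effectivity along each of the two ``factors,'' and since the gerbe $\cX\to\cX^{\rig}$ is only known to be neutral over the open substack $U$, these lifts and the bidegree curve have to be built inside $\cU\cong\B\mu_{2}\times_{k}U$ using the explicit models of Section~\ref{sec:Elliptic-curves} --- the Legendre family near $j=0$ and the Harbater / Katz--Gabber style torsors of Proposition~\ref{prop:mov-BG} --- after which one must check that the resulting rational maps extend to stacky curves with the expected augmented classes and that the transversality bookkeeping of Lemma~\ref{lem:equiv-PEff} survives the passage from an honest product to this pseudo-product. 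Everything else reduces to formal manipulations with the subdivision $\{C_{ij}\}$ and with the already-established Lemma~\ref{lem:ineq-intersection-2}.
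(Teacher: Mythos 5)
Your proposal follows essentially the paper's own route: the paper's proof consists precisely of the remark that one proves the analogue of Lemma \ref{lem:equiv-PEff} in this pseudo-product situation and then repeats the argument of Proposition \ref{prop:a-b-prod}, which is exactly what you carry out (with the extra care about building the witnessing curves inside $\cU\cong\B\mu_{2}\times_{k}U$ being a reasonable elaboration of the same idea). One small point: your argument, like the paper's, identifies the set of admissible $a$ with the intersection of the two half-lines $[a(\cL,0),\infty)$ and $[a(\cO,c),\infty)$ and hence yields $a(\cL',\widetilde{c})=\max\{a(\cL,0),a(\cO,c)\}$, consistent with Proposition \ref{prop:a-b-prod} and with the explicit values computed just after the proposition, so the ``$\min$'' in the printed statement is evidently a slip rather than a defect of your proof.
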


\begin{proof}
We can prove the version of Lemma \ref{lem:equiv-PEff} in our situation.
Using it, we can prove this proposition as an analog of Proposition
\ref{prop:a-b-prod} by the same argument. 
\end{proof}
We can compute 

\[
a(\cO,c)=1/c_{0}\text{ and }b(\cO,c)=1
\]
and 
\[
a(\cL,0)=2/d\text{ and }b(\cL,0)=1.
\]
Thus,
\[
a(\cL',\widetilde{c})=\begin{cases}
2/d & (2/d\ge1/c_{0})\\
1/e & (2/d<1/c_{0}).
\end{cases}
\]
and 
\[
b(\cL',\widetilde{c})=\begin{cases}
2 & (2/d=1/c_{0})\\
1 & (2/d\ne1/c_{0}).
\end{cases}
\]

\begin{thm}
The asymptotic formula in Conjecture \ref{conj:main} holds true for
the thin subset $T:=\cU\langle F\rangle\setminus\cX\langle F\rangle$
and for the height function $H_{\cL',\widetilde{c}}$, that is, we
have
\[
\#\{x\in\cU\langle F\rangle\mid H_{\cL',\widetilde{c}}(x)\le B\}\asymp B^{a(\cL',\widetilde{c})}(\log B)^{b(\cL',\widetilde{c})-1}.
\]
\end{thm}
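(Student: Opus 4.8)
The plan is to exploit the isomorphism $\cU\cong\B\mu_{2}\times_{k}U$ to turn the count into a product count over the two factors and then invoke the product asymptotic formula of Appendix~\ref{sec:A-product-lemma}, exactly in the spirit of Section~\ref{sec:Compatibility-with-products}. Using the proposition identifying $\cU$ with the trivial $\mu_{2}$-gerbe over $U=\Spec k[j^{\pm}]$, one has $\cU\langle F\rangle=U(F)\times(\B\mu_{2})\langle F\rangle$, and, as in the discussion preceding Lemma~\ref{lem:ineq-intersection-2}, the restriction of $H_{\cL',\widetilde{c}}$ to $\cU\langle F\rangle$ is the product height $(a,\sigma)\mapsto H_{\cL}(a)\cdot H_{c}(\sigma)$, where $H_{\cL}$ is the height on $U(F)\subset X(F)=\PP^{1}(F)$ attached to the degree-$d$ line bundle $\cL$ and $H_{c}$ is the height on $(\B\mu_{2})\langle F\rangle$ attached to the raising function $c$ with $c(\xi)=c_{0}$. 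So it suffices to estimate each of the two factor counts and then combine.

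For the $U(F)$-factor, $U=\mathbb{G}_{m}=\PP^{1}\setminus\{0,\infty\}$, so the function-field Schanuel estimate for $\PP^{1}$ gives $\#\{x\in\PP^{1}(F)\mid H_{\cO(1)}(x)\le B\}\asymp B^{2}$, hence $\#\{x\in\PP^{1}(F)\mid H_{\cL}(x)\le B\}\asymp B^{2/d}$; deleting the two $F$-points $0,\infty$ does not alter the asymptotic, so $\#\{x\in U(F)\mid H_{\cL}(x)\le B\}\asymp B^{2/d}(\log B)^{0}$, in accordance with the values $a(\cL,0)=2/d$ and $b(\cL,0)=1$ computed in the previous subsection. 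For the $(\B\mu_{2})\langle F\rangle$-factor, since the characteristic of $F$ is $3\ne2$, a $\mu_{2}$-torsor over $\Spec F$ is a quadratic étale $F$-algebra $\sigma$, classified by $F^{*}/(F^{*})^{2}$; for the nontrivial torsor class, the induced twisted arc $\widetilde{x_{v}}$ lies in $\{\xi\}\subset|\Delta_{\mu_{2}}|$ exactly at the places $v$ ramified in the associated quadratic extension, and such ramification is tame, so $\widetilde{c}(\widetilde{x_{v}})=c_{0}$ there and $0$ elsewhere; hence $H_{c}(\sigma)=\prod_{v\mid\mathfrak{d}_{\sigma}}q_{v}^{c_{0}}=\N(\mathfrak{d}_{\sigma})^{c_{0}}$ is a fixed power of the norm of the discriminant $\mathfrak{d}_{\sigma}$. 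The classical count of quadratic extensions of $F$ by discriminant (the function-field analogue of the count of quadratic fields, obtained for instance from the Dirichlet series $\sum_{\sigma}\N(\mathfrak{d}_{\sigma})^{-s}$, which has a simple pole at $s=1$) then yields $\#\{\sigma\mid\N(\mathfrak{d}_{\sigma})\le X\}\asymp X$, whence $\#\{\sigma\in(\B\mu_{2})\langle F\rangle\mid H_{c}(\sigma)\le B\}\asymp B^{1/c_{0}}(\log B)^{0}$, in accordance with $a(\cO,c)=1/c_{0}$ and $b(\cO,c)=1$.

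Feeding these two estimates into Theorem~\ref{thm:product-lemma} of Appendix~\ref{sec:A-product-lemma}, applied to $U(F)$ and $(\B\mu_{2})\langle F\rangle$ with the heights $H_{\cL}$ and $H_{c}$, and noting that both factors have vanishing $\log$-exponent, I obtain
\[
\#\{x\in\cU\langle F\rangle\mid H_{\cL',\widetilde{c}}(x)\le B\}\asymp B^{\max\{2/d,\,1/c_{0}\}}(\log B)^{b-1},
\]
with $b=1$ when $2/d\ne1/c_{0}$ and $b=2$ when $2/d=1/c_{0}$, which is precisely $B^{a(\cL',\widetilde{c})}(\log B)^{b(\cL',\widetilde{c})-1}$ by the invariant computations above. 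To see that this is genuinely an instance of the Main Speculation of Section~\ref{sec:Augmented-Main-Speculation}, I would finally record that $T=\cX\langle F\rangle\setminus\cU\langle F\rangle$ is thin, since it is contained in the set of $F$-points of the $0$-dimensional closed substack $\cX\setminus\cU$ (the cuspidal fiber together with the $j=0$ stacky point), each of whose finitely many irreducible components maps to $\cX_{F}$ by a representable, generically finite, non-birational morphism; that $X=\PP^{1}$ is geometrically rationally connected with Zariski-dense $F$-points; that $b(\cL',\widetilde{c})\le2<\infty$; and that bigness of $(\cL',\widetilde{c})$ together with $\fK_{\cX}\notin\fPEff(\cX)$ follow from $d>0$, $c_{0}>0$ and the cone computations of the preceding subsections. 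The only input requiring genuine (if standard) work is the asymptotic count of quadratic extensions of $F$ by discriminant; identifying $H_{c}$ with the $c_{0}$-power of the discriminant is a matter of tame-ramification bookkeeping, and the combination via the product lemma is then purely formal, so I anticipate no serious obstacle.
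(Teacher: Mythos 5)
Your proposal is correct and follows essentially the same route as the paper: the paper's proof likewise applies the product asymptotic of Theorem \ref{thm:product-lemma} to the factorization $\cU\langle F\rangle=U(F)\times(\B\mu_{2})\langle F\rangle$, citing the known counts for $\PP^{1}$ and for $\mu_{2}$-torsors ordered by discriminant, which are exactly the two factor estimates you sketch (Schanuel for $\PP^{1}$, the quadratic-extension count identified with $H_{c}=\N(\fd)^{c_{0}}$ via tameness at $p=3$). The only difference is that the paper invokes references for these inputs while you outline them directly, which does not change the argument.
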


\begin{proof}
The theorem can be proved by applying Theorem \ref{thm:product-lemma}
to a result for counting rational points of $\B\mu_{2}$ \cite[Theorems 1.3.2]{darda2023torsors}
and one for a projective line \cite{dipippo1990spacesof,wan1992heights}
(see also \cite{phillips2024pointsof}).
\end{proof}

\subsection{Conductors and minimal discriminants}

For an elliptic curve $E$ over $k'\tpars$, the \emph{minimal discriminant
exponent} $\fd(E)\in\ZZ_{\ge0}$ is defined to be the minimal $t$-order
of the discriminants for all the possible Weierstrass equations defining
$E$ with coefficients in $k'\tbrats$. The elliptic curve $E$ has
good reduction if and only if $\fd(E)=0$. We can also define the
\emph{conductor exponent} $\ff(E)$ as in \cite[p. 380]{silverman1994advanced}.
It is easy to see that $\fd$ and $\ff$ defines functions on $|\cJ_{\infty}\cX|$.
We have invariants for elliptic curves over a global field corresponding
to $\fd$ and $\ff$ respectively. 
\begin{defn}
Let $E$ be an elliptic curve over a global field $F$. For each place
$v\in M_{F}$, let $E_{v}:=E\otimes_{F}F_{v}$, which is an elliptic
curve over $F_{v}\cong k'\tpars$ for some finite extension $k'/k$.
The \emph{minimal discriminant $\fD(E)$ }and the \emph{conductor
$\fF(E)$ }of $E$ are defined respectively by
\[
\fD(E):=\prod_{v\in M_{F}}q_{v}^{\fd(E_{v})}\text{ and }\fF(E):=\prod_{v\in M_{F}}q_{v}^{\ff(E_{v})}.
\]
\end{defn}

The goal of this section is to show that $\fD$ and $\fF$ are the
height functions associated to raised line bundles. 
\begin{lem}
\label{lem:min-ext-ell}Let $K:=k'\tpars$ with $k'$ an algebraically
closed field and let $E$ be an elliptic curve over $K$ that has
potential good reduction. Let $\overline{K}$ be an algebraic closure
of $K$. Suppose that $E_{\overline{K}}=E\otimes_{K}\overline{K}$
can be given by an equation
\begin{equation}
z^{2}=(w-e_{1})(w-e_{2})(w-e_{3}),\label{eq:e123}
\end{equation}
where $e_{1}$, $e_{2}$ and $e_{3}$ are distinct elements of $\overline{K}$.
Then, 
\[
L:=\bigcap_{i\ne j}K(e_{1},e_{2},e_{3},\sqrt{e_{i}-e_{j}})\subset\overline{K}
\]
is the minimal extension $F/K$ such that $E_{F}=E\otimes_{K}F$ has
good reduction. 
\end{lem}

\begin{proof}
We first show that $E_{L}$ has good reduction. Relabeling $e_{1}$,
$e_{2}$ and $e_{3}$ if necessary, we may assume that $e_{2}-e_{1}$
has square roots in $L$. Over $L$, we can perform the coordinate
change
\begin{align*}
w & =(e_{2}-e_{1})w'+e_{1},\\
z & =(e_{2}-e_{1})^{3/2}z'
\end{align*}
so that we transform the equation to one of the Legendre form
\[
z^{2}=w(w-1)\left(w-\frac{e_{3}-e_{1}}{e_{2}-e_{1}}\right).
\]
From the assumption that $E$ has potential good reduction and from
an argument in \cite[Proof of (VII. 5.5), p.199]{silverman2009thearithmetic},
$\frac{e_{3}-e_{1}}{e_{2}-e_{1}}$ is an integral element and not
equal to $0$ or $1$ in the residue field of $L$. It follows that
$E_{L}$ has good reduction. 

To show the minimality, let $F/K$ be the minimal extension such that
$E_{F}$ has a good reduction. Note that from Lemma \ref{lem:twisted-arc-minimality},
such an extension is unique. Then, since the 2-torsion points are
$F$-rational, we have $K(e_{1},e_{2},e_{3})\subset F$. Since $E$
has  good reduction over $F$, we have a morphism $\Spec\cO_{F}\to\cX$,
where $\cO_{F}$ denotes the integral closure of $k'\tbrats$ in $F$.
Since $\cO_{F}$ has the algebraically closed residue field $k'$,
it lifts to $A=\AA_{k}^{1}$, where $A\to\cX$ is the étale morphism
induced from the Legendre form (Proposition \ref{prop:LegEt}). This
means that $E_{F}$ can be given by the Legendre form $z^{2}=w(w-1)(w-\lambda)$
for some $\lambda\in F$. From \cite[Proposition 3.(b), Chapter III]{silverman2009thearithmetic},
we have a linear coordinate change of the form
\begin{equation}
\begin{aligned}w & =u^{2}w'+r\\
z & =u^{3}z'+su^{2}z'+t
\end{aligned}
\quad(u\in F^{*},\,r,s,t\in F)\label{eq:sys-eqs}
\end{equation}
that transforms the Legendre equation to equation (\ref{eq:e123}).
By this transform, 2-torsion points map to 2-torsion points. If $(0,0)$
and $(1,0)$ map to $(e_{i},0)$ and $(e_{j},0)$ respectively, then
solving equations (\ref{eq:sys-eqs}) gives $u=\sqrt{e_{j}-e_{i}}$.
Thus, $K(e_{1},e_{2},e_{3},\sqrt{e_{j}-e_{i}})\subset F$ for some
two distinct $i,j\in\{1,2,3\}$. It follows that $L\subset F$. We
have proved the desired minimality.
\end{proof}
Let $\cY_{0},\cY_{\infty}\subset\cX$ be the reduced closed substacks
at $j=0$ and $j=\infty$, respectively. Let $x\in\cX(k'\tpars)$
be an elliptic curve given by a Weierstrass equation
\[
z^{2}=w^{3}+a_{2}w^{2}+a_{4}w+a_{6}\quad(a_{i}\in k'\tpars).
\]
To describe a formula for $\fd(x)$, we define several auxiliary invariants.
When $x^{*}(j)\ne0$, we have $a_{2}\ne0$ and define 
\[
\alpha_{2}(x):=\ord_{t}a_{2}\mod 2\in\ZZ/2\ZZ.
\]
When $x^{*}(j)=0$, we define 
\begin{align*}
\alpha_{4}(x) & :=\ord_{t}a_{4}\mod 4\in\ZZ/4\ZZ.
\end{align*}
For the twisted arc $\gamma$ associated to $x$, we also write these
values as $\alpha_{2}(\gamma)$ and $\alpha_{4}(\gamma)$, respectively.
Similarly, in what follows, we often identify $x$ with $\gamma$
and interchange the two symbols. Note that $\alpha_{2}(x)$ and $\alpha_{4}(x)$
are independent of the choice of the Weierstrass equation and that
these values are invariant under extensions of the coefficient field
$k'$. We get well-defined maps
\begin{gather*}
\alpha_{2}\colon|\cJ_{\infty}\cX|\setminus(|\cJ_{\infty}\cY_{0}|\cup|\cJ_{\infty}\cY_{\infty}|)\to\ZZ/2\ZZ,\\
\alpha_{4}\colon|\cJ_{\infty}\cY_{0}|\to\ZZ/4\ZZ.
\end{gather*}
Let 
\begin{align*}
\ord_{0}\colon|\cJ_{\infty}\cX|\setminus|\cJ_{\infty}\cY_{0}| & \to\ZZ_{\ge0},\\
\ord_{\infty}\colon|\cJ_{\infty}\cX|\setminus|\cJ_{\infty}\cY_{\infty}| & \to\ZZ_{\ge0}
\end{align*}
be the order functions at $0$ and $\infty$, respectively. If $\cI_{0}$
and $\cI_{\infty}$ are ideal sheaves in $\cO_{X}$ defining closed
points at $0$ and $\infty$ and if $\pi^{-1}\cI_{0}$ and $\pi^{-1}\cI_{\infty}$
are their pullbacks to $\cX$ as ideal sheaves, then $\ord_{0}$ and
$\ord_{\infty}$ are the order functions associated to $\pi^{-1}\cI_{0}$
and $\pi^{-1}\cI_{\infty}$, respectively \cite[Definition 8.2]{yasuda2024motivic2}.
More explicitly, the function $\ord_{0}$ is described as
\[
\ord_{0}(\gamma)=\begin{cases}
0 & (\gamma\in|\cJ_{\infty}\cY_{\infty}|\text{ or }\ord_{t}x^{*}(j)\le0)\\
\ord_{t}x^{*}(j) & (\text{otherwise}).
\end{cases}
\]
We have a similar description of $\ord_{\infty}$ by using $\ord_{t}x^{*}(1/j)$. 

When the elliptic curve $x\in\cX(k'\tpars)$ has potential good reduction,
following the notation from Lemma \ref{lem:min-ext-ell}, let $\delta=(e_{1}-e_{2})(e_{2}-e_{3})(e_{3}-e_{1})$.
Then, $\delta^{2}\in K=k'\tpars$ and we get the following tower of
field extensions:
\[
\xymatrix{L\ar@{-}[d]^{\le C_{2}}\\
K(e_{1},e_{2},e_{3})\ar@{-}[d]^{\text{\ensuremath{\le}\ensuremath{C_{3}}}}\\
K(\delta)\ar@{-}[d]^{\le C_{2}}\\
K
}
\]
Here the notation ``$\le C_{i}$'' means that the corresponding
field extension is Galois with Galois group being included in the
cyclic group $C_{i}$ of order $i$. We define $\bj(\gamma)$ be the
ramification jump of the extension $K(e_{1},e_{2},e_{3})/K(\delta)$.
If $K(e_{1},e_{2},e_{3})=K(\delta)$, we put $\bj(\gamma)=0$. Otherwise,
$\bj(\gamma)$ is a positive integer coprime to 3. From the first
displayed formulas in pages 435 and 436 of \cite{miyamoto2005reduction},
if we define $\xi\in K(\delta)$ by 
\[
\xi=\begin{cases}
1/\sqrt{x^{*}(j)} & (x^{*}(j)\ne0)\\
a_{6}/a_{4}\sqrt{-a_{4}} & (x^{*}(j)\ne0),
\end{cases}
\]
then $K(e_{1},e_{2},e_{3})/K(\delta)$ is obtained by adjoining a
root of the Artin--Schreier equation 
\[
T^{3}-T=\xi.
\]
As is well-known, the ramification jump is given by 
\[
\bj(\gamma)=-\min\left\{ 0,\sup_{c\in K(\delta)}v_{K(\delta)}(\xi-\wp c)\right\} .
\]
where $v_{K(\delta)}$ is the normalized valuation of $K(\delta)$
and $\wp$ is the Artin--Schreier map sending $c$ to $c^{p}-c$.
Using the notation by Miyamoto--Top \cite[pages 432 and 434]{miyamoto2005reduction},
we have 
\begin{equation}
\bj'(\gamma):=\frac{2\bj(\gamma)}{[K(\delta):K]}=\begin{cases}
-\widetilde{n} & (\widetilde{n}\ne\infty)\\
0 & (\widetilde{n}=\infty).
\end{cases}\label{eq:j'-tilde-n}
\end{equation}
(Note that the factor $[K(\delta):K]$ comes from the fact that the
definition of $v^{*K'}(-)$ in \cite[pages 432]{miyamoto2005reduction}
uses the valuation normalized for $K$ rather than for $K'$.)
\begin{prop}[{\cite[Theorem 2.1]{miyamoto2005reduction}}]
\label{prop:conductor}For $\gamma\in|\cJ_{\infty}\cX|\setminus|\cJ_{\infty}\cY_{\infty}|$,
we have
\[
\ff(\gamma)=\begin{cases}
1+\alpha_{2}(\gamma) & (\ord_{\infty}(\gamma)>0))\\
0 & (\ord_{\infty}(\gamma)=0,\,\gamma\in|\J_{\infty}\cX|)\\
2+\bj'(\gamma) & (\text{otherwise}).
\end{cases}
\]
\end{prop}

\begin{proof}
We explain how to rewrite \cite[Theorem 2.1]{miyamoto2005reduction}
using our notation to get this proposition. The case where $\ord_{\infty}(\gamma)>0$
corresponds to the case $n>0$ in the cited theorem and the parity
of $n_{2}$ is given by $\alpha_{2}$. Thus, the proposition holds
in this case. The case where $\ord_{\infty}(\gamma)=0$ and $\gamma\in|\J_{\infty}\cX|$
is exactly when the elliptic curve corresponding to $\gamma$ has
good reduction and hence Kodaira type $I_{0}$. In this case, $\ff(\gamma)=0$,
as given in the table in the cited theorem. In the remaining case,
the cited theorem together with (\ref{eq:j'-tilde-n}) gives $\ff(\gamma)=2+\bj'(\gamma)$. 
\end{proof}
\begin{prop}[{\cite[Theorem 2.1]{miyamoto2005reduction}}]
\label{prop:min-disc}We have
\[
\fd(\gamma)=\begin{cases}
\ord_{\infty}(\gamma)+6\alpha_{2} & (\gamma^{*}j\ne0,\,\ord_{\infty}(\gamma)>0)\\
12\left\lfloor \frac{10-\fe(\gamma)+\bj'(\gamma)}{12}\right\rfloor +\fe(\gamma) & (\text{otherwise}).
\end{cases}
\]
Here $\fe(\gamma)\in\{0,1,\dots,11\}$ is given by
\[
\fe(\gamma)=\begin{cases}
-\ord_{0}\gamma+6\alpha_{2}(\gamma)\mod{12} & (\gamma^{*}j\ne0)\\
3\alpha_{4}(\gamma) & (\gamma^{*}j=0).
\end{cases}
\]
\end{prop}

\begin{proof}
We translate \cite[Theorem 2.1]{miyamoto2005reduction} into our notation
as follows. The case $\ord_{\infty}(\gamma)>0$ corresponds to the
case $n>0$ according to the notation of Miyamoto--Top. Moreover,
we have $n=\ord_{\infty}(\gamma)$. Their theorem says that $\fd(\gamma)$
is either $n$ or $n+6$, depending on the parity of $\ord_{t}(a_{2})$.
Thus, the formula of our proposition holds in this case.

In the case $\ord_{\infty}(\gamma)=0$, $\fe(\gamma)$ is denoted
by $d'$ in the cited paper. If $\bj'(\gamma)>0$, then the formula
of the proposition follows from the cited theorem. In the remaining
case where $\bj'=\bj=0$, equivalently $\widetilde{n}\ge0$, their
theorem says that 
\[
\fd=\fe\in\{0,3,6,9\}.
\]
It follows that 
\[
\left\lfloor \frac{10-\fe(\gamma)+\bj'(\gamma)}{12}\right\rfloor =0.
\]
and hence the formula of the proposition holds also in this case. 
\end{proof}
\begin{lem}
\label{lem:const-alpha}There are at most countably many stable subsets
$B_{i}\subset|\cJ_{\infty}\cX|$, $i\in I$ such that $|\cJ_{\infty}\cX|\setminus(|\cJ_{\infty}\cY_{0}|\cup|\cJ_{\infty}\cY_{\infty}|)=\bigsqcup_{i\in I}B_{i}$
and such that $\alpha_{2}$ is constant on each $B_{i}$.
\end{lem}

\begin{proof}
Applying Propositions \ref{prop:versal-affine} and \ref{prop:factor-open},
we get 
\begin{enumerate}
\item an étale surjective morphism $\coprod_{s\in S}\Spec R_{s}\to\cJ_{\infty}\cX$
with $S$ being at most countable, 
\item for each $s\in S$, at most countably many locally closed, constructible
and affine subschemes $\Spec R_{s,i}\subset\Spec R_{s}$, $i\in I_{s}$,
\item for each $s\in S$ and $i\in I_{s}$, a morphism $\Spec R_{s,i}\tpars\to\cV$,
\end{enumerate}
such that the image of $\coprod_{s\in S}\coprod_{i\in I_{s}}\Spec R_{s,i}$
in $|\cJ_{\infty}\cX|$ is $|\cJ_{\infty}\cX|\setminus(|\cJ_{\infty}\cY_{0}|\cup|\cJ_{\infty}\cY_{\infty}|)$
and for every geometric point $\Spec k'\to\Spec R_{s,i}$, the induced
$k'\tpars$-point $\Spec k'\tpars\to\cV\to\cX$ of $\cX$ corresponds
to the induced $k'$-point of $\cJ_{\infty}\cX$,
\[
\Spec k'\to\Spec R_{s,i}\hookrightarrow\Spec R_{s}\to\cJ_{\infty}\cX.
\]

We now fix $s$ and $i$ and denote $R_{s,i}$ by $R$. Since $\Spec R\tpars$
is quasi-compact, from an argument in \cite[Section 13.1.6]{olsson2016algebraic},
we can find a finite Zariski cover $\Spec R\tpars=\bigcup_{j=1}^{n}\Spec R\tpars_{f_{j}}$
by distinguished open subsets $\Spec R\tpars_{f_{j}}$ such that for
each $j$, the induced elliptic curve over $R\tpars_{f_{j}}$ is defined
by a Weierstrass equation
\[
z^{2}=w^{3}+a_{j,2}w^{2}+a_{j,4}w+a_{j,6}\quad(a_{j,i}\in R\tpars_{f_{j}}).
\]
For each $j$, let us write $a_{j,2}=b_{j}/f_{j}^{m_{j}}$, where
$b_{j},f_{j}\in R\tpars$ and $m_{j}\in\ZZ$. For every point $x\in\Spec R$,
there exists $j$ such that $\ord_{b_{j}}(x)<\infty$ and $\ord_{f_{j}}(x)<\infty$.
Thus, every point of $\Spec R$ is contained in the subset
\[
\{\ord_{b_{j}}=n_{1}\}\cap\{\ord_{f_{j}}=n_{2}\}
\]
for some $j$ and for some integers $n_{1}$ and $n_{2}$, which is
constructible from Lemma \ref{lem:ord-constr}. On this constructible
subset, $\ord_{a_{j,2}}$ is constant. Since $\Spec R$ is quasi-compact
for the constructible topology \cite[Chapter I, Proposition 1.9.15]{grothendieck1964elements},
it is covered by finitely many of these constructible subsets. This
shows that $\Spec R$ decomposes into the two constructible subsets
\[
D_{s,i,e}:=\{\alpha_{2}=e\}\subset\Spec R=\Spec R_{s,i}\quad(e\in\{0,1\}).
\]
There images in $|\cJ_{\infty}\cX|$, which we denote by $C_{s,i,e}$
respectively, are constructible and cover $|\cJ_{\infty}\cX|\setminus(|\cJ_{\infty}\cY_{0}|\cup|\cJ_{\infty}\cY_{\infty}|)$.
There are at most countably many of them. On each of them, $\alpha_{2}$
is constant. Relabeling $C_{s,i,e}$ and denoting them by $C_{m}$,
$m\in\ZZ_{\ge0}$, we define $B_{m}:=C_{m}\setminus\bigcup_{v=0}^{m-1}C_{v}$
so that $|\cJ_{\infty}\cX|\setminus(|\cJ_{\infty}\cY_{0}|\cup|\cJ_{\infty}\cY_{\infty}|)=\bigsqcup_{m\in\ZZ_{\ge0}}B_{m}$. 

Since $\cX$ is smooth, from \cite[Lemma 6.22]{yasuda2024motivic2},
the Untwisting stack $\Utg_{\Gamma_{\cX}}(\cX)$ (see Section \ref{subsec:Constr-tw})
is rig-smooth (relative to the base $\Df_{\Gamma_{\cX}}$). Therefore,
for any quasi-compact substack $\Upsilon\subset\Gamma_{\cX}$, the
Jacobian order function $\fj_{\cX}$ \cite[Definition 8.4]{yasuda2024motivic2}
is bounded on the subset $|\cJ_{\Upsilon,\infty}\cX|\subset|\cJ_{\infty}\cX|$.
From \cite[Lemma 10.15]{yasuda2024motivic2}, $|\cJ_{\Upsilon,\infty}\cX|$
is a stable subset. From Corollary \ref{cor:constr-finite-level},
every $B_{m}$ is again stable. 
\end{proof}
\begin{lem}
\label{lem:const-j}For any choice of $\Gamma_{\cX}$ (see Remark
\ref{rem:not unique} and Section \ref{subsec:Constr-tw}), the functions
$\bj$ and $\bj'$ are constant on each connected component of $|\cJ_{\infty}\cV|$.
\end{lem}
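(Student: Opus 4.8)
The plan is to imitate the proof of Lemma~\ref{lem:const-alpha}. First I would record that $\bj(\gamma)$ is built entirely out of the twisted formal disk $\cE$ --- equivalently, out of $\tau(\gamma)\in|\Lambda|$, which determines the Galois group $G_\gamma:=\Gal(L_\gamma/k'\tpars)$ together with the ramification filtration of $L_\gamma/k'\tpars$ --- and out of the subfield $K_\gamma=k'\tpars\bigl(\sqrt{\gamma^{*}(j)}\bigr)\subseteq L_\gamma$, which is determined by the class of $\gamma^{*}(j)$ in $k'\tpars^{*}/(k'\tpars^{*})^{2}$ (and, degenerately, by whether $\gamma^{*}(j)=0$). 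All of these are intrinsic to the isomorphism class of the twisted arc, so $\bj$ is invariant under the equivalence of Definition~\ref{def:equiv} and is manifestly independent of the auxiliary choice of $\Gamma_\cX$ (Remark~\ref{rem:not unique}). It therefore suffices to prove that $\bj$ is locally constant on $|\cJ_\infty\cX|$, whence it is constant on connected components.

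I would then establish two local-constancy statements. First, $\gamma\mapsto\bigl(G_\gamma,\ \text{the ramification filtration of }L_\gamma/k'\tpars\bigr)$ is locally constant: the map $\tau$ is continuous and $|\Lambda|$ is the disjoint union of the open-and-closed pieces $\Lambda_{[G]}^{[\br]}$, on each of which both data are constant. Second, the square class of $\gamma^{*}(j)$ is locally constant on the open subset $W\subseteq|\cJ_\infty\cX|$ of arcs whose generic point lands in $\cU\cong\B\mu_2\times U$ (with $U=\Spec k[j^{\pm}]$): this follows by the argument of Lemma~\ref{lem:const-alpha} applied to $\gamma^{*}(j)$ in place of a leading Weierstrass coefficient, using that $\Delta_{\mu_2}\cong\B\mu_2\amalg\B\mu_2$ has only two points; equivalently, $\sqrt{j}$ defines a $\mu_2$-torsor on $U$ and hence a morphism $\cU\to\B\mu_2$ inducing a continuous map $W\to|\Delta_{\mu_2}|$. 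On the complementary closed set $|\cJ_\infty\cX|\setminus W$, which consists of arcs factoring through $\cY$ (where $\cY_{\overline{k}}\cong\B G$ with $G$ the dicyclic group of order $12$) and arcs factoring through the cuspidal substack, I would argue directly: the latter carry no wild ramification, so $\bj=0$, while for arcs into $\cY$ one has $\gamma^{*}(j)=0$ and $K_\gamma=k'\tpars$, so $\bj$ is a function of $(G_\gamma,\text{filtration})$ alone.

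It then remains to see that $\bj$ factors through the two locally constant invariants above. Given $\tau(\gamma)$ --- hence $L_\gamma$ with its filtration --- and the square class of $\gamma^{*}(j)$, the subfield $K_\gamma$ is pinned down inside $L_\gamma$ (since $k'\tpars$ has a unique quadratic extension), so $\Gal(L_\gamma/K_\gamma)$ and the ramification jump of its maximal $3$-power subextension are determined; carrying this out is a finite case analysis over the subgroups of the dicyclic group of order $12$ equipped with the ramification filtrations permitted by Igusa's computation \cite{igusa1959fibresystems} ($G_0=S_3$, $G_1=C_3$, $G_i=1$ for $i\ge2$) and by the Hasse--Arf theorem. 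The hard part is precisely this case analysis --- above all, confirming compatibility across the closed set $|\cJ_\infty\cX|\setminus W$, i.e.\ that the value of $\bj$ at an arc factoring through $\cY$ matches its (locally constant) value on the nearby arcs in $W$, so that local constancy on $W$ and on $|\cJ_\infty\cY|$ really glues to local constancy on all of $|\cJ_\infty\cX|$. As an alternative to the explicit case check, one could instead invoke Tanno's theorem \cite{tanno2021thewild} that the ramification jump is constant on each connected component of the moduli space of $C_3$-covers of $\D^{*}$, at the cost of tracking the change of uniformizer that defines $K_\gamma$ in families.
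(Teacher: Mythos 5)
Your reduction leaves unproved exactly the step you yourself single out as ``the hard part,'' and that step is not cosmetic: because you feed the square class of $\gamma^{*}(j)$ into $\bj$ as an invariant separate from $\tau(\gamma)$, you only obtain (at best) local constancy of that extra invariant on the open locus $W$ of arcs whose generic point lands in $\cU$, and the assertion that a connected component meeting both $W$ and $|\cJ_{\infty}\cY|$ (or the cuspidal locus) carries a single value of $\bj$ is precisely the boundary compatibility you defer to an uncarried-out case analysis; until that is done the lemma is not proved. Moreover, the local-constancy claim on $W$ is itself shaky as argued: for a connected $R$-point of $\cJ_{\infty}\cX$ the element $\gamma^{*}(j)\in R\tpars$ need not be a unit, since its $t$-order could a priori vary over $\Spec R$, so the $\mu_{2}$-torsor $\sqrt{\gamma^{*}(j)}$ is only defined over $\Spec R\tpars[1/\gamma^{*}(j)]$ and the two-point description of $\Delta_{\mu_{2}}$ used in the proof of Lemma~\ref{lem:const-alpha} (where the relevant coefficient is a unit) does not apply directly. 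Ruling out such jumps of $\ord_{t}\gamma^{*}(j)$ within one connected component already requires knowing that the ramification of the associated twisted disks is constant in the family --- which is not something your square-class invariant supplies, but is exactly the mechanism the paper uses.

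The paper's proof is a single observation that bypasses the boundary entirely: by the construction of $\Gamma_{\cX}$ and $\cJ_{\infty}\cX$, every connected component lies over a single $\Theta_{[G]}^{[\br]}$, so for any $R$-point $\gamma$ with $\Spec R$ connected the family of Galois covers of $\Spec R\tpars$ corresponding to $\tau\circ\gamma$ has constant ramification, and $\bj$ is a function of that data alone. The idea you are missing is that the square class of $\gamma^{*}(j)$ is not an independent input: $K_{\gamma}\subseteq L_{\gamma}$, the field $k'\tpars$ has a unique quadratic extension, and when the $3$-part of $G_{\gamma}$ is nontrivial the alternative $K_{\gamma}=k'\tpars$ versus $K_{\gamma}\neq k'\tpars$ is forced by which subgroup of the dicyclic stabilizer occurs as $G_{\gamma}$ (the subgroups $C_{3}$ and $C_{6}$ map into the $A_{3}$-part, forcing $\sqrt{\gamma^{*}(j)}\in k'\tpars$, while an element acting nontrivially on $\sqrt{j}$ has order $4$, so a nontrivial $K_{\gamma}$ together with wild ramification forces the full dicyclic group). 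Given this, $\bj$ is computed from the ramification filtration of $L_{\gamma}/k'\tpars$ via Herbrand's theorem, hence factors through $\tau$, and constancy on connected components --- for any choice of $\Gamma_{\cX}$ --- follows at once, with no case analysis across $|\cJ_{\infty}\cX|\setminus W$ and no gluing step.
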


\begin{proof}
Consider an $R$-point $\Spec R\to\cJ_{\infty}\cV$ with $\Spec R$
connected. For each geometric point $r\colon\Spec k'\to\Spec R$,
let us write the corresponding twisted arc as $[W_{r}/G_{r}]\to\cX$,
where $W_{r}$ is a connected regular Galois cover of $\D_{k'}$ with
Galois group $G_{r}$. The Galois extension associated to $W_{r}\to\D_{k'}$
is the minimal extension $L$ of $K=k'\tpars$ such that the elliptic
curve $E_{r}/k'\tpars$ corresponding to $r$ has good reduction after
the base change to $L$. The values of $\bj$ and $\bj'$ are determined
by the ramification jump of the intermediate extension $K(e_{1},e_{2},e_{3})/K(\delta)$
of $L/K$ and by whether the subextension $K(\delta)/K$ is trivial.
It is enough to show that these data are independent of the chosen
geometric point $r$.

From the constructions of $\Gamma_{\cX}$ and $\cJ_{\infty}\cX$ (Sections
\ref{subsec:Universal-twisted-formal} and \ref{subsec:Constr-tw}),
for any $R$-point $\Spec R\to\cJ_{\infty}\cV$ with $\Spec R$ connected,
the induced morphism
\[
\Spec R\to\cJ_{\infty}\cV\to\cJ_{\infty}\cX\to\Gamma_{\cX}\to\Lambda
\]
has image in $\Lambda_{[G]}^{[\br]}$ for some Galoisian group $G$
and some ramification data $\br$. In particular, the ramification
jump of $K(e_{1},e_{2},e_{3})/K(\delta)$ is independent of the geometric
point $r$. Also, whether the extension $K(\delta)/K$ is trivial
is independent of $r$. The lemma follows.
\end{proof}
\begin{defn}
We define functions $\ff'$ and $\fd'$ on $|\cJ_{\infty}\cX|$ by
$\ff':=\ff-\ord_{\infty}$ and $\fd':=\fd-\ord_{\infty}$, respectively.
Here we follow the convention $\ff'(\gamma)=\fd'(\gamma)=\infty$
for $\gamma\in|\cJ_{\infty}\cY_{\infty}|$. 
\end{defn}

\begin{lem}
The functions $\ff'$ and $\fd'$ are raising functions. 
\end{lem}

\begin{proof}
For each of these functions, we need to find a suitable sectoroid
subdivision of $|\cJ_{\infty}\cX|$. We can first put $A_{\infty}=|\cJ_{\infty}(\cY_{0}\cup\cY_{\infty})|$
so that the behaviors of $\ff'$ and $\fd'$ on this set do not matter,
in particular, they are allowed to take value $\infty$ at some points
of $A_{\infty}$. From Propositions \ref{prop:conductor} and \ref{prop:min-disc}
and Lemmas \ref{lem:const-alpha} and \ref{lem:const-j}, we can easily
find a desired sectoroid subdivision except condition (5) of Definition
\ref{def:sectoroid-subdivision}. The open substack denoted by $\cX^{\circ}$
in Definition \ref{def:sectoroid-subdivision} is $\cX\setminus\cY_{0}$
in the current situation. We need to show that $\ff'$ and $\fd'$
are zeroes on $|\J_{\infty}\cX^{\circ}|\setminus C$ for some Gorenstein
measurable subset $C$ with $\gw(C)<-1$. If $\gamma\in|\J_{\infty}\cX^{\circ}|$
and if $\ord_{\infty}(\gamma)=0$, then the corresponding elliptic
curve has good reduction and hence $\ff'(\gamma)=\fd'(\gamma)=0$.
By a standard computation of motivic measure for arcs on smooth varieties
and its straightforward generalization to untwisted arcs on smooth
stacks, we see that the locus in $|\J_{\infty}\cX^{\circ}|$ with
$\ord_{\infty}\ge2$ has Gorenstein weight $<-1$. Thus, it is enough
to show that if $\ord_{\infty}(\gamma)=1$, then $\ff'(\gamma)=\fd'(\gamma)=0$,
equivalently $\alpha_{2}(\gamma)=0$. For $\gamma\in|\J_{\infty}\cX|$
with $\ord_{\infty}(\gamma)>0$, since $\cX$ is a moduli stack of
semistable curves, the corresponding elliptic curve over $k'\tpars$
has multiplicative/semistable reduction. From formulas in page 42
and Proposition 5.1(b) of \cite{silverman2009thearithmetic}, for
a minimal Weierstrass equation, $\ord_{t}(a_{2})=0$, in particular,
we get $\alpha_{2}(\gamma)=0$ as desired.
\end{proof}
\begin{cor}
The conductor $\fF$ and the minimal discriminant $\fD$ are the height
functions associated to the raised line bundles $(\pi^{*}\cO(1),\ff')$
and $(\pi^{*}\cO(1),\fd')$, respectively. 
\end{cor}

\begin{proof}
The assertion for the conductor follows from the equalities 
\[
\fF(x)=\prod_{v}q_{v}^{\ff(x_{v})}=\prod_{v}q_{v}^{\ord_{\infty}(x_{v})}\cdot\prod_{v}q_{v}^{\ff'(x_{v})}.
\]
and
\[
\prod_{v}q_{v}^{\ord_{\infty}(x_{v})}=H_{\pi^{*}\cO(1)}(x).
\]
Similarly for the discriminant. 
\end{proof}

\appendix

\section{Moduli of formal torsors\label{sec:Moduli-of-formal}}

Let $k$ be a field of characteristic $p>0$. Let $G$ be a finite
étale group scheme over $k\tpars$ satisfying the following condition:
\begin{condition}
The base change $G_{k\tpars^{\sep}}$ of $G$ to $k\tpars^{\sep}$
is a constant group of the form
\[
\prod_{i=1}^{n}(H_{i}\rtimes C_{i}),
\]
where $H_{i}$ are $p$-groups and $C_{i}$ are tame cyclic groups. 
\end{condition}

\begin{rem}
In particular, if $G$ is commutative, then the above condition holds.
\end{rem}

We define the stack $\Delta_{G}$ of $G$-torsors over $\D_{k}^{*}$
as follows. This is a stack over $k$. For an affine $k$-scheme $\Spec R$,
the fiber $\Delta_{G}(\Spec R)$ over $\Spec R$ is the groupoid of
$G$-torsors over $\D_{R}^{*}=\Spec R\tpars$. 
\begin{thm}
The stack $\Delta_{G}$ is isomorphic to the direct limit $\injlim\cX_{n}$
of a direct system of DM stacks $\cX_{n}$, $n\in\ZZ_{\ge0}$ satisfying
the following conditions:
\begin{enumerate}
\item $\cX_{n}$ are separated and of finite type over $k$,
\item transition maps $\cX_{n}\to\cX_{n+1}$ are representable, finite and
universally injective,
\item there exists a direct system of finite and étale atlases $X_{n}\to\cX_{n}$
(see \cite[Definition 3.3]{tonini2020moduliof}).
\end{enumerate}
Note that when $G$ is defined over $k$, then the theorem is a direct
consequence of a main result of \cite{tonini2020moduliof}. 
\end{thm}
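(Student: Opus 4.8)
The plan is to reduce the statement to the case where $G$ is a constant group scheme over $k\tpars^{\sep}$, for which the result is exactly the main theorem of \cite{tonini2020moduliof}, and then to perform a descent along the finite separable extension $k\tpars^{\sep}/k\tpars$ (more precisely, along a suitable finite sub-extension over which $G$ becomes constant). First I would fix a finite Galois extension $k'/k$ such that $G_{k'\tpars}$ is already the constant group $\prod_{i=1}^{n}(H_i\rtimes C_i)$; this is possible because the Galois action on the geometric fiber of $G$ factors through a finite quotient, and enlarging $k'$ we may assume $k'/k$ is Galois with group $\Gamma$. Let $\Delta_{G,k'}$ denote the stack of $G_{k'\tpars}$-torsors over $\D_{k'}^{*}$, which by \cite{tonini2020moduliof} is the direct limit $\injlim \cX_n'$ of a direct system of DM stacks $\cX_n'$ over $k'$ satisfying conditions (1)–(3). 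The group $\Gamma$ acts on this whole direct system compatibly (the action on $k'$ and on the torsor data), and one checks that $\Delta_G$ is the quotient stack $[\Delta_{G,k'}/\Gamma]$ — or rather the $\Gamma$-descent of $\Delta_{G,k'}$ — because a $G$-torsor over $\D_R^{*}$ is the same datum as a $G_{k'\tpars}$-torsor over $\D_{R\otimes_k k'}^{*}$ equipped with descent data for $\Gamma$. This is the structural heart of the argument.

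The key steps, in order, would be: (i) produce the extension $k'/k$ and the $\Gamma$-equivariant structure on the direct system $\{\cX_n'\}$ from \cite{tonini2020moduliof}, using functoriality of that construction in the base field — here one must check that the atlases $X_n'\to\cX_n'$ can be chosen $\Gamma$-equivariantly, which may require passing to a cofinal subsystem or replacing $X_n'$ by $\coprod_{\gamma\in\Gamma}\gamma^*X_n'$; (ii) set $\cX_n := $ the Weil restriction / $\Gamma$-descent of $\cX_n'$ along $\Spec k'\to\Spec k$, and likewise $X_n$ the descent of the $\Gamma$-equivariant atlas, so that $\cX_n'\cong\cX_n\times_k k'$; (iii) verify that the descended objects inherit properties (1)–(3): separatedness and finite type descend along the faithfully flat finite morphism $\Spec k'\to\Spec k$, representability and finiteness and universal injectivity of the transition maps $\cX_n\to\cX_{n+1}$ descend because these are properties that can be checked after the faithfully flat base change $-\times_k k'$, and the atlas $X_n\to\cX_n$ is finite and étale because it becomes so after base change to $k'$; (iv) identify $\injlim\cX_n$ with $\Delta_G$ by comparing their functors of points on affine $k$-schemes $\Spec R$, using that $G$-torsors over $\D_R^{*}$ descend from $G_{k'\tpars}$-torsors over $\D_{R\otimes_k k'}^{*}$ and that formation of the moduli stacks commutes with the base change $R\mapsto R\otimes_k k'$ (filtered colimits commuting with this finite base change is automatic).

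The main obstacle I expect is step (i): making the construction of \cite{tonini2020moduliof} genuinely functorial and $\Gamma$-equivariant. The cited paper constructs the $\cX_n'$ and their atlases by a concrete recursive procedure (truncating ramification data, adjoining roots, etc.), and it is not a priori obvious that a Galois automorphism of the base field lifts to an automorphism of the chosen atlases rather than merely to an isomorphism with a twisted atlas. The remedy is to either (a) check the construction is canonical enough that it transports along base-field isomorphisms on the nose, or (b) work throughout with the direct system of stacks $\cX_n'$ (which \emph{is} canonical up to canonical equivalence) and only descend the \emph{stacks}, then separately argue that a finite étale atlas exists over $k$ by descending a sufficiently large $\Gamma$-stable finite étale cover of $\cX_n'$; the latter exists because $\cX_n'$ is a DM stack of finite type, so one can take the normalization of $\cX_n'$ in a finite extension that splits all the relevant torsors and is visibly $\Gamma$-stable. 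A secondary, more routine point is that the transition maps' universal injectivity is preserved: universal injectivity is equivalent to the diagonal being surjective, a condition insensitive to the faithfully flat base change $-\times_k k'$, so this descends without difficulty. The final sentence of the statement — that when $G$ is already defined over $k$ the result is immediate from \cite{tonini2020moduliof} — is simply the case $k'=k$, $\Gamma=1$, and requires no argument.
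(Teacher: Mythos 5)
Your reduction fails at the very first step, and what it misses is precisely the point of this appendix. You pick a finite Galois extension $k'/k$ and assert that $G_{k'\tpars}$ becomes constant ``because the Galois action on the geometric fiber factors through a finite quotient.'' It does factor through a finite quotient of $\Gal(k\tpars^{\sep}/k\tpars)$, but the extensions of $k\tpars$ of the form $k'\tpars$ with $k'/k$ finite are exactly the \emph{unramified} ones, while the splitting field of $G$ is an arbitrary finite separable extension of $k\tpars$ and may be ramified, even wildly ramified. Concretely, take a constant group $H$ as in the Condition with $\Aut(H)\ne1$ and twist it by a ramified $\Aut(H)$-torsor over $\D_{k}^{*}$ (a Kummer cover $t\mapsto t^{1/n}$, or an Artin--Schreier cover $x^{p}-x=t^{-1}$ mapped to an order-$p$ automorphism, e.g.\ for $H=(\ZZ/p)^{2}$); the resulting $G$ satisfies the Condition (which only constrains the geometric fiber) but is not constant over $k'\tpars$ for any finite --- or even algebraically closed --- $k'$. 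So there is no $\Gamma$-equivariant system $\{\cX_{n}'\}$ to descend: $\Delta_{G,k'}$ is not covered by \cite{tonini2020moduliof}, and your steps (ii)--(iv) have no input. The only case your argument treats is $G$ split by an unramified extension (in particular $G$ defined over $k$), which is exactly the case the theorem is meant to go beyond, as the statement itself notes.

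The descent has to be performed along a finite étale cover $S_{1}\to S_{0}=\D_{k}^{*}$ of the punctured disk itself (the splitting extension of $k\tpars$), not along $\Spec k'\to\Spec k$. That is what the paper does: after a preliminary finite extension of the constant field --- harmless because being a limit of DM stacks as in the theorem can be checked after such an extension by \cite[Proposition 3.5]{tonini2020moduliof} --- one may assume that $S_{1}$, $S_{2}=S_{1}\times_{S_{0}}S_{1}$ and $S_{3}$ are disjoint unions of copies of $\D_{k}^{*}$. The coefficient-extension lemma ($k\spars\otimes_{k\tpars}R\tpars\cong R\spars$) then identifies the stacks $\cU_{i}$ of torsors over the $S_{i}$ with finite products of copies of the moduli stack of torsors for the now-constant group, which is where \cite{tonini2020moduliof} actually enters; such products and fiber products are again of the required ind-DM form by \cite[Proposition A.2]{tonini2020moduliof}. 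Finally, the descent datum $(T_{1},\alpha)$ is encoded as a fiber product of the $\cU_{i}$, and the cocycle condition is imposed by pulling back the identity section $\cU_{3}\to\I\cU_{3}$ of the inertia stack, an open and closed immersion, so $\Delta_{G}$ sits inside an ind-DM stack as an open and closed substack. If you want to repair your write-up, replace the constant-field splitting by this relative splitting cover; the equivariance and descent-of-properties issues you worried about in step (i) are then either subsumed by the cited Proposition 3.5 or made unnecessary by the fiber-product construction.
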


\begin{proof}
In this proof, we call the limit of DM stacks satisfying the above
conditions an \emph{ind-DM stack.} Let $S_{0}=\D_{k}^{*}$, a punctured
formal disk. There exists an étale finite cover $S_{1}\to S_{0}$
such that the base change $G_{S_{1}}$ of $G$ to $S_{1}$ is constant.
Let $S_{2}:=S_{1}\times_{S_{0}}S_{1}$ and $S_{3}:=S_{1}\times_{S_{0}}S_{1}\times_{S_{0}}S_{1}$.
There exists a finite Galois extension $k'/k$ such that the base
changes $S_{1,k'},S_{2,k'},S_{3,k'}$ are of the form $\D_{k'}^{*}$
\[
\D_{k'}^{*}\sqcup\cdots\sqcup\D_{k'}^{*}.
\]
From \cite[Proposition 3.5]{tonini2020moduliof}, it is enough to
show that the base change $(\Delta_{G})\otimes_{k}k'$ of $\Delta_{G}$
is an ind-DM stack. Thus, we may assume that there exists a finite
étale cover $S_{1}\to S_{0}$ such that $S_{1}$ as well as $S_{2}$
and $S_{3}$ defined as above is the disjoint union of finitely many
copies of $\D_{k}^{*}$. 

Giving a $G$-torsor over $S_{0}$ is equivalent to giving a $G$-torsor
$T_{1}\to S_{1}$ with an automorphism $\alpha$ of the $G$-torsor
\[
T_{2}:=T_{1}\times_{S_{1}}S_{2}\to S_{2}
\]
which satisfies the cocycle condition. Let $\cU_{1}$, $\cU_{2}$
and $\cU_{3}$ be the stacks of $G$-torsors over $S_{1}$, $S_{2}$
and $S_{3}$, respectively. From Lemma \ref{lem:coeff-ext}, if $S_{i}$
consists of $r$ copies of $\D_{k}^{*}$, then $\cU_{i}$ is isomorphic
to the product of $r$ copies of $\Delta_{G}$ (over the base field
$k$). From \cite[Proposition A.2]{tonini2020moduliof}, these stacks
are ind-DM stacks. 

Let $p_{i}\colon S_{2}\to S_{1}$, $i=1,2$ be the first and second
projections and let $q_{i}\colon S_{3}\to S_{1}$, $i=1,2,3$ be the
first to third projections. Let $q_{ij}\colon S_{3}\to S_{2}$ be
the projection to the $i$-th and the $j$-th factors. We have natural
pullback morphisms $p_{i}^{*}\colon\cU_{1}\to\cU_{2}$ and $q_{i}^{*}\colon\cU_{1}\to\cU_{3}$
and $q_{ij}^{*}\colon\cU_{2}\to\cU_{3}$. 

Consider the stack 
\[
\cV:=(\cU_{1}\times_{p_{1}^{*},\cU_{2},p_{2}^{*}}\cU_{1})\times_{\cU_{1}\times\cU_{1},\Delta_{\cU_{1}}}\cU_{1}.
\]
Again from \cite[Proposition A.2]{tonini2020moduliof}, this is an
ind-DM stack. This stack is equivalent to the stack $\cV_{1}'$ of
pairs $(T_{1},\alpha)$ of a $G$-torsor $T_{1}\to S_{1}$ and an
automorphism $\alpha$ of $T_{2}:=T_{1}\times_{S_{1}}S_{2}$ (possibly
without satisfying the cocycle condition). To incorporate the cocycle
condition, we consider the morphism 
\[
\cV'\to\I\cU_{3},\,(T_{1},\alpha)\mapsto(q_{1}^{*}T_{1},\alpha'),
\]
where $\I\cU_{3}$ is the inertia stack of $\cU_{3}$ and $\alpha'$
is the composite morphism 
\[
q_{1}^{*}T_{1}\xrightarrow{q_{12}^{*}\alpha}q_{2}^{*}T_{1}\xrightarrow{q_{23}^{*}\alpha}q_{3}^{*}T_{1}\xrightarrow{q_{31}^{*}\alpha}q_{1}^{*}T_{1}.
\]
Let $\iota\colon\cU_{3}\to\I\cU_{3}$ be the identity section. We
see that the desired stack $\Delta_{G}$ is realized as the fiber
product $\cV'\times_{\I\cU_{3}}\cU_{3}$. Since $\iota$ is an open
and closed immersion, so is the projection $\Delta_{G}\to\cV'$. This
shows that $\Delta_{G}$ is also an ind-DM stack.
\end{proof}
\begin{lem}
\label{lem:coeff-ext}Let $k\tpars\to k\spars$ be a finite étale
map of $k$-algebras. For a $k$-algebra $R$, the canonical map 
\[
k\spars\otimes_{k\tpars}R\tpars\to R\spars
\]
is an isomorphism.
\end{lem}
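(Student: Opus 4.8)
The plan is to reduce the assertion, which concerns Laurent series, to the analogous statement for power series rings, and to prove that statement by reducing modulo powers of the uniformizer $t$ and passing to the limit.

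First I would record what the hypothesis gives. Since $k\tpars\to k\spars$ is finite étale, it is a finite separable extension of fields; set $e:=[k\spars:k\tpars]$. As $k\tpars$ is complete, the $t$-adic valuation extends uniquely to $k\spars$, and its valuation ring is the integral closure of $k\tbrats$ in $k\spars$, a complete DVR which is module-finite and free of rank $e$ over $k\tbrats$. Its residue field is a finite extension $\kappa$ of $k$; since a complete equicharacteristic local ring contains a coefficient field, $\kappa$ embeds into that valuation ring and hence into $k\spars$. But $k\spars$ contains no proper algebraic extension of $k$ (an element algebraic over $k$ is integral, hence lies in $k\sbrats$, and reduces to a root of its minimal polynomial, which forces it into $k$), so $\kappa=k$. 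Thus the extension is totally ramified, $k\sbrats$ is the integral closure of $k\tbrats$, and by the structure theory of totally ramified extensions of complete discretely valued fields (see \cite{serre1979localfields}) we have $t\,k\sbrats=s^{e}k\sbrats$ and $k\sbrats=\bigoplus_{i=0}^{e-1}k\tbrats\cdot s^{i}$; equivalently, the image of $t$ in $k\sbrats$ is $s^{e}u$ for a unit $u\in k\sbrats^{\times}$.

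The key step is to show that the canonical map $k\sbrats\otimes_{k\tbrats}R\tbrats\to R\sbrats$ is an isomorphism for every $k$-algebra $R$. Because the image of $t$ in $R\sbrats$ is $s^{e}u$ with $u$ a unit (its constant term lies in $k^{\times}$, which maps into $R^{\times}$), the $t$-adic topology on $R\sbrats$ coincides with the $s$-adic one; in particular $t^{m}R\sbrats=s^{em}R\sbrats$, and $R\sbrats$ is $t$-adically complete and separated. Reducing modulo $t^{m}$, the ring $R\sbrats/t^{m}R\sbrats$ equals $R[s]/(s^{em})$, and I claim that $\{\,s^{i}t^{a}\mid 0\le i<e,\ 0\le a<m\,\}$ is an $R$-basis of it: indeed $s^{i}t^{a}=s^{ea+i}u^{a}$ has $s$-order $ea+i$ with leading coefficient $u(0)^{a}\in k^{\times}$, so as $(i,a)$ ranges over its index set these exponents range bijectively over $\{0,\dots,em-1\}$ and the change-of-basis matrix from $\{s^{j}\}_{j<em}$ is triangular with invertible diagonal. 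Hence $R\sbrats/t^{m}R\sbrats$ is free over $R\tbrats/t^{m}R\tbrats=R[t]/(t^{m})$ with basis $1,s,\dots,s^{e-1}$, compatibly as $m$ varies. Passing to the inverse limit and using $R\sbrats=\varprojlim_{m}R\sbrats/t^{m}R\sbrats$, $R\tbrats=\varprojlim_{m}R\tbrats/t^{m}R\tbrats$, and $k\sbrats=\bigoplus_{i<e}k\tbrats\cdot s^{i}$, I obtain $R\sbrats=\bigoplus_{i=0}^{e-1}R\tbrats\cdot s^{i}=k\sbrats\otimes_{k\tbrats}R\tbrats$, and inspection of the finite levels identifies this with the canonical map.

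Finally I would invert $t$. Since $t=s^{e}u$ with $u$ a unit, $k\spars=k\sbrats[t^{-1}]$ and $R\spars=R\sbrats[t^{-1}]$, so, tensor product commuting with localization,
\[
k\spars\otimes_{k\tpars}R\tpars=\bigl(k\sbrats\otimes_{k\tbrats}k\tpars\bigr)\otimes_{k\tpars}R\tpars=k\sbrats\otimes_{k\tbrats}R\tpars=\bigl(k\sbrats\otimes_{k\tbrats}R\tbrats\bigr)[t^{-1}]\xrightarrow{\ \sim\ }R\sbrats[t^{-1}]=R\spars,
\]
the displayed isomorphism being the one from the key step. This proves the lemma. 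The main obstacle is the key step, and specifically the bookkeeping needed to guarantee that the finite-level isomorphisms assemble, in the limit, into the canonical map and not merely into some abstract isomorphism; the explicit triangular basis $\{s^{i}t^{a}\}$ is what takes care of this. Minor points of care: everything is insensitive to the choice of uniformizer (only $e$ and the unit $u$ enter), and the inclusion $R\tbrats\hookrightarrow R\sbrats$ is injective — immediate by comparing lowest-order $s$-terms, since $u(0)\in k^{\times}$.
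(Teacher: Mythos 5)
The core of your argument is correct and follows the same route as the paper: first prove that the integral map $k\sbrats\otimes_{k\tbrats}R\tbrats\to R\sbrats$ is an isomorphism, then invert $t$. Where the paper identifies both sides as completions of the single ring $k\sbrats\otimes_{k}R$ (for the $s$-adic and $t$-adic topologies, respectively) and observes these topologies coincide because $t=s^{e}u$ with $u$ a unit, you prove the same statement by the explicit triangular basis $\{s^{i}t^{a}\}$ of $R\sbrats/t^{m}R\sbrats$ and a passage to the inverse limit; this is more hands-on but equivalent, and your final localization step is identical to the paper's. Given the input $t=s^{e}u$, your key step and its bookkeeping are fine.

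The gap is in the preliminary step, where you try to prove the relation $t=s^{e}u$ that the paper simply asserts. Two issues. First, the coefficient-field argument for $\kappa=k$ is not airtight: a coefficient field of the integral closure $\cO$ need not contain $k$, so its elements need not be algebraic over $k$ unless $\kappa/k$ is separable, which you have not shown. Second, and more seriously, the inference ``totally ramified, hence $k\sbrats$ is the integral closure of $k\tbrats$'' is a non sequitur: $\cO$ is the valuation ring of the unique extension $w$ of the $t$-adic valuation, while $k\sbrats$ is the valuation ring of $\ord_{s}$, and knowing that $\cO$ has residue field $k$ does not identify the two. They coincide precisely when $w$ is equivalent to $\ord_{s}$, i.e.\ precisely when $\ord_{s}(t)>0$ --- which is the very compatibility you are trying to establish, since a priori the abstract finite \'etale $k$-algebra map $k\tpars\to k\spars$ is not given to be continuous for the valuations. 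To close this, either take the compatibility for granted, as the paper implicitly does (it is evident in the geometric situation where the lemma is applied), or argue: $k\spars$ is complete, hence henselian, both for $\ord_{s}$ and for $w$ (the latter because it is finite over the complete field $k\tpars$); by F.~K.~Schmidt's theorem a field henselian with respect to two independent nontrivial valuations is separably closed, and for discrete valuations independence is the same as inequivalence; since $k\spars$ is not separably closed, $w$ is equivalent to $\ord_{s}$, whence $\ord_{s}(t)=e>0$ and the rest of your proof goes through unchanged.
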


\begin{proof}
The ring $R\sbrats$ is the $s$-adic completion of $k\sbrats\otimes_{k}R$.
On the other hand, $k\sbrats\otimes_{k\tbrats}R\tbrats$ is the $t$-adic
completion of $k\sbrats\otimes_{k}R$. Since $t=s^{r}\cdot u$ for
some $r\in\ZZ_{>0}$ and a unit $u\in k\sbrats^{*}$, we see that
the $s$-adic topology and the $t$-adic topology on $k\sbrats\otimes_{k}R$
are the same. Thus, we have a canonical isomorphism $R\sbrats\cong k\sbrats\otimes_{k\tbrats}R\tbrats$.
We also have $k\spars=k\sbrats_{s}=k\sbrats_{t}$, where subscript
means localizations. Similarly $R\spars=R\sbrats_{t}$. We get 
\begin{align*}
k\spars\otimes_{k\tpars}R\tpars & =k\sbrats_{t}\otimes_{k\tbrats_{t}}R\tbrats_{t}\\
 & =(k\sbrats\otimes_{k\tbrats}R\tbrats)_{t}\\
 & =R\sbrats_{t}\\
 & =R\spars.
\end{align*}
\end{proof}

\section{Auxiliary results on stacks of twisted arcs\label{sec:Morphisms}}

We keep working over a field $k$ of characteristic $p>0$. We denote
by $\Df_{k}$ the formal spectrum $\Spf k\tbrats$. Following the
formalism in \cite{yasuda2024motivic2}, we work with formal DM stacks
over $\Df_{k}$ and ones over $\Df_{\Gamma}:=\Df_{k}\times_{k}\Gamma$
for some DM stack $\Gamma$. For a formal DM stack $\cX$ over $\Df_{k}$,
we denote its special fiber $\cX\times_{\Df_{k}}\Spec k$ by $\cX_{0}$.
Note that 

\subsection{Morphisms between inertia-type stacks\label{subsec:inertia stacks}}

Firstly, we recall variations of the inertia stack from \cite[Section 6.4]{yasuda2024motivic2}. 

\begin{defn}
We say that a finite group $G$ is \emph{Galoisian }if it is isomorphic
to the Galois group of a Galois extension of $k'\tpars$ for some
algebraically closed field $k'$ over $k$. We denote by $\GalGps$
the set of representatives of isomorphism classes of Galoisian groups.
We say that a finite étale group scheme $\cG\to S$ over a scheme
$S$ is \emph{Galoisian }if all its geometric fibers are Galoisian.
We denote by $\cA$ the moduli stack of Galoisian group schemes; for
a scheme $S$, $\cA(S)$ is the groupoid of Galoisian group schemes
over $S$. We define $\cA_{[G]}$ to be the substack of $\cA$ that
parametrizes Galoisian group schemes with geometric fibers isomorphic
to $G$.
\end{defn}

We have $\cA=\coprod_{G\in\GalGps}\cA_{[G]}$ and $\cA_{[G]}\cong\B(\Aut G)=[\Spec k/\Aut G]$.
For each $G\in\GalGps$, the canonical atlas 
\[
u_{G}\colon\Spec k\to\cA_{[G]}
\]
corresponds to the constant group scheme $G\times\Spec k\to\Spec k$.
For each $G\in\GalGps$, there exists the universal group scheme $\cG_{[G]}\to\cA_{[G]}$.
We define $\B\cG_{[G]}$ to be the stack over $\cA_{[G]}$ parametrizing
$\cG_{[G]}$-torsors \cite[Definition 6.14]{yasuda2024motivic2}.
Taking the coproducts over $G\in\GalGps$, we obtain the universal
Galoisian group scheme $\cG\to\cA$ and the stack $\B\cG$ over $\cA$.
\begin{defn}
Let $\cX_{0}$ be a DM stack of finite type over $k$. For a Galoisian
group $G$, we define 
\[
\I^{[G]}\cX_{0}:=\ulHom_{\cA_{[G]}}^{\rep}(\B\cG_{[G]},\cX_{0})\text{ and }\I^{(G)}\cX_{0}:=\ulHom_{\Spec k}^{\rep}(\B G,\cX_{0}).
\]
We also define 
\[
\I^{[\bullet]}\cX_{0}:=\coprod_{G\in\GalGps}\I^{[G]}\cX_{0}\text{ and }\I^{(\bullet)}\cX_{0}:=\coprod_{G\in\GalGps}\I^{(G)}\cX_{0}.
\]
\end{defn}

Note that $\I^{[G]}\cX_{0}$ and $\I^{(G)}\cX_{0}$ are empty for
all but finitely many $G$. There exists a natural morphism $\I^{(G)}\cX_{0}\to\I^{[G]}\cX_{0}$
fitting into the following 2-Cartesian diagram:
\[
\xymatrix{\I^{(G)}\cX_{0}\ar[r]\ar[d] & \I^{[G]}\cX_{0}\ar[d]\\
\Spec k\ar[r]^{u_{G}} & \cA_{[G]}
}
\]

Let $f_{0}\colon\cY_{0}\to\cX_{0}$ be a morphism of DM stacks of
finite type over $k$. We will construct a morphism 
\begin{equation}
\I^{[\bullet]}f_{0}\colon\I^{[\bullet]}\cY_{0}\to\I^{[\bullet]}\cX_{0}.\label{eq:If}
\end{equation}
Note that since we do not require $f_{0}$ to be representable, we
cannot construct a a natural morphism $\I^{[G]}\cY_{0}\to\I^{[G]}\cX_{0}$
for each $G$; the morphism $\I^{[\bullet]}f_{0}$ does not send $\I^{[G]}\cY_{0}$
into $\I^{[G]}\cX_{0}$ for the same $G$ in general. 

Let $\cG\to S$ be a Galoisian group scheme and let $\beta\colon\B\cG\to\cY_{0}$
be a representable $k$-morphism corresponding to an $S$-point of
$\I^{[\bullet]}\cY$. Composing it with the canonical atlas $b\colon S\to\B\cG$,
we get an $S$-point $y\colon S\to\cY_{0}$ and its image $x\colon S\to\cX_{0}$.
Since $\beta$ is representable, the homomorphism
\[
\cG=\ulAut(b)\to\ulAut(y)
\]
of group schemes over $S$ induced by $\beta$ is injective. Let 
\[
\cK:=\Ker(\cG\to\ulAut(y)\to\ulAut(x))
\]
and let $\cH:=\cG/\cK$, which is again a Galoisian group scheme over
$S$. The morphism $f\circ\beta\colon\B\cG\to\cY_{0}\to\cX_{0}$ uniquely
factors as $\B\cG\to\B\cH\to\cX_{0}$, where $\B\cG\to\B\cH$ is the
morphism associated to the quotient morphism $\cG\to\cH$ and $\B\cH\to\cX$
is a representable morphism. Sending the given $S$-point $(\beta\colon\B\cG\to\cY_{0})\in(\I^{[\bullet]}\cY_{0})(S)$
to the obtained $S$-point $(\B\cH\to\cX_{0})\in(\I^{[\bullet]}\cX_{0})(S)$
defines the desired morphism $\I^{[\bullet]}f_{0}$. 

\subsection{Universal twisted formal disks\label{subsec:Universal-twisted-formal}}
\begin{defn}[{\cite[Definitions 5.9, 5.10 and 5.12]{yasuda2024motivic2}}]
We define $\Lambda$ to be the stack of fiberwise connected étale
finite torsors over the punctured formal disk $\D_{k}^{*}=\Spec k\tpars$
with locally constant ramification.
\end{defn}

This stack is endowed with a morphism $\Lambda\to\cA$ and for the
morphism $\Spec R\to\cA$ corresponding to a Galoisian group scheme
$\cG\to\Spec R$, the fiber $\Lambda(\Spec R)$ over this $R$-point
of $\cA$ is the groupoid of fiberwise connected $\cG$-torsors over
$\D_{R}^{*}:=\Spec R\tpars$. We can write $\Lambda$ as the countable
coproduct 
\[
\Lambda=\coprod_{G\in\GalGps}\coprod_{[\br]\in\Ram(G)/\Aut(G)}\Lambda_{[G]}^{[\br]},
\]
where $[\br]$ runs over the $\Aut(G)$-orbits $[\br]$ of ramification
data $\br$ for $G$. 

Each $\Lambda_{[G]}^{[\br]}$ is the ind-perfection of a reduced DM
stack $\Theta_{[G]}^{[\br]}$ of finite type over $k$. We put $\Theta=\coprod_{[G]}\coprod_{[\br]}\Theta_{[G]}^{[\br]}$
so that $\Lambda$ is the ind-perfection of $\Theta$. Note that the
choice of $\Theta$ is unique only up to universal homeomorphisms;
if we have two choices $\Theta$ and $\Theta'$ as such stacks, then
there is a third choice $\Theta''$ together with canonical morphisms
$\Theta''\to\Theta$ and $\Theta''\to\Theta'$ which are representable
and universal homeomorphisms. Moreover, we may and shall choose $\Theta_{[G]}^{[\br]}$
in such a way that there exists a universal integral model $E_{\Theta}\to\Df_{\Theta}$;
for each point $\Spec k'\to\Theta$, the base change $E_{\Theta}\times_{\Df_{\Theta}}\Df_{k'}$
is the formal spectrum of the integral closure of $k'\tbrats$ in
the torsor over $\D_{k'}^{*}=\Spec k'\tpars$ corresponding to $\theta$. 
\begin{defn}[{\cite[Definition 5.22]{yasuda2024motivic2}}]
We define the \emph{universal formal disk }over $\Theta$ to be the
quotient stack $\cE:=[E_{\Theta}/\cG_{\Theta}]$, which is endowed
with a morphism $\cE_{\Theta}\to\Df_{\Theta}$.
\end{defn}

We now introduce a property of morphisms of stacks which is convenient
for us. 
\begin{defn}
We say that a morphism $g\colon\cW\to\cV$ of stacks is \emph{countably
component-wise of finite type (for short, ccft) }if it is DM \cite[tag 04YW]{stacksprojectauthors2022stacksproject}
and there exists a countable family of open and closed substacks $\cW_{i}\subset\cW$,
$i\in I$ such that $|\cW|=\bigsqcup_{i\in I}|\cW_{i}|$ and $g|_{\cW_{i}}\colon\cW_{i}\to\cV$
are of finite type. When this is the case, we also say that $\cW$
is \emph{ccft over $\cV$. }
\end{defn}

For example, $\Theta$ is ccft over $\Spec k$.

\subsection{Construction of the stack of twisted arcs\label{subsec:Constr-tw}}

Let $\cX$ be a formal DM stack of finite type over $\Df_{k}$ and
let $\cX_{0}:=\cX\times_{\Df_{k}}\Spec k$, the special fiber of $\cX\to\Df_{k}$.
For a DM stack $\varPhi$ over $k$, we denote $\cX\times_{k}\Theta=\cX\times_{\Df_{k}}\Df_{\Theta}$
by $\cX_{\Theta}$. 
\begin{defn}
We define the \emph{total untwisting stack} of $\cX$ to be the Hom
stack
\[
\Utg_{\Theta}(\cX):=\ulHom_{\Df_{\Theta}}^{\rep}(\cE_{\Theta},\cX_{\Theta}),
\]
which is a formal DM stack of finite type over $\Df_{\Theta}$. For
a morphism $\Xi\to\Theta$ of DM stacks, we define 
\[
\Utg_{\Xi}(\cX):=\Utg_{\Theta}(\cX)\times_{\Df_{\Theta}}\Df_{\Xi}=\Utg_{\Gamma}(\cX)\times_{\Theta}\Xi.
\]
 to be the base change. 
\end{defn}

To a formal DM stack $\cZ$ of finite type over $\Df_{k}$, we can
define a closed substack $\cX^{\pur}\subset\cX$ in a unique way by
killing $t$-torsions \cite[Definition 6.30]{yasuda2024motivic2}.
For each $[G]$ and $[\br]$, we can decompose $\Theta_{[G]}^{[\br]}$
into finitely many reduced locally closed substacks $\Theta_{[G],i}^{[\br]}$
such that $|\Theta_{[G]}^{[\br]}|=\bigsqcup_{i}|\Theta_{[G],i}^{[\br]}|$
and $\Utg_{\Theta_{[G],i}^{[\br]}}(\cX)^{\pur}$ are flat over $\Df_{\Theta_{[G],i}^{[\br]}}.$
We define $\Gamma_{\cX}$ to be $\coprod_{[G],[\br],i}\Theta_{[G],k}^{[\br]}$.
Thus, $\Utg_{\Gamma_{\cX}}(\cX)^{\pur}$ is flat over $\Df_{\Gamma_{\cX}}$.
The stack $\Gamma_{\cX}$ is unique up to universal bijections. More
precisely, if $\Gamma_{\cX}$ and $\Gamma_{\cX}'$ are two choices
as such stacks, then there exists a third one $\Gamma_{\cX}''$ given
with canonical morphisms $\Gamma_{\cX}''\to\Gamma_{\cX}$ and $\Gamma_{\cX}''\to\Gamma_{\cX}^{'}$
which are representable, universally bijective and immersions on each
connected component of $\Gamma_{\cX}''$.

For each $n\in\ZZ_{\ge0}$, let $\D_{\Gamma_{\cX},n}:=\Spec k[t]/(t^{n+1})\times_{\Spec k}\Gamma_{\cX}$.
This is a closed substack of $\Df_{\Gamma_{\cX}}$ and we have $\Df_{\Gamma_{\cX}}=\injlim\D_{\Gamma_{\cX},n}$.
We define 
\[
\J_{\Gamma_{\cX},n}\Utg_{\Gamma_{\cX}}(\cX)^{\pur}:=\ulHom_{\Df_{\Gamma_{\cX}}}(\D_{\Gamma_{\cX},n},\Utg_{\Gamma_{\cX}}(\cX)^{\pur}).
\]
This is a DM stack of finite type over $\Gamma_{\cX}$. Its base change
by a point $\Spec k'\to\Gamma_{\cX}$ is the $n$-jet stack of the
formal DM stack $\Utg_{\Spec k'}(\cX)^{\pur}$ flat and of finite
type over $\Df_{k'}$: 
\[
(\J_{\Gamma_{\cX},n}\Utg_{\Gamma_{\cX}}(\cX)^{\pur})\times_{\Gamma_{\cX}}\Spec k'=\ulHom_{\Df_{k'}}(\Spec k'[t]/(t^{n+1}),\Utg_{\Spec k'}(\cX)^{\pur}).
\]

\begin{defn}[{\cite[Definition 7.6]{yasuda2024motivic2}}]
The \emph{stack of twisted arcs} of $\cX$, denoted by $\cJ_{\infty}\cX=\cJ_{\Gamma_{\cX},\infty}\cX$,
is defined to be the arc stack 
\[
\J_{\Gamma_{\cX},\infty}\Utg_{\Gamma_{\cX}}(\cX)^{\pur}=\projlim\J_{\Gamma_{\cX},n}\Utg_{\Gamma_{\cX}}(\cX)^{\pur}.
\]
\end{defn}

\begin{rem}
The decomposition of $\Theta_{[G]}^{[\br]}$ into substacks $\Theta_{[G],i}^{[\br]}$
and construction of flat formal stacks $\Utg_{\Gamma_{\cX}}(\cX)^{\pur}$
were needed to construct a morphism $\Utg_{\Gamma_{\cX}}(\cX)\to X$.
We explained this decomposition above to follow the definition of
$\cJ_{\infty}\cX$, although the decomposition does not play any role
in what follows. 
\end{rem}

By construction, the fiber $(\cJ_{\infty}\cX)(S)$ of the projection
$\cJ_{\infty}\cX\to\Gamma_{\cX}$ over an $S$-point $S\to\Gamma_{\cX}$
with $S$ a scheme is given by 
\begin{align*}
(\cJ_{\infty}\cX)(S) & =\Utg_{\Gamma_{\cX}}(\cX)^{\pur}(\Df_{S})\\
 & =\Utg_{\Gamma_{\cX}}(\cX)(\Df_{S})\\
 & =\ulHom_{\Df_{S}}^{\rep}(\cE_{S},\cX_{\Df_{S}})\\
 & =\ulHom_{\Df_{k}}^{\rep}(\cE_{S},\cX).
\end{align*}
Note that the morphism $S\to\Gamma_{\cX}$ composed with $\Gamma_{\cX}\to\Lambda$
corresponds to a Galoisian group scheme $\cG\to S$ and we have a
canonical closed immersion $\B\cG\hookrightarrow\cE_{S}$. Therefore,
for an $S$-point of $\cJ_{\infty}\cX$ corresponding to a representable
morphism $\cE_{S}\to\cX$, the composition $\B\cG\hookrightarrow\cE_{S}\to\cX$
is again representable and corresponds to an $S$-point of $\I^{[\bullet]}\cX_{0}$.
This construction defines a natural morphism 
\[
\psi_{\cX}\colon\cJ_{\infty}\cX\to\I^{[\bullet]}\cX_{0}.
\]

\subsection{Morphisms between stacks of twisted arcs\label{subsec:mor-tw}}

Let us now consider a morphism $f\colon\cY\to\cX$ of formal DM stacks
of finite type over $\Df_{k}$. We first define a morphism 
\[
\xi_{f}\colon\Lambda\times_{\cA}\I^{[\bullet]}\cY_{0}\to\Lambda\times_{\cA}\I^{[\bullet]}\cX_{0}
\]
compatible with $\I^{[\bullet]}f_{0}$ (see (\ref{eq:If})) as follows.
For a $k$-algebra $R$, let us consider an $R$-point
\[
(T\to\D_{R}^{*},\B\cG\to\cY_{0})\in(\Lambda\times_{\cA}\I^{[\bullet]}\cY_{0})(R).
\]
Here $T\to\D_{R}^{*}$ is a fiberwise connected torsor for a Galoisian
group scheme $\cG\to\Spec R$ and $\B\cG\to\cY_{0}$ is a representable
morphism. Let 
\[
(\B(\cG/\cK)\to\cX_{0})\in(\I^{[\bullet]}\cX_{0})(R)
\]
be the image of $(\B\cG\to\cY_{0})\in(\I^{[\bullet]}\cY_{0})(\Spec R)$
by $\I^{[\bullet]}f_{0}$. The quotient scheme $T/\cK$ is then a
$\cG/\cK$-torsor over $\D_{R}^{*}$, which is again fiberwise connected
and has locally constant ramification. We define $\xi_{f}$ by sending
the given $R$-point of $\Lambda\times_{\cA}\I^{[\bullet]}\cY_{0}$
to 
\[
(T/\cK\to\Spec R\tpars,\B(\cG/\cK)\to\cX_{0})\in(\Lambda\times_{\cA}\I^{[\bullet]}\cX_{0})(R).
\]

For a suitable choice of DM stacks $\Theta'$ and $\Theta$ ccft over
$k$ whose ind-perfections are identified with $\Lambda$, we can
lift $\xi_{f}$ to get a morphism 
\[
\xi_{f}'\colon\Theta'\times_{\cA}\I^{[\bullet]}\cY_{0}\to\Theta\times_{\cA}\I^{[\bullet]}\cX_{0}.
\]
Let us construct $\Gamma_{\cX}$ by decomposing components of $\Theta$
as explained in subsection \ref{subsec:Constr-tw}. Next, we construct
$\Gamma_{\cY}$ by decomposing components of $\Theta'$ in such a
way that the morphism $\xi_{f}'$ induces a morphism 
\[
\zeta_{f}\colon\Gamma_{\cY}\times_{\cA}\I^{[\bullet]}\cY_{0}\to\Gamma_{\cX}\times_{\cA}\I^{[\bullet]}\cX_{0}.
\]

We are now ready to construct a morphism $\cJ_{\infty}\cY\to\cJ_{\infty}\cX$.
We first define stacks of twisted arcs, $\cJ_{\infty}\cY=\cJ_{\Gamma_{\cY},\infty}\cY$
and $\cJ_{\infty}\cX=\cJ_{\Gamma_{\cX},\infty}\cX$, by using $\Gamma_{\cY}$
and $\Gamma_{\cX}$ chosen as above respectively. Let $\gamma\colon S\to\cJ_{\infty}\cY$
be an $S$-point corresponding to a representable morphism
\[
\gamma'\colon\cE_{S}=[E_{S}/\cG_{S}]\to\cY.
\]
The composition $\psi_{\cY}\circ\gamma\colon S\to\cJ_{\infty}\cY\to\I^{[\bullet]}\cY_{0}$
corresponds to a representable morphism $\B\cG_{S}\to\cY_{0}$ and
the composition $(\I^{[\bullet]}f)\circ\psi_{\cY}\circ\gamma\colon S\to\I^{[\bullet]}\cX_{0}$
corresponds to a representable morphism $\B(\cG_{S}/\cK_{S})\to\cX$
for a uniquely determined subgroup scheme $\cK_{S}\subset\cG_{S}$.
Now, the morphism $f\circ\gamma'\colon\cE_{S}\to\cX$ canonically
factors as 
\[
\cE_{S}\to\cF_{S}:=[(E_{S}/\cK_{S})/(\cG_{S}/\cK_{S})]\to\cX,
\]
where the second morphism $\cF_{S}\to\cX$ is representable. By construction,
$\cF_{S}$ is the twisted formal disk corresponding to an $S$-point
of $\Gamma_{\cX}$. (Here we need the choice of $\Gamma_{\cY}$ as
above so that $\zeta_{f}$ exists.) Thus, the induced morphism $\cF_{S}\to\cX$
corresponds to an $S$-point of $\cJ_{\infty}\cX$. This construction
defines a morphism
\[
f_{\infty}\colon\cJ_{\infty}\cY\to\cJ_{\infty}\cX.
\]
From the construction, the following diagram is commutative:
\[
\xymatrix{\cJ_{\infty}\cY\ar[r]^{f_{\infty}}\ar[d] & \cJ_{\infty}\cX\ar[d]\\
\Gamma_{\cY}\times_{\cA}\I^{[\bullet]}\cY_{0}\ar[r]^{\zeta_{f}}\ar[d]^{\pr_{2}} & \Gamma_{\cX}\times_{\cA}\I^{[\bullet]}\cX_{0}\ar[d]^{\pr_{2}}\\
\I^{[\bullet]}\cY_{0}\ar[r]^{\I^{[\bullet]}f} & \I^{[\bullet]}\cX_{0}
}
\]

\subsection{Rigidification\label{subsec:Rigidification}}

We now consider the case where $\cX$ is the fiber product $\cX=\cX_{0}\times_{\Spec k}\Df_{k}$
for a DM stack $\cX_{0}$ of finite type over $k$. Let $(\cX_{0})^{\rig}$
be the rigidification of $\cX_{0}$ and let $\cX^{\rig}:=(\cX_{0})^{\rig}\times_{\Spec k}\Df_{k}$.
We denote by $\rho$ the induced morphism $\cX\to\cX^{\rig}$. 
\begin{prop}
\label{prop:ccft}The morphism $\rho_{\infty}\colon\cJ_{\infty}\cX\to\cJ_{\infty}\cX^{\rig}$
is ccft.
\end{prop}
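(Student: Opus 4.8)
The plan is to factor $\rho_{\infty}$ through an auxiliary fibre product and to split the argument into a formal part, controlled entirely by the ccft property of the stacks $\Gamma_{\cX}$ and of the inertia-type stacks, and a single genuine finiteness statement about the fibres of $\rho_{\infty}$, which rests on Proposition \ref{prop:neutral}. Write $\cX_{0}$ for the special fibre, so $\cX=\cX_{0}\times_{k}\Df_{k}$ and $\cX^{\rig}=(\cX_{0})^{\rig}\times_{k}\Df_{k}$, and set
\[
B:=\Gamma_{\cX}\times_{\cA}\I^{[\bullet]}\cX_{0},\qquad B':=\Gamma_{\cX^{\rig}}\times_{\cA}\I^{[\bullet]}((\cX_{0})^{\rig}).
\]
The construction of Section \ref{subsec:Constr-tw} gives natural morphisms $\cJ_{\infty}\cX\to B$ and $\cJ_{\infty}\cX^{\rig}\to B'$ (the structure morphism to $\Gamma$ together with $\psi_{\cX}$, resp.\ $\psi_{\cX^{\rig}}$), and the commutative diagram at the end of the previous subsection, applied to $f=\rho$, says exactly that $\zeta_{\rho}\colon B\to B'$ is compatible with $\rho_{\infty}$. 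Hence $\rho_{\infty}$ factors as
\[
\cJ_{\infty}\cX\xrightarrow{\ r\ }\cJ_{\infty}\cX^{\rig}\times_{B'}B\xrightarrow{\ q\ }\cJ_{\infty}\cX^{\rig},
\]
with $q$ the first projection.

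For the formal part, recall that $\Gamma_{\cX}$ and $\Gamma_{\cX^{\rig}}$ are ccft over $k$ (each is a countable disjoint union of the finite-type stacks $\Theta_{[G],i}^{[\br]}$) and that $\I^{[\bullet]}\cX_{0}$ and $\I^{[\bullet]}((\cX_{0})^{\rig})$ are of finite type over $k$ (finite disjoint unions of mapping stacks $\ulHom^{\rep}$ between finite-type DM stacks); hence $B$ and $B'$ are ccft over $k$. Any morphism between two DM stacks each ccft over $k$ is itself ccft: decompose the source into countably many open and closed pieces of finite type over $k$; each piece is quasi-compact, and its morphism to the target is locally of finite type by cancellation, hence of finite type. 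Therefore $\zeta_{\rho}$ is ccft, and so is its base change $q$; as ccft morphisms compose, it suffices to show that $r$ is of finite type.

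For the finiteness of $r$ I would compute its base change along an arbitrary morphism $U\to\cJ_{\infty}\cX^{\rig}\times_{B'}B$ from a scheme. Such a $U$-point records a twisted arc $\cE'\to\cX^{\rig}$ over $U$, a twisted formal disk $\cE$ over $\Df_{U}$ with Galois group scheme $\cG$ and a representable morphism $\B\cG\to\cX_{0}$, together with data identifying $\cE'$ with the canonical factorisation of $\cE\to\cX^{\rig}$; a lift to $\cJ_{\infty}\cX$ is a representable morphism $\cE\to\cX$ whose rigidification is $\cE'\to\cX^{\rig}$ and whose centre is $\B\cG\to\cX_{0}$. Restricted to the punctured disk, a lift is a point of $(\D^{*}\times_{\cX^{\rig}}\cX)(\D^{*})$, which by Proposition \ref{prop:neutral} is the groupoid of $\cH$-torsors over $\D^{*}$ for the finite étale group scheme $\cH:=\D^{*}\times_{\cX^{\rig}}\cX$; conversely, by uniqueness of the twisted arc extending a given point of $\cX$ (as in \cite[Lemma 2.16]{darda2024thebatyrevtextendashmanin}), each such torsor gives a unique lift once its twisted formal disk is constrained to be $\cE$. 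Since the cover $E$ underlying $\cE$ is now fixed, the relevant torsors are those trivialised by $E$, hence are given by $1$-cocycles of the finite group scheme $\cG$ with values in $\cH$; these form a finite $U$-scheme, on which the prescribed-disk and prescribed-centre conditions cut out a locally closed part. Thus the base change of $r$ along $U$ is of finite type over $U$, so $r$ is of finite type, hence ccft, and $\rho_{\infty}=q\circ r$ is ccft.

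The step that I expect to demand the most care is this last family computation: one must check that the identification of twisted arcs of $\cX$ lying over a twisted arc of $\cX^{\rig}$ with $\cH$-torsors on the punctured disk is functorial in $U$ — including the bookkeeping through $\zeta_{\rho}$ relating $\Gamma_{\cX}$, $\Gamma_{\cX^{\rig}}$ and the group schemes $\cG$, $\cH$ — and that the resulting morphism $r$ is quasi-compact, not merely quasi-finite with finite fibres. Everything else is a formal manipulation with the ccft property.
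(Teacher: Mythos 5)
Your formal reductions are fine: $B$ and $B'$ are ccft over $k$, hence $\zeta_{\rho}$ and its base change $q$ are ccft, and ccft morphisms compose. The genuine gap is in the only non-formal step, the claim that $r\colon\cJ_{\infty}\cX\to\cJ_{\infty}\cX^{\rig}\times_{B'}B$ is of finite type: what you actually provide is a count of geometric fibres, not a finite-type morphism. Concretely, (i) Proposition \ref{prop:neutral} is a statement about $k'\tpars$-points with $k'$ \emph{algebraically closed}; over an arbitrary affine test scheme $U$ the gerbe $\D_{U}^{*}\times_{\cX^{\rig}}\cX\to\D_{U}^{*}$ need not be neutral, and even when it is, the group scheme $\cH$ lives over $\Spec R\tpars$ rather than over $U$, so the assertion that the lifts ``are given by $1$-cocycles of $\cG$ with values in $\cH$'' which ``form a finite $U$-scheme'' is unsupported: the cocycle/section functor in question is taken along $R\tpars/R$ (or along the family of covers $E_{U}^{*}$), and its representability by a finite-type $U$-scheme is exactly the kind of moduli-of-formal-torsors statement that needs the machinery of \cite{tonini2020moduliof,tonini2023moduliof} or the equivariant Hom stacks of \cite{yasuda2024motivic2}; it cannot simply be asserted. (ii) Even pointwise, the claim that a lift whose twisted formal disk is the prescribed $\cE=[E/G]$ corresponds to a torsor split by $E^{*}$ is not obvious; it is essentially Lemma \ref{lem:torsor-factor}, whose proof is a nontrivial diagram chase, and you use it without proof or citation. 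Since you yourself defer functoriality in $U$ and quasi-compactness of $r$, what is left unproved is precisely the content of the proposition: the fibres of $\rho_{\infty}$ are the infinite spaces $|\Delta_{\cH}|$, so ccft-ness is entirely a matter of producing the countable decomposition \emph{algebraically in families}, and that is the part your sketch does not deliver.

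For comparison, the paper sidesteps these representability questions by a different route: stabilizer-preserving étale base change (under which $\cJ_{\infty}$ is compatible with fibre products) reduces to $\cX_{0}=[V_{0}/H]$ with $H$ a constant finite group; there $\cJ_{\infty}\cX$ is written in terms of $G$-equivariant arc spaces $\J_{\infty}\ulHom^{\iota}(E_{\Gamma_{\cX,G}},V)$, and the comparison morphism to $\J_{\infty}\ulHom^{\overline{\iota}}(E_{\Gamma_{\cX^{\rig},G'}},V)\times_{\Gamma_{\cX^{\rig},G'}}\Gamma_{\cX,G}$ is shown to be an \emph{isomorphism} (your pointwise cocycle count is the shadow of this: once the $G$-cover over the fixed $G'$-cover is specified, the lift is unique), so ccft-ness is inherited from $\Gamma_{\cX,G}\to\Gamma_{\cX^{\rig},G'}$ and then descends along finite étale surjections onto $\cJ_{\infty}\cX$ and $\cJ_{\infty}\cX^{\rig}$. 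If you want to keep your factorization through $\cJ_{\infty}\cX^{\rig}\times_{B'}B$, you must prove that your lifting functor is representable and of finite type over $U$ in families, e.g.\ by passing to an étale atlas of $\cX^{\rig}$ and expressing the lifts equivariantly as above — at which point you are essentially reconstructing the paper's argument.
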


\begin{proof}
Consider a 2-Cartesian diagram of formal DM stacks 
\[
\xymatrix{\cY'\ar[d]^{f'}\ar[r] & \cY\ar[d]^{f}\\
\cX'\ar[r] & \cX
}
\]
such that horizontal arrows are stabilizer-preserving and étale. Then,
we may suppose that $\Gamma_{\cY'}=\Gamma_{\cY}$ and $\Gamma_{\cX'}=\Gamma_{\cX}$.
Under this assumption, the diagram
\[
\xymatrix{\cJ_{\infty}\cY'\ar[d]^{f'_{\infty}}\ar[r] & \cJ_{\infty}\cY\ar[d]^{f_{\infty}}\\
\cJ_{\infty}\cX'\ar[r] & \cJ_{\infty}\cX
}
\]
is also 2-Cartesian. Using this fact, it is enough to consider the
case $\cX_{0}=[V_{0}/H]$, where $V_{0}$ is an irreducible variety
over $k$ and $H$ is a finite group. Let $V=V_{0}\times_{\Spec k}\Df_{k}$
and let $N$ be the kernel of $H\to\Aut(V)$ and let $Q:=H/N$. Then,
$\cX=[V/H]$ and $\cX^{\rig}=[V/Q]$. From \cite[Section 6.3]{yasuda2024motivic2},
we have
\[
\cJ_{\infty}\cX\cong\coprod_{G\in\GalGps}\left[\left(\coprod_{\iota\in\Emb(G,H)}\J_{\infty}\ulHom_{\Df_{\Gamma_{[V/H],G}}}^{\iota}(E_{\Gamma_{[V/H],G}},V_{\Gamma_{[V/H],G}})\right)/\Aut(G)\times H\right].
\]
Here $\ulHom^{\iota}$ means the Hom (formal) scheme of $\iota$-equivariant
morphisms and $\Gamma_{[V/H],G}$ denotes the fiber product $\Gamma_{[V/H]}\times_{\cA}\Spec k$
induced from the morphism $\Spec k\to\cA$ corresponding to the constant
group $G$. 

For a Galoisian group $G$, let $\cJ_{\infty}^{[G]}\cX:=\cJ_{\infty}\cX\times_{\I^{[\bullet]}\cX_{0}}\I^{[G]}\cX_{0}$
and $\cJ_{\infty}^{(G)}\cX:=\cJ_{\infty}\cX\times_{\I^{[\bullet]}\cX_{0}}\I^{(G)}\cX_{0}$.
The former is an open and closed substack of $\cJ_{\infty}\cX$, while
the latter is an $\Aut(G)$-torsor over the former. We can write 
\[
\cJ_{\infty}^{(G)}\cX=\left[\coprod_{\iota\in\Emb(G,H)}\J_{\infty}\ulHom_{\Df_{\Gamma_{\cX,G}}}^{\iota}(E_{\Gamma_{\cX,G}},V_{\Gamma_{\cX,G}})/H\right].
\]
Let us fix $\iota\in\Emb(G,H)$, let $K\subset G$ be the kernel of
$G\xrightarrow{\iota}H\to Q$ and let $\overline{\iota}\colon G':=G/K\hookrightarrow Q$
be the induced embedding. Then, we have a natural morphism $\Gamma_{\cX,G}\to\Gamma_{\cX',G'}$,
sending $G$-torsors to the induced $Q$-torsors. We have a natural
morphism
\[
\J_{\infty}\ulHom_{\Df_{\Gamma_{\cX,G}}}^{\iota}(E_{\Gamma_{\cX,G}},V_{\Gamma_{\cX,G}})\to\J_{\infty}\ulHom_{\Df_{\Gamma_{\cX^{\rig},G'}}}^{\overline{\iota}}(E_{\Gamma_{\cX^{\rig},G'}},V_{\Gamma_{\cX^{\rig},G'}})\times_{\Gamma_{\cX^{\rig},G'}}\Gamma_{\cX,G}.
\]
We claim that this is an isomorphism. Indeed, an $S$-point of the
fiber product of the target corresponds to a pair of an $\overline{\iota}$-equivariant
morphism $E_{S}\to V$ and a $G$-cover $\widetilde{E_{S}}\to\Df_{S}$
such that $\widetilde{E_{S}}/K=E_{S}$. The composition $\widetilde{E_{S}}\to E_{S}\to V$
is obviously $\iota$-equivariant, which give an $S$-point of the
left side. This construction gives the inverse morphism. Since $\Gamma_{\cX,G}\to\Gamma_{\cX^{\rig},G'}$
is ccft, so is the morphism 
\[
\J_{\infty}\ulHom_{\Df_{\Gamma_{\cX,G}}}^{\iota}(E_{\Gamma_{\cX,G}},V_{\Gamma_{\cX,G}})\to\J_{\infty}\ulHom_{\Df_{\Gamma_{\cX^{\rig},G'}}}^{\overline{\iota}}(E_{\Gamma_{\cX^{\rig},G'}},V_{\Gamma_{\cX^{\rig},G'}}).
\]

Finally, we consider the following commutative diagram:
\[
\xymatrix{\coprod_{G,\iota}\J_{\infty}\ulHom_{\Df_{\Gamma_{\cX,G}}}^{\iota}(E_{\Gamma_{\cX,G}},V_{\Gamma_{\cX,G}})\ar[d]\ar[r] & \cJ_{\infty}\cX\ar[d]^{\rho_{\infty}}\\
\coprod_{G',\overline{\iota}}\J_{\infty}\ulHom_{\Df_{\Gamma_{\cX^{\rig},G'}}}^{\overline{\iota}}(E_{\Gamma_{\cX^{\rig},G'}},V_{\Gamma_{\cX^{\rig},G'}})\ar[r] & \cJ_{\infty}\cX^{\rig}
}
\]
The horizontal arrows are finite, étale and surjective. Since the
left vertical arrow is ccft, so is $\rho_{\infty}$. 
\end{proof}

\subsection{Versal families of $k'\protect\tpars$-points}

We keep assuming that $\cX=\cX_{0}\times_{\Spec k}\Df_{k}$ with $\cX_{0}$
a DM stack of finite type over $k$. For each geometric point $\Spec k'\to\cJ_{\infty}\cX_{0}$,
there exists the corresponding $k'\tpars$-point of $\cX_{0}$. However,
for an $R$-point $\Spec R\to\cJ_{\infty}\cX_{0}$ with $R$ a general
ring, we cannot get a corresponding $R\tpars$-point of $\cX_{0}$.
This is because if $\cE\to\cX_{0}$ is the family of twisted arcs
over $\Spec R$ that corresponds to the given $R$-point of $\cJ_{\infty}\cX_{0}$,
then $\cE$ is defined as a formal DM stack and does not contain $\Spec R\tpars$
just like the formal spectrum $\Spf R\tbrats$ does not contain $\Spec R\tpars$.
That being said, we can still construct an étale cover $\coprod_{s\in S}\Spec R_{s}\to\cJ_{\infty}\cX_{0}$
from a countable disjoint union of affine schemes such that the corresponding
$R_{s}\tpars$-points of $\cX_{0}$ exist, as we will show in this
section.
\begin{prop}
\label{prop:versal-affine}There exists a collection of data, 
\begin{enumerate}
\item an étale, surjective and stabilizer-preserving morphism $\phi\colon\coprod_{i=1}^{n}[V_{i,0}/H_{i}]\to\cX_{0}$,
where $V_{i,0}$ are affine schemes $\Spec A_{i}$ and $H_{i}$ are
finite groups,
\item an étale surjective morphism $\psi\colon\coprod_{s\in S}\Spec R_{s}\to\cJ_{\infty}\cX$
from the disjoint union of affine schemes $\Spec R_{s}$ with $S$
an at most countable index set,
\item for each $s\in S$, a uniformizable $R_{s}\tbrats$-algebra $O_{s}$
(see \cite[Definition 7.1]{tonini2023moduliof} or \cite[Definition 7.1]{yasuda2024motivic2})
given with an action of a Galoisian finite group $G_{s}$ such that
$O_{s}^{G_{s}}=R_{s}\tbrats$,
\item for each $s\in S$, an index $i(s)\in\{1,\dots,n\}$, an embedding
$\iota_{s}\colon G_{s}\hookrightarrow H_{i(s)}$ and an $\iota_{s}$-equivariant
morphism $\beta_{s}\colon\Spec O_{s}\to V_{i(s),0}$ over $k$ which
induces a representable morphism $\gamma_{s}\colon[\Spf O_{s}/G_{s}]\to[V_{i(s),0}/H_{i(s)}]$,
\end{enumerate}
such that for each $s\in S$, the induced morphism of stacks,
\[
[\Spf O_{s}/G_{s}]\xrightarrow{\gamma_{s}}[V_{i(s),0}/H_{i(s)}]\xrightarrow{\phi_{s}}\cX_{0},
\]
is the family of twisted arcs over $\Spec R_{s}$ corresponding to
$\psi_{s}\colon\Spec R_{s}\to\cJ_{\infty}\cX=\cJ_{\infty}\cX_{0}$.
Here $\phi_{s}$ and $\psi_{s}$ denote the restrictions of $\phi$
and $\psi$ to the $s$-components, respectively. 
\end{prop}

\begin{proof}
As is well-known, there exists an étale, surjective and stabilizer-preserving
morphism $\phi\colon\coprod_{i=1}^{n}[V_{i,0}/H_{i}]\to\cX_{0}$,
where $V_{i,0}$ are affine schemes $\Spec A_{i}$ and $H_{i}$ are
finite groups. Let $V_{i}:=V_{i,0}\times_{\Spec k}\Df_{k}$. As in
the proof of Proposition \ref{prop:ccft}, there exists an étale surjective
morphism
\[
\coprod_{\substack{1\le i\le n\\
G\in\GalGps\\
\iota\colon G\hookrightarrow H_{i}
}
}\J_{\infty}\ulHom_{\Df_{\Gamma_{[V_{i}/H_{i}],G}}}^{\iota}(E_{\Gamma_{\cX,G}},(V_{i})_{\Gamma_{\cX,G}})\to\cJ_{\infty}\cX.
\]
Note that the coproduct of the source is countable. For each $i,G,\iota$
and for each connected component $\Upsilon\subset\Gamma_{[V_{i}/H_{i}],G}$,
which is a DM stack of finite type over $k$, the substack $\ulHom_{\Df_{\Upsilon}}^{\iota}(E_{\Upsilon},(V_{i})_{\Upsilon})\subset\ulHom_{\Df_{\Gamma_{[V_{i}/H_{i}],G}}}^{\iota}(E_{\Gamma_{\cX,G}},(V_{i})_{\Gamma_{\cX,G}})$
is a formal scheme of finite type over $\Df_{\Upsilon}$. Let $T=\Spec C\to\Upsilon$
be an étale surjective morphism from an affine scheme and let 
\[
\Spf B_{s}\to\ulHom_{\Df_{T}}^{\iota}(E_{T},(V_{i})_{T})=\ulHom_{\Df_{\Upsilon}}^{\iota}(E_{\Upsilon},(V_{i})_{\Upsilon})\times_{\Upsilon}T
\]
be an étale surjective morphism of formal DM stacks over $\Df_{T}$.
Here $s$ denotes an index representing the chosen data, $i$, $G$,
$\iota$ and $\Upsilon$. The arc scheme $\J_{\infty}\Spf B_{s}$
(relative to the base $\Df_{T}$) is an affine scheme, say $\J_{\infty}\Spf B_{s}=\Spec R_{s}$.
Then, the natural morphism 
\[
\Spec R_{s}=\J_{\infty}\Spf B_{s}\to\J_{\infty}\ulHom_{\Df_{\Upsilon}}^{\iota}(E_{\Upsilon},(V_{i})_{\Upsilon}).
\]
corresponds to a $\Df_{\Upsilon}$-morphism 
\[
\Df_{R_{s}}=\Spf R_{s}\tbrats\to\ulHom_{\Df_{\Upsilon}}^{\iota}(E_{\Upsilon},(V_{i})_{\Upsilon}),
\]
which, in turn, corresponds to an $\iota$-equivariant $\Df_{k}$-morphism
\[
E_{R_{s}}=E_{\Upsilon}\times_{\Upsilon}\Spec R_{s}\to V_{i}.
\]
Here $E_{R_{s}}$ can be written as $\Spf O_{s}$ for a finite flat
$R_{s}\tbrats$-algebra with a $G$-action such that $O_{s}^{G}=R_{s}\tbrats$.
From \cite[Proposition 5.19(1)]{yasuda2024motivic2}, replacing $\Spec R_{s}$
with an étale cover of it if necessary, we may assume that $O_{s}$
is uniformizable. 

Since $V_{i}$ is affine, the induced morphism 
\[
\beta_{s}\colon\Spf O_{s}\to V_{i}\to V_{i,0}=\Spec A_{i}
\]
 uniquely factors as 
\[
\Spf O_{s}\to\Spec O_{s}\xrightarrow{\beta_{s}}\Spec A_{i}.
\]
Indeed, since $O_{s}=\projlim O_{s}/t^{n}O_{s}$ and we have a system
of homomorphisms $A_{i}\to O_{s}/t^{n}O_{s}$, we get the induced
homomorphism $A_{i}\to O_{s}$. Putting together all what we constructed
so far gives the desired data. 
\end{proof}
\begin{cor}
\label{cor:versal-affine-2}There exist an étale surjective morphism
\[
\psi\colon\coprod_{s\in S}\Spec R_{s}\to\cJ_{\infty}\cX=\cJ_{\infty}\cX_{0}
\]
with $S$ an at most countable index set and a morphism 
\[
\beta\colon\coprod_{s\in S}\Spec R_{s}\tpars\to\cX_{0}
\]
such that for every $s\in S$ and for every geometric point $\Spec k'\to\Spec R_{s}$,
the induced morphism
\[
\Spec k'\tpars\to\Spec R_{s}\tpars\xrightarrow{\beta_{s}}\cX_{0}
\]
corresponds to the twisted arc $\psi(r)\in(\cJ_{\infty}\cX_{0})(k')$. 
\end{cor}

\begin{proof}
We take data as in Proposition \ref{prop:versal-affine}. In particular,
we have a morphism $\psi\colon\coprod_{s\in S}\Spec R_{s}\to\cJ_{\infty}\cX$
and for each $s\in S$, we have an $\iota_{s}$-equivariant morphism
$\Spec O_{s}\to V_{i(s),0}$. Since the open subscheme $\Spec(O_{s})_{t}\subset\Spec O_{s}$
is a $G_{s}$-torsor over $\Spec R_{s}\tpars$, we get a morphism
\begin{multline*}
\Spec R_{s}\tpars=\Spec(O_{s})_{t}/G_{s}=[\Spec(O_{s})_{t}/G_{s}]\hookrightarrow[\Spec O_{s}/G_{s}]\\
\to[V_{i(s),0}/H_{i(s)}]\to\cX_{0}.
\end{multline*}
Putting these morphisms for $s\in S$ together gives a morphism $\coprod_{s\in S}\Spec R_{s}\tpars\to\cX_{0}$
satisfying the desired property. 
\end{proof}
For a ring $R$ and $a\in R\tpars$, we define the function 
\[
\ord_{a}\colon\Spec R\to\ZZ\cup\{\infty\},\,r\mapsto\ord a_{r},
\]
where $a_{r}$ is the image of $a$ in $\kappa(r)\tpars$ and we follow
the convention $\ord0=\infty$. 
\begin{lem}
\label{lem:const-order-unit}Let $R$ be a reduced ring and let $a\in R\tpars$.
Suppose that $\ord_{a}$ takes a constant finite value $n\in\ZZ$.
Then, $a$ is a unit of $R\tpars$.
\end{lem}

\begin{proof}
Let us write $a=\sum_{i\in\ZZ}a_{i}t^{i}$, where $a_{i}\in R$ and
$a_{i}=0$ for $i\ll0$. Let $m<n$. For every $r\in\Spec R$, the
image $(a_{m})_{r}\in\kappa(r)$ of $a_{m}$ is zero. This means that
$a_{m}$ is contained in every prime ideal of $R$. But, the intersection
of all prime ideals is the nilradical, which is the zero ideal since
$R$ is reduced. Thus, $a_{m}=0$ for every $m<n$. For every $r\in\Spec R$,
$(a_{n})_{r}\in\kappa(r)$ is nonzero. Namely, $a_{n}$ is not contained
in any prime ideal. This means that $a_{n}$ is a unit of $R$. Then,
$at^{-n}$ is a unit of $R\tbrats$. It follows that $a$ is a unit
of $R\tpars$. 
\end{proof}
\begin{defn}[{\cite[p. 356]{grothendieck1961elements2}}]
Let $W$ be a topological space. A subset $C\subset W$ is said to
be \emph{retrocompact }if the inclusion map $C\hookrightarrow W$
is quasi-compact. A subset $C\subset W$ is said to be \emph{constructible
}if it is the union of finitely many subsets of the form $C\setminus C'$
with $C$ and $C'$ being retrocompact open subsets. 
\end{defn}

\begin{lem}
\label{lem:ord-constr}Let $R$ be a ring and let $a\in R\tpars$.
\end{lem}

\begin{enumerate}
\item For each $m\in\ZZ$, the subset $\{\ord_{a}\le m\}\subset\Spec R$
is retrocompact and open.
\item For each $m\in\ZZ$, and the subset $\{\ord_{a}=m\}\subset\Spec R$
is locally closed and constructible. 
\item If $\ord_{a}(x)<\infty$ for every $x\in\Spec R$, then the subset
$\ord_{a}(\Spec R)\subset\ZZ$ is finite. 
\end{enumerate}
\begin{proof}
\begin{enumerate}
\item Let us write $a=\sum_{i}a_{i}t^{i}$, $a_{i}\in R$. We have
\[
\{\ord_{a}\le m\}=\bigcup_{i\le m}\Spec R_{a_{i}}.
\]
Each $\Spec R_{a_{i}}$ is a distinguished open subset, in particular,
a retrocompact open subset of $\Spec R$. Since $a_{i}=0$ for $i\ll0$,
the union above is in fact a finite union. It follows that $\{\ord_{a}\le m\}$
is a retrocompact open subset. 
\item This follows from
\[
\{\ord_{a}=m\}=\{\ord_{a}\le m\}\setminus\{\ord_{a}\le m-1\}.
\]
\item This follows from the fact that $\Spec R$ is quasi-compact for the
constructible topology \cite[Chapter I, Proposition 1.9.15]{grothendieck1964elements}. 
\end{enumerate}
\end{proof}
\begin{prop}
\label{prop:factor-open}Let us take a collection of data as in Proposition
\ref{prop:versal-affine}, which we represent in the same notation.
Let $\cZ_{0}\subset\cX_{0}$ be a closed substack and let $\cU_{0}:=\cX_{0}\setminus\cZ_{0}\subset\cX_{0}$
be its complement open substack. For each $s\in S$, let $C_{s}\subset\Spec R_{s}$
be the subset of the points $r\in\Spec R_{s}$ such that the induced
morphism 
\[
\Spec\kappa(r)\tpars\to\Spec R_{s}\tpars\xrightarrow{\beta_{s}}\cX_{0}
\]
 factors through $\cU_{0}$. Then, $C_{s}$ is the disjoint union
of at most countably many locally closed, constructible and affine
subschemes $\Spec R_{s,m}\subset\Spec R_{s}$, $m\in M_{s}$ such
that for every $m\in M_{s}$, the induced morphism 
\[
\Spec R_{s,m}\tpars\to\Spec R_{s}\tpars\xrightarrow{\beta_{s}}\cX_{0}
\]
 factors through $\cU_{0}$. 
\end{prop}

\begin{proof}
Since $O_{s}$ is uniformizable, we can write $O_{s}=R_{s}\llbracket u\rrbracket$
with $u$ an indeterminate. Let us write $V_{i(s),0}=\Spec A$. Let
$Z\subset V_{i(s),0}$ be the pullback of $\cZ$ to $V_{i(s),0}$
and let $I\subset A$ be its defining ideal. Since $A$ is finitely
generated over $k$, the ideal $I$ is finitely generated, say $I=\langle f_{1},\dots,f_{l}\rangle$.
Let $\beta_{s}^{-1}I=\langle\beta_{s}^{*}f_{1},\dots,\beta_{s}^{*}f_{l}\rangle\subset R_{s}\llbracket u\rrbracket$.
We have 
\begin{align*}
C_{s} & =\{\ord_{\beta_{s}^{-1}I}<\infty\}\\
 & =\bigcup_{i=1}^{l}\{\ord_{\beta_{s}^{*}f_{i}}<\infty\}.\\
 & =\bigcup_{i=1}^{l}\bigcup_{n\in\ZZ_{\ge0}}\{\ord_{\beta_{s}^{*}f_{i}}=n\}.
\end{align*}
For each $i$ and $n$, the subset $\{\ord_{\beta_{s}^{*}f_{i}}=n\}$
is locally closed and constructible, in particular, quasi-compact.
Giving this subset the reduced scheme structure, we have a finite
affine cover $\{\ord_{\beta_{s}^{*}f_{i}}=n\}=\bigcup_{j=1}^{e}\Spec B_{j}$
with $B_{j}$ reduced. For each $j$, let $g\in B_{j}\llbracket u\rrbracket$
be the image of $\beta_{s}^{*}f_{i}$ by the natural map $R_{s}\llbracket u\rrbracket\to B_{j}\llbracket u\rrbracket$.
Then, the function $\ord_{g}$ is constantly $n$ on $\Spec B_{j}$.
From Lemma \ref{lem:const-order-unit}, $g$ is a unit in $B_{j}\llparenthesis u\rrparenthesis$.
Hence, the map $A\to B_{j}\llparenthesis u\rrparenthesis$ uniquely
factors through the localization map $A\to A_{\beta_{s}^{*}f_{i}}$.
Equivalently, the morphism 
\[
\Spec B_{j}\llparenthesis u\rrparenthesis\to\Spec R_{s}\llparenthesis u\rrparenthesis\xrightarrow{\beta_{s}}\Spec A=V_{i(s),0}
\]
has the image contained in the distinguished open subset $\{\beta_{s}^{*}f_{i}\ne0\}\subset V_{i(s),0}$,
which is contained in the open subset $U_{i(s),0}:=V_{i(s),0}\setminus Z=\bigcup_{i=1}^{l}\{\beta_{s}^{*}f_{i}\ne0\}$.
From the construction, $U_{i(s),0}$ is the preimage of $\cU_{0}$
in $V_{i(s),0}$ and stable under the $H_{i(s)}$-action. The morphism
$\Spec B_{j}\llparenthesis u\rrparenthesis\to U_{i(s),0}$ is $\iota$-equivariant
and induces a morphism 
\[
\Spec B_{j}\tpars=[\Spec B_{j}\llparenthesis u\rrparenthesis/G]\to[U_{i(s),0}/H_{i(s)}]\to\cU_{0}.
\]
The subset $C_{s}$ is covered by the countably many subschemes $\Spec B_{j}\subset\Spec R$
as $n$, $i$ and $j$ vary. We have proved the proposition.
\end{proof}

\subsection{The Zariski topology of a projective limit of DM stacks}

Results of this section are not restricted to stacks of twisted arcs,
although they are helpful to understand the Zariski topology of $|\cJ_{\infty}\cX|$.
Let $\cX_{n}$, $n\in\ZZ_{\ge0}$ be quasi-compact and separated DM
stacks and let $\phi_{n}\colon\cX_{n+1}\to\cX_{n}$, $n\in\ZZ_{\ge0}$
be affine morphisms. Then, the limit $\cX:=\projlim\cX_{n}$ is again
a quasi-compact and separated DM stack. The natural morphisms $\pi_{n}\colon\cX\to\cX_{n}$
are affine. In this section, we give a few results on the Zariski
topology of the point set $|\cX|$ of $\cX$. 
\begin{lem}[{\cite[Remark B.4]{rydh2015noetherian}}]
\label{lem:Rydh} Let $\cU\subset\cX$ be a quasi-compact open substack.
Then, there exist $n\in\ZZ_{\ge0}$ and an open substack $\cV\subset\cX_{n}$
such that $\cU=\cV\times_{\cX_{n}}\cX$. 
\end{lem}

\begin{lem}
Let us regard the point sets $|\cX|$ and $|\cX_{n}|$ as topological
spaces by giving them the Zariski topology. Let us give $\projlim|\cX_{n}|$
the limit topology. Then, the natural map $|\cX|\to\projlim|\cX_{n}|$
is a homeomorphism.
\end{lem}

\begin{proof}
We first show that the map is bijective. Let $x_{n}\in|\cX_{n}|$,
$n\in\ZZ_{\ge0}$ be a sequence of points such that $x_{n+1}$ maps
to $x_{n}$ by the map $|\cX_{n+1}|\to|\cX_{n}|$. Inductively, we
can construct the following 2-commutative diagram:
\[
\xymatrix{\cdots & \Spec L_{n}\ar[d]^{x_{n}'}\ar[l] & \Spec L_{n+1}\ar[l]\ar[d]^{x_{n+1}'} & \cdots\ar[l]\\
\cdots & \cX_{n}\ar[l] & \cX_{n+1}\ar[l] & \cdots\ar[l]
}
\]
Here $x_{n}'$ are geometric points representing $x_{n}$, respectively.
Passing to the limit, we obtain a geometric point $\Spec\injlim L_{n}\to\cX$
that maps to $(x_{n})\in\projlim|\cX_{n}|$. We have proved that the
map of the proposition is surjective.

To show that the map is injective, take two geometric points $x,y\in\cX(K)$
such that the induced elements $(x_{n})$ and $(y_{n})$ of $\projlim|\cX_{n}|$
are the same. There exists $n_{0}$ such that for every $n\ge n_{0}$,
we have $\Aut(x)=\Aut(x_{n})$ and $\Aut(y)=\Aut(y_{n})$. For each
$n$, we have an isomorphism $\alpha_{n}\colon x_{n}\to y_{n}$ in
$\cX_{n}(K)$. The problem is that $\alpha_{n+1}$ may not map to
$\alpha_{n}$. But, $\alpha_{n}=\beta\circ\phi_{n}(\alpha_{n+1})$
for some automorphism $\beta$ of $y_{n}$. If $n\ge n_{0}$, then
there exists a unique lift $\beta'$ of $\beta$ to an automorphism
of $y_{n+1}$. We have $\phi_{n}(\beta'\circ\alpha_{n+1})=\alpha_{n}$.
Applying this argument inductively, we get a compatible system of
isomorphisms $x_{n}\to y_{n}$ that induces an isomorphism $x\to y$.
Thus, the two points $x$ and $y$ determine the same point of $|\cX|$,
which shows the desired injectivity. 

For an open subset $U\subset|\cX_{n}|$, $\pi_{n}^{-1}(U)$ is an
open subset of $|\cX|$. This shows that the Zariski topology of $|\cX|$
is stronger than the limit topology. To show the converse, let $U\subset|\cX|$
be an open subset. We can cover $U$ with quasi-compact open subsets
$U_{i}$, $i\in I$. From Lemma \ref{lem:Rydh}, for each $i$, there
exist $n(i)\in\ZZ_{\ge0}$ and an open subset $V_{i}\subset|\cX_{n(i)}|$
such that $\pi_{n(i)}^{-1}(V_{i})=U_{i}$. This shows that the limit
topology is stronger than the Zariski topology. Combining the two
claims completes the proof. 
\end{proof}
\begin{cor}
\label{cor:constr-finite-level}For every constructible subset $C\subset|\cX|$,
there exist $n\in\ZZ_{\ge0}$ and a constructible subset $D\subset|\cX_{n}|$
such that $C=\pi_{n}^{-1}(D)$.
\end{cor}

\begin{proof}
Let us write $C=\bigcup_{i=1}^{l}C_{i}\setminus C_{i}'$, where $C_{i}$
and $C_{i}'$ are retrocompact open subsets of $|\cX|$. Since $\cX$
is affine over $\cX_{n}$'s, $|\cX|$ is quasi-compact and hence $C_{i}$
and $C_{i}'$ are quasi-compact as well. From Lemma \ref{lem:Rydh},
for sufficiently large $n$, there are open subsets $D_{i}$ and $D_{i}'$
of $|\cX_{n}|$ such that $C_{i}=\pi_{n}^{-1}(D_{i})$ and $C'_{i}=\pi_{n}^{-1}(D'_{i})$.
Then, $C=\pi_{n}^{-1}(\bigcup_{i}(D_{i}\setminus D_{i}'))$. This
shows the corollary. 
\end{proof}

\section{A product lemma for asymptotic growth rate\label{sec:A-product-lemma}}

\selectlanguage{english}%
Let $(\beta_{1},\beta_{2})\in\ZZ_{\geq0}^{2}$. For $k\geq-1$, we
estimate the integrals: 
\[
\Phi_{k}(B,\beta_{1},\beta_{2}):=\int_{1}^{B}u^{k}\log(u)^{\beta_{1}}\log(B/u)^{\beta_{2}}du
\]

\begin{lem}
\label{kbigger} Let $k>-1$ be a real number. One has that 
\[
\Phi_{k}(B,\beta_{1},\beta_{2})\sim\frac{\beta_{2}!B^{k+1}\log(B)^{\beta_{1}}}{(k+1)^{\beta_{2}+1}}+O(f(B)),
\]
where $f(B):=B^{k+1}\log(B)^{\beta_{1}-1}$ if $\beta_{1}\geq1$ and
$f(B):=\log(B)^{\beta_{2}}$ if $\beta_{1}=0$.
\end{lem}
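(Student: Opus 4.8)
The plan is to make the substitution $u = Bv$, which concentrates the mass of the integral near $v=1$ (that is, near $u=B$, where the factor $u^{k}$ with $k>-1$ is largest) and cleanly isolates the main term. After this substitution one gets
\[
\Phi_{k}(B,\beta_{1},\beta_{2}) = B^{k+1}\int_{1/B}^{1} v^{k}\,(\log B + \log v)^{\beta_{1}}\bigl(\log(1/v)\bigr)^{\beta_{2}}\,dv,
\]
and expanding $(\log B+\log v)^{\beta_{1}}$ by the binomial theorem, together with the identity $(\log v)^{j} = (-1)^{j}(\log(1/v))^{j}$, yields
\[
\Phi_{k}(B,\beta_{1},\beta_{2}) = B^{k+1}\sum_{j=0}^{\beta_{1}}\binom{\beta_{1}}{j}(-1)^{j}(\log B)^{\beta_{1}-j}\,I_{j+\beta_{2}}(B),
\]
where $I_{m}(B):=\int_{1/B}^{1} v^{k}\bigl(\log(1/v)\bigr)^{m}\,dv$ for $m\in\ZZ_{\geq0}$. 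So everything reduces to the asymptotics of the single-variable integrals $I_{m}(B)$.

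For $I_{m}(B)$ the first step is to pass to $v = e^{-w}$. This turns the full integral $\int_{0}^{1} v^{k}(\log(1/v))^{m}\,dv$ into the Gamma integral $\int_{0}^{\infty} e^{-(k+1)w}w^{m}\,dw = m!/(k+1)^{m+1}$, which is legitimate precisely because $k+1>0$, and it turns the missing piece into the incomplete-Gamma tail $\int_{0}^{1/B} v^{k}(\log(1/v))^{m}\,dv = \int_{\log B}^{\infty} e^{-(k+1)w}w^{m}\,dw$. Repeated integration by parts (equivalently, the standard asymptotics of the incomplete Gamma function) bounds this tail by $O\!\bigl(B^{-(k+1)}(\log B)^{m}\bigr)$, so $I_{m}(B) = m!/(k+1)^{m+1} + O\!\bigl(B^{-(k+1)}(\log B)^{m}\bigr)$.

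Substituting back, the $j=0$ term contributes $B^{k+1}(\log B)^{\beta_{1}}I_{\beta_{2}}(B) = \tfrac{\beta_{2}!\,B^{k+1}(\log B)^{\beta_{1}}}{(k+1)^{\beta_{2}+1}} + O\!\bigl((\log B)^{\beta_{1}+\beta_{2}}\bigr)$, which is the asserted main term plus an acceptable error; and each term with $1\le j\le\beta_{1}$ is $O\!\bigl(B^{k+1}(\log B)^{\beta_{1}-j}\bigr)$, the largest being the $j=1$ term, of order $B^{k+1}(\log B)^{\beta_{1}-1}$. When $\beta_{1}\ge 1$ this term dominates the error $O((\log B)^{\beta_{1}+\beta_{2}})$, since $k+1>0$ makes a power of $B$ beat any power of $\log B$, so $f(B)=B^{k+1}(\log B)^{\beta_{1}-1}$; when $\beta_{1}=0$ the sum has only the $j=0$ term and the surviving error is $O((\log B)^{\beta_{2}})$, so $f(B)=(\log B)^{\beta_{2}}$. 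This matches the statement. The one genuinely fiddly point is the uniform estimate of the incomplete-Gamma tail $\int_{\log B}^{\infty} e^{-(k+1)w}w^{m}\,dw$; everything else is bookkeeping of the binomial sum.
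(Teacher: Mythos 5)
Your argument is correct, and it is genuinely different from the paper's. The paper proves the lemma by integrating by parts once to get the recursion
\[
\frac{u^{k+1}\log(u)^{\beta_{1}}\log(B/u)^{\beta_{2}}}{k+1}\Big|_{u=1}^{B}
=\Phi_{k}(B,\beta_{1},\beta_{2})+\frac{\beta_{1}\Phi_{k}(B,\beta_{1}-1,\beta_{2})-\beta_{2}\Phi_{k}(B,\beta_{1},\beta_{2}-1)}{k+1},
\]
computing the boundary cases $\Phi_{k}(B,\beta_{1},0)$ and $\Phi_{k}(B,0,\beta_{2})$ in closed form, and then running a double induction on $(\beta_{1},\beta_{2})$. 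You instead rescale $u=Bv$, expand $(\log B+\log v)^{\beta_{1}}$ binomially (legitimate since $\beta_{1}\in\ZZ_{\ge0}$), and reduce everything to the truncated Gamma integrals $I_{m}(B)=\int_{0}^{\log B}e^{-(k+1)w}w^{m}\,dw=m!/(k+1)^{m+1}+O\bigl(B^{-(k+1)}(\log B)^{m}\bigr)$, where the tail bound you flag as the fiddly point is indeed fine: writing $w=\log B+s$ gives $B^{-(k+1)}\int_{0}^{\infty}e^{-(k+1)s}(\log B+s)^{m}\,ds=O\bigl(B^{-(k+1)}(\log B)^{m}\bigr)$ using $k+1>0$. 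The bookkeeping then matches the stated main term and both error regimes ($\beta_{1}\ge1$ versus $\beta_{1}=0$), since $(\log B)^{\beta_{1}+\beta_{2}}=O\bigl(B^{k+1}(\log B)^{\beta_{1}-1}\bigr)$ when $\beta_{1}\ge1$. What each approach buys: yours is non-inductive and in principle yields a full asymptotic expansion in descending powers of $\log B$ with explicit coefficients in a single step, whereas the paper's recursion is more elementary (no Gamma-function input) and runs in parallel with its treatment of the borderline case $k=-1$ in Lemma \ref{kminus}, where your substitution would not concentrate the mass near $u=B$ and the main term instead comes from the whole range of integration.
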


\begin{proof}
We write $\Phi$ for $\Phi_{k}$. The integration by parts for the
functions $u^{k+1}/(k+1)$ and $\log(u)^{\beta_{1}}\log(B/u)^{\beta_{2}}$
gives: 
\begin{multline*}
\frac{u^{k+1}\log(u)^{\beta_{1}}\log(B/u)^{\beta_{2}}}{k+1}\bigg|_{u=1}^{B}=\Phi(B,\beta_{1},\beta_{2})+\int_{1}^{B}\frac{u^{k+1}}{k+1}(\log(u)^{\beta_{1}}\log(B/u)^{\beta_{2}})'du\\
=\Phi(B,\beta_{1},\beta_{2})+\frac{\beta_{1}\Phi(B,\beta_{1}-1,\beta_{2})-\beta_{2}\Phi(B,\beta_{1},\beta_{2}-1)}{k+1},
\end{multline*}
where one can put $\Phi(B,-1,\beta_{2})=\Phi(B,\beta_{1},-1)=0$.
One has that 
\[
\frac{u^{k+1}\log(u)^{\beta_{1}}\log(B/u)^{\beta_{2}}}{k+1}\bigg|_{u=1}^{B}=\begin{cases}
0, & \text{ if }\beta_{1}>0\text{ and }\beta_{2}>0;\\
B^{k+1}\log(B)^{\beta_{1}}/(k+1) & \text{ if }\beta_{1}>0\text{ and }\beta_{2}=0\\
-\log(B)^{\beta_{2}}/(k+1) & \text{ if }\beta_{1}=0\text{ and }\beta_{2}>0\\
(B^{k+1}-1)/(k+1) & \text{if }\beta_{1}=\beta_{2}=0.
\end{cases}.
\]
When $\beta_{2}=0$, we have for $\beta_{1}>0$ that 
\[
\Phi(B,\beta_{1},0)=\frac{B^{k+1}\log(B)^{\beta_{1}}}{k+1}-\frac{\beta_{1}\Phi(B,\beta_{1}-1,0)}{k+1}.
\]
As $\Phi(B,0,0)=(B^{k+1}-1)/(k+1)$, the induction gives that 
\[
\Phi(B,\beta_{1},0)=B^{k+1}\bigg(\sum_{i=0}^{\beta_{1}-1}\frac{(-1)^{i}\beta_{1}!\log(B)^{\beta_{1}-i}}{(\beta_{1}-i)!(k+1)^{i+1}}+\frac{(-1)^{\beta_{1}}\beta_{1}!(1-B^{-(k+1)})}{(k+1)^{\beta_{1}+1}}\bigg).
\]
We deduce 
\[
\Phi(B,\beta_{1},0)\sim\frac{B^{k+1}\log(B)^{\beta_{1}}}{k+1}+O(B^{k+1}\log(B)^{\beta_{1}-1})
\]
if $\beta_{1}>0$ and 
\[
\Phi(B,0,0)=\frac{B^{k+1}}{k+1}+O(1).
\]
Let us now suppose that $\beta_{1}=0$. When $\beta_{2}>0$, we have
that 
\[
\Phi(B,0,\beta_{2})=\frac{-\log(B)^{\beta_{2}}}{k+1}+\beta_{2}\Phi(B,0,\beta_{2}-1).
\]
As $\Phi(B,0,0)=0$, the induction gives that 
\[
\Phi(B,0,\beta_{2})=\sum_{i=0}^{\beta_{2}-1}\frac{-\beta_{2}!\log(B)^{\beta_{2}-i}}{(\beta_{2}-i)!(k+1)^{i+1}}+\frac{\beta_{2}!(B^{k+1}-1)}{(k+1)^{\beta_{2}+1}}.
\]
We deduce 
\[
\Phi(B,0,\beta_{2})\sim\frac{\beta_{2}!B^{k+1}}{(k+1)^{\beta_{2}+1}}+O(\log(B)^{\beta_{2}}).
\]
The claim is hence valid if $\beta_{1}=0$ or $\beta_{2}=0$. Suppose
the claim is valid for all $(\beta_{1}',\beta_{2}')$ with $0\leq\beta_{1}'\leq\beta_{1}$
and $0\leq\beta_{2}'\leq\beta_{2}$. Let us verify it for $(\beta_{1}+1,\beta_{2})$
and $(\beta_{1},\beta_{2}+1)$. First for for $(\beta_{1}+1,\beta_{2})$.
If $\beta_{2}=0$, we have already verified it, so suppose $\beta_{2}>0$.
One has that 
\[
\Phi(B,\beta_{1}+1,\beta_{2})=\frac{-(\beta_{1}+1)\Phi(B,\beta_{1},\beta_{2})+\beta_{2}\Phi(B,\beta_{1}+1,\beta_{2}-1)}{k+1}.
\]
The induction gives that $\Phi(B,\beta_{1},\beta_{2})\asymp B^{k+1}\log(B)^{\beta_{1}},$
hence 
\[
\Phi(B,\beta_{1}+1,\beta_{2})\sim\frac{\beta_{2}\Phi(B,\beta_{1}+1,\beta_{2}-1)}{k+1}+O(B^{k+1}\log(B)^{\beta_{1}}).
\]
By iterating this process, we eventually obtain that 
\begin{align*}
\Phi(B,\beta_{1}+1,\beta_{2}) & \sim\frac{\beta_{2}!\Phi(B,\beta_{1}+\beta_{2},0)}{(k+1)^{\beta_{2}+1}}+O(B^{k+1}\log(B)^{\beta_{1}})\\
 & \sim\frac{\beta_{2}!B^{k+1}\log(B)^{\beta_{1}+1}}{(k+1)^{\beta_{2}+1}}+O(B^{k+1}\log(B)^{\beta_{1}}),
\end{align*}
and the induction step is completed. Let us verify the claim for $(\beta_{1},\beta_{2}+1)$.
If $\beta_{1}=0$, we have already verified it, so suppose $\beta_{1}>0$.
One has that 
\[
\Phi(B,\beta_{1},\beta_{2}+1)=\frac{-\beta_{1}\Phi(B,\beta_{1}-1,\beta_{2}+1)+(\beta_{2}+1)\Phi(B,\beta_{1},\beta_{2})}{k+1}.
\]
The induction gives 
\[
\Phi(B,\beta_{1},\beta_{2})\sim\frac{\beta_{2}!B^{k+1}\log(B)^{\beta_{1}}}{(k+1)^{\beta_{2}+1}}+O(B^{k+1}\log(B)^{\beta_{1}-1}),
\]
thus 
\[
\Phi(B,\beta_{1},\beta_{2}+1)\sim\frac{(\beta_{2}+1)!B^{k+1}\log(B)^{\beta_{1}}}{(k+1)^{\beta_{2}+2}}-\frac{\beta_{1}\Phi(B,\beta_{1}-1,\beta_{2}+1)}{(k+1)}.
\]
If $\beta_{1}-1=0$, we have already verified it, so suppose $\beta_{1}-1>0$.
It suffices to have $\Phi(B,\beta_{1}-1,\beta_{2}+1)=O(B^{k+1}\log(B)^{\beta_{1}-1}).$
Now 
\[
\Phi(B,\beta_{1}-1,\beta_{2}+1)=\frac{-(\beta_{1}-1)\Phi(B,\beta_{1}-2,\beta_{2}+1)+(\beta_{2}+1)\Phi_{B}(B,\beta_{1}-1,\beta_{2})}{k+1}.
\]
The induction gives $\Phi(B,\beta_{1}-1,\beta_{2})=O(B^{k+1}\log(B)^{\beta_{1}-1})$
and thus it suffices to have $\Phi(\beta_{1}-2,\beta_{2}+1)=O(B^{k+1}\log(B)^{\beta_{1}-3})$
By iterating this process, we reach $\beta_{1}=0$, when we know that
$\Phi(B,0,\beta_{2}+1)=O(B^{k+1})$. The proof is completed. 
\end{proof}
\begin{lem}
\label{kminus} One has that 
\begin{align*}
\Phi_{-1}(B,\beta_{1},\beta_{2}) & =\sum_{i=0}^{\beta_{2}}\frac{\beta_{1}!\beta_{2}!\log(B)^{\beta_{1}+1+i}}{(\beta_{1}+i+1)!(\beta_{2}-i)!}\\
 & \sim\frac{\beta_{1}!\beta_{2}!\log(B)^{\beta_{1}+\beta_{2}+1}}{(\beta_{1}+\beta_{2}+1)!}+O(\log(B)^{\beta_{1}+\beta_{2}}).
\end{align*}
\end{lem}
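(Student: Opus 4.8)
The plan is to compute $\Phi_{-1}(B,\beta_{1},\beta_{2})$ in closed form and then read off the asymptotics. First I would substitute $u=e^{s}$, so that $u^{-1}\,du=ds$, $\log u=s$ and $\log(B/u)=\log B-s$, while the interval $[1,B]$ becomes $[0,\log B]$. This gives $\Phi_{-1}(B,\beta_{1},\beta_{2})=\int_{0}^{\log B}s^{\beta_{1}}(\log B-s)^{\beta_{2}}\,ds$, and the rescaling $s=(\log B)v$ identifies this with $(\log B)^{\beta_{1}+\beta_{2}+1}$ times Euler's Beta integral $\int_{0}^{1}v^{\beta_{1}}(1-v)^{\beta_{2}}\,dv=\frac{\beta_{1}!\,\beta_{2}!}{(\beta_{1}+\beta_{2}+1)!}$. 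Thus $\Phi_{-1}(B,\beta_{1},\beta_{2})$ is an explicit polynomial in $\log B$ of degree $\beta_{1}+\beta_{2}+1$ whose leading coefficient is $\frac{\beta_{1}!\,\beta_{2}!}{(\beta_{1}+\beta_{2}+1)!}$; the asserted asymptotic is then immediate, the error term coming from the lower-degree contributions one chooses to track when writing $\Phi_{-1}$ as a finite sum.

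Alternatively, staying within the integration-by-parts framework of Lemma~\ref{kbigger}: since a primitive of $u^{-1}$ is $\log u$, integrating by parts against $\log(u)^{\beta_{1}}\log(B/u)^{\beta_{2}}$ yields
\[
(\beta_{1}+1)\,\Phi_{-1}(B,\beta_{1},\beta_{2})=\bigl[\log(u)^{\beta_{1}+1}\log(B/u)^{\beta_{2}}\bigr]_{1}^{B}+\beta_{2}\,\Phi_{-1}(B,\beta_{1}+1,\beta_{2}-1),
\]
with base case $\Phi_{-1}(B,m,0)=\frac{1}{m+1}(\log B)^{m+1}$. Iterating this recursion in $\beta_{2}$ and collecting the binomial factors $\prod_{j}\frac{\beta_{2}-j}{\beta_{1}+1+j}$ that accumulate produces the claimed closed-form expression for $\Phi_{-1}(B,\beta_{1},\beta_{2})$ as a finite sum in powers of $\log B$; comparing with the first paragraph, its top-degree term has coefficient $\frac{\beta_{1}!\,\beta_{2}!}{(\beta_{1}+\beta_{2}+1)!}$ and every other power of $\log B$ appearing has degree at most $\beta_{1}+\beta_{2}$. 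The asymptotic $\Phi_{-1}(B,\beta_{1},\beta_{2})\sim\frac{\beta_{1}!\,\beta_{2}!}{(\beta_{1}+\beta_{2}+1)!}(\log B)^{\beta_{1}+\beta_{2}+1}+O\bigl((\log B)^{\beta_{1}+\beta_{2}}\bigr)$ then follows at once.

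There is no analytic difficulty in any of this; the only real work is the combinatorial bookkeeping in the induction, namely keeping the factorials straight while peeling $\beta_{2}$ down to $0$, and handling the degenerate evaluations carefully: the boundary factor $\log(B/u)^{\beta_{2}}|_{u=B}$ behaves as $0^{\beta_{2}}$, equal to $0$ for $\beta_{2}>0$ and to $1$ for $\beta_{2}=0$, while $\log(u)^{\beta_{1}+1}|_{u=1}=0$ always since $\beta_{1}+1\ge 1$. I would run the induction on $\beta_{2}$ against the displayed recursion to establish the finite-sum identity, and then the asymptotic is a one-line consequence of isolating the unique top-degree monomial; no uniformity or error-control subtleties arise beyond what is already visible in the explicit polynomial.
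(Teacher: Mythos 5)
Your Beta-integral route is correct and in fact cleaner than the paper's: after $u=e^{s}$ and $s=(\log B)v$ you get exactly $\Phi_{-1}(B,\beta_{1},\beta_{2})=\frac{\beta_{1}!\,\beta_{2}!}{(\beta_{1}+\beta_{2}+1)!}\log(B)^{\beta_{1}+\beta_{2}+1}$, a single monomial with no lower-order terms, which immediately gives the asymptotic in Lemma~\ref{kminus} (with error term zero), whereas the paper argues only via the integration-by-parts recursion of your second paragraph. Two caveats, however. First, your exact computation actually contradicts the first displayed identity of the lemma: for $\beta_{1}=0$, $\beta_{2}=1$ the stated sum is $\log B+\frac{1}{2}\log(B)^{2}$ while the integral equals $\frac{1}{2}\log(B)^{2}$, so the closed-form sum in the statement carries spurious lower-order terms. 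These enter the paper's proof because the boundary term $\log(u)^{\beta_{1}+1}\log(B/u)^{\beta_{2}}\big|_{u=1}^{u=B}$, which the paper itself computes to vanish when $\beta_{2}>0$, is nonetheless retained as $\log(B)^{\beta_{1}+1}$ in the recursion for $\beta_{2}>0$. Second, your own second paragraph inherits the same inconsistency: as you correctly observe, the boundary factor is $0^{\beta_{2}}=0$ for $\beta_{2}>0$, so iterating your recursion with the base case $\Phi_{-1}(B,m,0)=\frac{\log(B)^{m+1}}{m+1}$ does \emph{not} ``produce the claimed closed-form expression'' but only the single top-degree monomial $\frac{\beta_{1}!\,\beta_{2}!}{(\beta_{1}+\beta_{2}+1)!}\log(B)^{\beta_{1}+\beta_{2}+1}$, in agreement with your first paragraph (and the phrase about ``lower-degree contributions one chooses to track'' should be deleted, since there are none). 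None of this affects anything downstream: Theorem~\ref{thm:product-lemma} only uses the asymptotic $\Phi_{-1}(B,\beta_{1},\beta_{2})\asymp\log(B)^{\beta_{1}+\beta_{2}+1}$ with the stated leading constant, and that your argument proves cleanly. But you should either drop the claim that the recursion recovers the displayed finite sum, or note explicitly that the exact identity in the statement should be corrected to the single-term formula your computation yields.
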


\begin{proof}
We write $\Phi$ for $\Phi_{-1}$. One has that 
\begin{multline*}
(\log(u)^{\beta_{1}}\log(B/u)^{\beta_{2}})'\\
=\beta_{1}u^{-1}\log(u)^{\beta_{1}-1}\log(B/u)^{\beta_{2}}-\beta_{2}u^{-1}\log(u)^{\beta_{1}}\log(B/u)^{\beta_{2}-1}.
\end{multline*}
Integration by parts for the functions $\log(u)$ and $\log(u)^{\beta_{1}}\log(B/u)^{\beta_{2}}$
gives: 
\begin{multline*}
\log(u)^{\beta_{1}+1}\log(B/u)^{\beta_{2}}|_{u=1}^{u=B}\\
=\Phi(B,\beta_{1},\beta_{2})+\beta_{1}\Phi(B,\beta_{1},\beta_{2})-\beta_{2}\Phi(B,\beta_{1}+1,\beta_{2}-1),
\end{multline*}
with convention that $\Phi(B,\beta_{1}+1,-1)=0$. We have that 
\[
\log(u)^{\beta_{1}+1}\log(B/u)^{\beta_{2}}|_{u=1}^{u=B}=\begin{cases}
\log(B)^{\beta_{1}+1}, & \text{if }\beta_{2}=0\\
0, & \text{otherwise}
\end{cases}.
\]
If $\beta_{2}=0$, we have that 
\[
\Phi(B,\beta_{1},0)=\frac{\log(B)^{\beta_{1}+1}}{1+\beta_{1}},
\]
which proves the claim in this case. Suppose that $\beta_{2}>0$.
We have that 
\[
\Phi(B,\beta_{1},\beta_{2})=\frac{\log(B)^{\beta_{1}+1}+\beta_{2}\Phi(B,\beta_{1}+1,\beta_{2}-1)}{1+\beta_{1}}.
\]
We obtain 
\[
\Phi(B,\beta_{1},\beta_{2})=\sum_{i=0}^{\beta_{2}}\frac{\beta_{1}!\beta_{2}!\log(B)^{\beta_{1}+1+i}}{(\beta_{1}+i+1)!(\beta_{2}-i)!}.
\]
\end{proof}
\begin{thm}
\label{thm:product-lemma}Let $X,Y$ be countable sets with heights
$H_{1}:X\to\RR_{>0}$ and $H_{2}:X\to\RR_{>0}$. Suppose that 
\[
N_{X}(B):=\#\{x\in X|H_{1}(x)\leq B\}\asymp B^{\alpha_{1}}\log(B)^{\beta_{1}}
\]
and that 
\[
N_{Y}(B):=\#\{y\in Y|H_{2}(y)\leq B\}\asymp B^{\alpha_{2}}\log(B)^{\beta_{2}},
\]
where $\alpha_{1}\geq\alpha_{2}>0$ and $\beta_{1},\beta_{2}\in\ZZ_{\geq0}$.
Let us define $(\alpha,\beta)$ by $(\alpha,\beta):=(\alpha_{1},\beta_{1})$
if $\alpha_{1}>\alpha_{2}$ and $(\alpha,\beta):=(\alpha_{1},\beta_{1}+\beta_{2}+1)$
otherwise. Then 
\[
N_{X\times Y}(B):=\{(x,y)\in X\times Y|H_{1}(x)H_{2}(y)\leq B\}\asymp B^{\alpha}\log(B)^{\beta}.
\]
\end{thm}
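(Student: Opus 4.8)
The plan is to fiber the product count over $X$, to express the resulting weighted count as a Riemann--Stieltjes integral against $N_X$, and to recognize that integral---after one integration by parts---as essentially $\Phi_{\alpha_1-\alpha_2-1}$, for which Lemmas \ref{kbigger} and \ref{kminus} give the asymptotics. First, two harmless normalizations: since $N_X$ and $N_Y$ are non-decreasing and finite above (hence at) any point where the hypothesis applies, they are everywhere finite, so the value set of $H_1$ (resp.\ $H_2$) meets every bounded interval in a finite set and thus has a positive minimum; rescaling $H_1$ and $H_2$, which merely dilates $B$ and preserves every $\asymp$-estimate, we may assume $H_1,H_2\ge1$. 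As $\alpha_1\ge\alpha_2$, we fiber over $X$: because $H_2\ge1$ forces $N_Y(B/H_1(x))=0$ once $H_1(x)>B$,
\[
N_{X\times Y}(B)=\sum_{\substack{x\in X\\ H_1(x)\le B}}N_Y\!\bigl(B/H_1(x)\bigr)\qquad(B\ge1).
\]
Fibering over $X$ rather than over $Y$ is the point: the exponent of the integration variable that appears below is $\alpha_1-\alpha_2-1\ge-1$, exactly the range covered by the two lemmas, whereas the other choice would produce an exponent $<-1$.

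Next, fix $t_0>1$ beyond which $N_Y(t)\asymp t^{\alpha_2}(\log t)^{\beta_2}$. The terms with $H_1(x)>B/t_0$ contribute at most $N_Y(t_0)\,N_X(B)=O(B^{\alpha_1}(\log B)^{\beta_1})$. For the rest, $B/H_1(x)\ge t_0$, so replacing $N_Y$ by its asymptotic and writing the sum as a Stieltjes integral shows that, up to bounded factors, it equals $B^{\alpha_2}\int_{1^-}^{B/t_0}u^{-\alpha_2}(\log(B/u))^{\beta_2}\,dN_X(u)$. Since $g(u):=u^{-\alpha_2}(\log(B/u))^{\beta_2}$ is smooth on $[1,B/t_0]$, integrating by parts and inserting $N_X(u)\asymp u^{\alpha_1}(\log u)^{\beta_1}$ (valid for $u\ge t_0$; the boundary terms and the range $u<t_0$ contribute only $O(B^{\alpha_1}(\log B)^{\beta_1}+B^{\alpha_2}(\log B)^{\beta_2})$) turns the main part into $\asymp B^{\alpha_2}\Phi_{\alpha_1-\alpha_2-1}(B,\beta_1,\beta_2)$, together with a secondary copy coming from $\partial_u(\log(B/u))^{\beta_2}$, namely $\asymp B^{\alpha_2}\Phi_{\alpha_1-\alpha_2-1}(B,\beta_1,\beta_2-1)$. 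Now invoke the lemmas. If $\alpha_1>\alpha_2$, Lemma \ref{kbigger} with $k=\alpha_1-\alpha_2-1>-1$ gives $\Phi_k(B,\beta_1,\cdot)\asymp B^{k+1}(\log B)^{\beta_1}$, so $B^{\alpha_2}\Phi_k\asymp B^{\alpha_1}(\log B)^{\beta_1}$, which also absorbs every error term; hence $N_{X\times Y}(B)\asymp B^{\alpha_1}(\log B)^{\beta_1}$. If $\alpha_1=\alpha_2$, Lemma \ref{kminus} gives $\Phi_{-1}(B,\beta_1,\beta_2)\asymp(\log B)^{\beta_1+\beta_2+1}$, which strictly dominates all of the error terms as well as the secondary integral $\Phi_{-1}(B,\beta_1,\beta_2-1)\asymp(\log B)^{\beta_1+\beta_2}$; hence $N_{X\times Y}(B)\asymp B^{\alpha_1}(\log B)^{\beta_1+\beta_2+1}$. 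These are exactly the claimed values of $(\alpha,\beta)$.

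The lower bound runs through the same computation with the lower inequalities in $N_X\asymp\cdots$ and $N_Y\asymp\cdots$: one discards the positive boundary term at $B/t_0$ and the positive secondary integral and keeps only $\alpha_2B^{\alpha_2}\int_{t_0}^{B/t_0}u^{\alpha_1-\alpha_2-1}(\log u)^{\beta_1}(\log(B/u))^{\beta_2}\,du$ minus the $O((\log B)^{\beta_2})$ from the boundary at $1$. To bound this integral below by $\gtrsim\Phi_{\alpha_1-\alpha_2-1}(B,\beta_1,\beta_2)$ it suffices to restrict it to a sub-interval on which the integrand is bounded below by a constant times its order of magnitude---$[B/(2t_0),B/t_0]$ when $\alpha_1>\alpha_2$, and $[B^{1/3},B^{2/3}]$ when $\alpha_1=\alpha_2$---and since $\Phi_{\alpha_1-\alpha_2-1}(B,\beta_1,\beta_2)$ strictly dominates $(\log B)^{\beta_2}$ in both cases, the matching lower bound follows.

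The part that really needs care---the main obstacle---is the uniform accounting of error terms. In the balanced case $\alpha_1=\alpha_2$ the target power of $\log B$ is $\beta_1+\beta_2+1$, strictly larger than the powers appearing in the tail, in both boundary terms, in the $u<t_0$ part of the integral, in the secondary $\Phi(\cdot,\beta_1,\beta_2-1)$ integral, and in the $O$-terms built into Lemmas \ref{kbigger}--\ref{kminus} (where, when $\beta_1=0$, one must use the improved remainder $O((\log B)^{\beta_2})$ in Lemma \ref{kbigger}); one has to verify each of these is indeed strictly smaller. In the unbalanced case one must instead check that the positive boundary term, which is only comparable to the main order $B^{\alpha_1}(\log B)^{\beta_1}$ rather than larger, does not inflate it. Once this accounting is arranged, the remainder is the elementary Stieltjes calculus indicated above together with direct citation of the two lemmas.
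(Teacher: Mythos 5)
Your proposal is correct and follows essentially the same route as the paper's proof: fiber the count over $X$, apply partial summation to reduce everything to the integrals $\Phi_{\alpha_1-\alpha_2-1}(B,\beta_1,\beta_2)$ and $\Phi_{\alpha_1-\alpha_2-1}(B,\beta_1,\beta_2-1)$, and conclude with Lemmas \ref{kbigger} and \ref{kminus}. The differences are only in implementation — Riemann--Stieltjes integration by parts with a cutoff $t_0$ instead of the paper's integer binning $a(j)$ with Abel summation, and a subinterval lower bound on $[B/(2t_0),B/t_0]$ (resp.\ $[B^{1/3},B^{2/3}]$) in place of the paper's shift analysis of $\log(B/(u+1))$ — and these in fact treat the regime where the hypotheses $N_X,N_Y\asymp\cdots$ hold only for large arguments somewhat more carefully than the paper does.
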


\begin{proof}
Note that if $(H_{1},H_{2})$ is a pair of heights satisfying the
conditions, then for any $c_{1},c_{2}>0$ the heights $(c_{1}H_{1},c_{2}H_{2})$
also satisfy the conditions. We can thus suppose that $H_{1}$ and
$H_{2}$ are such that $1=\min(H_{1})=\min(H_{2})$. We can now find
constants $C_{1},C_{1}',C_{2},C_{2}'>0$ such that for all $B\geq1$,
one has that 
\[
C_{1}'B^{\alpha_{1}}\log(B)^{\beta_{1}}<N_{X}(B)<C_{1}B^{\alpha_{1}}\log(B)^{\beta_{1}}
\]
and 
\[
C_{2}'B^{\alpha_{2}}\log(B)^{\beta_{2}}<N_{Y}(B)<C_{2}B^{\alpha_{2}}\log(B)^{\beta_{2}}.
\]
Denote by $a(j)$ the number of $x\in X$ such that $j\leq H_{1}(x)<j+1$.
One has that 
\begin{align*}
N_{X\times Y}(B) & =\sum_{x\in X}N_{Y}(B/H_{1}(x))\\
 & \leq\sum_{j=1}^{B}a(j)N_{Y}(B/j)\\
 & \leq C_{2}B^{\alpha_{2}}\sum_{j=1}^{B}a(j)j^{-\alpha_{2}}\log(B/j)^{\beta_{2}}
\end{align*}
Set $\phi(x):=x^{-\alpha_{2}}\log(B/x)^{\beta_{2}}$ which is a continuously
differentiable function on $]1,B[$ with derivative 
\begin{multline*}
\phi'(x)=-\alpha_{2}x^{-\alpha_{2}-1}\log(B/x)^{\beta_{2}}+x^{-\alpha_{2}}\beta_{2}\log(B/x)^{\beta_{2}-1}\cdot(x/B)\cdot(-B/x^{2})\\
=-\alpha_{2}x^{-\alpha_{2}-1}\log(B/x)^{\beta_{2}}-\beta_{2}x^{-\alpha_{2}-1}\log(B/x)^{\beta_{2}-1}
\end{multline*}
The Abel's summation formula gives that 
\[
\sum_{j=1}^{B}a(j)j^{-\alpha_{2}}\log(B/j)^{\beta_{2}}=N_{X}(B)\phi(B)
-\int_{1}^{B}N_{X}(u)\phi'(u)du.
\]
One has 
\begin{align*}
N_{X}(B)\phi(B) & \leq C_{1}B^{\alpha_{1}-\alpha_{2}}\log(B)^{\beta_{1}}\log(1)^{\beta_{2}}\ll B^{\alpha_{1}-\alpha_{2}}\log(B)^{\beta_{1}}
\end{align*}
with the convention that $\log(1)^{0}=1$. One has that 
\begin{multline*}
-\int_{1}^{B}N_{X}(u)\phi'(u)du\\
=\int_{1}^{B}N_{X}(u)\cdot(\alpha_{2}u^{-\alpha_{2}-1}\log(B/u)^{\beta_{2}}+\beta_{2}u^{-\alpha_{2}-1}\log(B/u)^{\beta_{2}-1})du.
\end{multline*}
Using that $N_{X}(u)\leq C_{1}u^{\alpha_{1}}\log(u)^{\beta_{1}},$
we estimate the last integral by 
\begin{align*}
-\int_{1}^{B}N_{X}(u)\phi'(u)du\hskip-3cm\\
 & \leq C_{1}\int_{1}^{B}\alpha_{2}u^{\alpha_{1}-\alpha_{2}-1}\log(u)^{\beta_{1}}\log(B/u)^{\beta_{2}}+\beta_{2}u^{\alpha_{1}-\alpha_{2}-1}\log(u)^{\beta_{1}}\log(B/u)^{\beta_{2}-1}du\\
 & \ll\Phi_{\alpha_{1}-\alpha_{2}-1}(B,\beta_{1},\beta_{2})+\Phi_{\alpha_{1}-\alpha_{2}-1}(B,\beta_{1},\beta_{2}-1)\\
 & \ll\Phi_{\alpha_{1}-\alpha_{2}-1}(B,\beta_{1},\beta_{2}),
\end{align*}
where the last inequality follows from Lemmas~\ref{kbigger} and~\ref{kminus}.
Suppose first that $\alpha_{1}-\alpha_{2}>0$. Lemma~\ref{kbigger}
gives that 
\[
-\int_{1}^{B}N_{X}(u)\phi'(u)\ll B^{\alpha_{1}-\alpha_{2}}\log(B)^{\beta_{1}}.
\]
We obtain that 
\begin{align*}
N_{X\times Y}(B)\ll B^{\alpha_{2}}\cdot\bigg(N_{X}(B)\phi(B)-\int_{1}^{B}N_{X}(u)\phi'(u)du\bigg)\ll B^{\alpha_{1}}\log(B)^{\beta_{1}},
\end{align*}
as claimed. Suppose that $\alpha_{1}=\alpha_{2}$. Lemma \ref{kminus}
gives 
\[
-\int_{1}^{B}N_{X}(u)\phi'(u)\ll\log(B)^{\beta_{1}+\beta_{2}+1}
\]
and thus 
\[
N_{X\times Y}(B)\ll B^{\alpha_{2}}\cdot\bigg(N_{X}(B)\phi(B)-\int_{1}^{B}N_{X}(u)\phi'(u)du\bigg)\ll B^{\alpha_{2}}\log(B)^{\beta_{1}+\beta_{2}+1},
\]
as claimed. Let us prove the lower bound. 
\begin{align*}
N_{X\times Y}(B) & =\sum_{x\in X}N_{Y}(B/H_{1}(x))\\
 & \geq\sum_{j=1}^{B}a(j)N_{Y}(B/(j+1))\\
 & \geq C_{2}'B^{\alpha_{2}}\sum_{j=1}^{B}a(j)(j+1)^{-\alpha_{2}}\log(B/(j+1))^{\beta_{2}}.
\end{align*}
Let us set $\psi(x):=(x+1)^{-\alpha_{2}}\log(B/(x+1))^{\beta_{2}}$.
One has that 
\[
\psi'(x)=\phi(x+1)'=\phi'(x+1).
\]
The Abel's summation formula gives that 
\[
\sum_{j=1}^{B}a(j)(j+1)^{-\alpha_{2}}\log(B/(j+1))^{\beta_{2}}=N_{X}(B)\psi(B)-\int_{1}^{B}N_{X}(u)\phi'(u+1)du.
\]
We have that 
\begin{multline*}
-\int_{1}^{B}N_{X}(u)\phi'(u+1)du\\
=\int_{1}^{B}N_{X}(u)(\alpha_{2}(u+1)^{-\alpha_{2}-1}\log(B/(u+1))^{\beta_{2}}+\beta_{2}(u+1)^{-\alpha_{2}-1}\log(B/(u+1))^{\beta_{2}-1})du.
\end{multline*}
Using that $N_{X}(u)\geq C_{1}'u^{\alpha_{1}}\log(u)^{\beta_{2}}$,
we can estimate 
\begin{multline*}
-\int_{1}^{B}N_{X}(u)\phi'(u+1)du\\
\geq C_{1}\int_{1}^{B}\frac{\alpha_{2}u^{\alpha_{1}}\log(u)^{\beta_{1}}\log(\frac{B}{u+1})^{\beta_{2}}}{(u+1)^{-\alpha_{2}-1}}+\frac{\beta_{2}u^{\alpha_{1}}\log(u)^{\beta_{1}}\log(\frac{B}{u+1})^{\beta_{2}-1}}{(u+1)^{-\alpha_{2}-1}}du
\end{multline*}
On the interval $[1,B]$ one has that $u+1\leq2u,$ thus one can estimate
\begin{multline}
-\int_{1}^{B}N_{X}(u)\phi'(u+1)du\\
\gg\int_{1}^{B}u^{\alpha_{1}-\alpha_{2}-1}\log(u)^{\beta_{1}}\log\bigg(\frac{B}{u+1}\bigg)^{\beta_{2}}+u^{\alpha_{1}-\alpha_{2}-1}\log(u)^{\beta_{1}}\log\bigg(\frac{B}{u+1}\bigg)^{\beta_{2}-1}du.\label{estint}
\end{multline}
When $B>4$, on the interval $u\in[1,B-2]$, one can estimate 
\[
\log\bigg(\frac{B}{u+1}\bigg)>\frac{\log(B/u)}{2},
\]
because this is equivalent to $\big(\frac{B}{u+1}\big)^{2}>B/u$ which
is true because 
\[
B>(B-3+(B-3)^{-1}+2)\geq(u+u^{-1}+2)=(u+1)^{2}/u
\]
for $u\in[1,B-2]$. The function under the last integral of (\ref{estint})
is nonnegative in the domain $[B-2,B-1]$. Whenever $B>4$ in the
domain $u\in[B-1,B]$, we have that 
\[
\log(B/(u+1))>\log(B/(B+1))>-\log(4/5).
\]
We deduce that for large~$B$, one has that 
\begin{multline*}
-\int_{1}^{B}N_{X}(u)\phi'(u+1)du\\
\gg\Phi_{\alpha_{1}-\alpha_{2}-1}(B-2,\beta_{1},\beta_{2})+\Phi_{\alpha_{1}-\alpha_{2}-1}(B-2,\beta_{1},\beta_{2}-1)-\int_{B-1}^{B}\alpha_{2}u^{\alpha_{1}-\alpha_{2}-1}\log(u)^{\beta_{1}}du.
\end{multline*}
Note that 
\begin{align*}
\int_{B-1}^{B}u^{\alpha_{1}-\alpha_{2}-1}\log(u)^{\beta_{1}}du & \sim\Phi_{\alpha_{1}-\alpha_{2}-1}(B,\beta_{1},0)-\Phi_{\alpha_{1}-\alpha_{2}-1}(B-1,\beta_{1},0)+O(f(B))\\
 & =\frac{(B^{\alpha_{1}-\alpha_{2}}\log(B)^{\beta_{1}}-(B-1)^{\alpha_{1}-\alpha_{2}}\log(B-1)^{\beta_{1}})}{\alpha_{1}-\alpha_{2}}+O(f(B)).\\
 & =O(B^{\alpha_{1}-\alpha_{2}-1}\log(B)^{\beta_{1}}),
\end{align*}
where we recall that~$f$ is the error term which is as in the respective
Lemmas~\ref{kbigger} and~\ref{kminus}. 
\begin{align*}
-\int_{1}^{B}N_{X}(u)\phi'(u+1)du & \gg\Phi_{\alpha_{1}-\alpha_{2}-1}(B,\beta_{1},\beta_{2})+\Phi_{\alpha_{1}-\alpha_{2}}(B,\beta_{1},\beta_{2}-1)\\
 & \gg\Phi_{\alpha_{1}-\alpha_{2}-1}(B,\beta_{1},\beta_{2}).
\end{align*}
We have that 
\begin{align*}
N_{X}(B)\psi(B)
\geq C_{1}'B^{\alpha_{1}}\log(B)^{\beta_{1}}(B+1)^{-\alpha_{2}}\log(B/(B+1))^{\beta_{2}}.
\end{align*}
We want to establish that 
\begin{equation}
\Phi_{\alpha_{1}-\alpha_{2}-1}(B,\beta_{1},\beta_{2})+N_{X}(B)\psi(B)\gg\Phi_{\alpha_{1}-\alpha_{2}-1}(B,\beta_{1},\beta_{2}).\label{phist}
\end{equation}
Suppose that $\alpha_{1}=\alpha_{2}$. For $B\gg0$, one has that
\[
-N_{X}(B)\psi(B)=O(\log(B)^{\beta_{1}})
\]
so as 
\[
\Phi_{\alpha_{1}-\alpha_{2}-1}(B,\beta_{1},\beta_{2})\asymp\log(B)^{\beta_{1}+\beta_{2}+1},
\]
the inequality (\ref{phist}) is valid in this case. Suppose that
$\alpha_{1}>\alpha_{2}$. For $B\gg0$, we have that 
\[
-N_{X}(B)\psi(B)=O(B^{\alpha_{1}-\alpha_{2}})
\]
and as 
\[
\Phi_{\alpha_{1}-\alpha_{2}-1}(B,\beta_{1},\beta_{2})\asymp B^{\alpha_{1}-\alpha_{2}}\log(B)^{\beta_{1}},
\]
the inequality (\ref{phist}) is valid in this case. Finally, we estimate
\begin{align*}
N_{X\times Y}(B) & \gg B^{\alpha_{2}}\sum_{j=1}^{B}a(j)(j+1)^{-\alpha_{2}}\log(B/(j+1))^{\beta_{2}}\\
 & =B^{\alpha_{2}}\cdot\bigg(N_{X}(B)\psi(B)-\int_{1}^{B}N_{X}(u)\psi'(u+1)du\bigg)\\
 & \gg B^{\alpha_{2}}\cdot\Phi_{\alpha_{1}-\alpha_{2}-1}(B,\beta_{1},\beta_{2}).
\end{align*}
The proof is completed. 
\end{proof}
\selectlanguage{american}%

\section{Erratum for the preceding paper\label{sec:Erratum}}

We considered only positive raising functions in the first draft of
the paper \cite{darda2024thebatyrevtextendashmanin}, then dropped
this constraint in the published version. However, in Lemma 8.16 of
this paper, we need to assume that the raising function $c$ in question
be positive. This change does not affect the rest of the paper, since
the lemma is not used elsewhere. 

\bgroup\inputencoding{utf8}\bibliographystyle{amsalpha}
\bibliography{Positive-Char}
\egroup
\end{document}